\documentclass[11pt,reqno]{amsart}
\usepackage[a4paper,margin=1in,headheight=14pt]{geometry}
\usepackage[T1]{fontenc}
\usepackage{lmodern}
\usepackage{fancyhdr}
\usepackage{amsmath,amsthm,amssymb,mathtools,mathrsfs}
\usepackage{aliascnt}
\usepackage{microtype}
\usepackage{xcolor}
\usepackage{enumitem}
\usepackage{hyperref}
\usepackage[nameinlink,capitalise,noabbrev]{cleveref}
\usepackage{bm}
\usepackage{mathrsfs}
\hypersetup{colorlinks=true,linkcolor=blue!45!black,citecolor=blue!45!black,urlcolor=blue!45!black}
\allowdisplaybreaks[2]
\numberwithin{equation}{section}
\newtheorem{theorem}{Theorem}[section]
\newtheorem*{backgroundtheorem}{Theorem}
\newaliascnt{proposition}{theorem}
\newtheorem{proposition}[proposition]{Proposition}
\aliascntresetthe{proposition}
\newaliascnt{lemma}{theorem}
\newtheorem{lemma}[lemma]{Lemma}
\aliascntresetthe{lemma}
\newaliascnt{corollary}{theorem}
\newtheorem{corollary}[corollary]{Corollary}
\aliascntresetthe{corollary}
\newaliascnt{claim}{theorem}
\newtheorem{claim}[claim]{Claim}
\newtheorem*{claim*}{Claim}
\aliascntresetthe{claim}
\theoremstyle{definition}
\newtheorem{definition}{Definition}[section]
\newaliascnt{assumption}{theorem}

\aliascntresetthe{assumption}
\theoremstyle{remark}
\newaliascnt{remark}{theorem}
\newtheorem{remark}[remark]{Remark}
\aliascntresetthe{remark}
\crefname{proposition}{Proposition}{Propositions}
\crefname{lemma}{Lemma}{Lemmas}
\crefname{corollary}{Corollary}{Corollaries}
\crefname{claim}{Claim}{Claims}
\crefname{definition}{Definition}{Definitions}
\crefname{assumption}{Assumption}{Assumptions}
\crefname{remark}{Remark}{Remarks}

\newcommand{\R}{\mathbb R}

\newcommand{\Cyl}{\mathcal C}
\newcommand{\cB}{\mathcal B}
\newcommand{\eps}{\varepsilon}

\newcommand{\La}{\Lambda_0}
\newcommand{\ph}{\varphi}

\newcommand{\norm}[1]{\lVert #1\rVert}

\newcommand{\dd}{\,d}

\newcommand{\spanop}{\operatorname{span}}
\newcommand{\dist}{\operatorname{dist}}

\hypersetup{
 pdftitle={Singular Extremal Solutions on Thin Ellipsoids with Varying Nonlinearities},
 pdfsubject={A constant-boundary construction of singular extremal energy solutions},
 pdfkeywords={extremal solution, thin ellipsoid, semistability, Hardy inequality, Brezis Open Problem 6.1},
 pdfauthor={Bao Yu and Yang Zhou}
}
\title{Singular Extremal Solutions on Thin Ellipsoids\\with Varying Nonlinearities}
\author[Bao Yu]{Bao Yu}
\address{
School of Mathematical Sciences,
University of Science and Technology of China,
Hefei, Anhui Province, P.~R.~China, 230006.}
\email{baoyu1@mail.ustc.edu.cn}

\author[Yang Zhou]{Yang Zhou}
\address{
School of Mathematical Sciences,
University of Science and Technology of China,
Hefei, Anhui Province, P.~R.~China, 230006.}
\email{zy19700816@mail.ustc.edu.cn}
\date{}
\subjclass[2020]{ Primary 35J61; Secondary 35B35, 35B65.}
\keywords{Extremal solution; thin ellipsoid;
Hardy inequality; constant-boundary continuation; Br\'ezis Open Problem~6.1.}
\begin{document}

\begin{abstract}
Let $N=m+1$ and consider the thin ellipsoid
\[
\Omega_\eps=\{(y,x_N)\in\R^m\times\R:\ |y|^2+\eps^{-2}x_N^2<1\}.
\]
We prove that in every sufficiently large dimension, for every sufficiently small $\eps>0$, there exists a smooth positive, strictly increasing, strictly convex, superlinear nonlinearity $f_\eps$ for which the extremal solution in $\Omega_\eps$ is an unbounded $H^1_0$ solution.
Combining this result with Dancer's thin-domain regularity theorem
for the Gelfand nonlinearity $f(t)=e^t$, we obtain on the same sufficiently
thin ellipsoids a bounded Gelfand extremal solution and an unbounded
extremal solution for another nonlinearity. Thus, in this
two-part sense, Br\'ezis' Open Problem~6.1 is resolved in every sufficiently large dimension.

\end{abstract}
\maketitle

\section{Introduction and main result}

Let $\Omega\subset\R^N$ be a smooth bounded domain.  We consider the
semilinear Dirichlet problem
\begin{equation}\label{eq:original}
 \begin{cases}
 -\Delta u=\lambda f(u)&\text{in }\Omega,\\
 u>0&\text{in }\Omega,\\
 u=0&\text{on }\partial\Omega,
 \end{cases}
\end{equation}
where $\lambda>0$ and $f:[0,\infty)\to(0,\infty)$ is smooth and satisfies
\begin{equation}\label{eq:brezis-structural}
 f(0)>0,\qquad f\ \text{is increasing and convex},
\end{equation}
together with the superlinearity condition
\begin{equation}\label{eq:brezis-superlinear}
 \lim_{t\to\infty}\frac{f(t)}{t}=+\infty.
\end{equation}
The global structure of the minimal positive branch is classical.

\begin{backgroundtheorem}[Br\'ezis' Theorem~6.1
\cite{BrezisOP}; see also Br\'ezis--V\'azquez \cite{BV} and Br\'ezis \cite{BrezisIFT}]
There exists $\lambda^*\in(0,\infty)$ such that:
\begin{enumerate}[label=\textnormal{(\alph*)},leftmargin=2.2em]
\item for every $0<\lambda<\lambda^*$, problem \eqref{eq:original}
admits a minimal smooth solution $u_\lambda$, and $u_\lambda(x)$ is
increasing in $\lambda$ for every $x\in\Omega$;
\item for $\lambda>\lambda^*$, problem \eqref{eq:original} admits no
solution;
\item the monotone limit
\[
 u^*:=\lim_{\lambda\uparrow\lambda^*}u_\lambda
\]
is a weak solution at $\lambda=\lambda^*$, in the sense that
\[
 u^*\in L^1(\Omega),\qquad
 f(u^*)\,\dist(x,\partial\Omega)\in L^1(\Omega),
\]
and
\begin{equation}\label{eq:brezis-extremal-weak}
 -\int_\Omega u^*\Delta\zeta
 =\lambda^*\int_\Omega f(u^*)\zeta
 \qquad
 \forall\,\zeta\in C^2(\overline\Omega),\quad
 \zeta|_{\partial\Omega}=0.
\end{equation}
\end{enumerate}
\end{backgroundtheorem}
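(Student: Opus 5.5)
The plan follows the classical monotone-iteration and sub/supersolution argument of Brezis--Cazenave--Martel--Ramiandrisoa and Brezis--V\'azquez. Set $\Lambda=\{\lambda>0:\ \eqref{eq:original}\text{ has a classical solution}\}$ and $\lambda^*=\sup\Lambda$.

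\textbf{Step 1: $\Lambda$ is a nonempty interval with finite supremum.} Let $\zeta_0$ solve $-\Delta\zeta_0=1$ in $\Omega$ with $\zeta_0=0$ on $\partial\Omega$. For small $\lambda$, the function $\zeta_0$ is a supersolution because $\lambda f(\norm{\zeta_0}_\infty)\le1$, while $0$ is a subsolution since $f(0)>0$. Hence $\Lambda\neq\emptyset$. If $\lambda\in\Lambda$ with solution $u$ and $\mu<\lambda$, then $u$ is a supersolution for $\mu$, so $\mu\in\Lambda$.

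For finiteness, test the equation against the first eigenfunction $\varphi_1>0$ with eigenvalue $\lambda_1$. This gives $\lambda_1\int u\varphi_1=\lambda\int f(u)\varphi_1$. By \eqref{eq:brezis-superlinear} and $f(0)>0$ there is $c>0$ with $f(t)\ge ct$ for all $t\ge0$. Therefore $\lambda_1\ge\lambda c$, and $\lambda^*\le\lambda_1/c<\infty$.

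\textbf{Step 2: minimal solutions and monotonicity.} Start from $u^0=0$ and iterate $-\Delta u^{k+1}=\lambda f(u^k)$ with zero boundary data. Since $f$ is increasing, the maximum principle shows that $u^k$ increases and stays below any supersolution, in particular below any solution. The limit $u_\lambda$ is therefore the minimal solution, and it is classical by elliptic regularity.

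For $\mu<\lambda$, the solution $u_\lambda$ is a supersolution at level $\mu$, so the iteration at level $\mu$ stays below it: $u_\mu\le u_\lambda$. In fact $u_\mu<u_\lambda$ by the strong maximum principle. This gives (a) for $\lambda<\lambda^*$ and (b) for $\lambda>\lambda^*$.

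\textbf{Step 3: passage to the limit $\lambda\uparrow\lambda^*$ (main obstacle).} We need bounds uniform in $\lambda$ on $\int f(u_\lambda)\delta$, where $\delta=\dist(x,\partial\Omega)$.

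The $\varphi_1$ test alone gives $\lambda\int f(u_\lambda)\varphi_1=\lambda_1\int u_\lambda\varphi_1$. Since $\varphi_1\simeq\delta$, we need a bound on $\int u_\lambda\varphi_1$. Superlinearity supplies it: choose $M$ with $f(t)\ge (2\lambda_1/\lambda^*)t$ for $t\ge M$, and take $\lambda\ge\lambda^*/2$. Then
\[
\lambda_1\int u_\lambda\varphi_1=\lambda\int f(u_\lambda)\varphi_1\ \ge\ 2\lambda_1\frac{\lambda}{\lambda^*}\int_{\{u_\lambda\ge M\}}u_\lambda\varphi_1 .
\]
Splitting $\int u_\lambda\varphi_1$ into the regions $\{u_\lambda<M\}$ and $\{u_\lambda\ge M\}$ and using $\lambda/\lambda^*$ close to $1$ shows $\int u_\lambda\varphi_1\le C$. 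Hence $\int f(u_\lambda)\delta\le C$.

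Next, $u_\lambda=\lambda G[f(u_\lambda)]$, where $G$ is the Green operator, and $G$ maps $L^1(\delta\,dx)$ boundedly into $L^1$. So $u_\lambda$ is bounded in $L^1$. Monotone convergence then gives $u^*\in L^1$, and Fatou's lemma, combined with the monotonicity of $f(u_\lambda)$, gives $f(u^*)\delta\in L^1$ together with convergence in $L^1(\delta\,dx)$.

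Finally, for $\zeta\in C^2(\overline\Omega)$ with $\zeta=0$ on $\partial\Omega$ we have $|\zeta|\le C\delta$. This lets us pass to the limit in $-\int u_\lambda\Delta\zeta=\lambda\int f(u_\lambda)\zeta$, which yields \eqref{eq:brezis-extremal-weak}.

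The delicate point is this uniform weighted $L^1$ bound near $\lambda^*$. The estimate above handles it using only superlinearity, without any stability input.
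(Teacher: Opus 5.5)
Your argument is correct, and it is the standard proof: monotone iteration from $0$, then the $\varphi_1$-test combined with superlinearity to get the uniform bound $\int_\Omega f(u_\lambda)\,\dist(x,\partial\Omega)\le C$ near $\lambda^*$, then monotone convergence. The paper gives no proof of this background theorem and simply cites Br\'ezis and Br\'ezis--V\'azquez \cite{BV}, whose argument is essentially yours.

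One point to keep in mind: you define $\lambda^*$ through classical solutions, so part (b) only gives nonexistence of classical solutions for $\lambda>\lambda^*$. That matches the statement as written here. The stronger claim that no \emph{weak} solution exists for $\lambda>\lambda^*$ would need an extra argument: one shows that a weak solution at $\lambda$ yields classical solutions at every smaller parameter.
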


This theorem motivates the terminology used throughout the paper.
\begin{definition}[Extremal parameter, minimal branch, and singular extremal energy solution]\label{def:extremal-data}
The number $\lambda^*$ above is the \emph{extremal parameter}, the ordered
family $\{u_\lambda\}_{0<\lambda<\lambda^*}$ is the \emph{minimal branch}, and
\[
 u^*(\Omega,f):=\lim_{\lambda\uparrow\lambda^*}u_\lambda
\]
is the \emph{extremal solution}.  We call $u^*$ a \emph{singular extremal
energy solution} if
\[
 u^*\in H^1_0(\Omega)\setminus L^\infty(\Omega),
\]
$u^*$ satisfies \eqref{eq:brezis-extremal-weak}, and its linearized
quadratic form
\[
 \mathcal Q_{u^*}[\zeta]
 :=\int_\Omega
 \bigl(|\nabla\zeta|^2-\lambda^* f'(u^*)\zeta^2\bigr)
\]
is well defined and satisfies
\[
 \mathcal Q_{u^*}[\zeta]\ge0
 \qquad\text{for every }\zeta\in C_c^1(\Omega).
\]
\end{definition}

Martel \cite{Martel} proved that $u^*$ is the unique weak solution of
\eqref{eq:brezis-extremal-weak} at $\lambda=\lambda^*$.  For
$0<\lambda<\lambda^*$ the minimal solutions are stable, and the extremal
solution is semistable in the natural limiting sense; see, for example,
Br\'ezis--V\'azquez \cite{BV}.  Dupaigne's monograph \cite{DupaigneBook}
provides a systematic account of the Gelfand problem, extremal and stable
solutions, singular stable solutions, and the associated Hardy mechanisms.

The regularity theory exhibits a sharp dimensional threshold.  Nedev
\cite{Nedev} proved boundedness of $u^*$ for $N\le3$.  Cabr\'e
\cite{Cabre2010} obtained the dimension-four result for extremal solutions
in convex domains, as part of a broader regularity theory for positive
semistable solutions.  The remaining dimensions $5\le N\le9$ were settled
by Cabr\'e, Figalli, Ros-Oton and Serra \cite{CFRS}; see also the survey
\cite{CabreSurvey}.

\begin{backgroundtheorem}[Cabr\'e--Figalli--Ros-Oton--Serra \cite{CFRS}]
Assume \eqref{eq:brezis-structural}--\eqref{eq:brezis-superlinear} and
$N\le9$.  Then
\[
 u^*\in L^\infty(\Omega).
\]
\end{backgroundtheorem}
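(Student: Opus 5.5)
The plan is to derive the statement from a uniform \emph{a priori} estimate for the smooth minimal solutions $u_\lambda$, $0<\lambda<\lambda^*$, and then pass to the limit $\lambda\uparrow\lambda^*$. The target estimate is
\[
 \norm{u_\lambda}_{L^\infty(\Omega)}\le C(N,\Omega)\,\norm{u_\lambda}_{L^1(\Omega)}\qquad(N\le9),
\]
valid for every positive, \emph{stable} classical solution of $-\Delta u=\lambda f(u)$ with $f$ satisfying \eqref{eq:brezis-structural}. By part (a) of Br\'ezis' Theorem~6.1, every $u_\lambda$ is stable, since it is the minimal solution.

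The $L^1$ bound is elementary. Test the equation against the first Dirichlet eigenfunction $\ph_1>0$ and use superlinearity \eqref{eq:brezis-superlinear}, in the form $f(t)\ge 2\lambda_1 t/\lambda^*-C$. This gives a bound on $\int_\Omega u_\lambda\ph_1$, uniform in $\lambda\in(\lambda^*/2,\lambda^*)$, and hence on $\int_\Omega f(u_\lambda)\,\dist(x,\partial\Omega)$. Standard $L^1$--$\dist$ elliptic estimates then bound $\norm{u_\lambda}_{L^1(\Omega)}$ uniformly. Once the $L^\infty$ estimate is available, monotone convergence gives $u^*\in L^\infty(\Omega)$.

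The core is an interior estimate, which I would prove in three steps following the radial-derivative method.
\begin{enumerate}[label=\textnormal{(\roman*)},leftmargin=2.2em]
\item Insert the test function $\zeta=(x\cdot\nabla u)\,|x|^{(2-N)/2}\eta$, with a cutoff $\eta$, into the stability inequality $\int\lambda f'(u)\zeta^2\le\int|\nabla\zeta|^2$. Expanding with the identity for $\Delta(x\cdot\nabla u)$ yields
\[
 (N-2)\frac{10-N}{4}\int_{B_\rho} r^{2-N}u_r^2\le C\int_{B_{3\rho/2}\setminus B_\rho}r^{2-N}|\nabla u|^2 .
\]
The constant on the left is positive exactly when $N\le9$; this is where the dimensional threshold enters.
\item Prove the complementary bound: the full Dirichlet energy on an annulus is controlled by the radial energy on a larger annulus. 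I would argue by contradiction and compactness, using stability only through the weak $W^{1,2+\gamma}$ higher-integrability bound $\norm{\nabla u}_{L^{2+\gamma}(B_{1/2})}\le C\norm{u}_{L^1(B_1)}$. That bound comes from testing stability with $|\nabla u|\eta$ and using the Sternberg--Zumbrun identity. In the blow-up limit, a stable solution with vanishing radial derivative must be $0$-homogeneous and superharmonic, hence constant.
\item Combine (i) and (ii) by hole-filling to get the decay $\int_{B_\rho}r^{2-N}u_r^2\le C\rho^{2\alpha}\norm{u}_{L^1(B_1)}^2$. By a Morrey/Campanato argument this yields a $C^\alpha$ bound, and in particular an $L^\infty$ bound.
\end{enumerate}

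The main obstacle is the boundary estimate, since $\Omega$ is only smooth and not assumed convex. I would flatten $\partial\Omega$ locally and work in half-balls. Near the boundary I would first show that $u_\lambda$ is monotone in directions close to the inner normal, via a moving-planes-type argument adapted to almost-flat boundaries. Monotonicity gives an $L^1$ bound for $\nabla u$ up to the boundary. Then I would repeat step (i) with test functions centred at boundary points, handling boundary terms through $u=0$ and the sign of $u_\nu$, and step (ii) with half-space Liouville arguments in the compactness step. Controlling the error terms produced by the curvature of $\partial\Omega$ is the delicate part, and I expect to need a closeness-to-flat rescaling together with a careful induction on scales.
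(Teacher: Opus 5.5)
The paper does not prove this statement. It quotes it as background from Cabr\'e--Figalli--Ros-Oton--Serra, so the only available comparison is with their argument.

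\textbf{Interior part.} Your interior scheme is theirs: the test function $(x\cdot\nabla u)|x|^{(2-N)/2}\eta$, then a compactness lemma built on the $W^{1,2+\gamma}$ bound and on the fact that a $0$-homogeneous superharmonic function is constant because $S^{N-1}$ has no boundary, then hole-filling and Morrey. It is sound up to three repairs.
\begin{enumerate}
\item For $N\le2$ the constant $(N-2)(10-N)/4$ is $\le0$, not positive. Add dummy variables, or use Nedev for $N\le3$.
\item The $L^{2+\gamma}$ bound gives only weak compactness of $\nabla u_k$ in $L^2$. To keep the normalized annular energy in the limit you also need the $L^1_{\rm loc}$ compactness of gradients coming from the bounded mass of $-\Delta u_k\ge0$, followed by interpolation.
\item The larger annulus in (ii) must contain a neighbourhood of the smaller one on both sides, or you must run a doubling alternative. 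Otherwise the interior $W^{1,2+\gamma}$ estimate cannot be applied up to the inner sphere.
\end{enumerate}
Separately, the $L^1$ bound comes for free, since $u_\lambda\le u^*\in L^1(\Omega)$.

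\textbf{The genuine gap is the boundary.} It cannot be avoided, because $\Omega$ is only smooth, not convex.

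Moving planes is a global device. The reflected cap must stay inside $\Omega$, and $u(x^\lambda)-u(x)\ge0$ must hold on the whole boundary of the cap. In a local almost-flat chart, that boundary contains a piece of $\partial B_\rho\cap\Omega$ on which the sign is unknown. Globally, the method only reaches boundary points where $\Omega$ is supported by a hyperplane, never the concave parts. This is exactly why Cabr\'e's four-dimensional result was restricted to convex domains. It is also why Cabr\'e--Figalli--Ros-Oton--Serra, who treat general $C^3$ domains, bring in the convexity of $f$ at the boundary; your sketch never uses that hypothesis.

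Note also what monotonicity would buy you. If you had monotonicity along a cone of directions around the inner normal on a collar of fixed width, the boundary bound would follow at once from the interior one: compare $u(x)$ with its average over a fixed ball deeper inside. The boundary versions of (i)--(ii) would then be superfluous, so the whole difficulty sits in the step you leave unproved.

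Those boundary versions also fail as sketched, because the half-space analogue of your Liouville step is false. The function $x_N/|x|$ is $0$-homogeneous and nonnegative. It is superharmonic in $\{x_N>0\}$, since $-\Delta(x_N/|x|)=(N-1)x_N|x|^{-3}$. It vanishes on $\{x_N=0\}$ and is even nondecreasing in $x_N$. A boundary compactness lemma must therefore carry more of the nonlinear structure to the limit than superharmonicity.

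As written, your plan gives the interior estimate, and the full theorem for convex $\Omega$, where classical moving planes supplies the collar monotonicity. It does not give the theorem for general smooth $\Omega$.
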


The restriction $N\le9$ is optimal in general.  For the Gelfand
nonlinearity $f(u)=e^u$ on the unit ball $B_1\subset\R^N$, the
Joseph--Lundgren example \cite{JL} (see also \cite{BV}) gives, for $N\ge10$,
\begin{equation}\label{eq:JL-extremal-example}
 \lambda^*=2(N-2),\qquad
 u^*(x)=\log\frac1{|x|^2},
\end{equation}
so that $u^*\notin L^\infty(B_1)$.  This is the classical Gelfand problem,
originating in Gel'fand's work \cite{Gelfand}.  Thus boundedness is completely
understood in dimensions at most nine, whereas for $N\ge10$ the interaction
between the geometry of the domain and the choice of nonlinearity becomes
essential.  Br\'ezis formulated this issue as follows.

\medskip
\noindent
\textbf{Open Problem 6.1 (Br\'ezis \cite{BrezisOP}; see also
\cite{BV,BrezisIFT}).}
\emph{Assume $\Omega$ is a bounded smooth convex set in $\mathbb R^N$,
$N\ge10$.  Let $f(u)=e^u$.  Is $u^*$ unbounded?  If the answer is negative
for some domains $\Omega$, can one find other functions $f$ satisfying
\eqref{eq:brezis-structural}--\eqref{eq:brezis-superlinear}, possibly
depending on $\Omega$, such that $u^*\notin L^\infty(\Omega)$?}
\medskip

Write $N=m+1$ and, for $\eps>0$, set
\begin{equation}\label{eq:ellipsoid}
 \Omega_\eps
 =\{(y,x_N)\in\R^m\times\R:
       |y|^2+\eps^{-2}x_N^2<1\}.
\end{equation}
These domains are smooth, bounded, and strictly convex.  For the exponential
nonlinearity, sufficiently strong thinning regularizes the extremal solution.

\begin{backgroundtheorem}[Dancer \cite{Dancer}; see also D\'avila \cite{Davila}]
Let $N\ge2$ and let $\Omega_\eps$ be the ellipsoids in
\eqref{eq:ellipsoid}.  For the Gelfand nonlinearity $f(t)=e^t$, the
corresponding extremal solution is bounded for all sufficiently small
$\eps>0$:
\[
 u^*(\Omega_\eps,e^{(\cdot)})\in L^\infty(\Omega_\eps).
\]
\end{backgroundtheorem}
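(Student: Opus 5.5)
Dancer's theorem is cited background. The plan below reconstructs its proof for the exponential nonlinearity on thin ellipsoids. The strategy is to show that for small $\eps$ the extremal parameter $\lambda^*_\eps$ blows up like $\eps^{-2}$. It then suffices to show that the rescaled minimal branch stays uniformly bounded up to $\lambda^*_\eps$, using the essentially one-dimensional structure in the thin direction $x_N$.

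\textbf{Step 1: rescaling.} Set $x_N=\eps t$, so the domain becomes the unit ball $B=\{|y|^2+t^2<1\}$. The equation becomes
\[
-\eps^2\Delta_y u-\partial_t^2 u=\eps^2\lambda e^u\quad\text{in }B.
\]
Writing $\mu=\eps^2\lambda$, the problem is a singular perturbation whose formal limit, for each fixed $y$, is the one-dimensional Gelfand problem
\[
-u''=\mu e^u\quad\text{on }\bigl(-\sqrt{1-|y|^2},\sqrt{1-|y|^2}\bigr).
\]
The one-dimensional extremal parameter on an interval of half-length $\ell$ scales like $\ell^{-2}$. The worst fibre is therefore $y=0$, with half-length $1$, and we expect $\mu^*_\eps\to\mu^*_{1D}(1)$.

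\textbf{Step 2: comparison bounds.} The first task is to obtain the bounds
\[
c\le\eps^2\lambda^*_\eps\le C.
\]
\begin{itemize}
\item For the upper bound, test the equation against the first eigenfunction. This gives $\lambda^*\le C\lambda_1(\Omega_\eps)$, and $\lambda_1(\Omega_\eps)\sim\eps^{-2}$.
\item For the lower bound, use as a supersolution the one-dimensional minimal solution in $t$ on the slab $|t|<1$, which contains $B$.
\end{itemize}

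\textbf{Step 3: uniform $L^\infty$ bound (main obstacle).} We must bound the minimal solutions $u_\mu$ uniformly as $\mu\uparrow\mu^*_\eps$. I would use stability of the minimal solutions, which in the rescaled variables reads
\[
\int_B\bigl(\eps^2|\nabla_y\phi|^2+|\partial_t\phi|^2\bigr)\ge\mu\int_B e^{u}\phi^2 .
\]
Suppose $\|u_{\mu_k}\|_\infty\to\infty$. Blow up at a maximum point $p_k$ with the scale $r_k$ defined by
\[
\mu_k e^{u(p_k)}r_k^2=1 ,
\]
so that $r_k\to0$. Stretch the $y$-directions by $r_k\eps^{-1}$ and the $t$-direction by $r_k$. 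The limit is then a stable entire (or half-space) solution of
\[
-\Delta v=e^v\quad\text{in }\R^N\text{ or a half-space}.
\]
The danger is that $N\ge10$ allows singular stable solutions, and the Liouville theorem (Farina) only excludes smooth stable solutions for $N\le9$.

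\textbf{Step 4: ruling out blow-up.} The key is to exploit that stretching in $y$ by the factor $r_k/\eps$ versus $t$ by $r_k$ is anisotropic. This breaks the radial Joseph--Lundgren profile, and blow-up must then be essentially one-dimensional in $t$.

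For that one-dimensional reduction I would use the a priori estimate $u\le C$ along each fibre, obtained by integrating the ODE in $t$: in dimension one, stable solutions are bounded by the extremal bound. The lateral terms $\eps^2\Delta_y u$ are controlled by interior $y$-regularity (Bernstein-type estimates in $y$), derived by differentiating the equation in $y$ and using the maximum principle near the boundary, where the moving-plane method gives monotonicity. Boundary blow-up is excluded via moving planes on the convex ellipsoid, which keeps maxima away from $\partial\Omega_\eps$ uniformly.

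Combining Steps 3 and 4, $u^*(\Omega_\eps,e^{(\cdot)})$ is bounded for $\eps$ small. The hardest point is the fibrewise control of the $\eps^2\Delta_y u$ term uniformly up to $\mu^*_\eps$. There I would first try the monotonicity in $|y|$ from moving planes, together with the stability inequality tested by $\phi=e^{\alpha u}\eta$ (the Crandall--Rabinowitz argument) in the $t$-variable only.
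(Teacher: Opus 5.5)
The paper does not prove this statement. It is quoted from Dancer's memoir and D\'avila's survey, so your plan has to stand on its own, and as written it does not close: Steps~3--4 do not give a contradiction.

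\textbf{The blow-up in Steps~3--4.} Take the stretchings that keep the limit nondegenerate, $y=y_k+\eps r_kY$ and $t=t_k+r_kT$. Then $\eps^2\Delta_y+\partial_t^2$ becomes exactly $r_k^{-2}\Delta_{Y,T}$. The anisotropy is just the one you created with $x_N=\eps t$, and in the original variables this is the ordinary isotropic blow-up.
\begin{itemize}
\item The limit is a stable solution of $-\Delta v=e^v$ in $\R^N$ or a half-space. Because $r_k$ is much smaller than the thickness, the thin geometry has disappeared at that scale.
\item For $N\ge10$ such stable solutions exist. The regular radial entire solutions lie below $\log\bigl(2(N-2)/|x|^2\bigr)$ and are stable by Hardy's inequality, since $2(N-2)\le (N-2)^2/4$. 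So nothing ``breaks'' the Joseph--Lundgren profile.
\item Any other ratio of the two stretchings is either a fixed linear change of variables of the same limit, or sends the coefficient of $\Delta_Y$ to $0$ or $\infty$. The latter destroys the compactness needed to obtain a limit with $v(0)=0$.
\end{itemize}

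\textbf{The fibrewise bound.} This step is circular. $u(y,\cdot)$ solves $-u_{tt}=\mu e^u+\eps^2\Delta_yu$, not the one-dimensional problem. On the central fibre, where the maximum lies by moving planes, $\Delta_yu\le0$. That fibre is therefore only a one-dimensional \emph{sub}solution, which gives lower bounds, not upper bounds. Uniform control of $\eps^2\Delta_yu$ up to $\mu_\eps^*$ is essentially the theorem itself.

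\textbf{The stability test.} The test function $(e^{\alpha u}-1)\eta(y)$ yields only $e^u\in L^p$ for $p<5$, the usual $N$-dimensional range. Once $\eta$ is localized at the natural scale $\eps$ in $y$, the term $\eps^2|\nabla_y\eta|^2$ is of order one, so there is no gain.

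\textbf{What is missing.} You need to use thinness at the scale of the thickness, before any concentration.
\begin{itemize}
\item Your Step~2 already gives more than you use. The one-dimensional minimal solution on $(-1,1)$ is a supersolution on the ball, so the minimal branch is uniformly bounded for $\mu\le\mu_1^*$, the one-dimensional extremal parameter of $(-1,1)$.
\item The whole difficulty is the window $\mu_1^*<\mu<\mu_\eps^*$. It is nonempty, because at $\mu_1^*$ the minimal solution is strictly stable on the ball. In this window the central fibre has no one-dimensional solution at all, and $u$ is held down only through the coupling to shorter fibres.
\item One must show that here the solution stays a small perturbation of the bounded one-dimensional extremal profile. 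This reduction to the cross-section is where ``at most nine collapsing variables'' enters Dancer's theory.
\end{itemize}
One concrete way to finish is to construct, for small $\eps$, a bounded solution $v_\eps$ whose linearization has principal eigenvalue $0$, as a perturbation of the one-dimensional fold with $y$ as a slow variable. Suppose its parameter $\lambda_v$ were below $\lambda^*$. Testing $-\Delta(v_\eps-u_{\lambda_v})$ against the principal eigenfunction and using strict convexity would force $v_\eps=u_{\lambda_v}$, contradicting the strict stability of minimal solutions below $\lambda^*$. Hence $\lambda_v=\lambda^*$ and $u^*\le v_\eps$. An alternative is thickness-scale compactness combined with an $\varepsilon$-regularity (small-energy) theorem for stable solutions. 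Your outline contains neither ingredient.
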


\begin{remark}
This is the one-thin-direction case of Dancer's thin-domain theory.  More
generally, the regularity conclusion holds when the number of collapsing
variables is at most nine; see \cite[pp.~54--56]{Dancer} and the explicit
extremal-solution formulation in \cite[Theorem~1.14]{Davila}.  
\end{remark}

In fact, Dancer's thin-domain theorem already gives a negative answer to the first part of Br\'ezis' question~6.1. The remaining part of the question is to find an appropriate nonlinearity which satisfies \eqref{eq:brezis-structural}-\eqref{eq:brezis-superlinear} and has a singular extremal solution in the thin ellipsoid.  We consider the following
one-parameter family
\begin{equation}\label{eq:fj}
 f_\delta(t)=\frac{1+t+\delta(1+t)^2}{1+\delta},
 \qquad t\ge0,\quad\delta>0.
\end{equation}
For every fixed $\delta>0$,
\begin{equation}\label{eq:nonlinearity-properties}
 \begin{gathered}
 f_\delta(0)=1,\qquad
 f_\delta'(t)=\frac{1+2\delta(1+t)}{1+\delta}>0,
 \qquad f_\delta''(t)=\frac{2\delta}{1+\delta}>0,\\
 \lim_{t\to\infty}\frac{f_\delta(t)}{t}=\infty.
 \end{gathered}
\end{equation}
Thus every $f_\delta$ satisfies the structural assumptions
\eqref{eq:brezis-structural}--\eqref{eq:brezis-superlinear}.  At the same
time,
\[
 f_\delta\longrightarrow 1+t
 \qquad\text{in }C^k_{\mathrm{loc}}([0,\infty))
\]
for every fixed $k$ as $\delta\downarrow0$.  In the theorem below the choice
$\delta=\delta_\eps$ depends on the thickness of the ellipsoid.

\begin{theorem}\label{thm:main}
There is an integer $m_1\ge17$ with the following property.  For every
fixed $m\ge m_1$ there are constants $C_m>0$ and $\eps_0(m)>0$ such that,
for each $0<\eps\le\eps_0(m)$, one can choose
\[
 0<\delta_\eps\le C_m\eps^2
\]
for which
\begin{equation}\label{eq:main-extremal}
 \begin{gathered}
 u^*(\Omega_\eps,f_{\delta_\eps})
 \in H^1_0(\Omega_\eps)\cap L^3(\Omega_\eps)
       \setminus L^\infty(\Omega_\eps),\\
 \lambda^*(\Omega_\eps,f_{\delta_\eps})
       =\frac{\La(1+\delta_\eps)}{\eps^2},
 \qquad \La:=\frac{\pi^2}{4}.
 \end{gathered}
\end{equation}
Moreover, $u^*(\Omega_\eps,f_{\delta_\eps})$ is a singular
semistable extremal energy solution in the sense of
\cref{def:extremal-data}.  Consequently, for every sequence
$\eps_j\downarrow0$ with $\eps_j\le\eps_0(m)$, the choices
$\delta_j=\delta_{\eps_j}$ satisfy $\delta_j\to0$ and produce singular
extremal energy solutions on $\Omega_{\eps_j}$.
\end{theorem}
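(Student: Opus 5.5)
The plan is to exhibit an explicit-up-to-perturbation singular solution at the parameter $\lambda=\La(1+\delta)/\eps^2$, prove that it is semistable, and then identify it with $u^*$ via the Br\'ezis--V\'azquez characterization. That characterization (see \cite{BV}, together with Martel's uniqueness \cite{Martel}) says: if $u\in H^1_0$ is an unbounded weak solution for some $\lambda$ and $\int|\nabla\zeta|^2\ge\lambda\int f'(u)\zeta^2$ for all test functions $\zeta$, then $\lambda=\lambda^*$ and $u=u^*$.

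\textbf{Step 1: reduce to a problem with constant boundary data.} Set $w=1+u$. Then $f_\delta(u)=(w+\delta w^2)/(1+\delta)$. With $\lambda=\La(1+\delta)/\eps^2$ and the rescaling $x_N=\eps s$, the problem becomes
\[
-\eps^2\Delta_y w-\partial_s^2 w=\La\,(w+\delta w^2)\quad\text{in }\{|y|^2+s^2<1\},\qquad w=1\text{ on the boundary}.
\]
The value $\La=\pi^2/4$ is exactly the first Dirichlet eigenvalue of $-\partial_s^2$ on $(-1,1)$, with eigenfunction $\cos(\pi s/2)$. The linear part is therefore resonant in the thin direction. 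The free parameter $\delta$ is what will be used to remove this resonance.

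\textbf{Step 2: build the singular solution.} The singular part is modelled on the scale-invariant solution of $-\Delta W=\La\delta W^2$, namely
\[
W=\frac{2(N-4)}{\La\delta}\,|x|^{-2}
\]
in the original variables. For it,
\[
\lambda f_\delta'(u)\approx\frac{2\La\delta W}{\eps^2}=\frac{4(N-4)}{|x|^2}.
\]
Hardy's inequality then requires $4(N-4)<(N-2)^2/4$, i.e.\ $N^2-20N+68>0$. This holds for $N\ge 17$ (and with strict margin for large $N$), which explains why $m_1\ge17$. Since $|x|^{-2}$ lies in $H^1\cap L^3$ near $0$ when $N>6$, such a solution is automatically in $H^1_0\cap L^3\setminus L^\infty$.

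I would write $w=W_{\mathrm{sing}}\chi+\Psi+\phi$. Here $W_{\mathrm{sing}}\chi$ is the singular profile cut off near the origin, $\Psi$ is a smooth outer profile built from the thin-direction mode $\cos(\pi s/2)$ and the data $w=1$ on the boundary, and $\phi$ is a correction. The equation for $\phi$ is solved by a contraction in weighted spaces. The weights are $|x|^{-2+\sigma}$ near $0$, where the linearization $-\Delta-4(N-4)|x|^{-2}$ is invertible with good estimates precisely because of the Hardy gap. Away from $0$ the linearization is $-\eps^2\Delta_y-\partial_s^2-\La$, which is invertible only on the complement of the $\cos(\pi s/2)$ mode.

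\textbf{Step 3: the main obstacle, the resonant mode.} The resonant kernel produces a Lyapunov--Schmidt obstruction: the projection of the error onto $\cos(\pi s/2)$, viewed as a function of $y$. This projection must be killed by choosing $\delta=\delta_\eps$, possibly together with the amplitude of the outer profile. I expect this to be the hardest step. The thin-direction operator yields inverse bounds of size $\eps^{-2}$ on the resonant component. Tracking the error terms suggests that balancing them forces $\delta_\eps\sim\eps^2$, which matches the claimed bound $\delta_\eps\le C_m\eps^2$. The first thing I would try is a one-dimensional degree, or intermediate value, argument in $\delta$ on the scalar reduced equation. This should be done after showing that the reduced problem in $y$ is a radial ODE in $y$ with a sign change as $\delta$ crosses a value of order $\eps^2$.

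\textbf{Step 4: positivity, semistability and identification.} I would check that $w>1$, so $u>0$, from the maximum principle and the structure of the construction. Semistability follows because
\[
\lambda f_\delta'(u)\le\frac{4(N-4)+o(1)}{|x|^2}+\frac{C}{\eps^2}.
\]
Near $0$ the Hardy gap absorbs the first term. Away from $0$, the bottom of the spectrum of $-\Delta$ in thin regions is $\ge\La/\eps^2$, and the perturbation is controlled by $\delta_\eps\eps^{-2}=O(1)$ times $|\Psi|$. Combining these with a cutoff partition gives $\mathcal Q_{u}\ge0$. Finally, the Br\'ezis--V\'azquez characterization identifies $u$ with $u^*$ and gives the stated value of $\lambda^*$.
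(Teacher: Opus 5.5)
\textbf{Assessment.} Your Step~1 and the final appeal to Br\'ezis--V\'azquez are sound, and the paper starts and finishes the same way. The gap is in Steps~2--3: the singular solution cannot be obtained by perturbative gluing, because there is no small parameter where it matters. Your profile should read $W=2(N-4)\eps^2/(\La\delta|x|^2)$. In the variables $q=\delta w$, $X=y/\eps$, $z=x_N/\eps$, the region $|x|\sim\eps$ becomes the unit scale of the slab $\R^m\times(-1,1)$, and the equation there is $-\Delta q=\La(q+q^2)$ with $q=\delta=O(\eps^2)$ near $z=\pm1$. Your profile becomes $q=2(N-4)/(\La(|X|^2+z^2))$, which is of size $O(m)$ there and violates the boundary condition by $O(m)$. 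So $W_{\rm sing}\chi+\Psi$ has an $O(1)$ error in the transition region, and the contraction for $\phi$ cannot close. Shrinking the cutoff radius $\rho$ does not help: it only forces $\Psi$ to match $q\sim\rho^{-2}$, i.e.\ to be the same large nonlinear object.

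What is really needed is an exact singular semistable solution of the threshold cylinder problem with controlled tail. Producing one is the central difficulty; the paper gets such objects only through global continuation of an ordered branch plus large-dimension compactness. Likewise, the resonant obstruction is a whole function of $y$, not a scalar. On $\eps\ll|y|\ll\eps^{1/2}$ the first-mode equation is the fully nonlinear $m$-dimensional Lane--Emden equation $-\Delta_y a=c_0(\delta/\eps^2)a^2$: the resonance removes the linear term, so with $\delta\sim\eps^2$ the quadratic term is of the same order as $\Delta_y a$. This is not a solvability condition that one choice of $\delta$ and an intermediate-value argument can remove.

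\textbf{Step~4.} The semistability argument fails for a related reason.
\begin{itemize}
\item The linear part of $\lambda f_\delta'(u)$ is exactly $\La/\eps^2$, and the first Dirichlet eigenvalue of $\Omega_\eps$ exceeds this only by $O(\eps^{-1})$. So you are forced to take $C=\La$, which uses up $\int|\partial_{x_N}\zeta|^2$.
\item What remains to absorb $2\La\delta w/\eps^2$ is only the $m$-dimensional Hardy inequality in $y$, plus a geometric slack $\La|y|^2/\eps^2$ that is negligible for $|y|\ll\eps^{1/2}$.
\item That term is of the same order as $(m-2)^2/(4|y|^2)$ throughout this region. Along the model tail $q\approx D_mr^{-2}\ph$ it equals $\tfrac{3\pi}{2}(m-4)\ph(x_N/\eps)|y|^{-2}$, which already exceeds your $4(N-4)/|x|^2$ on the midplane. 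In the transition region nothing bounds it a priori.
\end{itemize}
Semistability therefore hinges on a sharp uniform bound $r^2q\le T$ with $2\La T<(m-2)^2/4$, and a cutoff partition cannot supply that.

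\textbf{How the paper avoids this.} It never builds the singular solution directly.
\begin{itemize}
\item It fixes $\vartheta=\La$ and runs the minimal branch in the boundary value $\delta$.
\item It bounds $\delta_\eps^*\le\overline B_m\eps^2$ by comparison with the global cylinder branch, which needs $S_m=o(m^2)$.
\item It proves $\sup r^2u_\delta(r,0)=o(m^2)$ uniformly along the branch by a blow-up argument as $m\to\infty$, which gives a uniform spectral gap.
\item It concludes that the monotone endpoint is unbounded, since otherwise the implicit function theorem would continue the branch past $\delta_\eps^*$.
\end{itemize}
This is why the theorem gives only some large $m_1$, rather than the Joseph--Lundgren range $N\ge17$ you obtain from the Hardy count.
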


Combining \cref{thm:main} with Dancer's theorem gives the following
consequence for Br\'ezis' problem.

\begin{corollary}[Br\'ezis Open Problem~6.1 in sufficiently large dimensions]\label{cor:brezis-op61}
Let $N_0:=m_1+1$.  For every $N\ge N_0$ there exist arbitrarily thin
smooth bounded strictly convex ellipsoids $\Omega_\eps\subset\mathbb R^N$
for which
\[
 u^*(\Omega_\eps,e^{(\cdot)})\in L^\infty(\Omega_\eps),
\]
whereas, for the smooth positive increasing strictly convex superlinear
nonlinearity $f_{\delta_\eps}$ in \eqref{eq:fj},
\[
 u^*(\Omega_\eps,f_{\delta_\eps})\notin L^\infty(\Omega_\eps).
\]
Hence the first question in Br\'ezis' Open Problem~6.1 has a negative
answer on this family of thin ellipsoids, while the second question has an
affirmative answer for the same domains.  In this sense, the two-part
alternative in Open Problem~6.1 is resolved on this family for every
$N\ge N_0$.
\end{corollary}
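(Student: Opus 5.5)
The corollary follows by combining \cref{thm:main} with Dancer's theorem on a common range of $\eps$.

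Fix $N\ge N_0=m_1+1$ and set $m=N-1\ge m_1$. By \cref{thm:main} there are $\eps_0(m)>0$ and $C_m>0$ such that for every $0<\eps\le\eps_0(m)$ one can choose $\delta_\eps\in(0,C_m\eps^2]$ with
\[
 u^*(\Omega_\eps,f_{\delta_\eps})\notin L^\infty(\Omega_\eps).
\]

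Since $N\ge N_0\ge18\ge2$, Dancer's theorem applies to the ellipsoids \eqref{eq:ellipsoid}. It gives $\eps_1(N)>0$ such that
\[
 u^*(\Omega_\eps,e^{(\cdot)})\in L^\infty(\Omega_\eps)\qquad\text{for all }0<\eps\le\eps_1(N).
\]

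Set $\bar\eps:=\min\{\eps_0(m),\eps_1(N)\}>0$. For every $\eps\in(0,\bar\eps]$, both conclusions hold on the same domain $\Omega_\eps$. These $\Omega_\eps$ are arbitrarily thin, and each one is smooth, bounded and strictly convex, as noted after \eqref{eq:ellipsoid}.

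It remains to check that $f_{\delta_\eps}$ has the stated structural properties. This follows from \eqref{eq:nonlinearity-properties} with $\delta=\delta_\eps>0$:
\begin{itemize}
\item $f_{\delta_\eps}$ is a polynomial, hence smooth;
\item $f_{\delta_\eps}(0)=1$ and $f_{\delta_\eps}'>0$ on $[0,\infty)$, so it is positive and strictly increasing;
\item $f_{\delta_\eps}''=2\delta_\eps/(1+\delta_\eps)>0$, so it is strictly convex;
\item $f_{\delta_\eps}(t)/t\to\infty$, so it is superlinear.
\end{itemize}
Hence $f_{\delta_\eps}$ satisfies \eqref{eq:brezis-structural}--\eqref{eq:brezis-superlinear}. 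Since $\delta_\eps$ depends on $\eps$, this fits the clause ``possibly depending on $\Omega$'' in Open Problem~6.1.

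The interpretation is then immediate. The boundedness of the Gelfand extremal solution on each $\Omega_\eps$, which is a bounded smooth convex set in $\R^N$ with $N\ge10$, answers the first question of Open Problem~6.1 negatively. The unboundedness of $u^*(\Omega_\eps,f_{\delta_\eps})$ on the same domains answers the second question affirmatively.

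There is no genuine obstacle once \cref{thm:main} is available. The only points to check are that the two smallness thresholds can be taken simultaneously, which the minimum $\bar\eps$ handles, and that the dimension hypotheses of both results are met, which holds because $N\ge N_0$.
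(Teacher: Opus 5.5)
Your proposal is correct and matches the paper's proof essentially step for step: fix $m=N-1\ge m_1$, take $\eps$ below both Dancer's threshold and $\eps_0(m)$ from \cref{thm:main}, and read off the structural properties of $f_{\delta_\eps}$ from \eqref{eq:nonlinearity-properties}. Your explicit check that $N\ge N_0\ge 18$ meets the dimension hypotheses ($N\ge 2$ for Dancer's theorem, $N\ge 10$ for the setting of Open Problem~6.1) is a harmless addition that the paper leaves implicit.
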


Dancer's memoir \cite[pp.~54--56]{Dancer} develops a systematic Dirichlet
theory on long and thin domains, with asymptotic analysis adapted to the
degenerating geometry.  Related questions involving simultaneous variation
of the parameter, the power nonlinearity, and the domain were studied by
Gazzola--Malchiodi \cite{GazzolaMalchiodi} for
$-\Delta u=\lambda(1+u)^p$.  Our regime is different: the nonlinearity is
allowed to vary with the thickness, and the proof follows a high-amplitude
singular branch at the first transverse threshold rather than a weakly
nonlinear perturbation.  The stable entire limits that arise in the blow-up
analysis are closely related to the Liouville and classification theory for
stable Lane--Emden type equations; see Farina \cite{Farina2007} and
Dupaigne--Farina \cite{DupaigneFarina}.

The change of variables used throughout the proof is exact.  Set
\begin{equation}\label{eq:rescaled-domain}
 \begin{gathered}
 (X,z)=(y/\eps,x_N/\eps),\qquad r=|X|,\\
 D_\eps=\{(X,z):\eps^2|X|^2+z^2<1\}
       \subset\Cyl:=\R^m\times(-1,1),
 \qquad\Gamma_\eps=\partial D_\eps.
 \end{gathered}
\end{equation}
If
\[
 q(X,z)=\delta\bigl(1+u(y,x_N)\bigr),
 \qquad
 \vartheta=\frac{\eps^2\lambda}{1+\delta},
\]
then \eqref{eq:original} with $f=f_\delta$ becomes
\begin{equation}\label{eq:qproblem-mu}
 -\Delta q=\vartheta(q+q^2)\quad\text{in }D_\eps,
 \qquad q=\delta\quad\text{on }\Gamma_\eps.
\end{equation}
Conversely,
\begin{equation}\label{eq:inversechange}
 u(y,x_N)=\delta^{-1}q(y/\eps,x_N/\eps)-1,
 \qquad
 \lambda=(1+\delta)\vartheta\eps^{-2}.
\end{equation}
We work at the threshold
\[
 \vartheta=\La=\frac{\pi^2}{4},
\]
the first Dirichlet eigenvalue of $-\partial_{zz}$ on $(-1,1)$.

For a nonnegative solution $q$ on a domain $G$, define the linearized
quadratic form
\[
 \mathfrak q_{q,G}[\zeta]
 =\int_G\bigl(|\nabla\zeta|^2-\La(1+2q)\zeta^2\bigr).
\]
\begin{definition}[Semistability and constant-boundary energy solution]\label{def:semistable-energy}
A nonnegative function $q$ on $G$ is \emph{semistable} if
\[
 \mathfrak q_{q,G}[\zeta]\ge0
 \qquad\text{for every }\zeta\in C_c^1(G).
\]
For a constant $\delta\in\R$, a \emph{constant-boundary energy solution}
on $D_\eps$ is a nonnegative function $q$ such that
\[
 q-\delta\in H^1_0(D_\eps)
\]
and $-\Delta q=\La(q+q^2)$ holds weakly in $D_\eps$.  It is called
\emph{semistable} when the preceding quadratic-form condition holds with
$G=D_\eps$.
\end{definition}

\begin{remark}[Extension of the semistability test class]\label{rem:semistability-density}
Let $q\ge0$ be semistable in the sense of \cref{def:semistable-energy} and set
\[
 V_q:=\La(1+2q)\ge0.
\]
Then the semistability inequality extends from $C_c^1(G)$ to every
$\eta\in H_0^1(G)$.  Indeed, choose $\eta_n\in C_c^\infty(G)$ such that
$\eta_n\to\eta$ strongly in $H^1(G)$ and, after passing to a subsequence,
$\eta_n\to\eta$ almost everywhere in $G$.  Semistability gives
\[
 \int_G V_q\eta_n^2\le \int_G|\nabla\eta_n|^2.
\]
Since $V_q\ge0$, Fatou's lemma and the strong $H^1$ convergence yield
\[
 \int_G V_q\eta^2
 \le \liminf_{n\to\infty}\int_G V_q\eta_n^2
 \le \lim_{n\to\infty}\int_G|\nabla\eta_n|^2
 =\int_G|\nabla\eta|^2.
\]
Hence $\mathfrak q_{q,G}[\eta]\ge0$ for every $\eta\in H_0^1(G)$.  The same
argument applies to the $H^1$-closure of any smooth admissible test class
with prescribed zero Dirichlet trace.  We use this extension below without
further comment for truncations and nonlinear test functions.
\end{remark}

The rescaled problem therefore reduces the proof of \cref{thm:main} to the
construction, for every sufficiently small $\eps$, of an unbounded
semistable constant-boundary energy solution on $D_\eps$ with a positive
boundary value of order $O_m(\eps^2)$.

\subsection{Outline of the proof}

The proof has four stages.

\smallskip
\noindent\textbf{1. Threshold cylinder and local branch.}
In \cref{sec:cylinder} we study the threshold problem
\begin{equation}\label{eq8}
 -\Delta_XQ-Q_{zz}=\La(Q+Q^2)\quad\text{in }\Cyl,
 \qquad Q=0\quad\text{on }\partial\Cyl,
 \qquad Q(X,z)\to0\quad\text{as }|X|\to\infty.
\end{equation}
A weighted Lyapunov--Schmidt reduction produces a small ordered family
$A\mapsto Q_A$, normalized by $Q_A(0,0)=A$, with $\partial_AQ_A>0$.
Semistability yields transverse monotonicity, while the first transverse
projection satisfies a Joseph--Lundgren type scalar inequality.  These
estimates provide the slow/fast Jacobi decomposition needed for
continuation.

\smallskip
\noindent\textbf{2. Large-dimensional compactness and the global cylinder branch.}
For the maximal ordered family $\cB_m$, define
\begin{equation}\label{eq:intro-Sm}
 S_m:=\sup\{r^2Q(r,0):\ Q\in\cB_m,\ r>0\}.
\end{equation}
The central cylinder estimate is
\begin{equation}\label{eq:intro-Sm-o}
 \frac{S_m}{m^2}\longrightarrow0
 \qquad(m\to\infty).
\end{equation}
It is proved in \cref{thm:Sm} by a three-case blow-up analysis.  In
particular, for all sufficiently large $m$,
\[
 \gamma_m:=\frac{(m-2)^2}{4}-2\La S_m>0,
\]
and \cref{prop:hardy-coerc} gives the branch-uniform Hardy estimate
\[
 \mathfrak q_Q[\zeta]
 \ge
 \gamma_m\int_\Cyl\frac{\zeta^2}{r^2}
 \qquad(Q\in\cB_m).
\]
This coercivity eliminates fast Jacobi obstructions and closes the ordered
continuation.  The resulting branch satisfies the uniform pointwise bound
\begin{equation}\label{eq:intro-cylinder-input}
 0<Q_A(r,z)\le S_m r^{-2}\cos(\pi z/2),
\end{equation}
and converges, as $A\to\infty$, to a singular semistable cylinder endpoint.

\smallskip
\noindent\textbf{3. Constant-boundary minimal branch and finite-domain nondegeneracy.}
Fix a sufficiently large $m$ and a sufficiently small $\eps>0$.  In
\cref{sec:finite-branch} we study the constant-boundary problem
\[
 -\Delta q=\La(q+q^2)\quad\text{in }D_\eps,
 \qquad q=\delta\quad\text{on }\Gamma_\eps,
\]
and its minimal branch
\[
 \{q_\delta:0<\delta<\delta_\eps^*\},
 \qquad
 \delta_\eps^*:=\sup\{\delta>0:\ \delta\text{ is admissible}\}.
\]
The comparison with the cylinder branch gives
\[
 0<\delta_\eps^*\le \overline B_m\eps^2,
 \qquad \overline B_m=o(m^2).
\]
Writing $u_\delta=q_\delta-\delta$, the key quantitative estimate is \cref{thm:finite-annulus}.  In fact, by a three-regime
blow-up analysis, we prove that
\[
\sup\{r^2u_\delta(r,0):
       0<\eps\le m^{-2},\ 0\le\delta\le \overline B_m\eps^2,\ 0<r<\eps^{-1}\}=o(m^2)\qquad(m\to\infty),
\]
uniformly along the bounded minimal branch.    
Combining this estimate
with Hardy's inequality and the thin-direction Poincar\'e gap gives
\cref{prop:cb-uniform-gap}:
\[
 \mathfrak q_{q_\delta,D_\eps}[\zeta]
 \ge \eps\sqrt{\La\gamma_m^{\mathrm e}}\,
       \|\zeta\|_{L^2(D_\eps)}^2,
 \qquad 0<\delta<\delta_\eps^*,
\]
where $\gamma_m^{\mathrm e}>0$ depends only on the fixed large dimension.
The lower bound is uniform in $\delta$ and is precisely the nondegeneracy
input needed in Section~4.

\smallskip
\noindent\textbf{4. Singular endpoint and extremal identification.}
In \cref{sec:finite-limit} we pass to the monotone endpoint
\[
 q_\eps:=\lim_{\delta\uparrow\delta_\eps^*}q_\delta.
\]
The equation and semistability give uniform $H^1\cap L^3$ control, so
$q_\eps$ is an exact semistable constant-boundary energy solution with
\[
 q_\eps|_{\Gamma_\eps}=\delta_\eps^*,
 \qquad 0<\delta_\eps^*\le \overline B_m\eps^2.
\]
If $q_\eps$ were bounded, elliptic regularity would make it classical and
the uniform gap from \cref{prop:cb-uniform-gap} would still hold at the
endpoint.  Hence the Dirichlet linearization at
$(q_\eps,\delta_\eps^*)$ would be invertible, and the implicit function
theorem would continue the constant-boundary branch to some
$\delta>\delta_\eps^*$, contradicting the definition of
$\delta_\eps^*$.  Thus $q_\eps$ is unbounded. 

Scaling back through
\eqref{eq:inversechange} produces an unbounded semistable $H^1_0$ solution
of \eqref{eq:original}; Br\'ezis--V\'azquez \cite[Theorem~3.1]{BV} then
identifies it with the extremal solution.  This proves \cref{thm:main},
and \cref{cor:brezis-op61} follows by combining the result with Dancer's
thin-domain theorem.

Throughout, constants carrying an $m$ subscript may depend on the fixed
transverse dimension.  When $m\to\infty$, compactness arguments are
formulated for the corresponding two-variable radial equations.  We use
standard interior and boundary elliptic estimates, Harnack inequalities,
and maximum principles in their classical forms; see, for example,
\cite[Chapters~3, 6, 8 and~9]{GT}.

\section{The threshold cylinder}\label{sec:cylinder}

Throughout this section
\begin{equation}\label{eq:phi}
 \ph(z)=\cos\frac{\pi z}{2},\qquad
 \La:=\frac{\pi^2}{4},\qquad
 \int_{-1}^1\ph^2\dd z=1,
 \qquad
 B_2:=\int_{-1}^1\ph^3\dd z=\frac{8}{3\pi}.
\end{equation}
For a function $F(X,z)$ integrable in $z$ on every fibre, we use the
fibrewise orthogonal
projections
\begin{equation}\label{eq:transverse-projections}
 (P_1F)(X,z)=\left(\int_{-1}^1 F(X,t)\ph(t)\,dt\right)\ph(z),
 \qquad P^\perp F=F-P_1F.
\end{equation}
\begin{definition}[First transverse mode and fibrewise orthogonality]\label{def:first-transverse-mode}
The one-dimensional eigenspace $\operatorname{span}\{\ph\}$ is the
\emph{first transverse mode} at the threshold $\La$.  For a fibrewise
integrable function $F$, $P_1F$ is its first-mode component and
$P^\perp F$ its transverse complement, as in
\eqref{eq:transverse-projections}.  The notation $F\perp\ph$ means
\[
 P_1F=0\quad\text{for every }X,
\]
not merely orthogonality after integration in $X$.
\end{definition}

We consider positive $O(m)$-invariant, even-in-$z$ solutions of
\begin{equation}\label{eq:cylinder}
 -Q_{rr}-\frac{m-1}{r}Q_r-Q_{zz}=\La(Q+Q^2)
 \,\,\hbox{in }\Cyl,
 \,\, Q(r,\pm1)=0,
 \,\, Q(r,z)\to0\,\,\text{uniformly }r\to\infty.
\end{equation}
For such $Q$ define
\begin{equation}\label{eq:qform}
 \mathfrak q_Q[\zeta]
 :=\int_\Cyl\bigl(|\nabla_X\zeta|^2+|\zeta_z|^2-\La(1+2Q)\zeta^2\bigr).
\end{equation}
\begin{definition}[Threshold-cylinder profile and Hardy coercivity]\label{def:threshold-profile}
A \emph{threshold-cylinder profile} is a positive solution $Q$ of
\eqref{eq:cylinder} that is $O(m)$-invariant in $X$, even in $z$, regular
at the radial axis, and tends to zero uniformly as $r\to\infty$.
Semistability is understood in the sense of
\cref{def:semistable-energy}, with $G=\Cyl$; in particular
$\mathfrak q_Q=\mathfrak q_{Q,\Cyl}$.  Since $Q$ is smooth, the quadratic
form is continuous on compactly supported zero-trace $H^1$ functions,
so density of $C_c^1(\Cyl)$ gives the equivalent formulation
\[
 \mathfrak q_Q[\zeta]\ge0
\]
for every compactly supported $H^1$ test function with zero trace at
$z=\pm1$.
We say that $Q$ has a \emph{Hardy coercivity margin} $\gamma>0$ if
\[
 \mathfrak q_Q[\zeta]
 \ge\gamma\int_\Cyl\frac{\zeta^2}{r^2}
\]
for every such test function.  A family has a \emph{uniform Hardy
margin} if the same $\gamma$ works for every member of the family.
\end{definition}

\subsection{Weighted spaces and the threshold affine class}

We first fix the topology needed for continuation. Let
\[
 \mathcal A=\{(Y,z):1<|Y|<2,\ |z|<1\},
 \qquad (T_Ru)(Y,z)=u(RY,z).
\]
For $R\ge1$ put
\[
 A_R:=\{(X,z):R<|X|<2R,\ |z|<1\}.
\]
The scaled distance on $A_R$ is
\begin{equation}\label{eq:scaled-distance}
 d_R\bigl((X,z),(X',z')\bigr)
 :=\left(\frac{|X-X'|^2}{R^2}+|z-z'|^2\right)^{1/2}.
\end{equation}
\begin{definition}[Scaled annular H\"older norms]\label{def:scaled-holder}
For $R\ge1$, the \emph{scaled annular H\"older norm} measures the radial
$X$ variable at length scale $R$ while leaving the transverse variable
$z$ unscaled.  Precisely, for $0<\alpha<1$ we set
\begin{align}
 \norm{u}_{C^{0,\alpha}_{\rm sc}(A_R)}
 &:=\norm{T_Ru}_{C^{0,\alpha}(\mathcal A)} \notag\\
 &=\norm{u}_{L^\infty(A_R)}
 +\sup_{\substack{P,P'\in A_R\\P\ne P'}}
 \frac{|u(P)-u(P')|}{d_R(P,P')^\alpha}.
 \label{eq:C0alpha-sc-def}
\end{align}
Equivalently, the $X$ variable is measured at length scale $R$ whereas the
transverse variable $z$ is left unscaled.  More generally,
\begin{equation}\label{eq:Ckalpha-sc-def}
 \norm{u}_{C^{k,\alpha}_{\rm sc}(A_R)}
 :=\norm{T_Ru}_{C^{k,\alpha}(\mathcal A)},
 \qquad k\in\{0,1,2,\ldots\}.
\end{equation}
Thus, for example, the $C^{2,\alpha}_{\rm sc}$ norm controls
\[
 |u|,\quad R|\nabla_Xu|,\quad |u_z|,\quad
 R^2|D_X^2u|,\quad R|\nabla_Xu_z|,\quad |u_{zz}|,
\]
together with the corresponding scaled $\alpha$--H\"older seminorms.
For $\tau>0$ we write
\begin{equation}\label{eq:Osc-def}
 u=O_{C^{0,\alpha}_{\rm sc}}(r^{-\tau})
 \quad\Longleftrightarrow\quad
 \sup_{R\ge R_0}R^\tau
 \norm{u}_{C^{0,\alpha}_{\rm sc}(A_R)}<\infty,
\end{equation}
and analogously
\begin{equation}\label{eq:Osc2-def}
 u=O_{C^{2,\alpha}_{\rm sc}}(r^{-\tau})
 \quad\Longleftrightarrow\quad
 \sup_{R\ge R_0}R^\tau
 \norm{u}_{C^{2,\alpha}_{\rm sc}(A_R)}<\infty.
\end{equation}
\end{definition}
The choice of $R_0\ge1$ is immaterial.  In particular,
$u=O_{C^{2,\alpha}_{\rm sc}}(r^{-\tau})$ implies
$\nabla_X^j\partial_z^\ell u=O(r^{-\tau-j})$ for $j+\ell\le2$,
with the corresponding scaled H\"older control.
\begin{definition}[Weighted H\"older spaces and the Dirichlet radial-even class]\label{def:weighted-spaces}
For every nonnegative integer $k$, $0<\alpha<1$, and $\tau>0$, define
\begin{align}\label{eq:weighted-holder}
 \norm{u}_{C^{k,\alpha}_\tau(\Cyl)}
 :=&\ \norm{u}_{C^{k,\alpha}(\{r<2\}\times(-1,1))}
 +\sup_{R\ge1}R^\tau\norm{T_Ru}_{C^{k,\alpha}(\mathcal A)}.
\end{align}
Thus $C^{k,\alpha}_\tau(\Cyl)$ consists of functions decaying at least
like $r^{-\tau}$ in the scaled annular topology.  In the second term $X$
derivatives are measured in the scaled $Y$ variables while $z$ is not
rescaled.  The subscript $D$ denotes the closed subspace satisfying
zero trace at $z=\pm1$, radiality in $X$, evenness in $z$, and the usual
smooth radial-axis compatibility conditions.
\end{definition}
Let
\begin{equation}\label{eq:beta}
 \beta_\pm=\frac{m-2\pm\sqrt{m^2-20m+68}}{2},
 \qquad
 \beta_\pm(m-2-\beta_\pm)=4(m-4).
\end{equation}
For $m\ge16$ one has $2<\beta_-<(m-2)/2<\beta_+$. Fix once and for all
$0<\alpha<1$ and
\begin{equation}\label{eq:sigma-choice}
 2<\sigma<\min\{4,\beta_-\},
 \qquad D_m:=\frac{2(m-4)}{\La B_2}=\frac{3(m-4)}{\pi}.
\end{equation}
\begin{definition}[Threshold affine class]\label{def:threshold-affine}
Let $\chi_\infty$ vanish on $[0,1]$ and equal one on $[2,\infty)$.  The
\emph{threshold affine class} is
\begin{equation}\label{eq:affine-space}
 \mathscr X_\sigma
 :=\chi_\infty(r)D_mr^{-2}\ph(z)+C^{2,\alpha}_{\sigma,D}(\Cyl).
\end{equation}
Thus $Q\in\mathscr X_\sigma$ means that $Q$ has the fixed threshold tail
$D_mr^{-2}\ph$ and a Dirichlet radial-even remainder that decays like
$r^{-\sigma}$ in the scaled topology.  The affine class is independent of
the particular cutoff up to an equivalent translation of its Banach
coordinate.
\end{definition}

\subsection{Transverse monotonicity and scalar separation}

The first estimate is independent of radial monotonicity.

\begin{lemma}[Transverse monotonicity]\label{lem:transverse}
Let $Q$ be a positive semistable profile satisfying \eqref{eq:cylinder}. Then, for $0<z<1$,
\begin{equation}\label{eq:transverse-mono}
 \partial_z\!\left(\frac{Q(r,z)}{\ph(z)}\right)<0.
\end{equation}
Consequently
\begin{equation}\label{eq:phi-major}
 0<Q(r,z)\le Q(r,0)\ph(z),\qquad Q_z(r,z)<0\quad(0<z<1).
\end{equation}
\end{lemma}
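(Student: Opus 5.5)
The plan is to work with the quotient $w:=Q/\ph$ on the half-cylinder $\Cyl^+:=\{0<z<1\}$ and show that $v:=w_z$ is negative there. The sign of $v$ will come from a weighted energy argument in which semistability is the key input. Since $\ph''=-\La\ph$, a direct computation gives
\[
 -\Delta w-2\frac{\ph'}{\ph}w_z=\La w^2\ph .
\]
Differentiating in $z$ and multiplying by $\ph^2$ puts the equation for $v$ in divergence form:
\[
 -\operatorname{div}(\ph^2\nabla v)-c\,\ph^2 v=\La w^2\ph'\ph^2\le0\quad\text{in }\Cyl^+,
 \qquad c:=2\La w\ph+2\Big(\frac{\ph'}{\ph}\Big)'=2\La Q-2\La\sec^2\frac{\pi z}{2}.
\]
The inequality holds because $\ph'<0$ for $0<z<1$. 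By evenness, $v=0$ on $z=0$. Near $z=1$, Hopf's lemma and boundary regularity give that $w$ is smooth up to $z=\pm1$ and positive there, so $v$ is bounded and the weight $\ph^2$ kills all boundary terms at $z=1$.

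Next I reformulate semistability in the weight $\ph^2$. For $\zeta=\ph\eta$ with $\eta$ compactly supported in $r$, integrating by parts in $z$ and using $\ph''=-\La\ph$ gives
\[
 \mathfrak q_Q[\ph\eta]=\int_\Cyl\ph^2|\nabla\eta|^2-2\La\int_\Cyl Q\ph^2\eta^2\ge0 .
\]
This also holds for $\eta$ that are merely odd in $z$ or supported in $\Cyl^+$ with zero trace at $z=0$, by \cref{rem:semistability-density}. Take $\eta=v^+\chi_R$, where $\chi_R$ is a radial cutoff, and test the equation for $v$ against it. The bulk terms give
\[
 \int\ph^2|\nabla(v^+\chi_R)|^2-\int c\,\ph^2(v^+\chi_R)^2\le\int\ph^2(v^+)^2|\nabla\chi_R|^2 .
\]
By semistability the left side is at least
\[
 2\La\int\sec^2\!\big(\tfrac{\pi z}{2}\big)\ph^2(v^+\chi_R)^2=2\La\int(v^+\chi_R)^2 .
\]
This is a strict coercivity margin of size $2\La$ coming from the transverse potential.

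The main obstacle is the cutoff error $\int\ph^2(v^+)^2|\nabla\chi_R|^2\lesssim R^{-2}\int_{A_R}(v^+)^2$, because $v$ is not known a priori to be in $L^2$. I would first try to control it through the decay of $Q$. Uniform convergence $Q\to0$ and elliptic estimates give $Q\to0$ in $C^1$, hence $w\to0$ and $v\to0$ uniformly as $r\to\infty$. Moreover $c\le 2\La Q-2\La<0$ once $r\ge R_0$. On the exterior region $\{r>R_0\}$, $v$ is therefore a subsolution of an operator with negative zeroth-order coefficient, and the maximum principle gives $\sup_{r>R_0}v^+\le\sup_{r=R_0}v^+$. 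If needed, an exponentially decaying barrier $e^{-\kappa r}$ built from the gap $-c\ge\La$ shows that $v^+$ decays exponentially. Then $v^+\in L^2(\Cyl^+)$, the cutoff error tends to $0$, and letting $R\to\infty$ yields $2\La\int(v^+)^2\le0$, so $v\le0$.

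Finally, strict negativity follows from the strong maximum principle applied to $-\operatorname{div}(\ph^2\nabla v)-c\ph^2v\le0$. The right-hand side $\La w^2\ph'\ph^2$ is strictly negative in $\Cyl^+$, so $v\not\equiv0$ on any open set, and hence $v<0$ in the interior; this is \eqref{eq:transverse-mono}. Integrating from $0$ to $z$ gives $Q(r,z)/\ph(z)\le Q(r,0)$, which is \eqref{eq:phi-major}. Since $Q_z=\ph w_z+\ph' w$ with $w>0$, $\ph'<0$ and $w_z<0$ for $0<z<1$, we also get $Q_z<0$ there.
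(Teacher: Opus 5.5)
Your proposal is correct and is essentially the paper's proof: your test function $\ph v^+\chi_R$ is the paper's $H_+\chi_R$ with $H=\ph\,\partial_z(Q/\ph)$. Your weighted form of semistability gives the same coercivity margin $2\La$ coming from $-2b'=2\La\sec^2(\pi z/2)$, and the final strong-maximum-principle step is the same.

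The only difference is how you handle infinity. The paper tests with $(H-\varepsilon)_+$, which is compactly supported because $H\to0$ uniformly, and then lets an $O(\varepsilon)$ error vanish, so it needs no decay rate for $H$. You instead use a radial cutoff and obtain $v^+\in L^2$ from the exterior barrier $Ke^{-\kappa r}$ with $\kappa^2<\La$. That route is also valid: $v$ actually vanishes on $z=\pm1$ (because $Q_{zz}=0$ there), and in any case the weight $\ph^2$ removes the boundary terms in the energy form of the comparison. One small correction: getting $v\to0$ uniformly up to $z=\pm1$ needs $Q\to0$ in $C^2$, not just $C^1$, and boundary Schauder estimates supply this.
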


\begin{proof}
Write $Q=R\ph$ on $\Cyl_+:=\R^m\times(0,1)$ and put $b=\ph'/\ph$. Cancelling the threshold linear term gives
\begin{equation}\label{eq:R-equation}
 -\Delta_XR-R_{zz}-2bR_z=\La\ph R^2.
\end{equation}
Boundary Schauder estimates and the Hopf expansion imply that $R=Q/\ph$ extends in $C^{1,\alpha}$ to $z=1$ on bounded $X$-sets. Since $Q\to0$ uniformly at infinity, the same estimates on translated unit cylinders give $R,R_z\to0$ as $r\to\infty$.

Set $S=R_z$ and $H=\ph S$. Differentiating \eqref{eq:R-equation} and using $-\ph''=\La\ph$ gives
\begin{equation}\label{eq:H-equation}
 (L_Q-2b')H=F:=\La\ph\ph'R^2<0,
 \qquad
 L_Q=-\Delta_X-\partial_{zz}-\La(1+2Q),
 \qquad
 b'=-\La\sec^2\frac{\pi z}{2}<0.
\end{equation}
Moreover $H=0$ at $z=0$ by evenness, $H=0$ at $z=1$ because $R_z$ is bounded and $\ph(1)=0$, and $H\to0$ uniformly as $r\to\infty$.

For $\varepsilon>0$ set
\[
 w_\varepsilon:=(H-\varepsilon)_+.
\]
Since $H$ vanishes on $z=0,1$ and at spatial infinity, $w_\varepsilon$ has compact support in $\Cyl_+$ and hence is an admissible semistability test function. Define
\[
 \mathcal Q_H[\zeta]
 :=\mathfrak q_Q[\zeta]
   +\int_{\Cyl_+}(-2b')\zeta^2.
\]
Semistability and $-2b'=2\La\sec^2(\pi z/2)\ge2\La$ imply
\begin{equation}\label{eq:H-form-coercive}
 \mathcal Q_H[\zeta]\ge2\La\int_{\Cyl_+}\zeta^2
 \qquad
 \text{for every }\zeta\in H_0^1(\Cyl_+)\text{ with compact support}.
\end{equation}
Testing \eqref{eq:H-equation} by $w_\varepsilon$ and using
$H=w_\varepsilon+\varepsilon$ on $\{H>\varepsilon\}$ gives the exact identity
\begin{equation}\label{eq:H-truncation-identity}
 \mathcal Q_H[w_\varepsilon]
 =\int_{\Cyl_+}Fw_\varepsilon
 +\varepsilon\int_{\{H>\varepsilon\}}
 \Bigl(\La(1+2Q)+2b'\Bigr)w_\varepsilon.
\end{equation}
Since $Q(r,z)\to0$ uniformly as $r\to\infty$, choose $R_0>0$ such that
$Q\le\frac14$ for $r\ge R_0$. Then
\begin{equation}\label{eq:H-truncation-far-sign}
 \La(1+2Q)+2b'
 =\La\left(1+2Q-2\sec^2\frac{\pi z}{2}\right)
 \le-\frac{\La}{2}
 \qquad (r\ge R_0).
\end{equation}
Because $F<0$, \eqref{eq:H-truncation-identity}--\eqref{eq:H-truncation-far-sign} yield
\begin{equation}\label{eq:H-truncation-estimate}
 0\le2\La\int_{\Cyl_+}w_\varepsilon^2
 \le \mathcal Q_H[w_\varepsilon]
 \le C(Q,R_0)\,\varepsilon
 \int_{\{r\le R_0\}}w_\varepsilon
 \le C(Q,R_0,H)\,\varepsilon.
\end{equation}
Here the last constant is finite because $H$ is continuous on
$\{|X|\le R_0\}\times[0,1]$. Letting $\varepsilon\downarrow0$ and using
$w_\varepsilon\uparrow H_+$ gives
\[
 \int_{\Cyl_+}H_+^2=0.
\]
Thus $H\le0$. In fact $H<0$ in $\Cyl_+$: if $H(X_0,z_0)=0$ at an interior point, then $H$ has a local maximum there, so $-\Delta H(X_0,z_0)\ge0$, whereas the zeroth-order term in $L_Q-2b'$ vanishes at that point; this contradicts $F(X_0,z_0)<0$ in \eqref{eq:H-equation}. Hence $R_z=H/\ph<0$. Since $\ph'<0$ for $z>0$ and $R>0$,
\[
 Q_z=\ph'R+\ph R_z<0,
 \qquad
 R(r,z)\le R(r,0),
\]
which proves \eqref{eq:phi-major}.
\end{proof}

Define the first transverse projection
\begin{equation}\label{eq:pdef}
 p(r)=\int_{-1}^1Q(r,z)\ph(z)\dd z.
\end{equation}

\begin{lemma}[Scalar separation]\label{lem:scalar-separation}
Assume $m\ge16$. Every positive semistable profile satisfies
\begin{equation}\label{eq:p-separation}
 0<p(r)<\frac{D_m}{r^2}\qquad(r>0).
\end{equation}
\end{lemma}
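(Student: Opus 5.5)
Project the equation onto $\ph$ and compare the resulting scalar inequality with the explicit singular supersolution $D_mr^{-2}$. Multiply \eqref{eq:cylinder} by $\ph$ and integrate in $z\in(-1,1)$. Using $Q(r,\pm1)=0$, $\ph(\pm1)=0$ and $-\ph''=\La\ph$, integrating by parts twice cancels the linear term:
\[
 -p''-\frac{m-1}{r}p'=\La\int_{-1}^1Q^2\ph\dd z .
\]
Positivity $p>0$ is immediate from $Q>0$ and $\ph>0$. For the upper bound, \cref{lem:transverse} gives $Q\le Q(r,0)\ph$, but I need a lower bound on the right side in terms of $p$. Cauchy--Schwarz with weight $\ph$ gives
\[
 p^2=\Bigl(\int Q\ph\Bigr)^2\le\int Q^2\ph\int\ph=\frac4\pi\int Q^2\ph .
\]
This constant does not match $B_2$, which is natural: $D_m$ corresponds to the profile $Q\approx p\ph$, where $\int Q^2\ph=B_2p^2$. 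So I instead expect to use $R=Q/\ph$ decreasing in $|z|$ (from \cref{lem:transverse}) to get $\int Q^2\ph\ge B_2p^2$, i.e.
\[
 \int R^2\ph^3\ge\Bigl(\int R\ph^2\Bigr)^2\frac{\int\ph^3}{(\int\ph^2)^2}.
\]
This is a Chebyshev-type rearrangement inequality. With the probability measure $\ph^2dz$ it reads $E[R^2\ph]\ge B_2E[R]^2$. Since $R$ and $\ph$ are both decreasing in $|z|$, Chebyshev gives $E[R\cdot R\ph]\ge E[R]E[R\ph]\ge E[R]E[R]E[\ph]=B_2E[R]^2$. So
\[
 -\Delta_r p\ge \La B_2p^2 .
\]

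Next compare with $w=D_mr^{-2}$. A direct computation gives $-\Delta_r w=2(m-4)D_mr^{-4}=\La B_2w^2$ by the choice of $D_m$, so $w$ is an exact singular solution of the scalar equation. This is the Joseph--Lundgren setting. The linearization at $w$ is $-\Delta_r-2\La B_2D_mr^{-2}=-\Delta_r-4(m-4)r^{-2}$. Since $4(m-4)<(m-2)^2/4$ exactly when $m^2-20m+68>0$, which holds for $m\ge16$, the roots $\beta_\pm$ of \eqref{eq:beta} are real.

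To prove $p<w$, I would argue as follows. The function $p$ is regular at $0$, so $p<w$ near $r=0$. At infinity, $p\to0$. Decay estimates (the far-field linear problem for $Q$ is coercive: $-\Delta_X-\partial_{zz}-\La$ acting on the first mode is $-\Delta_X$, while the nonlinearity $p^2$ is small) should give $p=O(r^{-2})$. I still need the strict inequality there, and I would get it from the semistability-derived scalar inequality rather than from decay alone. Suppose $p\ge w$ somewhere, and take a first crossing. The supersolution property $-\Delta p\ge\La B_2p^2$ alone is insufficient; the main obstacle is to use semistability in the first mode. I would test $\mathfrak q_Q$ with $\zeta=\eta(r)\ph(z)$, which gives
\[
 \int|\eta'|^2r^{m-1}\ge2\La\int\Bigl(\int Q\ph^2\cdot\ph\Bigr)\eta^2 .
\]
Then $\int Q\ph^3\ge B_2p$, again by Chebyshev with $R$ and $\ph$ decreasing. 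So $-\Delta_r-2\La B_2p$ is nonnegative on radial functions. By Hardy's inequality this form must fail if $2\La B_2p\ge 4(m-4)r^{-2}\cdot(1+\text{margin})$ on a long range of $r$ relative to $(m-2)^2/4$, but this only gives $p\le\frac{(m-2)^2}{8\La B_2}r^{-2}$, which is weaker than $D_m$.

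The sharp bound therefore needs a different argument, and I would use an intersection argument. Let $v=w-p$. It satisfies $-\Delta v\le\La B_2(w+p)v$, with $v>0$ near $0$. On the region $\{v<0\}$ we have $p>w$, so the potential satisfies $\La B_2(w+p)\le2\La B_2p$. Testing semistability (in the first-mode reduced form above) with $\eta=v_-$, truncated and with compact support, gives
\[
 \int|\nabla v_-|^2\le\int\La B_2(w+p)v_-^2<\int2\La B_2p\,v_-^2\le\int|\nabla v_-|^2,
\]
a contradiction unless $v_-\equiv0$. The compact support of $v_-$ needs control at infinity, which requires $\limsup r^2p<D_m$. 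I would handle this by running the same argument with $w_t=tD_mr^{-2}$ for $t>1$, where $v_-$ is compactly supported once $r^2p$ is bounded, and then letting $t\downarrow1$. Strictness then follows from the strong maximum principle applied to $v\ge0$, with $v\not\equiv0$ near the origin.
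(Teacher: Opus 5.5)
Your reduction to the scalar inequality is correct. Projecting onto $\ph$ and applying your double Chebyshev argument (the paper uses Chebyshev once plus Cauchy--Schwarz) gives $-p''-\frac{m-1}{r}p'\ge c_0p^2$ with $c_0=\La B_2$, and $W=D_mr^{-2}$ (your $w$) is an exact singular solution.

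\textbf{The gap is in the final comparison.} With $v=W-p$ you have $-\Delta v\le c_0(W+p)v$, so $v$ is a \emph{sub}solution. Testing this against $v_-\ge0$, with $\nabla v\cdot\nabla v_-=-|\nabla v_-|^2$ and $vv_-=-v_-^2$, gives
\[
 \int|\nabla v_-|^2\ \ge\ c_0\int(W+p)v_-^2 ,
\]
which is the reverse of the first inequality in your chain. This is not just a typo. On $\{p>W\}$, semistability of the first-mode form also only bounds $\int|\nabla v_-|^2$ from below, so combining the two yields nothing. Indeed $g=p-W>0$ there satisfies $-\Delta g\ge c_0(W+p)g\ge 4(m-4)r^{-2}g$, which is perfectly consistent with Hardy's inequality. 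No contradiction can come from the region outside the contact.

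The same sign problem breaks your strictness step. Knowing $v\ge0$ and $-\Delta v\le c_0(W+p)v$ does not let you invoke the strong maximum principle. Separately, the bound $r^2p=O(1)$ at infinity, which you need for compact support of $v_-$, is not available at this stage. In the paper it is derived from this very lemma (Step~1 of \cref{lem:universal-tail}).

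\textbf{The missing idea is to argue inside the first contact radius.} Since $r^2p\to0$ as $r\downarrow0$, if the bound fails there is a first $R>0$ with $p<W$ on $(0,R)$ and $p(R)=W(R)$. On $B_R$ set $h=W-p>0$. Now testing the subsolution inequality with $h$ itself preserves its direction, and $W+p<2W$ gives
\[
 -\Delta h\le c_0(W+p)h<4(m-4)r^{-2}h .
\]
Test with $\chi_\rho^2h$, where $\chi_\rho$ vanishes near the origin. Since $h=O(r^{-2})$, the cutoff error is $O(\rho^{m-6})$, and letting $\rho\downarrow0$ yields
\[
 \int_{B_R}|\nabla h|^2<4(m-4)\int_{B_R}\frac{h^2}{r^2} .
\]
This contradicts the sharp Hardy inequality, because $\frac{(m-2)^2}{4}-4(m-4)=\frac{m^2-20m+68}{4}>0$ for $m\ge16$.

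This argument gives the strict inequality directly and needs no information at infinity. It uses semistability only through the transverse monotonicity that feeds your Chebyshev step.
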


\begin{proof}
Let $d\mu=\ph^2\dd z$, a probability measure on $[-1,1]$, and $R=Q/\ph$. By \cref{lem:transverse}, $R^2$ and $\ph$ are even and nonincreasing in $|z|$. Chebyshev's integral inequality and Cauchy--Schwarz give
\begin{align}\label{eq:cheb}
 \int_{-1}^1Q^2\ph\dd z
 =\int R^2\ph\dd\mu
 \ge \left(\int R^2\dd\mu\right)\left(\int\ph\dd\mu\right)
 \ge B_2\cdot p(r)^2.
\end{align}
Projecting \eqref{eq:cylinder} onto $\ph$ cancels the linear transverse term and yields
\begin{equation}\label{eq:p-super}
 -p''-\frac{m-1}{r}p'=\La\int Q^2\ph\dd z\ge c_0p^2,
 \qquad c_0:=\La B_2.
\end{equation}
The singular function $W(r)=D_mr^{-2}$ satisfies $-\Delta W=c_0W^2$ in $\R^m\setminus\{0\}$. Since $p$ is regular at the axis, $r^2p(r)\to0$ as $r\downarrow0$. If \eqref{eq:p-separation} failed, there would be a first $R>0$ such that $p<W$ on $(0,R)$ and $p(R)=W(R)$. Put $h=W-p$. Then
\begin{equation}\label{eq:hhardy}
 -\Delta h\le c_0(W+p)h<\frac{4(m-4)}{r^2}h
 \quad\hbox{in }B_R\setminus\{0\},
 \qquad h=0\quad\hbox{on }\partial B_R.
\end{equation}
Let $\chi_\rho$ vanish in $B_\rho$, equal one outside $B_{2\rho}$, and satisfy $|\nabla\chi_\rho|\le C/\rho$. Since $h=O(r^{-2})$, $\nabla h=O(r^{-3})$, the cutoff error is $O(\rho^{m-6})\to0$. Testing \eqref{eq:hhardy} by $\chi_\rho^2h$ and sending $\rho\downarrow0$ gives
\[
 \int_{B_R}|\nabla h|^2<4(m-4)\int_{B_R}\frac{h^2}{r^2}.
\]
But the sharp Hardy inequality gives
\[
 \int_{B_R}|\nabla h|^2\ge\frac{(m-2)^2}{4}\int_{B_R}\frac{h^2}{r^2},
\]
and
\[
 \frac{(m-2)^2}{4}-4(m-4)=\frac{m^2-20m+68}{4}>0
\]
for $m\ge16$. This contradiction proves the upper bound; positivity is immediate.
\end{proof}

\subsection{Universal tail estimates}

\begin{lemma}[Universal tail estimates]\label{lem:universal-tail}
Let $m\ge16$ and let $Q$ be a positive semistable profile satisfying
\eqref{eq:cylinder}.  Write
\begin{equation}\label{eq:pw-decomp-rig}
 Q(r,z)=p(r)\ph(z)+w(r,z),\qquad
 \int_{-1}^1w(r,z)\ph(z)\dd z=0.
\end{equation}
Then
\begin{equation}\label{eq:w-sc-r4}
 w=O_{C^{2,\alpha}_{\rm sc}}(r^{-4}).
\end{equation}
In addition,
\begin{equation}\label{eq:w-minimal-tail}
 \sup_{|z|<1}\frac{|w(r,z)|}{\ph(z)}\le Cr^{-4},
 \qquad
 \sup_{|z|<1}\frac{|w_r(r,z)|}{\ph(z)}\le Cr^{-5}
 \qquad(r\ge R_0),
\end{equation}
where the quotients are understood by their continuous boundary extensions.
Moreover, for every $2<\tau<\min\{4,\beta_-\}$,
\begin{equation}\label{eq:tailQ-sc}
 Q-D_mr^{-2}\ph=O_{C^{2,\alpha}_{\rm sc}}(r^{-\tau}),
\end{equation}
and
\begin{equation}\label{eq:tailQr}
 \sup_{|z|<1}
 \frac{|Q_r(r,z)+2D_mr^{-3}\ph(z)|}{\ph(z)}
 \le C_\tau r^{-\tau-1}.
\end{equation}
In particular,
\begin{equation}\label{eq:tailQ}
 \sup_{|z|<1}
 \frac{|Q(r,z)-D_mr^{-2}\ph(z)|}{\ph(z)}
 \le C_\tau r^{-\tau}.
\end{equation}
The preliminary $r^{-2}$ bounds are uniform on a family with a uniform
$L^\infty(\Cyl)$ bound.  The sharper asymptotic estimates
\eqref{eq:tailQ-sc}--\eqref{eq:tailQ} are uniform for every such family
which is bounded below by one fixed positive profile $Q_0$.  In particular
they are uniform on every ordered branch segment
\begin{equation}\label{eq:tail-compact-branch-segment}
 \{Q_A:A_0\le A\le A_1\},\qquad 0<A_0<A_1<\infty.
\end{equation}
\end{lemma}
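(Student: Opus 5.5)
We start from the preliminary bound $0\le Q\le Q(r,0)\ph$ given by \cref{lem:transverse} and from $p<D_mr^{-2}$ given by \cref{lem:scalar-separation}. The first task is a pointwise bound $Q(r,0)\le Cr^{-2}$.

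\textbf{Step 1: a pointwise $r^{-2}$ bound.} On the annulus $A_R$, the function $Q$ solves a linear equation $-\Delta Q-\La Q=\La Q^2$. Since $Q\le Q(r,0)\ph$ and $Q$ is nonnegative, we apply a boundary Harnack inequality on unit cylinders (in the $X$ variable, at unit scale) to compare $Q(r,0)$ with $\int Q\ph\,dz=p$ over nearby fibres. Here the Hopf profile $\ph$ is the common boundary behaviour. This gives $Q(r,0)\le C\sup_{|r'-r|<1}p(r')\le Cr^{-2}$. The constant $C$ depends only on an $L^\infty$ bound of $Q$, which is the source of the uniformity stated for the preliminary bounds. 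Rescaled Schauder estimates on $T_RQ$ then upgrade this to $Q=O_{C^{2,\alpha}_{\rm sc}}(r^{-2})$.

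\textbf{Step 2: the transverse part $w$.} Applying $P^\perp$ to \eqref{eq:cylinder} gives
\[
-\Delta_X w-w_{zz}-\La w=\La P^\perp(Q^2),
\]
with right-hand side $O_{C^{0,\alpha}_{\rm sc}}(r^{-4})$. The fibrewise operator $-\partial_{zz}-\La$ on $\ph^\perp$ has spectral gap $9\La-\La=8\La>0$. After scaling, the term $-\Delta_X$ carries a factor $R^{-2}$ and acts as a small perturbation. We therefore invert the operator fibrewise on each $A_R$, using a maximum principle / energy argument with a barrier of the form $Cr^{-4}\ph_3$-like supersolutions. Alternatively, we use weighted $L^2$ estimates on $A_R$ followed by scaled Schauder estimates. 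The decay of $w$ at infinity rules out growing homogeneous pieces, since homogeneous solutions in $\ph^\perp$ decay exponentially. This yields \eqref{eq:w-sc-r4}.

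The quotient bounds \eqref{eq:w-minimal-tail} follow from $w=0$ at $z=\pm1$ together with the bound on $w_z$ (respectively $w_{rz}$), using the mean value theorem in $z$ near the boundary.

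\textbf{Step 3: the scalar ODE for $p$.} Projecting the equation gives
\[
-p''-\tfrac{m-1}{r}p'=\La\int Q^2\ph=c_0p^2+g,
\]
with $g=\La\int(2p\ph w+w^2)\ph=O(r^{-6})$. Write $p=D_mr^{-2}(1+\eta)$. The linearization of $-\Delta p=c_0p^2$ at $W=D_mr^{-2}$ is the Euler operator $-\Delta-4(m-4)r^{-2}$, whose indicial roots for $r^{-2}\eta$ are $-\beta_\pm$ relative to $r^{-2}$. In $t=\log r$ the equation becomes a second-order ODE for $\eta$ with characteristic exponents $-(\beta_\pm-2)$, a quadratic nonlinearity, and forcing $O(e^{-2t})$.

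The \textbf{main obstacle} is to show $\eta\to0$ at all, that is, to exclude $p$ decaying faster than $r^{-2}$. From Step 1 we only know that $\eta$ is bounded and $>-1$, with $p<W$. The plan here is a phase-plane (Emden--Fowler) analysis in $t$: the autonomous part has equilibria $\eta=0$ and $\eta=-1$. The equilibrium $\eta=-1$, i.e.\ $p=o(r^{-2})$, would force $p\sim cr^{2-m}$, since $-\Delta p\ge0$ with decay gives a Newtonian tail. Such a $p$ would make $Q$ decay like $r^{2-m}$ and hence be a fast-decaying positive threshold solution.

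To exclude this, we test the projected equation against $r^{2-m}$, or use a Pohozaev identity. The positive right-hand side, which is of integrable order, gives a positive flux constant. We then combine this with semistability (the form $\mathfrak q_Q$) tested on $\ph$ times a radial cutoff. Semistability is incompatible with $Q\ge0$ having $\int Q^2\ph$ mass while the first mode sees the zero potential at threshold: the test $\zeta=\ph\psi(X)$ gives $\int|\nabla\psi|^2\ge2\La\int Q\ph^2\psi^2$. I expect the comparison to close using $\beta_-<(m-2)/2$ and the Hardy constant. The uniformity statement requires a lower barrier $Q\ge Q_0$, which is exactly what excludes this fast-decay alternative uniformly.

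Once $\eta\to0$, the linearized ODE with both roots having negative real part (guaranteed for $m\ge16$) plus $O(r^{-4})$ forcing gives $|\eta|\le Cr^{-(\tau-2)}$ for every $\tau<\min\{4,\beta_-\}$, by variation of constants, and similarly for $\eta'$. Combining this with Step 2 and rescaled Schauder estimates gives \eqref{eq:tailQ-sc}. Finally \eqref{eq:tailQr} and \eqref{eq:tailQ} follow from the quotient argument of Step 2, applied to $p'\ph+w_r$.
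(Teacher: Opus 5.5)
Your architecture matches the paper's: Harnack together with \cref{lem:transverse,lem:scalar-separation} for $Q\le Cr^{-2}\ph$, a massive transverse inverse for $w$, the projected ODE for $Y=c_0r^2p$ in $t=\log r$, and variation of constants for the final rate. However, the step you yourself single out as the main obstacle is left open, and the mechanism you sketch for it does not close. Testing semistability with $\zeta=\ph\psi(X)$ only yields $\int_{\R^m}|\nabla\psi|^2\ge\int_{\R^m}V\psi^2$ with $V=2\La\int_{-1}^1Q\ph^2\,dz=O(r^{2-m})$. Such a potential can perfectly well satisfy this inequality, for instance whenever $\|V\|_{L^{m/2}}$ lies below the Sobolev constant. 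Testing the projected equation against $r^{2-m}$ just reproduces the Newtonian representation of $p(0)$, with both sides positive. The Hardy constant and the inequality $\beta_-<(m-2)/2$ are tied to potentials of size $r^{-2}$, i.e.\ to the linearization about $D_mr^{-2}$, so they say nothing about a profile whose potential is $O(r^{2-m})$. Even before this, ``phase-plane analysis'' must be made concrete. Convergence of $Y$ (no oscillation) is naturally obtained from a Lyapunov functional such as $\mathcal E=\tfrac12(Y')^2-(m-4)Y^2+\tfrac13Y^3$. The damping $(m-6)Y'$ makes it nonincreasing up to an integrable error, which gives $Y'\in L^2$ and then $Y\to0$ or $Y\to2(m-4)$.

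The missing idea is to test with the \emph{whole} profile once it has finite energy. If $Y\to0$, the exponential dichotomy at that equilibrium gives $Y=O(e^{-\gamma t})$ for every $\gamma<2$, hence $Q\le Cr^{-a}\ph$ with $a>2$. The Newton formula for $p$, together with $|w|\le Cr^{-2a}\ph$, then pushes the exponent past $m/2$, so $Q\in H^1_0(\Cyl)\cap L^3(\Cyl)$. Testing the equation and semistability with cutoffs of $Q$ gives $\int_{\Cyl}|\nabla Q|^2=\La\int_{\Cyl}(Q^2+Q^3)$ and $\int_{\Cyl}|\nabla Q|^2\ge\La\int_{\Cyl}(Q^2+2Q^3)$, hence $\La\int_{\Cyl}Q^3\le0$, a contradiction. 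The Pohozaev option you mention would also work at this point, even without semistability. The multiplier $X\cdot\nabla_XQ$ combined with the energy identity gives $\int|\nabla_XQ|^2=\frac m6\La\int Q^3$. The fibrewise Poincar\'e inequality $\int Q_z^2\ge\La\int Q^2$ gives $\int|\nabla_XQ|^2\le\La\int Q^3$, which is impossible for $m>6$. That computation has to be carried out on its own, not mixed with the first-mode test.

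Two further points need repair.
\begin{enumerate}
\item ``Rescaled Schauder estimates on $T_RQ$'' cannot give $O_{C^{2,\alpha}_{\rm sc}}$ bounds, because after $X=RY$ the operator $-R^{-2}\Delta_Y-\partial_{zz}-\La$ degenerates in $Y$. The extra factor $r^{-1}$ per radial derivative must come from $p'=-r^{1-m}\int_0^rs^{m-1}g\,ds$. It must also come from reapplying the massive inverse to the $r$-differentiated transverse equations, which carry the commutator potentials $j(m-1)r^{-2}$.
\item The uniformity you attribute to the lower barrier $Q_0$ comes concretely from the sandwich $0\le D_mr^{-2}-p\le D_mr^{-2}-p_0=O(r^{-\tau})$, followed by a scaled ODE estimate for $p-D_mr^{-2}$.
\end{enumerate}
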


\begin{proof}
Put $H_z=-\partial_{zz}-\La$.  On the even Dirichlet subspace
$\ph^\perp$ the eigenvalues of $H_z$ are
\[
 \mu_k=((2k+1)^2-1)\La,\qquad k\ge1,
\]
and hence
\begin{equation}\label{eq:Hz-gap-rig}
 H_z\ge8\La\qquad\hbox{on }\ph^\perp.
\end{equation}
Projection of \eqref{eq:cylinder} onto $\ph^\perp$ gives
\begin{equation}\label{eq:w-eq-rig}
 \mathscr L_\perp w:=
 \left(-\partial_{rr}-\frac{m-1}{r}\partial_r+H_z\right)w
 =F:=\La P^\perp(Q^2).
\end{equation}

\medskip\noindent
\emph{Step 1: the pointwise $r^{-2}\ph$ bound.}
Because $Q\to0$ uniformly as $r\to\infty$, choose $R_0$ so that
$0<Q\le1$ on $\{r\ge R_0-1\}$.  There
\[
 -\Delta_{X,z}Q=a(X,z)Q,\qquad
 a(X,z)=\La(1+Q(X,z)),\qquad 0<a\le2\La.
\]
Fix $X_0$ with $|X_0|=r\ge R_0$.  The interior Harnack inequality on
$B_1(X_0)\times(-\frac12,\frac12)$ gives a constant $C_H$, independent
of $X_0$, such that
\[
 Q(X_0,z)\ge C_H^{-1}Q(X_0,0),\qquad |z|\le\frac14.
\]
Hence, with $c_\ph:=\int_{-1/4}^{1/4}\ph(z)\dd z>0$,
\[
 p(r)\ge C_H^{-1}c_\ph Q(r,0).
\]
Using \cref{lem:scalar-separation,lem:transverse},
\begin{equation}\label{eq:Q-pointwise-rig}
 0<Q(r,z)\le Q(r,0)\ph(z)
 \le C p(r)\ph(z)\le Cr^{-2}\ph(z),\qquad r\ge R_0.
\end{equation}
Boundary Schauder estimates on translated half-cylinders and
$\ph(z)\asymp1-|z|$ near $z=\pm1$ give
\begin{equation}\label{eq:Q-quotient-prelim}
 \sup_{r\ge R_0}r^2
 \norm{Q(r,\cdot)/\ph}_{C^{1,\alpha}([-1,1])}<\infty.
\end{equation}

\begin{remark}\label{rem:tail-uniformity-correct}
If $\mathcal F$ is a family with
\[
 \sup_{Q\in\mathcal F}\|Q\|_{L^\infty(\Cyl)}\le M,
\]
then on every translated unit cylinder
$0<\La(1+Q)\le\La(1+M)$, so the Harnack constant above depends only on
$m$ and $M$.  Hence \cref{lem:scalar-separation,lem:transverse} gives,
uniformly for $Q\in\mathcal F$,
\[
 Q(r,z)\le C_M p(r)\ph(z)\le C_M r^{-2}\ph(z)
 \qquad (r\ge2).
\]
The equation then gives uniform local and boundary $C^{2,\alpha}$ bounds
on the translated tail charts.  These observations make Steps~1--3 below
uniform, but they do not by themselves make the radius at which the first
mode enters a fixed neighbourhood of its nonzero Euler equilibrium uniform.
Indeed the small branch \eqref{eq:LS-ansatz}, at radii $r\simeq s^{-1}$,
shows that such a conclusion would be false.

The additional order hypothesis supplies the missing entrance information
without an assumption of compactness in a weighted topology.  Suppose
$Q\ge Q_0$ for every $Q\in\mathcal F$, where $Q_0$ is one fixed positive
semistable profile.  Put $W(r)=D_mr^{-2}$ and let $p_0$ be the first
projection of $Q_0$.  Scalar separation and order give the exact bounds
\begin{equation}\label{eq:ordered-tail-sandwich}
 0\le W(r)-p(r)\le W(r)-p_0(r).
\end{equation}
Fix $2<\tau<\min\{4,\beta_-\}$.  The single-profile conclusion of
Step~4 below, applied only to the fixed $Q_0$, gives
$W-p_0=O(r^{-\tau})$.  This use of Step~4 is not recursive: that step
uses no family-uniform entrance assertion.  Consequently
$|p-W|\le C_{Q_0,\tau}r^{-\tau}$ uniformly over $\mathcal F$.

Steps~1--3, whose constants depend only on the height bound $M$, give
uniform estimates $p=O_{C^{2,\alpha}_{\rm sc}}(r^{-2})$,
$w=O_{C^{2,\alpha}_{\rm sc}}(r^{-4})$, and
\[
 -p''-\frac{m-1}{r}p'=c_0p^2+\mathcal E_Q,
 \qquad
 \mathcal E_Q=O_{C^{0,\alpha}_{\rm sc}}(r^{-6}).
\]
For $e=p-W$ this implies
\[
 -e''-\frac{m-1}{r}e'-c_0(p+W)e=\mathcal E_Q.
\]
On a fixed enlarged annulus in the variable $s=r/R$, the function
$e_R(s)=R^\tau e(Rs)$ is uniformly bounded, its potential
$c_0R^2(p(Rs)+W(Rs))$ is uniformly $C^{0,\alpha}$, and its source is
$R^{\tau+2}\mathcal E_Q(Rs)=O(R^{\tau-4})$.  Interior one-dimensional
Schauder estimates therefore give
\[
 \sup_{Q\in\mathcal F}\sup_{R\ge R_0}
 R^\tau\|p-W\|_{C^{2,\alpha}_{\rm sc}(R<r<2R)}<\infty.
\]
Combining this with the uniform transverse estimates proves
\eqref{eq:tailQ-sc}--\eqref{eq:tailQ}, including the radial derivative
and the quotient by $\ph$.  For an ordered segment use $Q_0=Q_{A_0}$
and $M=\|Q_{A_1}\|_{L^\infty(\Cyl)}$, which is finite for the fixed
upper profile; no radial monotonicity is needed for this use of order.  The same argument applies to $A_0\le A<A^*<\infty$
under a uniform height bound, without assuming that this half-open
segment is already compact in the weighted space.
\end{remark}

\medskip\noindent
\emph{Step 2: first-mode radial derivatives.}
Set
\[
 g(r):=\La\int_{-1}^1Q(r,z)^2\ph(z)\dd z.
\]
The projected equation is
\begin{equation}\label{eq:p-radial-rig}
 -p''-\frac{m-1}{r}p'=g(r),\qquad p'(0)=0.
\end{equation}
By \eqref{eq:Q-pointwise-rig}, $0\le g(r)\le Cr^{-4}$.  Therefore
\begin{equation}\label{eq:pprime-formula-rig}
 p'(r)=-r^{1-m}\int_0^r s^{m-1}g(s)\dd s,
\end{equation}
and, because $m>4$,
\begin{equation}\label{eq:p12-rig}
 p'(r)=O(r^{-3}),\qquad
 p''(r)=-\frac{m-1}{r}p'(r)-g(r)=O(r^{-4}).
\end{equation}

\medskip\noindent
\emph{Step 3: a unified massive transverse inverse and the full
$C^{2,\alpha}_{\rm sc}$ estimate.}
Recall
\[
 \mathscr L_\perp
 =-\partial_{rr}-\frac{m-1}{r}\partial_r+H_z
 \quad\hbox{on }\ph^\perp,
 \qquad H_z\ge8\La.
\]
We first isolate the transverse inverse estimate used throughout the rest
of the proof.

\begin{definition}[Regular--decaying realization]
\label{def:regular-decaying-realization}
Let $\nu=\nu(r)\ge0$ be radial and locally $C^{0,\alpha}$ on
$[0,\infty)$.  The \emph{regular--decaying realization} of
\[
 \mathscr L_{\perp,\nu}:=\mathscr L_\perp+\nu(r)
\]
on the even transverse space $\ph^\perp$ is the restriction of
$\mathscr L_{\perp,\nu}$ to functions $u=u(r,z)$ such that
\[
 u(r,\pm1)=0,\qquad P_1u(r,\cdot)=0,
\]
$u$ extends across $r=0$ as an $O(m)$-invariant
$C^{2,\alpha}_{\rm loc}$ function of $(X,z)$, and
\[
 \lim_{r\to\infty}
 \|u\|_{C^{1,\alpha}(\mathcal U_{r,1/2})}=0.
\]
\end{definition}

The homogeneous kernel of the realization in
\cref{def:regular-decaying-realization} is trivial.  Indeed, expand
\[
 u(r,z)=\sum_{k\ge1}u_k(r)\phi_k(z),
 \qquad H_z\phi_k=\mu_k\phi_k,\qquad \mu_k\ge8\La.
\]
Each coefficient solves
\[
 -u_k''-\frac{m-1}{r}u_k'+(\mu_k+\nu(r))u_k=0,
 \qquad u_k'(0)=0,
\]
or equivalently
\[
 (r^{m-1}u_k')'=(\mu_k+\nu(r))r^{m-1}u_k.
\]
If $u_k(0)>0$, then $u_k'>0$ for as long as $u_k>0$, so $u_k$
cannot converge to zero at infinity; if $u_k(0)<0$, the same argument
applied to $-u_k$ gives the same contradiction.  Hence $u_k(0)=0$, and
then $u_k'(0)=0$ and ODE uniqueness imply $u_k\equiv0$.  Thus
$u\equiv0$.

\begin{claim}[Massive transverse tail estimate]\label{clm:massive-transverse-inverse}
Fix $q>0$ and $R_*\ge4$.  For a transverse function $F\perp\ph$ set
\begin{align*}
 \|F\|_{Y_q}
 :=&\ \|F\|_{L^2(\{r<2R_*\}\times(-1,1))}\\
 &+\sup_{r\ge R_*}(1+r)^q
 \left(
  \|F(r,\cdot)\|_{L^2_z}
  +\|F\|_{C^{0,\alpha}(\mathcal U_r)}
 \right),
\end{align*}
where
\[
 \mathcal U_r=(r-1,r+1)\times(-1,1),\qquad
 \mathcal U_{r,1/2}=(r-\tfrac12,r+\tfrac12)\times(-1,1),
\]
and set
\[
 \|u\|_{X_q}
 :=\sup_{r\ge R_*}(1+r)^q
 \|u\|_{C^{2,\alpha}(\mathcal U_{r,1/2})}.
\]
Let $\nu=\nu(r)\ge0$ be radial, locally $C^{0,\alpha}$ on
$[0,\infty)$, and satisfy
\[
 \sup_{r\ge R_*}(1+r)^2
 \|\nu\|_{C^{0,\alpha}(r-1,r+1)}<\infty.
\]
Let $F=F(r,z)$ be $O(m)$-invariant in $X$, even in $z$, satisfy
$P_1F=0$, and extend across $r=0$ as an $O(m)$-invariant
$C^{0,\alpha}_{\rm loc}$ function of $(X,z)$ up to the flat boundary
$z=\pm1$.  If $\|F\|_{Y_q}<\infty$, then the regular--decaying
realization in \cref{def:regular-decaying-realization} has a unique
solution $u$, and
\begin{equation}\label{eq:massive-inverse-XY}
 \|u\|_{X_q}
 \le C_{q,\mathcal V}\|F\|_{Y_q}.
\end{equation}
The constant is uniform when $\nu$ ranges over any family
$\mathcal V$ with a common local $C^{0,\alpha}$ bound on compact radial
intervals and a common bound in the displayed tail seminorm.
\end{claim}

\begin{proof}
Let $\{\phi_k\}_{k\ge1}$ be an orthonormal basis of even Dirichlet
eigenfunctions of $H_z$ on $\ph^\perp$, with
\[
 H_z\phi_k=\mu_k\phi_k,\qquad \mu_k\ge\mu_*:=8\La.
\]
For $\nu\equiv0$, the scalar equation for the $k$-th transverse mode is
\[
 -u_k''-\frac{m-1}{r}u_k'+\mu_ku_k=f_k.
\]
Writing $\nu_0=(m-2)/2$, its regular and decaying homogeneous solutions
are
\[
 u_{0,\mu}(r)=r^{-\nu_0}I_{\nu_0}(\sqrt\mu r),\qquad
 u_{\infty,\mu}(r)=r^{-\nu_0}K_{\nu_0}(\sqrt\mu r).
\]
Their weighted Wronskian is $-1$, and the regular--decaying solution is
\begin{equation}\label{eq:massive-green-rig}
 u(r)=u_{\infty,\mu}(r)\int_0^r u_{0,\mu}(s)f(s)s^{m-1}\dd s
      +u_{0,\mu}(r)\int_r^\infty u_{\infty,\mu}(s)f(s)s^{m-1}\dd s.
\end{equation}
The standard bounds for $I_{\nu_0}$ and $K_{\nu_0}$ imply that, for every
$q>0$, there is $C_q$, independent of $\mu\ge\mu_*$, such that
\begin{equation}\label{eq:massive-weight-rig}
 \int_0^\infty G_\mu(r,s)(1+s)^{-q}s^{m-1}\dd s
 \le C_q(1+r)^{-q}.
\end{equation}
For a general nonnegative $\nu$, the scalar homogeneous equation
\[
 -a''-\frac{m-1}{r}a'+(\mu+\nu(r))a=0
\]
is a positive-mass singular Sturm--Liouville equation.  Standard
Sturm--Liouville theory gives, up to normalization, the solution regular at
$r=0$ and a positive recessive (hence decaying) solution at infinity; see,
for example, Zettl \cite[Chs.~7--10]{Zettl}.  In the present scalar setting
these solutions may equivalently be obtained by the usual initial-value and
Volterra constructions.  Their weighted Wronskian is nonzero:
otherwise they would generate a nontrivial regular--decaying homogeneous
solution, contradicting the kernel triviality proved above.  They therefore
define a positive regular--decaying Green kernel $G_{\mu,\nu}$.  The
quadratic-form ordering and the maximum principle then give
\[
 0\le G_{\mu,\nu}(r,s)\le G_\mu(r,s).
\]
Hence \eqref{eq:massive-weight-rig} remains valid uniformly for the
operators $\mathscr L_{\perp,\nu}$ considered here.

Expanding
\[
 F(r,z)=\sum_{k\ge1}f_k(r)\phi_k(z),\qquad
 u(r,z)=\sum_{k\ge1}u_k(r)\phi_k(z),
\]
and denoting by $G_{k,\nu}(r,s)$ the regular--decaying Green kernel of
\[
 -\partial_{rr}-\frac{m-1}{r}\partial_r+\mu_k+\nu(r),
\]
we have
\[
 u_k(r)=\int_0^\infty G_{k,\nu}(r,s)f_k(s)s^{m-1}\dd s.
\]
The maximum principle gives monotonicity both in the mass and in the
nonnegative potential:
\[
 0\le G_{k,\nu}(r,s)\le G_{\mu_k,0}(r,s)
 \le G_{\mu_*,0}(r,s)=:G_*(r,s).
\]
Hence Minkowski's integral inequality followed by Parseval's identity
gives the pointwise-in-$r$ estimate
\begin{align*}
 \|u(r,\cdot)\|_{L^2_z}
 &=\left(\sum_k\left|
   \int_0^\infty G_{k,\nu}(r,s)f_k(s)s^{m-1}\dd s
   \right|^2\right)^{1/2}\\
 &\le \int_0^\infty G_*(r,s)
     \left(\sum_k|f_k(s)|^2\right)^{1/2}s^{m-1}\dd s\\
 &=\int_0^\infty G_*(r,s)
     \|F(s,\cdot)\|_{L^2_z}s^{m-1}\dd s.
\end{align*}
We split the last integral at $2R_*$.  On $s\ge2R_*$ the definition of
$Y_q$ and \eqref{eq:massive-weight-rig} imply
\[
 \int_{2R_*}^\infty G_*(r,s)\|F(s,\cdot)\|_{L^2_z}s^{m-1}\dd s
 \le C_q(1+r)^{-q}\|F\|_{Y_q}.
\]
For the compact part, Cauchy--Schwarz yields
\begin{align*}
 &\int_0^{2R_*}G_*(r,s)\|F(s,\cdot)\|_{L^2_z}s^{m-1}\dd s\\
 &\qquad\le
 \left(\int_0^{2R_*}G_*(r,s)^2s^{m-1}\dd s\right)^{1/2}
 \|F\|_{L^2(\{s<2R_*\}\times(-1,1))}.
\end{align*}
If $R_*\le r\le4R_*$, the factor in parentheses is bounded by
compactness.  If $r\ge4R_*$, the Bessel asymptotics in
\eqref{eq:massive-green-rig} give
\[
 \left(\int_0^{2R_*}G_*(r,s)^2s^{m-1}\dd s\right)^{1/2}
 \le Ce^{-cr}.
\]
Since a polynomial factor is absorbed by the exponential, in both cases
\[
 (1+r)^q
 \int_0^{2R_*}G_*(r,s)\|F(s,\cdot)\|_{L^2_z}s^{m-1}\dd s
 \le C_{q,R_*}\|F\|_{Y_q}.
\]
Combining the two pieces proves
\begin{equation}\label{eq:massive-L2-general}
 \sup_{r\ge R_*}(1+r)^q\|u(r,\cdot)\|_{L^2_z}
 \le C_q\|F\|_{Y_q}.
\end{equation}
The only summability used is the $L^2_z$ orthogonal decomposition;
the uniform spectral gap dominates every mode by the single kernel $G_*$.

The Green representation first defines a locally $H^1$ weak solution
with the stated symmetry, transverse orthogonality, and zero Dirichlet
trace.  Because $F$ and $\nu$ are locally $C^{0,\alpha}$, standard
interior and flat-boundary Schauder estimates upgrade this solution to
$C^{2,\alpha}_{\rm loc}$ away from the radial axis.  Near $r=0$ we
interpret the equation in the full $(X,z)$ variables; the assumed
$O(m)$-invariant $C^{0,\alpha}_{\rm loc}$ extension of $F$ and the radial
$C^{0,\alpha}_{\rm loc}$ potential give the same Schauder upgrade across
the axis.  Thus the solution has the local regularity required in
\cref{def:regular-decaying-realization}.  The tail estimate below then
also gives the required decay at infinity.

We next upgrade \eqref{eq:massive-L2-general} to the unit-strip
$C^{2,\alpha}$ estimate.  Fix $r_0\ge R_*+2$.  On
$\mathcal U_{r_0}$ the equation is the uniformly elliptic Dirichlet
problem
\[
 -u_{rr}-u_{zz}-\frac{m-1}{r}u_r+(\nu(r)-\La)u=F.
\]
The coefficients have uniformly bounded $C^{0,\alpha}$ norms because
$r\ge R_*$ and
$\|(1+r)^2\nu\|_{C^{0,\alpha}(r-1,r+1)}$ is uniformly bounded.  Standard
interior and flat-boundary $W^{2,2}$ estimates on the slightly smaller
strip give
\[
 \|u\|_{W^{2,2}(\mathcal U_{r_0,3/4})}
 \le C\bigl(\|u\|_{L^2(\mathcal U_{r_0})}
            +\|F\|_{L^2(\mathcal U_{r_0})}\bigr).
\]
In two variables $W^{2,2}$ embeds into $C^0$ (indeed into
$C^{0,\gamma}$ for every $\gamma<1$), and the interior/Dirichlet
Schauder estimate therefore yields
\begin{align*}
 \|u\|_{C^{2,\alpha}(\mathcal U_{r_0,1/2})}
 &\le C\bigl(\|u\|_{C^0(\mathcal U_{r_0,3/4})}
             +\|F\|_{C^{0,\alpha}(\mathcal U_{r_0,3/4})}\bigr)\\
 &\le C\bigl(\|u\|_{L^2(\mathcal U_{r_0})}
             +\|F\|_{C^{0,\alpha}(\mathcal U_{r_0})}\bigr).
\end{align*}
By \eqref{eq:massive-L2-general}, the first term is bounded by
$C(1+r_0)^{-q}\|F\|_{Y_q}$; the second one is bounded in exactly the
same way by the definition of $Y_q$.  Hence
\[
 (1+r_0)^q
 \|u\|_{C^{2,\alpha}(\mathcal U_{r_0,1/2})}
 \le C_q\|F\|_{Y_q}.
\]
The remaining bounded range $R_*\le r_0\le R_*+2$ can be covered by
finitely many unit strips and is absorbed into the constant by the same local
estimate and the compact $L^2$ term in $Y_q$.  Taking the supremum proves \eqref{eq:massive-inverse-XY}.
\end{proof}

The commuted radial equations below contain the inverse-square terms
$j(m-1)r^{-2}$.  Since only their tail estimates are used, we avoid
introducing a singular realization at the axis and reduce them to the
preceding smooth-potential claim.

\begin{claim}[Tail reduction for inverse-square commutators]
\label{clm:inverse-square-tail}
Fix $j\in\{1,2,3\}$ and $q>0$. Suppose that $u=u(r,z)$ is even in
$z$, locally $C^{2,\alpha}$ for $r>0$, and satisfies
\[
 u(r,\pm1)=0,\qquad P_1u(r,\cdot)=0,
\]
\[
 \left(\mathscr L_\perp+\frac{j(m-1)}{r^2}\right)u=f
 \qquad (r>0),
\]
where $f=f(r,z)$ is even in $z$, satisfies $P_1f=0$, belongs to
$C^{0,\alpha}_{\rm loc}(\{r>0,\ |z|\le1\})$, and has finite $Y_q$-norm,
\[
 \|f\|_{Y_q}<\infty.
\]
Moreover,
\[
 \lim_{r\to\infty}
 \|u\|_{C^{1,\alpha}(\mathcal U_{r,1/2})}=0.
\]
Then
\begin{equation}\label{eq:inverse-square-tail-est}
 \sup_{r\ge R_*}(1+r)^q
 \|u\|_{C^{2,\alpha}(\mathcal U_{r,1/2})}
 \le C_q\left(
   \|f\|_{Y_q}
   +\|u\|_{C^{1,\alpha}(\{R_*/2<r<R_*+1\}\times(-1,1))}
 \right).
\end{equation}
For families with a uniform local $C^{1,\alpha}$ bound on the fixed
annulus, the estimate is uniform.
\end{claim}

\begin{proof}
Choose $\chi\in C^\infty([0,\infty))$ with $\chi=0$ on
$[0,R_*/2]$ and $\chi=1$ on $[R_*,\infty)$. Choose a smooth
nonnegative radial potential $\widetilde\nu_j$ on $[0,\infty)$ such
that
\[
 \widetilde\nu_j(r)=\frac{j(m-1)}{r^2}
 \qquad\text{for }r\ge R_*/2.
\]
Then $\widetilde u:=\chi u$ satisfies
\[
 (\mathscr L_\perp+\widetilde\nu_j)\widetilde u
 =\chi f+\mathcal C_\chi[u],
\]
where
\[
 \mathcal C_\chi[u]
 =-2\chi'u_r-\chi''u-\frac{m-1}{r}\chi'u
\]
is supported in the fixed annulus $R_*/2<r<R_*$. Hence
\[
 \|\chi f+\mathcal C_\chi[u]\|_{Y_q}
 \le C_q\left(
  \|f\|_{Y_q}
  +\|u\|_{C^{1,\alpha}(\{R_*/2<r<R_*+1\}\times(-1,1))}
 \right).
\]
The assumptions on $u$ imply
\[
 \widetilde u(r,\pm1)=0,\qquad
 P_1\widetilde u(r,\cdot)=0,\qquad
 \lim_{r\to\infty}
 \|\widetilde u\|_{C^{1,\alpha}(\mathcal U_{r,1/2})}=0,
\]
and $\widetilde u$ vanishes near the axis. Hence $\widetilde u$
belongs to the regular--decaying realization in
\cref{def:regular-decaying-realization}. By the uniqueness just proved for that realization, $\widetilde u$ is
the unique regular--decaying solution with source
$\chi f+\mathcal C_\chi[u]$.  Applying \eqref{eq:massive-inverse-XY} and using
$\widetilde u=u$ for $r\ge R_*$ proves
\eqref{eq:inverse-square-tail-est}.
\end{proof}

We first apply the massive inverse to $w$.  Before doing so, we record the
unit-scale coefficient bound needed in the source norm.  The pointwise
estimate \eqref{eq:Q-pointwise-rig} and a local $W^{2,p}$--Schauder
bootstrap for \eqref{eq:cylinder}, including the Dirichlet boundary
charts at $z=\pm1$, give
\begin{equation}\label{eq:Q-unit-C2}
 \|Q\|_{C^{2,\alpha}(\mathcal U_r)}\le Cr^{-2}
 \qquad(r\gg1).
\end{equation}
Hence $F=\La P^\perp(Q^2)$ is $O(m)$-invariant, even in $z$,
fibrewise orthogonal to $\ph$, locally $C^{0,\alpha}$ up to the axis and
flat boundary, and belongs to $Y_4$.  Thus
\eqref{eq:massive-inverse-XY} with $\nu=0$ yields
\begin{equation}\label{eq:w-unit-prelim}
 \|w\|_{C^{2,\alpha}(\mathcal U_{r,1/2})}\le Cr^{-4}.
\end{equation}
In particular $w_r,w_{rr}=O_{C_z^{0,\alpha}}(r^{-4})$.  Combining this
with \eqref{eq:p12-rig} gives
\[
 Q_r=p'\ph+w_r=O_{C_z^{0,\alpha}}(r^{-3}),
\]
and therefore
\begin{equation}\label{eq:Fr-rig}
 F_r=2\La P^\perp(QQ_r)=O_{C_z^{0,\alpha}}(r^{-5}).
\end{equation}
Differentiating \eqref{eq:w-eq-rig} once gives
\begin{equation}\label{eq:wr-eq-rig}
 \left(\mathscr L_\perp+\frac{m-1}{r^2}\right)w_r=F_r.
\end{equation}
Since $w(r,\pm1)=0$ and $P_1w=0$, differentiation in $r$ gives
\[
 w_r(r,\pm1)=0,\qquad P_1w_r(r,\cdot)=0.
\]
Moreover, \eqref{eq:w-unit-prelim} implies
\[
 \|w_r\|_{C^{1,\alpha}(\mathcal U_{r,1/2})}
 \le Cr^{-4}\longrightarrow0.
\]
The source $F_r$ is even, fibrewise orthogonal to $\ph$, and locally
$C^{0,\alpha}$ for $r>0$.  Thus \cref{clm:inverse-square-tail} applies
with $j=1$ and $q=5$, and gives
\begin{equation}\label{eq:wr-C2-rig}
 \|w_r\|_{C^{2,\alpha}(\mathcal U_{r,1/2})}\le Cr^{-5}.
\end{equation}
Hence $w_{rr}=O_{C_z^{0,\alpha}}(r^{-5})$ and
$Q_{rr}=p''\ph+w_{rr}=O_{C_z^{0,\alpha}}(r^{-4})$.  Thus
\begin{equation}\label{eq:Frr-rig}
 F_{rr}=2\La P^\perp(Q_r^2+QQ_{rr})
 =O_{C_z^{0,\alpha}}(r^{-6}).
\end{equation}
Differentiating twice gives
\begin{equation}\label{eq:wrr-eq-rig}
 \left(\mathscr L_\perp+\frac{2(m-1)}{r^2}\right)w_{rr}
 =F_{rr}+\frac{2(m-1)}{r^3}w_r.
\end{equation}
The right-hand side is even, fibrewise orthogonal to $\ph$, locally
$C^{0,\alpha}$ for $r>0$, and belongs to $Y_6$.  In addition,
\eqref{eq:wr-C2-rig} gives
\[
 w_{rr}(r,\pm1)=0,\qquad P_1w_{rr}(r,\cdot)=0,
 \qquad
 \|w_{rr}\|_{C^{1,\alpha}(\mathcal U_{r,1/2})}
 \le Cr^{-5}\longrightarrow0.
\]
Hence \cref{clm:inverse-square-tail} applies with $j=2$ and $q=6$, and yields
\begin{equation}\label{eq:wrr-C2-rig}
 \|w_{rr}\|_{C^{2,\alpha}(\mathcal U_{r,1/2})}\le Cr^{-6}.
\end{equation}

To control the scaled H\"older seminorm of the second $X$ derivatives,
we record one more radial derivative.  Since
$g(r)=\La\int Q^2\ph$, the already proved bounds give $g'(r)=O(r^{-5})$;
differentiating \eqref{eq:p-radial-rig} yields
\begin{equation}\label{eq:p3-rig}
 p'''(r)=O(r^{-5}).
\end{equation}
The estimate \eqref{eq:wrr-C2-rig} first gives $w_{rrr}=O(r^{-6})$,
so $Q_{rrr}=p'''\ph+w_{rrr}=O(r^{-5})$ and
\begin{equation}\label{eq:Frrr-rig}
 F_{rrr}=2\La P^\perp(3Q_rQ_{rr}+QQ_{rrr})
 =O_{C_z^{0,\alpha}}(r^{-7}).
\end{equation}
Differentiating \eqref{eq:wrr-eq-rig} gives
\begin{equation}\label{eq:wrrr-eq-rig}
 \left(\mathscr L_\perp+\frac{3(m-1)}{r^2}\right)w_{rrr}
 =F_{rrr}+\frac{6(m-1)}{r^3}w_{rr}
       -\frac{6(m-1)}{r^4}w_r.
\end{equation}
The source is even, fibrewise orthogonal to $\ph$, locally
$C^{0,\alpha}$ for $r>0$, and belongs to $Y_7$.  Moreover, \eqref{eq:wrr-C2-rig} gives
\[
 w_{rrr}(r,\pm1)=0,\qquad P_1w_{rrr}(r,\cdot)=0,
 \qquad
 \|w_{rrr}\|_{C^{1,\alpha}(\mathcal U_{r,1/2})}
 \le Cr^{-6}\longrightarrow0.
\]
Therefore \cref{clm:inverse-square-tail} applies with $j=3$ and $q=7$, and gives
\begin{equation}\label{eq:wrrr-rig}
 \|w_{rrr}\|_{C^{2,\alpha}(\mathcal U_{r,1/2})}\le Cr^{-7}.
\end{equation}
Consequently
\begin{align}\label{eq:w-natural-derivatives}
 \|w(r,\cdot)\|_{C_z^{2,\alpha}}&\le Cr^{-4},\notag\\
 \|w_r(r,\cdot)\|_{C_z^{2,\alpha}}&\le Cr^{-5},\\
 \|w_{rr}(r,\cdot)\|_{C_z^{2,\alpha}}&\le Cr^{-6},\notag\\
 \|w_{rrr}(r,\cdot)\|_{C_z^{2,\alpha}}&\le Cr^{-7}.\notag
\end{align}

For $R$ large set $W_R(Y,z)=R^4w(RY,z)$ on
$\mathcal A=\{1<|Y|<2,\ |z|<1\}$.  Since $w$ is radial in $X$,
\[
 \partial_{X_i}w=w_r\frac{X_i}{r},\qquad
 \partial_{X_iX_j}w
 =w_{rr}\frac{X_iX_j}{r^2}
 +\frac{w_r}{r}\left(\delta_{ij}-\frac{X_iX_j}{r^2}\right),
\]
and the analogous third-derivative formulas hold.  Therefore
\eqref{eq:w-natural-derivatives} gives
\[
 \|W_R\|_{C^{2,\alpha}(\mathcal A)}
 \le C\sup_{R\le r\le2R}\Big(
 R^4\|w\|_{C_z^{2,\alpha}}
 +R^5\|w_r\|_{C_z^{2,\alpha}}
 +R^6\|w_{rr}\|_{C_z^{2,\alpha}}
 +R^7\|w_{rrr}\|_{L_z^\infty}\Big)
 \le C.
\]
Hence
\begin{equation}\label{eq:w-sc-r4-proof}
 w=O_{C^{2,\alpha}_{\rm sc}}(r^{-4}),
\end{equation}
which proves \eqref{eq:w-sc-r4}.

The same claim also contains the transverse estimate needed in the
bootstrap below.  If, for some $a>2$,
\[
 |Q(r,z)|\le Cr^{-a}\ph(z),
\]
then $F=\La P^\perp(Q^2)$ belongs to $Y_{2a}$.  Applying
\eqref{eq:massive-inverse-XY} with $q=2a$ gives
\begin{equation}\label{eq:bootstrap-w-r2a}
 \|w\|_{C^{2,\alpha}(\mathcal U_{r,1/2})}\le Cr^{-2a}.
\end{equation}
Since $w(r,\pm1)=0$, the $C_z^1$ part and
$\ph(z)\asymp1-|z|$ imply
\begin{equation}\label{eq:bootstrap-w-phi}
 |w(r,z)|\le Cr^{-2a}\ph(z).
\end{equation}
Thus the later Newton bootstrap uses only the unified inverse claim above.

Finally $w(r,\pm1)=w_r(r,\pm1)=0$.  The $C_z^1$ bounds in
\eqref{eq:w-natural-derivatives}, together with
$\ph(z)\asymp1-|z|$ near the boundary, yield
\[
 \sup_{|z|<1}\frac{|w(r,z)|}{\ph(z)}\le Cr^{-4},\qquad
 \sup_{|z|<1}\frac{|w_r(r,z)|}{\ph(z)}\le Cr^{-5},
\]
which is \eqref{eq:w-minimal-tail}.

\medskip\noindent
\emph{Step 4: convergence of the first mode.}
The first estimate in \eqref{eq:w-minimal-tail} is already sufficient:
\[
 \int_{-1}^1Q^2\ph\dd z
 =B_2p^2+2p\int_{-1}^1w\ph^2\dd z
   +\int_{-1}^1w^2\ph\dd z
 =B_2p^2+O(r^{-6}).
\]
Thus
\begin{equation}\label{eq:p-asym-ode-rig}
 -p''-\frac{m-1}{r}p'=c_0p^2+O(r^{-6}),
 \qquad c_0=\La B_2.
\end{equation}
Set $t=\log r$ and $Y(t)=c_0r^2p(r)$.  By
\cref{lem:scalar-separation}, $0<Y<2(m-4)$ and
\begin{equation}\label{eq:Y-rig}
 Y''+(m-6)Y'-2(m-4)Y+Y^2:=\rho(t)=O(e^{-2t}).
\end{equation}
Define
\[
 \mathcal E(t)=\frac12(Y')^2-(m-4)Y^2+\frac13Y^3.
\]

\begin{claim}\label{clm:Y-energy-limit}
    $\mathcal E$ has a finite limit and $Y'\in L^2(t_0,\infty)$.
\end{claim} 
\begin{proof}
From \eqref{eq:Y-rig}, we obtain
\[
\mathcal E'(t)
=
-(m-6)(Y'(t))^2+O(e^{-2t}|Y'(t)|).
\]
By Young's inequality, for some constant \(C>0\),
\[
\mathcal E'(t)
\le
-\frac{m-6}{2}(Y'(t))^2+Ce^{-4t}.
\]

We first prove that \(Y'\in L^2(t_0,\infty)\). Since
\(
0<Y(t)<2(m-4),
\)
there exists \(C_m>0\) such that
\[
\mathcal E(t)\ge -C_m
\qquad\text{for all }t\ge t_0.
\]
Integrating \(\mathcal E'\) over \([t_0,T]\), we obtain
\[
\mathcal E(T)-\mathcal E(t_0)
\le
-\frac{m-6}{2}\int_{t_0}^{T}|Y'(t)|^2\,dt
+
C\int_{t_0}^{T}e^{-4t}\,dt.
\]
Therefore, 
\[
\frac{m-6}{2}
\int_{t_0}^{T}|Y'(t)|^2\,dt
\le
\mathcal E(t_0)+C_m
+
C\int_{t_0}^{\infty}e^{-4t}\,dt.
\]
The right-hand side is independent of \(T\). Since \(m>6\), letting
\(T\to\infty\) yields
\[
Y'\in L^2(t_0,\infty).
\]

We next prove that \(\mathcal E(t)\) has a finite limit as
\(t\to\infty\). Write
\[
\mathcal E'(t)
=
-(m-6)|Y'(t)|^2+R(t),
\qquad
|R(t)|\le Ce^{-2t}|Y'(t)|.
\]
By the Cauchy--Schwarz inequality,
\[
\begin{aligned}
\int_{t_0}^{\infty}|R(t)|\,dt
&\le
C\int_{t_0}^{\infty}e^{-2t}|Y'(t)|\,dt\\
&\le
C
\left(
\int_{t_0}^{\infty}e^{-4t}\,dt
\right)^{1/2}
\left(
\int_{t_0}^{\infty}|Y'(t)|^2\,dt
\right)^{1/2}
<\infty.
\end{aligned}
\]
Consequently,
\[
\int_{t_0}^{\infty}|\mathcal E'(t)|\,dt
\le
(m-6)
\int_{t_0}^{\infty}|Y'(t)|^2\,dt
+
\int_{t_0}^{\infty}|R(t)|\,dt
<\infty.
\]
Hence \(\mathcal E'\in L^1(t_0,\infty)\). In particular, for
\(T_2>T_1\ge t_0\),
\[
|\mathcal E(T_2)-\mathcal E(T_1)|
\le
\int_{T_1}^{T_2}|\mathcal E'(t)|\,dt
\longrightarrow0
\qquad
\text{as }T_1,T_2\to\infty.
\]
Thus \(\mathcal E(t)\) is Cauchy at infinity, and therefore there exists
\(\mathcal E_\infty\in\mathbb R\) such that
\[
\mathcal E(t)\longrightarrow\mathcal E_\infty
\qquad\text{as }t\to\infty.
\]

\end{proof}

Since \(Y\) is uniformly bounded and \(\mathcal E\) has a finite
limit, \(Y'\) is uniformly bounded on \([t_0,\infty)\). Returning to
\eqref{eq:Y-rig}, we then obtain
\[
\|Y''\|_{L^\infty(t_0,\infty)}<\infty.
\]
Therefore \(Y'\) is uniformly Lipschitz, and in particular uniformly
continuous, on \([t_0,\infty)\).

\begin{claim}\label{clm:Y-omega-limit}
   \(Y'(t)\longrightarrow0\) as \(t\to\infty\), and \(Y(t)\to0\,\,\text{or}\,\, Y(t)\to2(m-4).\)
\end{claim}
\begin{proof}
     If \(Y'(t)\longrightarrow0\) were false, there would exist \(\varepsilon>0\) and a
sequence \(t_j\to\infty\) such that
\(
|Y'(t_j)|\ge\varepsilon.
\)
By the uniform continuity of \(Y'\), there exists \(\delta>0\),
independent of \(j\), such that
\[
|Y'(t)|\ge\frac{\varepsilon}{2}
\qquad
\text{for }t\in(t_j-\delta,t_j+\delta).
\]
After passing to a subsequence, these intervals may be assumed pairwise
disjoint. Consequently,
\[
\int_{t_0}^{\infty}|Y'(t)|^2\,dt
\ge
\sum_j
\int_{t_j-\delta}^{t_j+\delta}|Y'(t)|^2\,dt
\ge
\sum_j \frac{\varepsilon^2}{4}(2\delta)
=\infty,
\]
contradicting \(Y'\in L^2(t_0,\infty)\). Hence
\[
Y'(t)\to0.
\]

We next identify the possible limit points of \(Y\). Let \(L\) be any
limit point of \(Y\). Then there exists a sequence \(t_j\to\infty\)
such that
\(
Y(t_j)\to L.
\)
For \(s\) in a fixed bounded interval, define
\[
Y_j(s):=Y(t_j+s).
\]
Since
\[
\sup_{|s|\le R}|Y_j'(s)|
=
\sup_{|s|\le R}|Y'(t_j+s)|
\longrightarrow0
\qquad
\text{for every fixed }R>0,
\]
we obtain
\[
|Y_j(s)-Y_j(0)|
\le
R\sup_{|\tau|\le R}|Y_j'(\tau)|
\longrightarrow0
\]
uniformly for \(|s|\le R\). Since \(Y_j(0)=Y(t_j)\to L\), it follows that for every fixed \(R>0\)
\begin{equation}
    Y_j\longrightarrow L,\,Y_j'\longrightarrow0
\qquad\text{uniformly on }[-R,R].\label{eq:Y-translate-limit}
\end{equation}

Note that
\[
Y_j''
=
-(m-6)Y_j'
+2(m-4)Y_j-Y_j^2+\rho(t_j+s).
\]
Since \(\rho(t)=O(e^{-2t})\), for every fixed \(R>0\),
\[
\sup_{|s|\le R}|\rho(t_j+s)|\longrightarrow0.
\]
Using \eqref{eq:Y-translate-limit}, we conclude that
\[
Y_j''(s)
\longrightarrow
2(m-4)L-L^2
\]
uniformly on $[-R,R]$.

Fix \(R>0\). Integrating from \(0\) to \(R\) gives
\[
Y_j'(R)-Y_j'(0)
=
\int_0^R Y_j''(\tau)\,d\tau.
\]
The left-hand side tends to \(0\) by \eqref{eq:Y-translate-limit}, whereas the right-hand
side tends to
\[
R\bigl(2(m-4)L-L^2\bigr).
\]
Hence
\[
2(m-4)L-L^2=0.
\]

Let
\[
a:=2(m-4),
\]
and denote by
\[
\omega(Y)
:=
\left\{
L\in\mathbb R:
\exists\, t_j\to\infty
\text{ such that }Y(t_j)\to L
\right\}
\]
the \(\omega\)-limit set of \(Y\).

From the preceding argument, 
\[
\omega(Y)\subset \{0,a\}.
\]

We now show that \(\omega(Y)\) is connected. For every \(T\ge t_0\), set
\[
K_T:=\overline{Y([T,\infty))}.
\]
Since \([T,\infty)\) is connected and \(Y\) is continuous,
\(Y([T,\infty))\) is connected. Therefore \(K_T\) is connected as well.
Indeed, \(\{K_T\}_{T\ge t_0}\) is a decreasing family of nonempty compact
connected sets.

By definition of the \(\omega\)-limit set, we have
\[
\omega(Y)
=
\bigcap_{T\ge t_0} K_T.
\]
Since the intersection of a decreasing family of nonempty compact
connected subsets of \(\mathbb R\) is again nonempty and connected,
\(\omega(Y)\) is connected. Therefore,
\[
\omega(Y)=\{0\}
\qquad\text{or}\qquad
\omega(Y)=\{a\}.
\]
That is
\[
Y(t)\to0
\qquad\text{or}\qquad
Y(t)\to2(m-4).
\]

\end{proof}

Suppose $Y\to0$
as \(t\to\infty.\)
Write
\[
b:=m-4>2.
\]
Then equation \eqref{eq:Y-rig} can be rewritten as
\begin{equation}
    Y''+(b-2)Y'-2bY=F(t),\label{eq:Y-forced-linearization}
\end{equation}
where
\[
F(t)=-Y(t)^2+\rho(t),
\qquad
|\rho(t)|\le Ce^{-2t}.
\]

\begin{claim}
   For every $\gamma\in (0,2)$, we have \(|Y(t)|+|Y'(t)|
\le
C_\gamma e^{-\gamma t},\quad t\gg 1.\)
\end{claim}

\begin{proof}

Define
\[
c_+(t):=\frac{Y'(t)+bY(t)}{b+2},
\qquad
c_-(t):=\frac{2Y(t)-Y'(t)}{b+2}.
\]
Then
\[
Y=c_++c_-,
\qquad
Y'=2c_+-bc_-.
\]
A direct calculation using \eqref{eq:Y-forced-linearization} gives
\begin{equation}
    c_+'-2c_+=\frac{F}{b+2},
\qquad
c_-'+bc_-=-\frac{F}{b+2}.\label{eq:Y-dichotomy-system}
\end{equation}

Since \(Y(t),Y'(t)\to0\), we also have
\[
c_+(t),c_-(t)\to0.
\]
Therefore, the homogeneous mode \(Ce^{2t}\) is absent. Multiplying
the first equation in \eqref{eq:Y-dichotomy-system} by \(e^{-2t}\) and integrating from
\(t\) to \(+\infty\), we obtain
\begin{equation}
    c_+(t)
=
-\frac1{b+2}
\int_t^\infty e^{2(t-s)}F(s)\,ds.\label{eq:Y-growing-coordinate}
\end{equation}
Fixing \(T>0\) and integrating the second equation
in \eqref{eq:Y-dichotomy-system} from \(T\) to \(t\), we obtain
\begin{equation}
    c_-(t)
=
e^{-b(t-T)}c_-(T)
-\frac1{b+2}
\int_T^t e^{-b(t-s)}F(s)\,ds.\label{eq:Y-decaying-coordinate}
\end{equation}

We now prove exponential decay. Fix
\(
0<\gamma<2.
\)
Since \(Y(t)\to0\), after increasing \(T\) we may assume
\[
|Y(t)|\le\varepsilon
\qquad (t\ge T),
\]
where \(\varepsilon>0\) will be chosen sufficiently small. Therefore
\begin{equation}
|F(t)|
\le
|Y(t)|^2+Ce^{-2t}
\le
\varepsilon\bigl(|c_+(t)|+|c_-(t)|\bigr)+Ce^{-2t}.
\label{eq:Y-forcing-smallness}
\end{equation}

We now use the half-line exponential dichotomy directly.  Let
\[
 X_0:=C_0([T,\infty);\mathbb R^2),\qquad
 \|(u_+,u_-)\|_0:=\sup_{t\ge T}(|u_+(t)|+|u_-(t)|),
\]
and, for the fixed $0<\gamma<2$,
\[
 X_\gamma:=\left\{u\in X_0:\
 \|u\|_\gamma:=\sup_{t\ge T}e^{\gamma(t-T)}
 (|u_+(t)|+|u_-(t)|)<\infty\right\}.
\]
For $u=(u_+,u_-)$ put
\[
 \mathcal F(t,u):=-(u_+(t)+u_-(t))^2+\rho(t)
\]
and define
\begin{align*}
 (\mathcal T_Tu)_+(t)
 &:=-\frac1{b+2}\int_t^\infty e^{-2(s-t)}
          \mathcal F(s,u)\,ds,\\
 (\mathcal T_Tu)_-(t)
 &:=e^{-b(t-T)}c_-(T)
   -\frac1{b+2}\int_T^t e^{-b(t-s)}
          \mathcal F(s,u)\,ds.
\end{align*}
By \eqref{eq:Y-growing-coordinate}--\eqref{eq:Y-decaying-coordinate}, the actual pair
$c=(c_+,c_-)$ is a fixed point of $\mathcal T_T$ in $X_0$.

Choose $\delta>0$ so small that $C\delta<1/4$, where $C$ is larger
than the $L^1$ norms of the two dichotomy kernels multiplied by
$2/(b+2)$.  Since $c(t)\to0$ and $\rho(t)=O(e^{-2t})$, we may increase
$T$ so that
\[
 \|c\|_{L^\infty([T,\infty))}\le\frac\delta2,
 \qquad
 |c_-(T)|+Ce^{-2T}\le\frac\delta4.
\]
On the closed $X_0$--ball $\overline B_\delta$ one has
\[
 |\mathcal F(t,u)-\mathcal F(t,v)|
 \le 2\delta\bigl(|u_+(t)-v_+(t)|+|u_-(t)-v_-(t)|\bigr),
\]
so the two integral formulas imply
\[
 \|\mathcal T_Tu-\mathcal T_Tv\|_0
 \le C\delta\|u-v\|_0.
\]
The same kernel bounds and the preceding choice of $T$ show that
$\mathcal T_T(\overline B_\delta)\subset\overline B_\delta$.
Consequently $\mathcal T_T$ has at most one fixed point in that ball.

We next solve the same integral equation in $X_\gamma$.  If
$\|u\|_0\le\delta$, then
$|(u_++u_-)^2|\le2\delta(|u_+|+|u_-|)$.  Hence
\begin{align*}
 \sup_{t\ge T}e^{\gamma(t-T)}|(\mathcal T_Tu)_+(t)|
 &\le \frac{C\delta}{2+\gamma}\|u\|_\gamma+Ce^{-2T},\\
 \sup_{t\ge T}e^{\gamma(t-T)}|(\mathcal T_Tu)_-(t)|
 &\le |c_-(T)|+\frac{C\delta}{b-\gamma}\|u\|_\gamma+Ce^{-2T}.
\end{align*}
Here we used $\gamma<2<b$ and
\[
 e^{\gamma(t-T)}\int_t^\infty e^{-2(s-t)}e^{-\gamma(s-T)}\,ds
 =\frac1{2+\gamma},
\]
\[
 e^{\gamma(t-T)}\int_T^t e^{-b(t-s)}e^{-\gamma(s-T)}\,ds
 \le\frac1{b-\gamma}.
\]
The identical estimates for differences show that, after decreasing
$\delta$ once more if necessary, $\mathcal T_T$ is a strict contraction
on a closed $X_\gamma$--ball of radius
$C_\gamma(|c_-(T)|+e^{-2T})$ contained in $\overline B_\delta$.
Let $\widetilde c$ be its fixed point.  Both $c$ and $\widetilde c$ lie in
$\overline B_\delta\subset X_0$, where the fixed point is unique; hence
$c=\widetilde c$.  Therefore
\begin{equation}
    |c_+(t)|+|c_-(t)|
\le
C_\gamma e^{-\gamma(t-T)},
\qquad t\ge T.\label{eq:Y-coordinate-decay}
\end{equation}
Hence
\begin{equation}
|Y(t)|+|Y'(t)|
\le
C_\gamma e^{-\gamma t},
\qquad t\ge T,\,
0<\gamma<2.\label{eq:Y-zero-decay}
\end{equation}

\end{proof}

Recall that
\[
Y(t)=c_0 r^2p(r),
\qquad
t=\log r,
\]
where
\[
p(r)=\int_{-1}^1Q(r,z)\ph(z)\,dz.
\]
Since \(e^{-\gamma t}=r^{-\gamma}\), estimate \eqref{eq:Y-zero-decay} yields
\begin{equation}
    |p(r)|
=
\frac{|Y(\log r)|}{c_0r^2}
\le
C_\gamma r^{-2-\gamma}
\qquad (r\gg1).\label{eq:p-zero-decay}
\end{equation}

On the other hand, combining \eqref{eq:p-zero-decay} and the first estimate in \eqref{eq:w-minimal-tail}, we obtain
\begin{align}
|Q(r,z)|
\le
|p(r)|\varphi(z)+|w(r,z)| 
\le
C\bigl(r^{-2-\gamma}+r^{-4}\bigr)\varphi(z).
\end{align}

Choose any \(0<\gamma<2\) and set
\[
a_0:=2+\gamma\in(2,4).
\]
Hence, 
\[
|Q(r,z)|
\le
Cr^{-a_0}\varphi(z)
\qquad
(r\gg1,\ |z|<1).
\]

\begin{claim}
    $Q\in H^1_0(\Cyl)\cap L^3(\Cyl)$. 
\end{claim}
\begin{proof}
    
More generally, assume
\[
 |Q(r,z)|\le Cr^{-a}\ph(z),\qquad a>2.
\]
Coming back to \eqref{eq:p-radial-rig}, we have $g(r)\le Cr^{-2a}$,
while the unified massive inverse estimate in Step~3 gives directly
\eqref{eq:bootstrap-w-phi}, namely
$|w(r,z)|\le Cr^{-2a}\ph(z)$.  Since $p(r)\to0$, integration of
$-(r^{m-1}p')'=r^{m-1}g$ gives
\begin{equation}\label{eq:newton-bootstrap}
 p(r)=\frac1{m-2}\left[
 r^{2-m}\int_0^r s^{m-1}g(s)\dd s
 +\int_r^\infty s g(s)\dd s\right].
\end{equation}
If $2a<m$, then $p=O(r^{-(2a-2)})$; if $2a>m$, then
$p=O(r^{-(m-2)})$; at $2a=m$ one has
$p=O(r^{2-m}\log r)$ and hence
$p=O(r^{-(m-2-\delta)})$ for every $\delta>0$.  Consequently, for
arbitrarily small $\delta>0$,
\begin{equation}\label{eq:bootstrap-map}
 Q=O(r^{-a^+}\ph),\qquad
 a^+\ge\min\{2a-2,m-2\}-\delta.
\end{equation}
Starting from $a_0=2+d_0$, the exponents improve geometrically until,
after finitely many steps, one obtains $a_*>m/2$.  Standard unit-scale
interior and boundary Schauder estimates then give
$|\nabla_XQ|+|Q_z|\le Cr^{-a_*}$ on the far field.  Hence
$Q\in H^1_0(\Cyl)\cap L^3(\Cyl)$.  

\end{proof}

Testing the equation with radial
cutoffs times $Q$, and semistability with the same cutoffs, then letting
the cutoff radius tend to infinity, gives
\[
 \int_\Cyl|\nabla Q|^2=\La\int_\Cyl(Q^2+Q^3),
 \qquad
 \int_\Cyl|\nabla Q|^2\ge\La\int_\Cyl(Q^2+2Q^3),
\]
a contradiction.  Therefore
\begin{equation}\label{eq:Y-limit-rig}
 Y(t)\to2(m-4),\qquad p(r)=D_mr^{-2}+o(r^{-2}).
\end{equation}

\medskip\noindent
\emph{Step 5: the indicial rate.}
Let $b(t)=Y(t)-2(m-4)$.  Then
\[
 b''+(m-6)b'+2(m-4)b=O(b^2)+O(e^{-2t}).
\]
Set
\[
 \lambda_-:=\beta_--2,\qquad \lambda_+:=\beta_+-2.
\]
Then $0<\lambda_-<\lambda_+$ and
\[
 \lambda_-+\lambda_+=m-6,\qquad
 \lambda_-\lambda_+=2(m-4).
\]
Writing
\[
 R(t):=O(b(t)^2)+O(e^{-2t}),
\]
the equation becomes
\[
 (\partial_t+\lambda_-)(\partial_t+\lambda_+)b=R(t).
\]
Fix $0<\gamma<\min\{2,\lambda_-\}$.  Since $b(t)\to0$, choose
$T$ so large that $|b(t)|\le\delta$ for $t\ge T$, where $\delta>0$
will be fixed below.  The variation-of-constants formula on $[T,t]$ is
\begin{align}\label{eq:b-voc-rig}
 b(t)={}&A_Te^{-\lambda_-(t-T)}+B_Te^{-\lambda_+(t-T)}\notag\\
 &+\frac1{\lambda_+-\lambda_-}
 \int_T^t\bigl(e^{-\lambda_-(t-s)}-e^{-\lambda_+(t-s)}\bigr)R(s)\,ds,
\end{align}
where $A_T,B_T$ are determined linearly by $b(T),b'(T)$.  Differentiating
\eqref{eq:b-voc-rig} gives the analogous formula for $b'$.  For
$S>T$ put
\[
 M_{\gamma,S}:=
 \sup_{T\le t\le S}e^{\gamma(t-T)}\bigl(|b(t)|+|b'(t)|\bigr).
\]
Because $|R(t)|\le C\delta |b(t)|+Ce^{-2t}$ on $[T,\infty)$, the
kernel bounds
\[
 \int_T^t e^{-(\lambda_\pm-\gamma)(t-s)}\,ds
 \le\frac1{\lambda_\pm-\gamma}
\]
and $\gamma<2$ imply
\[
 M_{\gamma,S}
 \le C_\gamma\bigl(|b(T)|+|b'(T)|+e^{-2T}\bigr)
      +C_\gamma\delta M_{\gamma,S}.
\]
Choose $\delta$ so that $C_\gamma\delta\le\frac12$.  The last term is
absorbed, and the resulting bound is independent of $S$.  Letting
$S\to\infty$ gives
\[
 |b(t)|+|b'(t)|\le C_\gamma e^{-\gamma t}.
\]
Finally the differential equation for $b$ and
$R=O(b^2)+O(e^{-2t})$ give the same bound for $b''$.  Hence, for every
$0<\gamma<\min\{2,\beta_--2\}$,
\[
 |b(t)|+|b'(t)|+|b''(t)|\le C_\gamma e^{-\gamma t}.
\]
Equivalently, for every $2<\tau<\min\{4,\beta_-\}$,
\begin{equation}\label{eq:p-point-tail}
 p(r)=D_mr^{-2}+O(r^{-\tau}),\qquad
 p'(r)=-2D_mr^{-3}+O(r^{-\tau-1}).
\end{equation}
Let
\[
 a(r):=p(r)-D_mr^{-2}.
\]
Besides \eqref{eq:p-point-tail}, the first-mode equation and
\eqref{eq:w-sc-r4} give
\[
 a''(r)=O(r^{-\tau-2}).
\]
Indeed, if
\[
 e(r):=\La\int_{-1}^1Q^2\ph\dd z-c_0p^2,
\]
then $e=O(r^{-6})$ and, by \eqref{eq:w-natural-derivatives},
$e'=O(r^{-7})$.  Differentiating the equation for $a$ gives
\[
 a'''(r)=O(r^{-\tau-3}),
\]
because $\tau<4$.  The mean-value theorem on dyadic annuli now yields
\[
 a(r)\ph=O_{C^{2,\alpha}_{\rm sc}}(r^{-\tau}).
\]
Since \eqref{eq:w-sc-r4} and $\tau<4$ imply
$w=O_{C^{2,\alpha}_{\rm sc}}(r^{-\tau})$, we conclude that
\[
 Q-D_mr^{-2}\ph
 =O_{C^{2,\alpha}_{\rm sc}}(r^{-\tau}),
\]
which is \eqref{eq:tailQ-sc}.  The pointwise quotient estimate
\eqref{eq:tailQ} follows from the boundary factorization, while
\eqref{eq:p-point-tail} and the second estimate in
\eqref{eq:w-minimal-tail} give \eqref{eq:tailQr}.
\end{proof}

\begin{corollary}[Radial monotonicity]\label{cor:radial-mono}
Under the assumptions of \cref{lem:universal-tail},
\begin{equation}\label{eq:radial-mono}
 Q_r(r,z)<0\qquad(r>0,\ |z|<1).
\end{equation}
\end{corollary}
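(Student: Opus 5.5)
Differentiate the equation in $r$ and apply the same semistability/truncation argument used in \cref{lem:transverse}, now to $H:=Q_r$ (or better to $H:=Q_r/\ph$ times $\ph$, i.e.\ $Q_r$ itself). Since $Q$ is radial in $X$, each component $\partial_{X_i}Q=Q_r X_i/r$ solves the linearized equation $L_Q\partial_{X_i}Q=0$. In terms of $Q_r$ this becomes
\[
 \Bigl(L_Q+\frac{m-1}{r^2}\Bigr)Q_r=0\qquad\text{in }\Cyl\setminus\{r=0\},
\]
with $Q_r=0$ at $z=\pm1$ and $Q_r=0$ at $r=0$ by regularity. At infinity, \eqref{eq:tailQr} gives $Q_r=-2D_mr^{-3}\ph+O(r^{-\tau-1}\ph)$, so $Q_r<0$ for $r\ge R_1$, all $|z|<1$. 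Hence $(Q_r)_+$ is supported in the bounded set $\{r\le R_1\}$ and vanishes on its boundary portions $z=\pm1$, $r=0$, $r=R_1$.

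First I test the equation with $w:=(Q_r)_+$, which lies in $H^1_0$ of the compact region and is an admissible test function by \cref{rem:semistability-density}. Integration by parts gives
\[
 \mathfrak q_Q[w]+(m-1)\int_\Cyl\frac{w^2}{r^2}=0 .
\]
The axis is harmless: $Q_r=O(r)$ there and $m\ge16$, so there are no boundary terms. Semistability gives $\mathfrak q_Q[w]\ge0$, and therefore $\int w^2/r^2=0$, i.e.\ $Q_r\le0$ everywhere.

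Next comes the strict inequality. The function $v:=-Q_r\ge0$ solves
\[
 -\Delta v+\Bigl(\frac{m-1}{r^2}-\La(1+2Q)\Bigr)v=0\qquad\text{for }r>0 .
\]
The strong maximum principle (Hopf form for nonnegative supersolutions with bounded zero-order coefficient locally on $\{r>0\}$) shows that $v\equiv0$ or $v>0$ on the connected set $\{r>0,|z|<1\}$. The first alternative is excluded by the tail asymptotics $v\sim2D_mr^{-3}\ph>0$. Hence $Q_r<0$.

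The main point to check is the rigor of the first step. I need the identity obtained by testing with $(Q_r)_+$ to be justified, namely that $Q_r$ is $C^{2,\alpha}$ up to $z=\pm1$ away from the axis and that the Hardy-type term $\int w^2/r^2$ is finite near $r=0$. Both follow from smoothness of $Q$: $Q_r=r\,\psi(r^2,z)$ near the axis. The sign of the tail is supplied entirely by \cref{lem:universal-tail}, which is why the corollary is stated under its hypotheses.
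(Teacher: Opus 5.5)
Your proposal is correct and follows essentially the same route as the paper. Both test the differentiated equation $\bigl(L_Q+\frac{m-1}{r^2}\bigr)Q_r=0$ against $(Q_r)_+$ (the paper's $G_-$ with $G=-Q_r$), which is compactly supported because of \eqref{eq:tailQr}; semistability then forces $(Q_r)_+=0$, and the strong maximum principle (Harnack in the paper) on $\{r>0,|z|<1\}$, combined with the positive tail of $-Q_r$, gives strict negativity.
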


\begin{proof}
Set $G=-Q_r$.  Differentiating \eqref{eq:cylinder} for $r>0$ and using
\[
 \partial_r\!\left(\frac{m-1}{r}Q_r\right)
 =\frac{m-1}{r}Q_{rr}-\frac{m-1}{r^2}Q_r
\]
gives the exact equation
\begin{equation}\label{eq:Veq-rig}
 L_QG=-\frac{m-1}{r^2}G,
 \qquad L_Q:=-\Delta_X-\partial_{zz}-\La(1+2Q).
\end{equation}
By \eqref{eq:tailQr}, after division by $\ph$,
\[
 G(r,z)=2D_mr^{-3}\ph(z)+o(r^{-3}\ph(z)),
\]
uniformly for $|z|\le1$; hence $G>0$ for $r\ge R_1$ and $|z|<1$.
At the axis radial regularity gives $G(r,z)=O(r)$, while differentiating
$Q(r,\pm1)=0$ in $r$ gives $G(r,\pm1)=0$.  Therefore
$G_-:=\max\{-G,0\}$ is supported in $\{r<R_1\}$, has zero trace on
$z=\pm1$, and satisfies $r^{-1}G_-\in L^2$ near $r=0$.  Thus
$G_-\in H^1_0(\Cyl)$ and may be obtained as an $H^1$ limit of smooth
compactly supported admissible test functions.

Multiply \eqref{eq:Veq-rig} by $G_-$ and integrate.  On $\{G<0\}$,
$G=-G_-$ and $\nabla G=-\nabla G_-$.  Hence
\begin{align*}
 -\mathfrak q_Q[G_-]=\int_\Cyl\bigl(\nabla G\cdot\nabla G_-
   -\La(1+2Q)GG_-\bigr)
 =
 -(m-1)\int_\Cyl\frac{GG_-}{r^2}
 = (m-1)\int_\Cyl\frac{G_-^2}{r^2}.
\end{align*}
Consequently
\begin{equation}\label{eq:Vnegative-rig}
 -\mathfrak q_Q[G_-]
 =(m-1)\int_\Cyl\frac{G_-^2}{r^2}.
\end{equation}
Semistability gives $\mathfrak q_Q[G_-]\ge0$, so the left side of
\eqref{eq:Vnegative-rig} is nonpositive whereas the right side is
nonnegative.  Both vanish, and therefore $G_-=0$.  Hence $G\ge0$.

Note that equation \eqref{eq:Veq-rig} can be written as
\[
 -\Delta G+c(r,z)G=0\quad\text{in }\{r>0,|z|<1\},
 \qquad
 c(r,z)=\frac{m-1}{r^2}-\La(1+2Q),
\]
with locally bounded coefficient $c$.  The local Harnack inequality for
nonnegative solutions of Schr\"odinger equations with bounded zeroth-order
coefficient therefore applies on every ball compactly contained in
$\{r>0,|z|<1\}$.  If $G$ vanished at an interior point, Harnack's
inequality would imply that $G$ vanishes on a neighborhood of that point;
propagating this conclusion through overlapping interior balls gives
$G\equiv0$ on the connected set $\{r>0,|z|<1\}$.  This contradicts the
positive tail.  Hence $G>0$, i.e. $Q_r<0$ for every $r>0$.
\end{proof}

\subsection{The normal operator and fast Jacobi fields}

\begin{definition}[Radial-even Dirichlet Jacobi field at $Q$]\label{def:jacobi-field}
Let $Q$ be a smooth threshold-cylinder profile.  A
\emph{radial-even Dirichlet Jacobi field at $Q$} is a nonzero function
$H$ satisfying
\begin{equation}\label{eq:jacobi}
 L_QH=0,
 \qquad H(r,\pm1)=0,
\end{equation}
which is radial in $X$, even in $z$, smooth with the usual radial-axis
compatibility at $r=0$, and polynomially decaying at infinity, i.e. for
some $\rho>0$,
\[
 H=O_{C^{2,\alpha}_{\rm sc}}(r^{-\rho}).
\]
It is the Jacobi field associated with the linearization of
\eqref{eq:cylinder} at $Q$ within the radial-even Dirichlet symmetry
class.
\end{definition}
Write
\[
 H=h(r)\ph+H^\perp,
 \qquad \int_{-1}^1H^\perp(r,z)\ph(z)\,dz=0,
\]
Recall from \eqref{eq:w-eq-rig} that
\[
 \mathscr L_\perp
 =-\partial_{rr}-\frac{m-1}{r}\partial_r+H_z,
 \qquad H_z=-\partial_{zz}-\La,
\]
on $\ph^\perp$.  By \cref{lem:universal-tail}, with $\tau=\sigma$,
\begin{equation}\label{eq:Q-tail-normal}
 Q=D_mr^{-2}\ph+R_Q,
 \qquad R_Q=O_{C^{2,\alpha}_{\rm sc}}(r^{-\sigma}),
 \qquad \sigma>2.
\end{equation}
Projecting $L_QH=0$ onto $\ph^\perp$ gives the exact equation
\begin{equation}\label{eq:jacobi-perp-projected}
 \mathscr L_\perp H^\perp
 =2\La P^\perp\!\left(Q(h\ph+H^\perp)\right).
\end{equation}
We record the quantitative transverse estimate used below.
\begin{claim}
Fix $\ell\ge0$ such that
\[
 \|H\|_{\mathcal J_\ell}:=
 \|H\|_{C^{2,\alpha}(\{r<2R_1\})}
 +\sup_{R\ge R_1}R^{-\ell}
   \|H\|_{C^{2,\alpha}_{\rm sc}(A_R)}<\infty.
\]
There exist $R_1,c,C>0$, depending only on the fixed profile $Q$ and
on $m,\alpha,\ell$, such that, after setting
\[
 A_R=\{(X,z):R<|X|<2R,\ |z|<1\},
\]
and
\[
 M_h(R):=\sup_{R/8\le r\le16R}\bigl(|h(r)|+r|h'(r)|\bigr),
\]
one has
\begin{equation}\label{eq:jacobi-perp-annular}
  \|H^\perp\|_{C^{2,\alpha}_{\rm sc}(A_R)}
 \le CR^{-2}M_h(R)+C\|H\|_{\mathcal J_\ell} e^{-cR},\qquad R\ge R_1.
\end{equation}
\end{claim}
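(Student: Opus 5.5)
The plan is to treat \eqref{eq:jacobi-perp-projected} as a massive transverse equation with a source that splits into a part driven by the first mode $h$ and a part that is small because $Q$ decays. The massive inverse of \cref{clm:massive-transverse-inverse} has an exponentially localized Green kernel (with rate $\sqrt{\mu_*}=\sqrt{8\La}$). So the idea is to localize: apply the inverse to a cutoff of $H^\perp$ supported near the dyadic annulus $A_R$. The source near $A_R$ then contributes at the local scale, while everything far away contributes only $e^{-cR}$ times the global norm $\|H\|_{\mathcal J_\ell}$.

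\textbf{Step 1 (source decomposition).} Write the right side of \eqref{eq:jacobi-perp-projected} as
\[
 2\La P^\perp(Qh\ph)+2\La P^\perp(QH^\perp).
\]
By \eqref{eq:Q-tail-normal} and \eqref{eq:Q-unit-C2}, on $r\simeq R$ the first term is bounded in unit-strip $C^{0,\alpha}$ by $CR^{-2}(|h|+|h'|)$. Since $P^\perp(D_mr^{-2}\ph^2)\neq0$ in general, the $R^{-2}$ factor is genuinely present. The second term is $\le CR^{-2}$ times the local size of $H^\perp$.

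\textbf{Step 2 (localized inversion).} Take a smooth radial cutoff $\eta_R$ equal to $1$ on $R/4\le r\le 8R$, supported in $R/8\le r\le16R$, with $|\eta_R'|\lesssim R^{-1}$. Then $\eta_RH^\perp$ solves $\mathscr L_\perp(\eta_RH^\perp)=\eta_R F+\mathcal C$. The commutator $\mathcal C$ involves $\eta_R'H^\perp_r$, $\eta_R''H^\perp$ and $r^{-1}\eta_R'H^\perp$, and lives on the transition zones, where it is bounded by $CR^{-1}$ times the local $C^1$ size of $H^\perp$ there.

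Next use the pointwise $L^2_z$ Green bound from the proof of \cref{clm:massive-transverse-inverse}, i.e. domination by $G_*$ with $G_*(r,s)\lesssim e^{-c|r-s|}$ (after the $s^{m-1}$ weight, in the relevant range). For $r\in(R,2R)$:
\begin{itemize}
 \item the source on $R/4\le s\le 8R$ gives at most $CR^{-2}M_h(R)+CR^{-2}\sup_{R/8\le s\le16R}\|H^\perp(s,\cdot)\|$;
 \item the commutator terms lie at distance $\gtrsim R$ from $(R,2R)$, so they give at most $Ce^{-cR}\cdot R^{\ell}\|H\|_{\mathcal J_\ell}$, and the polynomial factor is absorbed by shrinking $c$.
\end{itemize}
Local Schauder estimates on unit strips, as at the end of the proof of \cref{clm:massive-transverse-inverse}, upgrade this $L^2_z$ bound to unit-strip $C^{2,\alpha}$.

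\textbf{Step 3 (absorption).} The term $CR^{-2}\sup_{[R/8,16R]}|H^\perp|$ is absorbed for $R\ge R_1$ large. It lives on a slightly larger annulus than the left side, so I would run the estimate with the sup over the whole window $[R/8,16R]$ on both sides, using nested cutoffs. Alternatively, iterate once: a bound $\|H^\perp\|\lesssim R^{\ell}\|H\|_{\mathcal J_\ell}$ is a priori available, and multiplying by $R^{-2}$ twice combined with re-localization closes. The cleanest route is probably to set $N(R)=\sup_{[R/4,8R]}\|H^\perp\|_{C^{2,\alpha}(\mathcal U_{r,1/2})}$. Step 2 then gives
\[
 N(R)\le CR^{-2}M_h(R)+CR^{-2}N'(R)+Ce^{-cR}\|H\|_{\mathcal J_\ell},
\]
with $N'$ taken over $[R/8,16R]$. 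Using the crude bound $N'\le CR^{\ell}\|H\|_{\mathcal J_\ell}$ directly is not good enough, so the loss must be handled by a bootstrap on enlarging windows, or by placing the $QH^\perp$ term on the left and treating $\nu$-type potentials via \cref{clm:massive-transverse-inverse}.

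\textbf{Step 4 (scaled norm).} Convert unit-strip $C^{2,\alpha}$ bounds into $C^{2,\alpha}_{\rm sc}(A_R)$. Scaling $X$ by $R$ only improves radial derivatives (by factors $R^{-1}$), so the unit-scale bound dominates the scaled one, using radial symmetry as in the derivation of \eqref{eq:w-sc-r4-proof}.

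The main obstacle is the absorption in Step 3. The $QH^\perp$ coupling is not sign-definite, and it is nonlocal in $z$ through $P^\perp$, so it cannot simply be put into $\nu$. I would first try treating it perturbatively: the operator norm of $u\mapsto 2\La P^\perp(Qu)$ on unit strips is $O(R^{-2})$, which is tiny compared to the gap $8\La$. A Neumann series for the localized inverse therefore converges for $R\ge R_1$, and each term keeps the exponential off-diagonal decay with a slightly reduced $c$.
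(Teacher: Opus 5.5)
\textbf{Gap: Step 4 gets the unit-scale versus scaled comparison backwards.}

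Your Steps 1--3 close once the $QH^\perp$ coupling is moved to the left and inverted by a Neumann series, but they yield only the \emph{unit-strip} bound
\[
 \sup_{R/4\le\rho\le8R}\|H^\perp\|_{C^{2,\alpha}(\mathcal U_{\rho,1/2})}
 \le CR^{-2}M_h(R)+C\|H\|_{\mathcal J_\ell}e^{-cR}.
\]
This is the paper's first stage, \eqref{eq:jacobi-v-unit0}. One smaller correction here: run the Neumann series in a space conjugated by $e^{a\theta(r)}$, with one fixed $a<\sqrt{8\La}$ and a $1$-Lipschitz $\theta$. Since $P^\perp(Q\,\cdot)$ acts fibrewise, it commutes with this radial weight, so no decay rate is lost from term to term. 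Reducing $c$ at every term, as you phrase it, would let the rate degenerate along an infinite series.

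The substantive problem is Step 4. By definition $\|v\|_{C^{2,\alpha}_{\rm sc}(A_R)}=\|v(R\,\cdot,\cdot)\|_{C^{2,\alpha}(\mathcal A)}$, and $\partial_Y=R\partial_X$. So the scaled norm controls $R|\nabla_Xv|$, $R^2|D_X^2v|$, $R|\nabla_Xv_z|$, and a H\"older quotient in which radial increments are divided by $|X-X'|/R$. It is therefore \emph{stronger} than the unit-strip norm, by up to a factor of order $R^{2+\alpha}$, not weaker.

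To reach \eqref{eq:jacobi-perp-annular} you need every radial derivative of $H^\perp$ to gain an extra power of $R$:
\[
 \partial_r^jH^\perp=O\bigl(R^{-2-j}M_h(R)\bigr)+O(e^{-cR})
 \quad\text{on unit strips}.
\]
This is needed for $j=1,2$, and for $j=3$ to control the scaled H\"older seminorm of $D_X^2H^\perp$ over radial distances up to $R$ via the mean-value theorem. None of this follows from the $j=0$ estimate.

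\textbf{How the paper gets these bounds.} It differentiates the transverse Jacobi equation in $r$ up to three times.
\begin{itemize}
\item Each commutator adds the nonnegative potential $j(m-1)r^{-2}$, so the same $G_*$-domination and weighted Neumann inverse still apply.
\item The new sources involve $Q_r,Q_{rr},Q_{rrr}$, which come from the universal tail estimates, and $h',h'',h'''$.
\item Since $M_h$ controls only $h$ and $rh'$, the derivatives $h'',h''',h^{(4)}$ must be extracted from the projected first-mode Euler equation \eqref{eq:jacobi-h-euler-local}. Its source is bounded using the transverse estimates from the previous level.
\item Each round shrinks the radial window, which is why $M_h$ is a supremum over $[R/8,16R]$.
\end{itemize}

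The computation you cite for \eqref{eq:w-sc-r4-proof} actually shows the same thing. It rests on the decaying derivative bounds \eqref{eq:w-natural-derivatives} for $w_r,w_{rr},w_{rrr}$, which were proved by exactly this commuted-equation bootstrap, not on a unit-scale bound for $w$ alone.
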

\begin{proof}
Write $v:=H^\perp$.  We separate the proof into an exterior
unit-strip inverse and a finite derivative bootstrap.

\smallskip\noindent
\emph{1. Exponentially weighted inverses and exterior localization.}
For $j=0,1,2,3$ put
\[
 T_j=\left(\mathscr L_\perp+\frac{j(m-1)}{r^2}\right)^{-1},
 \qquad
 B_Q=2\La\chi_{R_1}P^\perp(Q\,\cdot),
 \qquad
 \mathscr A_j^Q=T_j^{-1}-B_Q,
\]
where $\chi_{R_1}=0$ for $r\le R_1/2$ and $\chi_{R_1}=1$ for
$r\ge R_1$.  The unperturbed inverse is the regular (Friedrichs at
the axis), polynomially bounded realization.  To avoid confusing
polynomial growth with the positive decay weights $X_q,Y_q$ above,
for $p\in\R$ define
\[
 \|F\|_{\mathcal B_p}
   =\sup_{r\ge0}(1+r)^p\|F(r,\cdot)\|_{L^2(-1,1)}.
\]
A bound in $\mathcal B_{-\ell}$ allows growth of order $\ell$.
If $\theta$ is a bounded real $1$-Lipschitz function and $a\ge0$, also set
$\|F\|_{\mathcal B_{p,a,\theta}}
=\|e^{a\theta}F\|_{\mathcal B_p}$.

Let $G_*=G_{8\La,0}$ be the positive radial Green kernel in the proof
of the massive inverse.  For every fixed $p\in\R$ and
$0\le a<\sqrt{8\La}$, its Bessel representation gives
\begin{equation}\label{eq:jacobi-exponential-moment}
 \sup_{r\ge0}(1+r)^p
 \int_0^\infty G_*(r,s)(1+s)^{-p}
                 e^{a|r-s|}s^{m-1}\,ds\le C_{p,a}.
\end{equation}
Here is a direct way to check the weight.  For $r,s\ge1$ the Bessel
bounds give exponential decay at rate $\sqrt{8\La}$ times fixed powers
of $r/s$ and $s/r$.  Split the integral into $s<r/2$,
$r/2\le s\le2r$, and $s>2r$; on the middle part the ratios are
bounded, and on the other parts their fixed powers are absorbed by
$e^{-(\sqrt{8\La}-a)|r-s|}$.  When both radii are bounded the weights
are bounded and the integral is bounded by the resolvent of the
constant source.  The mixed compact--large regions have the same
exponential bound.  This proves \eqref{eq:jacobi-exponential-moment}
for positive and negative $p$.

The scalar kernels for all transverse modes and all $j$ are dominated
by $G_*$, since both the mass and $j(m-1)r^{-2}$ are nonnegative
increments of the reference operator.  Minkowski and Parseval,
followed by $|\theta(r)-\theta(s)|\le|r-s|$, now imply
\begin{equation}\label{eq:jacobi-exponential-conjugation}
 \|T_jF\|_{\mathcal B_{p,a,\theta}}
 \le C_{p,a}\|F\|_{\mathcal B_{p,a,\theta}}.
\end{equation}
These constants are independent of $\theta$, of its supremum norm,
and of $j\in\{0,1,2,3\}$.  Moreover
\[
 \|B_Qv\|_{\mathcal B_{p,a,\theta}}
 \le C R_1^{-2}\|v\|_{\mathcal B_{p,a,\theta}},
\]
because multiplication by $Q$ is bounded on $L^2_z$ and $P^\perp$
is an orthogonal projection.  Fix a small $a>0$.  Choose $R_1$ so
large that $C C_{p,a}R_1^{-2}<1/2$ for $p=0,-\ell$ and also for
$a=0$.  Thus the Neumann series
\begin{equation}\label{eq:jacobi-Aj-neumann}
 (\mathscr A_j^Q)^{-1}
 =\sum_{n=0}^\infty(T_jB_Q)^nT_j
\end{equation}
converges in each of these exponentially weighted spaces, uniformly
in $\theta$.  It is this weighted convergence, rather than an
unweighted operator norm, that supplies off-diagonal decay.  The
solutions obtained in the different weights agree by uniqueness in
the polynomially bounded class.

For an exterior source, use the growth norm
\[
 \|F\|_{\mathcal S_\ell}
 =\|F\|_{\mathcal B_{-\ell}}
 +\sup_{r\ge R_1/4}(1+r)^{-\ell}
                       \|F\|_{C^{0,\alpha}(\mathcal U_r)}.
\]
Let $R\ge16R_1$, $R/4\le\rho\le8R$, and choose a smooth cutoff
$\eta_R$ supported in $R/8<r<16R$, equal to one on
$R/6\le r\le12R$.  Split $F=\eta_RF+(1-\eta_R)F$.
The first part is bounded in $\mathcal B_0$ by its local supremum.
For the second part choose
\[
 \theta(r)=\min\{\operatorname{dist}(r,
                \operatorname{supp}((1-\eta_R)F)),c_1R\},
\]
where $c_1>0$ is small and fixed.  It vanishes on the source support
and is at least $c_2R$ throughout the observation strip, for fixed
$c_2>0$.  The weighted inverse therefore bounds the far contribution
there by
$C(1+R)^\ell e^{-ac_2R}\|F\|_{\mathcal S_\ell}$.
Absorb the polynomial into a slightly smaller exponential.
On translated unit strips, the equation is uniformly elliptic in
$(r,z)$; the nonlocal part is the bounded rank-one term
$\ph\int Qv\ph$.  Its $L^2$ bound first gives a $W^{2,2}$ estimate,
then $C^{0,\gamma}$ for every $\gamma<1$, and finally the boundary
Schauder estimate.  For the far contribution the local source is
zero.  For the near contribution its local $C^{0,\alpha}$ norm is
retained.  We obtain
\begin{equation}\label{eq:jacobi-unit-localized-inverse}
 \begin{split}
 \| (\mathscr A_j^Q)^{-1}F
       \|_{C^{2,\alpha}(\mathcal U_{\rho,1/2})}
 \le{}& C\sup_{R/8\le s\le16R}
                \|F\|_{C^{0,\alpha}(\mathcal U_s)}
       +Ce^{-cR}\|F\|_{\mathcal S_\ell}.
 \end{split}
\end{equation}
The proof applies to any two fixed nested annular intervals with
positive separation in the $r/R$ variable.  We use successively
smaller intervals below.  The bounded range $R_1\le R\le16R_1$
is covered directly by $\|H\|_{\mathcal J_\ell}$, after increasing
the constant in the exponential term.
Before applying the inverse to a derivative of $v=H^\perp$, multiply
that derivative by a fixed cutoff vanishing near the axis.  The
resulting commutator is supported in a fixed annulus and is controlled
by $C\|H\|_{\mathcal J_\ell}$, using local differentiated Schauder
estimates for the homogeneous Jacobi equation.  It therefore
contributes only the exponential term in
\eqref{eq:jacobi-unit-localized-inverse}.  This also removes any issue
with the axis realization of the differentiated radial operators.

\smallskip\noindent
\emph{2. Zeroth transverse estimate and the first-mode derivatives.}
For $r\ge R_1$, the transverse Jacobi equation is
\[
 \left(\mathscr L_\perp-2\La P^\perp(Q\,\cdot)\right)v
 =2\La P^\perp(Qh\ph).
\]
Replacing the exterior operator by $\mathscr A_0^Q$ produces only a
fixed compactly supported error; by
\eqref{eq:jacobi-unit-localized-inverse} this contributes
$O(C\|H\|_{\mathcal J_\ell}e^{-cR})$ on the annuli under consideration.  Since
$Q=O_{C^{2,\alpha}_{\rm sc}}(r^{-2})$, the mean-value theorem gives
\[
 \sup_{R/8\le s\le16R}
 \|P^\perp(Qh\ph)\|_{C^{0,\alpha}(\mathcal U_s)}
 \le CR^{-2}M_h(R).
\]
Hence \eqref{eq:jacobi-unit-localized-inverse} with $j=0$ yields
\begin{equation}\label{eq:jacobi-v-unit0}
 \sup_{R/4\le\rho\le8R}
 \|v\|_{C^{2,\alpha}(\mathcal U_{\rho,1/2})}
 \le CR^{-2}M_h(R)+C\|H\|_{\mathcal J_\ell}e^{-cR}.
\end{equation}

Projecting $L_QH=0$ onto $\ph$ gives
\begin{equation}\label{eq:jacobi-h-euler-local}
 -h''-\frac{m-1}{r}h'-\frac{4(m-4)}{r^2}h=F_H,
\end{equation}
where
\[
 F_H=2\La D_mr^{-2}\int_{-1}^1\ph^2v\,\dd z
 +2\La\int_{-1}^1R_Q(h\ph+v)\ph\,\dd z.
\]
Using \eqref{eq:jacobi-v-unit0},
$R_Q=O_{C^{2,\alpha}_{\rm sc}}(r^{-\sigma})$ with $\sigma>2$, and
the definition of $M_h(R)$, we obtain
\begin{equation}\label{eq:jacobi-h2-simple}
 \sup_{R/4\le r\le8R}|h''(r)|
 \le CR^{-2}M_h(R)+C\|H\|_{\mathcal J_\ell}e^{-cR}.
\end{equation}

\smallskip\noindent
\emph{3. First and second radial derivatives of the transverse field.}
Differentiating the exterior transverse equation once gives
\begin{equation}\label{eq:jacobi-vr-eq-simple}
 \left(\mathscr L_\perp+\frac{m-1}{r^2}
       -2\La P^\perp(Q\,\cdot)\right)v_r
 =2\La P^\perp\!\left(Q_r(h\ph+v)+Qh'\ph\right).
\end{equation}
After insertion of the cutoff defining $\mathscr A_1^Q$, the discrepancy
is compactly supported and hence exponentially small on $A_R$.  From
\eqref{eq:jacobi-v-unit0}, the universal-tail derivative bounds
\[
 Q_r=O_{C^{0,\alpha}_z}(r^{-3}),\qquad Q=O(r^{-2}),
\]
and $|h'|\le CR^{-1}M_h(R)$, the right-hand side of
\eqref{eq:jacobi-vr-eq-simple} is bounded in the local
$C^{0,\alpha}$ norm by $CR^{-3}M_h(R)+Ce^{-cR}$.  Therefore
\eqref{eq:jacobi-unit-localized-inverse} with $j=1$ gives
\begin{equation}\label{eq:jacobi-vr-unit}
 \sup_{R/2\le\rho\le4R}
 \|v_r\|_{C^{2,\alpha}(\mathcal U_{\rho,1/2})}
 \le CR^{-3}M_h(R)+C\|H\|_{\mathcal J_\ell}e^{-cR}.
\end{equation}

Differentiating $F_H$ and using
\eqref{eq:jacobi-v-unit0}--\eqref{eq:jacobi-vr-unit}, together with
$R_Q=O_{C^{2,\alpha}_{\rm sc}}(r^{-\sigma})$, gives
\[
 \sup_{R/2\le r\le4R}|F_H'(r)|
 \le CR^{-3}M_h(R)+C\|H\|_{\mathcal J_\ell}e^{-cR}.
\]
Differentiating \eqref{eq:jacobi-h-euler-local} therefore yields
\begin{equation}\label{eq:jacobi-h3-simple}
 \sup_{R/2\le r\le4R}|h'''(r)|
 \le CR^{-3}M_h(R)+C\|H\|_{\mathcal J_\ell}e^{-cR}.
\end{equation}

Differentiating the transverse equation twice gives
\begin{align}\label{eq:jacobi-vrr-eq-simple}
 &\left(\mathscr L_\perp+\frac{2(m-1)}{r^2}
       -2\La P^\perp(Q\,\cdot)\right)v_{rr}\notag\\
 &\quad=2\La P^\perp\!\left(
 Q_{rr}(h\ph+v)+2Q_r(h'\ph+v_r)+Qh''\ph\right)
 +\frac{2(m-1)}{r^3}v_r.
\end{align}
All terms on the right are
$O_{C^{0,\alpha}}(R^{-4}M_h(R))+O(e^{-cR})$ on the relevant unit
strips.  Hence the case $j=2$ of
\eqref{eq:jacobi-unit-localized-inverse} yields
\begin{equation}\label{eq:jacobi-vrr-unit}
 \sup_{3R/4\le\rho\le3R}
 \|v_{rr}\|_{C^{2,\alpha}(\mathcal U_{\rho,1/2})}
 \le CR^{-4}M_h(R)+C\|H\|_{\mathcal J_\ell}e^{-cR}.
\end{equation}

\smallskip\noindent
\emph{4. One more radial derivative and recovery of the scaled norm.}
The estimates already obtained also justify the H\"older norm of the
next source.  Differentiating the first-mode source twice, using the
bounds for $v,v_r,v_{rr}$, gives
$|F_H''|\le CR^{-4}M_h(R)+C\|H\|_{\mathcal J_\ell}e^{-cR}$ on
$3R/4\le r\le3R$.  The twice differentiated first-mode equation
therefore yields
\[
 |h^{(4)}|\le CR^{-4}M_h(R)+C\|H\|_{\mathcal J_\ell}e^{-cR}
 \quad(3R/4\le r\le3R).
\]
In particular $h^{(3)}$ has the required unit-strip $C^{0,\alpha}$ bound.
Differentiating \eqref{eq:jacobi-vrr-eq-simple} once more gives
\begin{align}\label{eq:jacobi-vrrr-eq-simple}
 &\left(\mathscr L_\perp+\frac{3(m-1)}{r^2}
       -2\La P^\perp(Q\,\cdot)\right)v_{rrr}\notag\\
 &\quad=2\La P^\perp\!\left(
 Q_{rrr}(h\ph+v)+3Q_{rr}(h'\ph+v_r)
 +3Q_r(h''\ph+v_{rr})+Qh'''\ph\right)\notag\\
 &\qquad+\frac{6(m-1)}{r^3}v_{rr}
          -\frac{6(m-1)}{r^4}v_r.
\end{align}
The universal-tail estimates proved in Step~3 give
\[
 Q_r=O(r^{-3}),\qquad Q_{rr}=O(r^{-4}).
\]
For the third derivative, differentiate the radial equation
\eqref{eq:p-radial-rig} once more.  Since
\[
 g''(r)=2\La\int_{-1}^1
 \bigl(Q_r^2+QQ_{rr}\bigr)\ph\,\dd z=O(r^{-6}),
\]
one obtains $p''''=O(r^{-6})$.  Together with
\eqref{eq:wrrr-rig}, this yields
\[
 Q_{rrr}=O_{C^{0,\alpha}}(r^{-5})
\]
on translated unit strips.  Together with the preceding estimates, the
right-hand side of \eqref{eq:jacobi-vrrr-eq-simple} is
$O_{C^{0,\alpha}}(R^{-5}M_h(R))+O(e^{-cR})$.  Applying
\eqref{eq:jacobi-unit-localized-inverse} with $j=3$ gives
\begin{equation}\label{eq:jacobi-vrrr-unit}
 \sup_{R\le\rho\le2R}
 \|v_{rrr}\|_{C^{2,\alpha}(\mathcal U_{\rho,1/2})}
 \le CR^{-5}M_h(R)+C\|H\|_{\mathcal J_\ell}e^{-cR}.
\end{equation}

The estimates
\eqref{eq:jacobi-v-unit0}, \eqref{eq:jacobi-vr-unit},
\eqref{eq:jacobi-vrr-unit}, and \eqref{eq:jacobi-vrrr-unit} imply, on
$A_R$,
\[
 |v|+R|v_r|+R^2|v_{rr}|
 \le CR^{-2}M_h(R)+C\|H\|_{\mathcal J_\ell}e^{-cR}.
\]
Since $v$ is radial in $X$, the identities
\[
 \partial_{X_i}v=v_r\frac{X_i}{r},\qquad
 \partial_{X_iX_j}v
 =v_{rr}\frac{X_iX_j}{r^2}
 +\frac{v_r}{r}\left(\delta_{ij}-\frac{X_iX_j}{r^2}\right)
\]
control the scaled first and second $X$ derivatives.  The
$C^{2,\alpha}$ unit-strip estimates control all $z$ derivatives and
mixed derivatives, while \eqref{eq:jacobi-vrrr-unit} and the mean-value
theorem control the radial Hölder seminorm of the second $X$ derivatives
on distances up to order $R$.  Consequently
\[
 \|v\|_{C^{2,\alpha}_{\rm sc}(A_R)}
 \le CR^{-2}M_h(R)+C\|H\|_{\mathcal J_\ell}e^{-cR},
\]
which is \eqref{eq:jacobi-perp-annular}.

In the abbreviated asymptotic notation used below this is
\begin{equation}\label{eq:jacobi-perp}
 H^\perp
 =O_{C^{2,\alpha}_{\rm sc}}
   \bigl(r^{-2}(|h|+r|h'|)\bigr)+O(e^{-cr}).
\end{equation}
\end{proof}

The first-mode equation is governed by the Euler operator
\begin{equation}\label{eq:euler}
 Eh:=-h''-\frac{m-1}{r}h'-\frac{4(m-4)}{r^2}h,
\end{equation}
whose indicial solutions are $r^{-\beta_-}$ and $r^{-\beta_+}$, with
\[
 W(r^{-\beta_-},r^{-\beta_+})
 =-(\beta_+-\beta_-)r^{-(m-1)}.
\]
The next lemma records the complete weight bootstrap and the slow/fast
alternative.  It will be used later for the branch tangent without repeating
this argument.

\begin{lemma}[Jacobi indicial bootstrap and slow/fast alternative]
\label{lem:jacobi-slow-fast}
Let $Q$ be a smooth semistable profile satisfying
\eqref{eq:Q-tail-normal}, and let
\[
 H=h(r)\ph+H^\perp
\]
be a radial-even Dirichlet Jacobi field which is regular at the axis and
polynomially decaying at infinity.  Then there exists a unique slow
coefficient $c_-(H)$ given by
\begin{equation}\label{eq:slow-coefficient-limit}
 c_-(H)
 =\lim_{r\to\infty}
 \frac{r^{\beta_-}}{\beta_+-\beta_-}
 \bigl(rh'(r)+\beta_+h(r)\bigr).
\end{equation}
Exactly one of the following alternatives occurs:
\begin{equation}\label{eq:jacobi-slow-general}
 H=c_-(H)r^{-\beta_-}\ph
   +o_{C^{2,\alpha}_{\rm sc}}(r^{-\beta_-})
 \qquad\text{if }c_-(H)\ne0,
\end{equation}
or
\begin{equation}\label{eq:jacobi-fast-general}
 H=c_+(H)r^{-\beta_+}\ph
   +o_{C^{2,\alpha}_{\rm sc}}(r^{-\beta_+})
 \qquad\text{if }c_-(H)=0,
\end{equation}
with faster decay if also $c_+(H)=0$.  The same expansions hold after
division by $\ph$, uniformly up to $z=\pm1$.
\end{lemma}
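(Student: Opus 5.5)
The plan is to treat the first-mode equation \eqref{eq:jacobi-h-euler-local} as a perturbed Euler equation $Eh=F_H$ and to run a weight bootstrap that combines variation of constants with the transverse estimate \eqref{eq:jacobi-perp-annular}. Since $H$ is polynomially decaying, $M_h(R)\le CR^{-\rho}$ for some $\rho>0$. By \eqref{eq:jacobi-perp-annular}, $H^\perp=O(r^{-2-\rho})+O(e^{-cr})$. Because $R_Q=O(r^{-\sigma})$ with $\sigma>2$, the source then satisfies
\[
 |F_H(r)|\le C r^{-2-\rho-\kappa},\qquad \kappa:=\min\{2,\sigma-2\}>0.
\]
Thus $F_H$ gains a fixed power $r^{-\kappa}$ relative to the scale $r^{-2}|h|$ natural for $E$.

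The first step is the variation-of-constants formula on $[r_0,\infty)$, using the indicial pair $r^{-\beta_\pm}$ and the Wronskian recorded after \eqref{eq:euler}:
\[
 h(r)=a\,r^{-\beta_-}+b\,r^{-\beta_+}
 +\frac{1}{\beta_+-\beta_-}\int_{r}^{\infty}\!\bigl(r^{-\beta_-}s^{\beta_-}-r^{-\beta_+}s^{\beta_+}\bigr)s^{-1}\cdot s^{2}F_H(s)\,ds .
\]
Here the growing homogeneous solutions are excluded by decay, and both rates are decaying because $0<\beta_-<\beta_+$. The integral converges whenever $s^{\beta_-+1}F_H(s)$ is integrable. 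If it is not, I would instead use the formula with the $r^{-\beta_-}$ part integrated from $r_0$.

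Next comes the bootstrap. If $h=O(r^{-\mu})$ with $\mu$ not an indicial exponent, the formula gives $h=O(r^{-\min\{\mu+\kappa,\beta_-\}})$ up to a slow term, or $O(r^{-\min\{\mu+\kappa,\beta_+\}})$ once the slow coefficient vanishes. Logarithmic losses at the thresholds are handled by lowering $\kappa$ slightly. The bound $r|h'|$ is obtained from the differentiated formula, so $M_h$ improves in the same way, and finitely many steps pass $\beta_-$. Concretely, the quantity $\frac{r^{\beta_-}}{\beta_+-\beta_-}(rh'+\beta_+h)$ equals $a$ plus an integral tail; this tail tends to zero once $F_H=O(r^{-2-\beta_--\kappa'})$, which the bootstrap guarantees. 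This establishes the existence and uniqueness of the limit \eqref{eq:slow-coefficient-limit}.

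For the alternatives: if $c_-\ne0$, then $h=c_-r^{-\beta_-}+O(r^{-\beta_--\kappa'})$, and \eqref{eq:jacobi-perp-annular} gives $H^\perp=O(r^{-\beta_--2})$, which yields \eqref{eq:jacobi-slow-general}. If $c_-=0$, the slow mode is absent and the bootstrap continues up to $\beta_+$, giving $c_+$ as the analogous limit $\frac{r^{\beta_+}}{\beta_+-\beta_-}(-rh'-\beta_-h)$. If $c_+=0$ as well, the iteration continues and gives faster decay. The scaled $C^{2,\alpha}$ statements follow from \eqref{eq:jacobi-perp-annular} together with the higher derivative bounds \eqref{eq:jacobi-h2-simple}--\eqref{eq:jacobi-h3-simple} applied to $h-c_\mp r^{-\beta_\mp}$. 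Division by $\ph$ uses the Dirichlet trace of $H^\perp$ and its $C^1_z$ control, as in \eqref{eq:w-minimal-tail}.

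The main obstacle I expect is the coupling between $h$ and $H^\perp$ through $M_h$, which is a sup over the enlarged annulus $[R/8,16R]$. The bootstrap therefore has to be run with the weighted sup norms $\sup_r r^{\mu}(|h|+r|h'|)$ rather than pointwise. I would first check that the $e^{-cr}$ terms and the compact region only perturb the constants $a,b$ and do not affect the rates.
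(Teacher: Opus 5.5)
Your proposal is correct and follows essentially the same route as the paper. The annular transverse estimate \eqref{eq:jacobi-perp-annular} feeds a source bound $F_H=O(r^{-\rho-\sigma})$ into Euler variation of constants, so each step gains $\sigma-2$. The slow coefficient is the limit of $\frac{r^{\beta_-}}{\beta_+-\beta_-}(rh'+\beta_+h)$, which is equivalent to the paper's $a_-(t)$ having an integrable derivative. In the fast case the bootstrap continues up to $\beta_+$.

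One small correction: the fully tail-integrated formula you wrote requires $s^{\beta_++1}F_H\in L^1$, not $s^{\beta_-+1}F_H\in L^1$. So until the final fast step, the $r^{-\beta_+}$ part must be integrated from $r_0$, as in \eqref{eq:euler-preindicial-voc} and \eqref{eq:euler-between-roots-general}. Also, both Euler homogeneous solutions decay, so no "growing" solution is discarded and both coefficients must be kept.
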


\begin{definition}[Slow and fast radial-even Jacobi fields]\label{def:slow-fast-jacobi}
For a radial-even Dirichlet Jacobi field covered by
\cref{lem:jacobi-slow-fast}, the number $c_-(H)$ in
\eqref{eq:slow-coefficient-limit} is its \emph{slow coefficient}.  We call
$H$ \emph{slow} if $c_-(H)\ne0$ and \emph{fast} if $c_-(H)=0$.  Thus a slow
field has leading first-mode decay $r^{-\beta_-}\ph$, whereas a fast field
has decay at least $r^{-\beta_+}\ph$ in the first mode, with the transverse
complement decaying still faster as specified by
\cref{lem:jacobi-slow-fast}.
\end{definition}

\begin{proof}
Projecting $L_QH=0$ onto the first transverse mode and using
\eqref{eq:Q-tail-normal} gives
\begin{equation}\label{eq:jacobi-first-general}
 Eh=F_H,
\end{equation}
where
\begin{align}\label{eq:jacobi-first-source-general}
 F_H(r)
 :={}&2\La D_mr^{-2}\int_{-1}^1\ph^2H^\perp\,\dd z \\
 &+2\La\int_{-1}^1R_Q(h\ph+H^\perp)\ph\,\dd z.\notag
\end{align}
Here we used $2\La D_m\int_{-1}^1\ph^3=4(m-4)$.  The transverse
component is controlled by the annular estimate
\eqref{eq:jacobi-perp-annular}.

\medskip\noindent
\emph{Step 1: bootstrap up to the slow indicial root.}
By polynomial decay there exists $\rho_0>0$ such that
\begin{equation}\label{eq:jacobi-initial-rho}
 H=O_{C^{2,\alpha}_{\rm sc}}(r^{-\rho_0}).
\end{equation}
Suppose, more generally, that for some $\rho>0$,
\begin{equation}\label{eq:jacobi-rho-hyp}
 H=O_{C^{2,\alpha}_{\rm sc}}(r^{-\rho}).
\end{equation}
Then $M_h(R)=O(R^{-\rho})$, and
\eqref{eq:jacobi-perp-annular} yields
\begin{equation}\label{eq:jacobi-perp-rho-gain}
 H^\perp=O_{C^{2,\alpha}_{\rm sc}}(r^{-\rho-2}).
\end{equation}
Substituting this and $R_Q=O_{C^{2,\alpha}_{\rm sc}}(r^{-\sigma})$
into \eqref{eq:jacobi-first-source-general} gives
\begin{equation}\label{eq:jacobi-source-rho-gain}
 F_H=O_{C^{0,\alpha}_{\rm sc}}(r^{-\rho-\sigma}),
\end{equation}
because the first term in \eqref{eq:jacobi-first-source-general} is
$O(r^{-\rho-4})$ and $2<\sigma<4$.

We now make the Euler improvement explicit.  If $Eh=f$ on $[R,\infty)$,
variation of constants with both homogeneous coefficients retained gives
\begin{align}\label{eq:euler-preindicial-voc}
 h(r)={}&A r^{-\beta_-}+B r^{-\beta_+}
 -\frac{r^{-\beta_-}}{\beta_+-\beta_-}
   \int_R^r s^{\beta_-+1}f(s)\,\dd s \\
 &+\frac{r^{-\beta_+}}{\beta_+-\beta_-}
   \int_R^r s^{\beta_++1}f(s)\,\dd s.\notag
\end{align}
Consequently, if $f=O(r^{-q})$, then for every
\begin{equation}\label{eq:euler-preindicial-improve}
 0<\rho'<\min\{\beta_-,q-2\}
\end{equation}
one has $h=O_{C^{2,\alpha}_{\rm sc}}(r^{-\rho'})$; at the borderline $q-2=\beta_-$ one has
$r^{-\beta_-}\log r=O(r^{-\rho'})$ for every $\rho'<\beta_-$, so the
same conclusion holds with any such $\rho'$.  Applying this with $q=\rho+\sigma$ and combining with
\eqref{eq:jacobi-perp-rho-gain}, we obtain
\begin{equation}\label{eq:jacobi-weight-step}
 H=O_{C^{2,\alpha}_{\rm sc}}(r^{-\rho'})
 \quad\text{for every}\quad
 \rho'<\min\{\beta_-,\rho+\sigma-2\}.
\end{equation}
Thus, as long as the current weight is below $\beta_-$, it increases by
an arbitrarily small amount less than the fixed positive number
$\sigma-2$.  Starting from \eqref{eq:jacobi-initial-rho}, a finite number
of applications of \eqref{eq:jacobi-weight-step} reaches every prescribed
weight $\rho<\beta_-$. 
This is the bootstrap up to the slow indicial root.

Choose now $\rho<\beta_-$ so close to $\beta_-$ that
\begin{equation}\label{eq:jacobi-mu-above-slow}
 \beta_-<\mu<\min\{\beta_+,\rho+\sigma-2\}
\end{equation}
for some $\mu$.  Then \eqref{eq:jacobi-source-rho-gain} implies
$F_H\in C^{0,\alpha}_{\mu+2}$.  Rewriting
\eqref{eq:euler-preindicial-voc} by replacing the first integral with a
tail integral and absorbing the resulting constant into $A$ gives
\begin{align}\label{eq:euler-between-roots-general}
 h(r)=c_-r^{-\beta_-}+c_+r^{-\beta_+}
 +\frac1{\beta_+-\beta_-}\Bigg[
 &r^{-\beta_-}\int_r^\infty s^{\beta_-+1}F_H(s)\,\dd s\\
 &+r^{-\beta_+}\int_R^r s^{\beta_++1}F_H(s)\,\dd s
 \Bigg].\notag
\end{align}
The bracketed term is
$O_{C^{2,\alpha}_{\rm sc}}(r^{-\mu})$.  Therefore $c_-$ is uniquely
determined.  
With $t=\log r$ and
$y(t)=h(e^t)$, equation \eqref{eq:jacobi-first-general} becomes
\[
 y''+(m-2)y'+4(m-4)y=q(t),
 \qquad q(t):=-e^{2t}F_H(e^t),
\]
and
\[
 a_-(t):=
 \frac{e^{\beta_-t}}{\beta_+-\beta_-}
 \bigl(y'(t)+\beta_+y(t)\bigr)
\]
satisfies
\[
 a_-'(t)=\frac{e^{\beta_-t}}{\beta_+-\beta_-}q(t).
\]
Because \eqref{eq:jacobi-mu-above-slow} gives
$e^{\beta_-t}q(t)\in L^1(T,\infty)$, $a_-(t)$ converges.  Returning to
$r=e^t$ gives precisely \eqref{eq:slow-coefficient-limit}.

If $c_-(H)\ne0$, formula \eqref{eq:euler-between-roots-general} and
\eqref{eq:jacobi-perp-rho-gain} yield
\eqref{eq:jacobi-slow-general}.

\medskip\noindent
\emph{Step 2: bootstrap from a vanishing slow coefficient to the fast root.}
Assume $c_-(H)=0$.  Formula \eqref{eq:euler-between-roots-general} first
gives $H=O_{C^{2,\alpha}_{\rm sc}}(r^{-\mu})$ for the weight $\mu$ in
\eqref{eq:jacobi-mu-above-slow}.  Repeating
\eqref{eq:jacobi-perp-rho-gain}--\eqref{eq:jacobi-source-rho-gain} and
using \eqref{eq:euler-between-roots-general} with the already determined
slow coefficient equal to zero improves the exponent according to
\[
 \mu<\mu'<\min\{\beta_+,\mu+\sigma-2\}.
\]
Since $\sigma-2>0$, finitely many iterations place the weight arbitrarily
close to $\beta_+$ from below.  Choose such a weight, still denoted by
$\mu$, so that
\begin{equation}\label{eq:jacobi-fast-integrability-general}
 \mu+\sigma>\beta_++2,
 \qquad
 \mu+2>\beta_+.
\end{equation}
Then $F_H=O(r^{-q})$ with $q:=\mu+\sigma>\beta_++2$, hence
\[
 I_+:=\int_R^\infty s^{\beta_++1}F_H(s)\,\dd s
\]
converges absolutely.  In \eqref{eq:euler-between-roots-general} write
\[
 \int_R^r s^{\beta_++1}F_H(s)\,\dd s
 =I_+-\int_r^\infty s^{\beta_++1}F_H(s)\,\dd s.
\]
Absorbing $I_+/(\beta_+-\beta_-)$ into the coefficient of
$r^{-\beta_+}$ and estimating both remaining tail integrals by
$O(r^{2-q})=o(r^{-\beta_+})$ gives
\[
 h(r)=c_+(H)r^{-\beta_+}+o(r^{-\beta_+}).
\]
Moreover \eqref{eq:jacobi-perp-rho-gain} and the second inequality in
\eqref{eq:jacobi-fast-integrability-general} imply
$H^\perp=o_{C^{2,\alpha}_{\rm sc}}(r^{-\beta_+})$, proving
\eqref{eq:jacobi-fast-general}.  If $c_+(H)=0$, the same formula gives
strictly faster decay.

Finally, all remainders have zero Dirichlet trace at $z=\pm1$.  The
boundary Schauder estimate and the factorization
$\ph(z)\asymp1-|z|$ therefore give the same slow and fast expansions
after division by $\ph$, uniformly up to the transverse boundary.
\end{proof}

For continuation we use a graph realization of the massive transverse
block.  The scaled H\"older spaces remain useful for profiles and
homogeneous Jacobi fields; the domain of the operator on arbitrary
sources is specified separately.

Fix
\begin{equation}\label{eq:mu-choice}
 \beta_-<\mu<\min\{\beta_+,\beta_-+\sigma-2\},
 \qquad \psi_-=\chi_\infty r^{-\beta_-}\ph.
\end{equation}
Let $\mathcal Y_\mu^\perp=P^\perp C^{0,\alpha}_\mu(\Cyl)$, with the
radial and even symmetries.
\begin{definition}[Massive transverse graph realization]\label{def:massive-graph-domain}
The \emph{massive transverse graph domain} is
\begin{equation}\label{eq:massive-graph-domain}
 \begin{split}
 \mathcal G_\mu^\perp=\{w:
 &\ w\in C^{2,\alpha}_{\mathrm{loc}}(\overline\Cyl),\quad
 w(r,\pm1)=0,\quad P_1w=0,\\
 &\ w\in C^{0,\alpha}_\mu(\Cyl),\quad
 \mathscr L_\perp w\in\mathcal Y_\mu^\perp\},\\
 \|w\|_{\mathcal G_\mu^\perp}
 &=\|w\|_{C^{0,\alpha}_\mu}
       +\|\mathscr L_\perp w\|_{C^{0,\alpha}_\mu}.
 \end{split}
\end{equation}
All functions in this space have the radial-axis compatibility and the
symmetries imposed above.  The operator $\mathscr L_\perp$ is
$-\Delta_X+H_z$ on $\ph^\perp$.  The adjective \emph{graph} refers to the
fact that both $w$ and $\mathscr L_\perp w$ are controlled in the same
weighted topology; no unproved gain of scaled $X$ derivatives for
arbitrary sources is built into the definition.
\end{definition}

\begin{lemma}[The massive graph realization]\label{lem:massive-graph}
The space $\mathcal G_\mu^\perp$ is Banach, and
\[
 \mathscr L_\perp:\mathcal G_\mu^\perp\longrightarrow
                         \mathcal Y_\mu^\perp
\]
is an isomorphism.  On every fixed compact cylinder, its graph norm
controls the usual $C^{2,\alpha}$ norm on a smaller compact cylinder.
It also controls the physical boundary factor:
\begin{equation}\label{eq:massive-graph-boundary-factor}
 \sup_{r\ge0}(1+r)^\mu
 \left(\|w(r,\cdot)\|_{C^1([-1,1])}
       +\sup_{|z|<1}\frac{|w(r,z)|}{\ph(z)}\right)
 \le C\|w\|_{\mathcal G_\mu^\perp}.
\end{equation}
If a source has scaled derivatives in $X$ through order two with their
corresponding weights, then the solution has the same scaled
$X$-regularity.  In particular
\begin{equation}\label{eq:massive-same-order}
 \|\mathscr L_\perp^{-1}F\|_{C^{2,\alpha}_\mu}
 \le C\|F\|_{C^{2,\alpha}_\mu}
 \qquad(F\in P^\perp C^{2,\alpha}_\mu).
\end{equation}
\end{lemma}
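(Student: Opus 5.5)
The plan is to build everything on the massive transverse tail estimate, \cref{clm:massive-transverse-inverse}, applied with $\nu\equiv0$ and $q=\mu$. Completeness of $\mathcal G_\mu^\perp$ comes first. Take a Cauchy sequence $w_n$. Then $w_n\to w$ in $C^{0,\alpha}_\mu$ and $\mathscr L_\perp w_n\to F$ in $\mathcal Y_\mu^\perp$. Local interior and flat-boundary Schauder estimates on compact cylinders, including the axis charts where the equation is read in the full $(X,z)$ variables, give $C^{2,\alpha}$ convergence on compact sets. Hence $w\in C^{2,\alpha}_{\rm loc}$, the conditions $w(r,\pm1)=0$ and $P_1w=0$ pass to the limit, and $\mathscr L_\perp w=F$. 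This also proves the claim that the graph norm controls $C^{2,\alpha}$ on smaller compact cylinders.

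For the isomorphism, injectivity holds because any $w$ in the kernel is a regular--decaying homogeneous solution. Indeed, $w\in C^{0,\alpha}_\mu$ decays, and local Schauder estimates upgrade this to $C^{1,\alpha}$ decay on unit strips. The kernel-triviality argument given after \cref{def:regular-decaying-realization} then forces $w=0$.

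For surjectivity, take $F\in\mathcal Y_\mu^\perp$. Its $Y_\mu$ norm is bounded by $C\|F\|_{C^{0,\alpha}_\mu}$: the unit-strip $C^{0,\alpha}$ norm is dominated by the scaled annular norm because $d_R\le$ Euclidean distance for $R\ge1$, and the $L^2_z$ norm is bounded by the sup. So \cref{clm:massive-transverse-inverse} produces a solution $u$ with $\sup_r(1+r)^\mu\|u\|_{C^{2,\alpha}(\mathcal U_{r,1/2})}\le C\|F\|$.

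The step I expect to be the main obstacle is turning this unit-strip control into the scaled norm $C^{0,\alpha}_\mu$. The difficulty is the Hölder quotient for radial separations up to order $R$. For $|X-X'|\le R$ I will interpolate. If $|X-X'|\le1$, the unit $C^1$ bound gives a difference of size $\le CR^{-\mu}|X-X'|\le CR^{-\mu}(|X-X'|/R)^\alpha R^{\alpha}$. That factor $R^\alpha$ is the danger, so it needs care. If $|X-X'|\ge1$, I simply use $2\sup|u|\le CR^{-\mu}$ against $d_R\ge R^{-1}$, which again loses $R^\alpha$.

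The fix I intend is to first apply the claim with a slightly larger decay exponent, and if that is not available, to avoid the loss by commuting one radial derivative. For the second route, $u_r$ solves $(\mathscr L_\perp+(m-1)r^{-2})u_r=F_r$, but $F$ has only scaled Hölder regularity, so this derivative is not directly available. I therefore expect to need a scaled Schauder estimate instead. On $A_R$, rescale $X=RY$. The operator $-R^{-2}\Delta_Y+H_z$ with mass gap $8\La$ is handled mode by mode: each mode solves $-R^{-2}\Delta_Y a+\mu_k a=f_k$, and the kernel $G_*$ is exponentially localized at scale $1$, which is small compared with $R$. Convolving a scaled-Hölder source against an exponentially localized kernel preserves the scaled Hölder seminorm, because $\int G_*=O(\mu_k^{-1})$ uniformly. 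That gives $\|u\|_{C^{0,\alpha}_{\rm sc}(A_R)}\le C\|F\|_{C^{0,\alpha}_{\rm sc}(\tilde A_R)}+$ (exponentially small far contributions), using the weighted off-diagonal bound \eqref{eq:jacobi-exponential-moment}. The $z$-Hölder part is handled by the unit-strip Schauder estimate, since $z$ is unscaled. Summing over $R$ gives $u\in\mathcal G_\mu^\perp$ with bounded inverse, and the open mapping theorem is not even needed.

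The boundary factor \eqref{eq:massive-graph-boundary-factor} follows from the unit-strip $C^{2,\alpha}$ bound, the condition $w(r,\pm1)=0$, and $\ph\asymp1-|z|$.

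For \eqref{eq:massive-same-order}, I will differentiate radially as in Step~3 of \cref{lem:universal-tail}. The derivatives $w_r,w_{rr},w_{rrr}$ solve the commuted equations with $j(m-1)r^{-2}$, whose sources are $F_r$, $F_{rr}+2(m-1)r^{-3}w_r$, and so on, with weights $\mu+1,\mu+2,\mu+3$. \Cref{clm:inverse-square-tail}, used inductively, then gives $R^{\mu+j}$ bounds. Finally, the radial-to-Cartesian formulas recover the scaled $C^{2,\alpha}$ norm exactly as in the derivation of \eqref{eq:w-sc-r4-proof}.
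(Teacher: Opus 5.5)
Your route to the isomorphism differs from the paper's, and it is workable. The paper gets existence, uniqueness and the full $C^{0,\alpha}_\mu$ bound in one step from the product formula $\int_0^\infty e^{t\Delta_Y}e^{-tH_z}P^\perp\,dt$ of \cref{prop:massive-product-resolvent}. There, Gaussian averaging is an average of $X$-translates, which costs only a polynomial factor in the weighted scaled norm, and that factor is absorbed by the exponential decay of the transverse semigroup. You instead take existence and unit-strip bounds from \cref{clm:massive-transverse-inverse}, correctly observe that these lose a factor $R^\alpha$ in the scaled H\"older quotient, and repair this by a per-mode Cartesian convolution with the Bessel kernel of $-\Delta_X+\mu_k$. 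That is the same mechanism in a different form. Two points need tightening. Your ``first route'' is not available: a source in $C^{0,\alpha}_\mu$ has no extra decay, so the convolution step is mandatory. And to pass from per-mode seminorms to the pointwise-in-$z$ seminorm of $w$, you should record the summation $\sum_k\mu_k^{-1}\sup|\phi_k|<\infty$, which holds since $\mu_k\sim k^2$. The paper's heat representation avoids modes altogether.

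The step that fails is the same-order estimate \eqref{eq:massive-same-order}. You run the radial bootstrap of Step~3 of \cref{lem:universal-tail} through $w_r,w_{rr},w_{rrr}$. However, the equation for $w_{rrr}$ has source $F_{rrr}+6(m-1)r^{-3}w_{rr}-6(m-1)r^{-4}w_r$, and $F\in P^\perp C^{2,\alpha}_\mu$ has no third derivative. In \cref{lem:universal-tail} this was harmless only because $Q$ is smooth. Without the $w_{rrr}$ step, \cref{clm:inverse-square-tail} applied to $w_{rr}$ gives only $|w_{rrr}|\lesssim R^{-\mu-2}$. The scaled H\"older seminorm of $R^2D_X^2w$ on $A_R$ then suffers exactly the $R^\alpha$ loss you identified at order zero, so the recovery ``as in \eqref{eq:w-sc-r4-proof}'' does not go through.

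The remedy is already in your hands. The massive resolvent is translation invariant in $X$, so it commutes with Cartesian derivatives $\partial_X^\nu$, $|\nu|\le2$. Since $\partial_X^\nu F\in C^{0,\alpha}_{\mu+|\nu|}$, your order-zero scaled convolution estimate, which does not need radiality, gives $\partial_X^\nu w\in C^{0,\alpha}_{\mu+|\nu|}$. The $z$-derivatives and mixed derivatives then follow from unit-strip Schauder estimates for $\partial_X^\nu w$ and from $w_{zz}=-\La w-\Delta_Xw-F$. This is precisely the paper's ``commute $X$-derivatives with the product resolvent.''
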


\begin{proof}
This is the case $\rho=1$ of the uniform product-resolvent estimate in
\cref{prop:massive-product-resolvent}.  The compact-cylinder
$C^{2,\alpha}$ estimate and the boundary quotient estimate
\eqref{eq:massive-graph-boundary-factor} are included there.  The
same-order estimate \eqref{eq:massive-same-order} follows by commuting
$X$-derivatives with the product resolvent.  See also
Lockhart--McOwen~\cite{LM} for the general weighted elliptic framework on
noncompact manifolds; the appendix records the concrete polynomial-H\"older
version used here.
\end{proof}

\begin{definition}[Normalized Jacobi domain, target, and bordered operator]\label{def:normalized-jacobi-spaces}
Define the normalized domain and target by
\begin{equation}\label{eq:normalized-graph-spaces}
 \begin{split}
 \mathscr D_\mu
 &=\spanop\{\psi_-\}
   \oplus\bigl(C^{2,\alpha}_{\mu,\mathrm{rad}}(\R^m)\ph\bigr)
   \oplus\mathcal G_\mu^\perp,\\
 \mathscr Y_\mu
 &=P_1C^{0,\alpha}_{\mu+2}\oplus\mathcal Y_\mu^\perp.
 \end{split}
\end{equation}
The direct sums carry their natural sum norms.  Point evaluation at the
axis is continuous by the local estimate in
\cref{lem:massive-graph}.  For a profile $Q$, the \emph{normalized Jacobi
operator} is the bordered map
\[
 \mathcal N_QH:=(L_QH,H(0,0)):
 \mathscr D_\mu\longrightarrow\mathscr Y_\mu\times\R.
\]
The scalar component fixes the center-height normalization and removes
the branch tangent from the normalized kernel.
\end{definition}

Homogeneous fields in this graph domain have the stronger tail
regularity used in the Jacobi analysis above.  Indeed their first mode
satisfies $h=O_{C^{2,\alpha}_{\mathrm{sc}}}(r^{-\beta_-})$, and their
complement is initially bounded by $Cr^{-\mu}$.  Local Schauder
estimates first give a polynomial bound in the norm $\mathcal J_\ell$
used in \eqref{eq:jacobi-perp-annular}, for some finite $\ell$.
That estimate then yields
\[
 \|H^\perp\|_{C^{2,\alpha}_{\mathrm{sc}}(A_R)}
 \le CR^{-\beta_--2}+Ce^{-cR}.
\]
Thus the indicial bootstrap and the slow/fast alternative apply to all
homogeneous fields in $\mathscr D_\mu$.

\medskip\noindent\emph{Averaged coefficients.}
The preceding annular estimate and indicial argument also apply to
$L_{\overline Q}$ when $\overline Q=(Q_1+Q_2)/2$ is the average of
two smooth profiles with the same threshold tail.  Indeed every
coefficient estimate and every differentiated tail bound used above
is preserved by taking this average.  Projection of
$L_{\overline Q}H=0$ gives exactly
\eqref{eq:jacobi-first-general}--\eqref{eq:jacobi-first-source-general}
with $Q$ replaced by $\overline Q$.  Neither the nonlinear equation
for $\overline Q$ nor its semistability was used in those linear
arguments.  In particular, for every $\mu$ satisfying
\eqref{eq:mu-choice}, a polynomially decaying such solution has the
representation
\begin{equation}\label{eq:averaged-coefficient-domain}
 H=c_-\chi_\infty r^{-\beta_-}\ph+H_f,
 \quad P_1H_f\in C^{2,\alpha}_\mu\ph,
 \quad P^\perp H_f\in\mathcal G_\mu^\perp.
\end{equation}
For the last assertion the annular estimate gives
$H^\perp=O_{C^{2,\alpha}_{\rm sc}}(r^{-\beta_--2})$;
its projected equation gives
$\mathscr L_\perp H^\perp=O_{C^{0,\alpha}_{\rm sc}}(r^{-\beta_--2})$.
Both belong to weight $\mu$, since $\mu<\beta_-+2$.
The first-mode remainder lies in weight $\mu$ by the explicit
between-roots formula.  These statements assert membership for each
fixed pair; no uniform inverse is inferred from them.

\begin{proposition}[Normalized Fredholm index]\label{prop:fredholm}
If $Q\in\mathscr X_\sigma$ is a smooth semistable profile, then
\begin{equation}\label{eq:NQ}
 \mathcal N_Q:\mathscr D_\mu\to\mathscr Y_\mu\times\R,
 \qquad
 \mathcal N_QH=(L_QH,H(0,0)),
\end{equation}
is Fredholm of index zero.
\end{proposition}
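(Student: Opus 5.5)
We will show that $\mathcal N_Q$ is a compact perturbation of an explicitly invertible model operator, whose index we compute directly. Decompose $Q=\chi_\infty D_mr^{-2}\ph+R_Q$ as in \eqref{eq:Q-tail-normal}, and write $H=a\psi_-+h\ph+v$ with $a\in\R$, $h\in C^{2,\alpha}_{\mu,\mathrm{rad}}$ and $v\in\mathcal G_\mu^\perp$. Relative to the splitting $P_1\oplus P^\perp$, the model operator
\[
 \mathcal N_0H:=\Bigl(\bigl(E_\infty(a\chi_\infty r^{-\beta_-}+h)\bigr)\ph,\ \mathscr L_\perp v,\ H(0,0)\Bigr),
 \qquad
 E_\infty:=-\partial_{rr}-\tfrac{m-1}{r}\partial_r-\chi_\infty\tfrac{4(m-4)}{r^2},
\]
is block diagonal in the transverse components. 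The difference $L_Q-\mathcal N_0$, with the scalar row removed, consists of three pieces: the term $-2\La P_1(R_Q\,\cdot)$; the cross terms $-2\La\chi_\infty D_mr^{-2}P_1(\ph\,\cdot)$ acting on $v$ and $-2\La P^\perp(Q\,\cdot)$ acting on $h\ph+a\psi_-$; and the $Q$-dependent transverse block.

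\emph{Step 1: the transverse block.} By \cref{lem:massive-graph}, $\mathscr L_\perp:\mathcal G_\mu^\perp\to\mathcal Y_\mu^\perp$ is an isomorphism, which settles this block.

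\emph{Step 2: the first-mode block.} The map $(a,h)\mapsto E_\infty(a\chi_\infty r^{-\beta_-}+h)$ goes from $\R\oplus C^{2,\alpha}_{\mu,\mathrm{rad}}$ to $C^{0,\alpha}_{\mu+2}$. Since the weight $\mu$ lies strictly between the indicial roots $\beta_-$ and $\beta_+$ by \eqref{eq:mu-choice}, the explicit between-roots formula \eqref{eq:euler-between-roots-general} solves $Eh=f$ with $f\in C^{0,\alpha}_{\mu+2}$. The solution splits as $c_-r^{-\beta_-}$ plus a remainder of weight $\mu$. Regularity at the axis imposes one constraint, namely that the solution is the regular one. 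The decaying $r^{-\beta_+}$ mode lies in weight $\mu$, but it is singular at the axis, so it is removed by matching to the regular interior solution. The conclusion is that this scalar block is an isomorphism onto $C^{0,\alpha}_{\mu+2}$: the extra dimension supplied by $\psi_-$ exactly compensates for the one-dimensional family of regular homogeneous solutions. I would verify this by solving the interior ODE from $r=0$ with the regular initial condition and reading off both coefficients at infinity. The bordering row $H(0,0)$ adds one dimension to the target, so $\mathcal N_0$ has index $-1+1$. More carefully, the regular homogeneous solution $h_0$ of $E_\infty$ (with $h_0(0)=1$) is generically a combination $c_-r^{-\beta_-}+c_+r^{-\beta_+}$, which lies in the domain since $\beta_+>\mu$. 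Hence the first-mode block has exactly a one-dimensional kernel and is surjective, and the point evaluation removes that kernel. Altogether, $\mathcal N_0$ is Fredholm of index $0$.

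\emph{Step 3: compactness of the perturbation.} Each piece of the difference gains decay. The term $R_Q=O(r^{-\sigma})$ maps weight $\mu$ (or the slow mode $r^{-\beta_-}$) into weight $\mu+\sigma>\mu+2$. Similarly, multiplication by $Q=O(r^{-2})$ maps into $P^\perp$ with weight $\mu+2>\mu$, and the slow mode lands in weight $\beta_-+2>\mu$, again by \eqref{eq:mu-choice}. Together with local Schauder compactness (Arzel\`a--Ascoli on compact cylinders) and the strict gain of weight at infinity, these multiplication operators are compact from $\mathscr D_\mu$ to $\mathscr Y_\mu$. The cross term $r^{-2}P_1(\ph v)$ is the only term without a strict gain: $v$ has weight $\mu$, and so the term has weight $\mu+2$, exactly at the target weight.

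\emph{The main obstacle} is this non-compact coupling term. I would handle it by an upper-triangular argument rather than compactness. Order the unknowns as $(v,(a,h))$. Then the $P^\perp$ equation involves $h$ only through a compact term, while the $P_1$ equation contains $v$ through a bounded operator. After removing the compact pieces, the operator is block triangular with invertible diagonal blocks up to the index-zero bordered scalar block, and is therefore Fredholm with the same index. An index-zero Fredholm operator plus a compact operator is again Fredholm of index zero, which proves \cref{prop:fredholm}.

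Semistability is not needed for the index. It only enters through the tail expansion \eqref{eq:Q-tail-normal} supplied by \cref{lem:universal-tail}, which is what guarantees $Q\in\mathscr X_\sigma$ with the correct $D_m$ tail and hence the indicial roots $\beta_\pm$.
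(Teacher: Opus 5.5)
Your proposal is correct and is essentially the paper's proof. The paper uses the same upper-triangular normal operator: the cut-off Euler block on $\spanop\{\chi_\infty r^{-\beta_-}\}\oplus C^{2,\alpha}_{\mu,\mathrm{rad}}$ (surjective with one-dimensional kernel, hence index $1$), the invertible massive block $\mathscr L_\perp$, the non-compact coupling $-2\La D_m r^{-2}\int\ph^2H^\perp\,dz$ kept off the diagonal, a compact remainder that gains weight, and the point evaluation lowering the index from $1$ to $0$.

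The only flaw is the opening of Step~2, where you call the augmented scalar block an isomorphism and then do a confused ``$-1+1$'' count. Your ``more carefully'' paragraph is the correct statement (kernel spanned by the regular homogeneous solution, surjectivity by shooting from the axis and matching at large $r$), and it should replace that opening.
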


\begin{proof}
Write
\[
 H=h(r)\ph+H^\perp,
 \qquad
 F=f(r)\ph+F^\perp,
 \qquad
 \int_{-1}^1H^\perp(r,z)\ph(z)\dd z=0.
\]
By \cref{lem:universal-tail}, applied with $\tau=\sigma$,
\begin{equation}\label{eq:Q-normal-tail-rig}
 Q=D_mr^{-2}\ph+R_Q,
 \qquad
 R_Q=O_{C^{2,\alpha}_{\rm sc}}(r^{-\sigma}).
\end{equation}
Since $2\La D_mB_2=4(m-4)$, the first-mode projection of the leading
potential equals the Euler potential.  More precisely, if
\[
 E:=-\partial_{rr}-\frac{m-1}{r}\partial_r
       -\frac{4(m-4)}{r^2},
 \qquad
 \mathscr L_\perp:=-\partial_{rr}-\frac{m-1}{r}\partial_r+H_z,
\]
and
\begin{equation}\label{eq:B-upper-rig}
 (BH^\perp)(r)
 :=-2\La D_mr^{-2}
   \int_{-1}^1\ph(z)^2H^\perp(r,z)\dd z,
\end{equation}
then the exact operator can be written on the exterior as
\begin{equation}\label{eq:normal-splitting-rig}
 L_Q
 \binom{h}{H^\perp}
 =
 \begin{pmatrix}E&B\\0&\mathscr L_\perp\end{pmatrix}
 \binom{h}{H^\perp}
 +\mathcal K_Q\binom{h}{H^\perp}.
\end{equation}
The upper-right operator $B$ is part of the tail normal operator; it is
not regarded as a small error.  Every remaining term gains a strictly
positive power of $r$ in the chosen domain/target weights.  Indeed:
\begin{enumerate}[label=(\roman*)]
\item multiplication by $R_Q$ gains $\sigma$ powers;
\item the leading coupling from $h\ph$ into $\ph^\perp$ is
      $O(r^{-2}h)$, and $\mu<\beta_-+\sigma-2<\beta_-+2$;
\item multiplication of $H^\perp$ by $Q$ in the transverse equation
      gains two powers;
\item the difference between the exact first-mode coefficient and
      $4(m-4)r^{-2}$ is $O(r^{-\sigma})$.
\end{enumerate}

To turn the tail normal expression into a global operator on the
weighted spaces, fix $R_0>2$ so that the tail expansion above holds for
$r\ge R_0$.  Choose $\chi_0\in C^\infty([0,\infty))$ with
$\chi_0=0$ on $[0,R_0/2]$ and $\chi_0=1$ on $[R_0,\infty)$.
Define
\begin{align*}
 \widetilde V(r)
 &:=4(m-4)\frac{\chi_0(r)}{r^2},\\
 \widetilde E
 &:=-\partial_{rr}-\frac{m-1}{r}\partial_r-\widetilde V(r),\\
 (\widetilde B H^\perp)(r)
 &:=-2\La D_m\frac{\chi_0(r)}{r^2}
   \int_{-1}^1\ph(z)^2H^\perp(r,z)\dd z.
\end{align*}
Both $\widetilde E$ and $\widetilde B$ are regular at the axis and agree
with $E$ and $B$, respectively, for $r\ge R_0$.  Set
\[
 \widetilde{\mathcal T}
 :=\begin{pmatrix}\widetilde E&\widetilde B\\0&\mathscr L_\perp\end{pmatrix},
 \qquad
 \mathcal K_Q:=L_Q-\widetilde{\mathcal T}.
\]
Thus $\mathcal K_Q$ is now a globally defined bounded operator
$\mathscr D_\mu\to\mathscr Y_\mu$.

We justify that $\mathcal K_Q$ is compact.  Let $\chi_R(r)$ be a smooth
cutoff which equals one for $r\le R$ and vanishes for $r\ge2R$, where
$R\ge2R_0$, and set $\mathcal K_{Q,R}:=\chi_R\mathcal K_Q$.  On a fixed cylinder, the first-mode norm and the local graph
estimate in \cref{lem:massive-graph} control the ordinary
$C^{2,\alpha}$ norm on a slightly smaller cylinder.  The embedding
of this bounded set into $C^{0,\alpha}$ is compact.  Since
$\mathcal K_{Q,R}$ is a zeroth-order operator with smooth compactly
supported coefficients, it follows that
$\mathcal K_{Q,R}:\mathscr D_\mu\to\mathscr Y_\mu$ is compact.
On the tail, (i)--(iv) give a strict gain in the chosen weights.  Hence
there is $\delta>0$ such that
\[
 \|(1-\chi_R)\mathcal K_Q\|_{\mathscr D_\mu\to\mathscr Y_\mu}
 \le CR^{-\delta}\longrightarrow0.
\]
Consequently $\mathcal K_Q$ is the operator-norm limit of the compact
operators $\mathcal K_{Q,R}$ and is compact.

Note that the choice of $\chi_0$ doesn't influence the index.  If
$(\widetilde E_j,\widetilde B_j)$, $j=1,2$, are two choices with different $\chi_0$, then
\[
 \widetilde E_1-\widetilde E_2=a_0(r),
 \qquad
 (\widetilde B_1-\widetilde B_2)H^\perp
 =a_1(r)\int_{-1}^1\ph^2H^\perp\dd z,
\]
with $a_0,a_1$ smooth and compactly supported.  By the same compact local
H\"older embedding, these differences are compact operators.  

We now compute the index of the upper triangular normal operator in
\eqref{eq:normal-splitting-rig}.

\medskip\noindent
\emph{The massive block.}
By \cref{lem:massive-graph}, for each
$F^\perp\in\mathcal Y_\mu^\perp$ there is a unique solution
$H^\perp\in\mathcal G_\mu^\perp$ of
$\mathscr L_\perp H^\perp=F^\perp$, and
\begin{equation}\label{eq:M-isom-rig}
 \|H^\perp\|_{\mathcal G_\mu^\perp}
 \le C\|F^\perp\|_{C^{0,\alpha}_\mu}.
\end{equation}
Thus this graph realization is an isomorphism and has index zero.

\medskip\noindent
\emph{The Euler block and its index.}
The exterior Euler expression is
\[
 E_\infty h:=-h''-\frac{m-1}{r}h'-\frac{4(m-4)}{r^2}h,
 \qquad r\ge R_0,
\]
where $R_0>2$ is fixed.  Its homogeneous solutions are
$y_-=r^{-\beta_-}$ and $y_+=r^{-\beta_+}$, with
\[
 W(y_-,y_+)=-(\beta_+-\beta_-)r^{-(m-1)}.
\]
For $f\in C^{0,\alpha}_{\mu+2}(r>R_0)$ define
\begin{align}\label{eq:euler-VOC-rig}
 (\mathcal G_Ef)(r)
 :=\frac1{\beta_+-\beta_-}\Bigg[
 &r^{-\beta_-}\int_r^\infty s^{\beta_-+1}f(s)\dd s\\
 &+r^{-\beta_+}\int_{R_0}^r s^{\beta_++1}f(s)\dd s\Bigg].\notag
\end{align}
Since $\beta_-<\mu<\beta_+$, both integrals converge in the indicated
direction, direct differentiation gives $E_\infty\mathcal G_Ef=f$, and
\begin{equation}\label{eq:euler-weight-est-rig}
 \norm{\mathcal G_Ef}_{C^{2,\alpha}_\mu(r>R_0)}
 \le C\norm{f}_{C^{0,\alpha}_{\mu+2}(r>R_0)}.
\end{equation}
Thus every exterior solution belonging to the admissible weighted class
has the unique form
\begin{equation}\label{eq:euler-general-rig}
 h=c_-r^{-\beta_-}+c_+r^{-\beta_+}+\mathcal G_Ef.
\end{equation}
The slow coefficient is represented in the global domain by the added
vector $\chi_\infty r^{-\beta_-}$, while
$r^{-\beta_+}\in C^{2,\alpha}_\mu$ because $\beta_+>\mu$.

The exterior expression $E_\infty$ is only a tail model.  We use the
smooth global extension $\widetilde E$ fixed above; it agrees with
$E_\infty$ for $r\ge R_0$ and is a smooth radial elliptic operator at the
axis.  Introduce the scalar spaces
\[
 \mathscr D^E_\mu
 :=\spanop\{\chi_\infty r^{-\beta_-}\}
      \oplus C^{2,\alpha}_{\mu,\mathrm{rad}}(\R^m),
 \qquad
 \mathscr Y^E_\mu
 :=C^{0,\alpha}_{\mu+2,\mathrm{rad}}(\R^m).
\]

By the compact-extension argument above, the index of this scalar block is independent of the chosen smooth core extension.  It therefore suffices to compute the index of the fixed operator $\widetilde E$.

We next prove that
\[
 \widetilde E:\mathscr D^E_\mu\longrightarrow\mathscr Y^E_\mu
\]
is onto and has one-dimensional kernel.  Let
$f\in\mathscr Y^E_\mu$ be arbitrary.  For each $a\in\R$, solve on
$[0,R_0]$ the regular radial initial-value problem
\begin{equation}\label{eq:euler-core-ivp}
 \widetilde Eh=f,
 \qquad h(0)=a,
 \qquad h'(0)=0.
\end{equation}
The condition $h'(0)=0$ is exactly the radial smoothness condition at the
axis.  Standard regular ODE theory (equivalently, the integral identity
\[
 h'(r)=-r^{1-m}\int_0^r
 s^{m-1}\bigl(f(s)+\widetilde V(s)h(s)\bigr)\dd s
\]
near $r=0$) gives a unique smooth radial solution on $[0,R_0]$.  Denote
its Cauchy data at $R_0$ by
\[
 A_a:=h(R_0),\qquad B_a:=h'(R_0).
\]
On $[R_0,\infty)$ every admissible solution is of the form
\eqref{eq:euler-general-rig}.  The constants $(c_-,c_+)$ are chosen so
that this exterior solution matches $(A_a,B_a)$ at $R_0$.  They solve
\[
 \begin{pmatrix}
 R_0^{-\beta_-}&R_0^{-\beta_+}\\
 -\beta_-R_0^{-\beta_--1}&-\beta_+R_0^{-\beta_+-1}
 \end{pmatrix}
 \binom{c_-}{c_+}
 =
 \binom{A_a-(\mathcal G_Ef)(R_0)}
       {B_a-(\mathcal G_Ef)'(R_0)}.
\]
The determinant is
\begin{equation}\label{eq:euler-matching-det}
 -(\beta_+-\beta_-)R_0^{-(m-1)}\ne0,
\end{equation}
so the matching coefficients exist and are unique.  The resulting global
function belongs to $\mathscr D^E_\mu$: on the exterior the slow term is
represented by $c_-\chi_\infty r^{-\beta_-}$, while the fast term and the
particular solution belong to $C^{2,\alpha}_\mu$. Taking, for
instance, $a=0$ gives a global solution for every $f$, hence
\[
 \operatorname{coker}\widetilde E=0.
\]

For $f=0$, every regular solution of \eqref{eq:euler-core-ivp} is uniquely determined by
the single number $a=h(0)$. Hence
\[
 \dim\ker\widetilde E=1.
\]
Therefore $\widetilde E$ is Fredholm and
\begin{equation}\label{eq:E-index-rig}
 \operatorname{ind}\widetilde E
 =\dim\ker\widetilde E-\dim\operatorname{coker}\widetilde E
 =1.
\end{equation}

\medskip\noindent
\emph{Index of the upper triangular normal operator.}
The global normal operator is the already defined
\[
 \widetilde{\mathcal T}
 =\begin{pmatrix}\widetilde E&\widetilde B\\0&\mathscr L_\perp\end{pmatrix}.
\]
The operator $\widetilde B:\mathcal G_\mu^\perp\to
C^{0,\alpha}_{\mu+2}$ is bounded: on the tail it is multiplication by
$r^{-2}$ followed by a fixed transverse functional, while on the core it
has smooth compactly supported coefficients.  Since $\mathscr L_\perp$
is an isomorphism, the target-space map
\[
 \mathcal P(f,F^\perp)
 :=\bigl(f-\widetilde B\mathscr L_\perp^{-1}F^\perp,F^\perp\bigr)
\]
is a bounded automorphism with inverse
\[
 \mathcal P^{-1}(f,F^\perp)
 =\bigl(f+\widetilde B\mathscr L_\perp^{-1}F^\perp,F^\perp\bigr).
\]
A direct calculation gives
\[
 \mathcal P\widetilde{\mathcal T}(h,H^\perp)
 =\bigl(\widetilde Eh,\mathscr L_\perp H^\perp\bigr).
\]
Thus $\widetilde{\mathcal T}$ is Fredholm-equivalent to
$\widetilde E\oplus\mathscr L_\perp$, and therefore
\[
 \operatorname{ind}\widetilde{\mathcal T}
 =\operatorname{ind}\widetilde E
  +\operatorname{ind}\mathscr L_\perp
 =1+0=1.
\]

Since $L_Q=\widetilde{\mathcal T}+\mathcal K_Q$ and $\mathcal K_Q$ is
compact, Atkinson's theorem gives that $L_Q$ is Fredholm with the same
index.  Hence
\begin{equation}\label{eq:LQ-index-rig}
 \operatorname{ind}(L_Q:\mathscr D_\mu\to\mathscr Y_\mu)=1.
\end{equation}
Finally $(L_Q,H\mapsto H(0,0))$ is a finite-rank perturbation of
$(L_Q,0):\mathscr D_\mu\to\mathscr Y_\mu\times\R$.  The latter has index
$\operatorname{ind}L_Q-1=0$, and finite-rank perturbations preserve the
index.  Therefore
\[
 \operatorname{ind}\mathcal N_Q=0.
\]
This proves the proposition.
\end{proof}

\subsection{The small ordered branch}

Let $c_0=\La B_2$.
\begin{definition}[Regular Lane--Emden profile]\label{def:lane-emden-profile}
The \emph{regular Lane--Emden profile} $V=V_m$ is the unique maximal
radial solution of the initial-value problem
\begin{equation}\label{eq:LE}
 -V''-\frac{m-1}{r}V'=c_0V^2,
 \qquad V(0)=1,
 \qquad V'(0)=0.
\end{equation}
Equivalently, $V(|Y|)$ solves $-\Delta_YV=c_0V^2$ in its radial domain
of existence and is smooth at $Y=0$.  The adjective \emph{regular}
distinguishes it from the singular scale-invariant profile
$2(m-4)/(c_0r^2)$ used in the separation argument below.
\end{definition}
For $m>6$ the solution remains positive: otherwise its first zero
would yield a positive Dirichlet solution of
$-\Delta V=c_0V^2$ in a ball, contrary to Pohozaev's variational
identity \cite{Pohozaev}, since the exponent $2$ is critical or
supercritical when $m\ge6$ and strictly supercritical for $m>6$.
Once positivity is known, global continuation is immediate. Indeed,
\[
 (r^{m-1}V')'=-c_0r^{m-1}V^2<0,
\]
so $V'(r)<0$ for $r>0$ and hence $0<V(r)\le1$ throughout its maximal
interval of existence. Moreover the integrated radial equation gives
\[
 -V'(r)=c_0r^{1-m}\int_0^r s^{m-1}V(s)^2\,\dd s
 \le \frac{c_0}{m}r.
\]
Thus both $V$ and $V'$ stay bounded on every finite radial interval,
and the standard ODE continuation theorem gives a global solution on
$[0,\infty)$. The
existence and qualitative analysis of regular positive radial
Lane--Emden states in the critical and supercritical regimes is
classical; see, for example, Joseph--Lundgren \cite{JL} and
Gui--Ni--Wang \cite{GNW}.  For broader classification and Liouville
theory for Lane--Emden solutions on unbounded domains, including stable
and finite-Morse-index classes, see Farina \cite{Farina2007}; for stable
entire solutions of general convex semilinear equations, see
Dupaigne--Farina \cite{DupaigneFarina}.  We retain the preceding argument
because it matches exactly the normalization used here.

The following lemma is the form of the Joseph--Lundgren separation
principle needed here; see \cite{JL}.  We include the proof because we
also need the sign of the scaling field and the first nonzero tail
coefficient in the weighted Lyapunov--Schmidt construction.

\begin{lemma}[Lane--Emden separation]\label{lem:LE}
Assume
\begin{equation}\label{eq:JL-condition}
 \frac{(m-2)^2}{4}>4(m-4).
\end{equation}
Then
\begin{equation}\label{eq:LE-sep}
 0<V(r)<\frac{2(m-4)}{c_0r^2},
 \qquad
 Z(r):=2V(r)+rV'(r)>0.
\end{equation}
Moreover
\begin{equation}\label{eq:LE-tail}
 V(r)=D_mr^{-2}+c_{LE}r^{-\kappa}+o(r^{-\kappa}),
 \qquad \kappa\in\{\beta_-,\beta_+\},\quad c_{LE}<0
\end{equation}
for the first nonzero correction coefficient.
\end{lemma}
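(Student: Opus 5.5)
The plan is to work in the Emden--Fowler variables $t=\log r$, $Y(t)=c_0r^2V(r)$, which turn \eqref{eq:LE} into the autonomous equation
\[
 Y''+(m-6)Y'-2(m-4)Y+Y^2=0,
\]
with $Y(t)\sim c_0e^{2t}$ as $t\to-\infty$.

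\emph{Step 1: the upper bound.} I would repeat the comparison argument of \cref{lem:scalar-separation} almost verbatim, with $p$ replaced by $V$ and with no error term. Put $W=D_mr^{-2}$ and suppose $R$ is the first radius with $V(R)=W(R)$. Then $h=W-V>0$ on $(0,R)$ satisfies $-\Delta h=c_0(W+V)h<4(m-4)r^{-2}h$. Testing with $\chi_\rho^2h$ and letting $\rho\downarrow0$ gives
\[
 \int_{B_R}|\nabla h|^2<4(m-4)\int_{B_R}h^2r^{-2},
\]
which contradicts Hardy's inequality under \eqref{eq:JL-condition}. Positivity of $V$ was already established above.

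\emph{Step 2: positivity of $Z$.} Note that $Z=\frac{d}{d\lambda}\big|_{\lambda=1}\lambda^2V(\lambda r)$ is the scaling field, so it solves the linearized equation
\[
 -\Delta Z=2c_0VZ,\qquad Z(0)=2>0.
\]
In the $t$-variable, $Z=c_0^{-1}e^{-2t}Y'$, so $Z>0$ is equivalent to $Y'>0$, i.e.\ to $r^2V$ being strictly increasing. Suppose $Z$ has a first zero $R_1$. Then $Z>0$ on $B_{R_1}$ is a Dirichlet eigenfunction with eigenvalue $0$ for $-\Delta-2c_0V$. By Step 1, $2c_0V<4(m-4)r^{-2}<\frac{(m-2)^2}{4}r^{-2}$, so Hardy gives
\[
 \int_{B_{R_1}}|\nabla Z|^2>2c_0\int_{B_{R_1}}VZ^2,
\]
a contradiction. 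Hence $Z>0$ for all $r>0$.

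\emph{Step 3: the tail.} Since $Y$ is increasing and bounded by $2(m-4)$, it has a limit. The limit is an equilibrium, hence $0$ or $2(m-4)$, and it cannot be $0$ because $Y$ increases from $0$. So $Y\to2(m-4)$. Set $b=Y-2(m-4)<0$. Then
\[
 (\partial_t+\lambda_-)(\partial_t+\lambda_+)b=-b^2,\qquad \lambda_\pm=\beta_\pm-2,
\]
which is exactly the setting of Step~5 of \cref{lem:universal-tail}, now without the $e^{-2t}$ forcing. The same variation-of-constants argument gives exponential decay of $b$ at some rate. Bootstrapping the quadratic source, which decays at twice the rate of $b$, and using $\lambda_+<2\lambda_-$ when needed, I would obtain
\[
 b=A e^{-\lambda_-t}+Be^{-\lambda_+t}+o(\text{leading}).
\]
In the $r$-variable this reads $V=D_mr^{-2}+c_{LE}r^{-\kappa}+o(r^{-\kappa})$, with $\kappa=\beta_-$ if $A\neq0$ and $\kappa=\beta_+$ otherwise.

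\emph{Step 4: the sign of $c_{LE}$.} Since $b<0$ for all $t$, the leading coefficient must satisfy $c_{LE}\le0$, and it is nonzero by the choice of first nonzero coefficient. This argument needs $b\not\equiv0$, which holds because $Y\to0$ as $t\to-\infty$. I also have to rule out the case $A=B=0$ with only faster decay. By linear ODE uniqueness for the $b$-equation, superexponential decay beyond the root $\lambda_+$ forces $b\equiv0$. I would make this precise with the two-mode decomposition $c_\pm$, showing that if both coefficients vanish then a Gronwall contraction gives $b\equiv0$.

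\emph{Main obstacle.} I expect the hardest part to be the careful treatment of the possible resonance $\lambda_+=2\lambda_-$, or more generally $\lambda_+\ge2\lambda_-$. In that case the $b^2$ source decays like $e^{-2\lambda_-t}$, which can precede or equal $e^{-\lambda_+t}$. Then the expansion holds with $\kappa=\beta_-$ whenever $A\neq0$, and the statement only asserts the first nonzero correction, so the case analysis can be organized around whether $A=0$.
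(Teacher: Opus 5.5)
Your proposal is correct, and its skeleton matches the paper's. Both use the first-contact/Hardy argument for $V<D_mr^{-2}$, the variable $Y=c_0r^2V$, and monotone convergence of $Y$ to $2(m-4)$. Both then run the slow/fast variation-of-constants analysis of $b=Y-2(m-4)$, and use superexponential decay plus backward Gronwall to exclude the case where both coefficients vanish.

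Two sub-steps are done differently.

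\emph{Positivity of $Z$.} The paper argues purely on the ODE. Since $Y'=c_0r^2Z>0$ for $t\ll0$, at a first zero of $Y'$ the equation would give $Y''=Y(2(m-4)-Y)>0$ by the separation bound, which is impossible. You instead view $Z$ as the scaling Jacobi field. You exclude a Dirichlet zero-mode of $-\Delta-2c_0V$ on $B_{R_1}$ using Hardy and the strict bound $2c_0V<4(m-4)r^{-2}$. This is heavier than the one-line ODE argument, but it is exactly the coercivity \eqref{eq:LE-coerc} that the paper needs later anyway.

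\emph{Sign of $c_{LE}$.} The paper reads $c_{LE}<0$ off the leading coefficient $(2-\kappa)c_{LE}$ of $Z$, which tacitly requires the tail expansion of $V'$ as well. You read the sign directly from $b<0$, i.e.\ from $V<D_mr^{-2}$. That uses only the undifferentiated expansion and is the cleaner argument.

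Three small points.

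First, $\lambda_+<2\lambda_-$ holds only for $m=16$ and fails for every $m\ge17$; as $m\to\infty$, $\lambda_-\to2$ while $\lambda_+\approx m-8$. You never need it, and the ``resonance'' you flag is not an obstacle. Once $b=O(e^{-\gamma t})$ with $2\gamma>\lambda_-$, the fast mode and the contribution of $b^2$ are $o(e^{-\lambda_-t})$ whenever $A\neq0$. When $A=0$, the bootstrap reaches the fast root in finitely many steps, whatever the relation between $\lambda_+$ and $2\lambda_-$.

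Second, ``the limit is an equilibrium'' deserves its line. If $2(m-4)L-L^2\ne0$, integrating $(e^{(m-6)t}Y')'=e^{(m-6)t}(2(m-4)Y-Y^2)$ drives $Y'$ to a nonzero constant, contradicting boundedness. The paper instead reuses \cref{clm:Y-energy-limit,clm:Y-omega-limit}.

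Third, excluding superexponential decay is backward Gronwall for the nonlinear first-order system $X'=F(X)$ with $|F(X)|\le C|X|$, not linear ODE uniqueness. Your mention of a Gronwall/contraction argument is the right fix.
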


\begin{proof}
The first inequality follows from the same first-contact/Hardy argument as \cref{lem:scalar-separation}, applied to $W=2(m-4)/(c_0r^2)$. Put $t=\log r$ and $Y=c_0r^2V$. Then
\begin{equation}\label{eq:LE-autonomous}
 Y''+(m-6)Y'-2(m-4)Y+Y^2=0,
 \qquad
 Y'=c_0r^2Z.
\end{equation}
For $t\ll0$, $Y'>0$. If $Y'$ had a first zero, then $0<Y<2(m-4)$ there and \eqref{eq:LE-autonomous} would give
\[
 Y''=Y\bigl(2(m-4)-Y\bigr)>0,
\]
contradicting first contact with zero from above. Hence $Z>0$ and $Y'>0$ on $\R$.  By \eqref{eq:LE-sep},
\[
 0<Y(t)=c_0r^2V(r)<2(m-4),
\]
so $Y$ is increasing and converges to some $L\in(0,2(m-4)]$.
To identify $L$, repeating the arguments of \cref{clm:Y-energy-limit,clm:Y-omega-limit} with the forcing $\rho\equiv0$, we get
\[
 L\bigl(L-2(m-4)\bigr)=0.
\]
Since $L>0$, necessarily $L=2(m-4)$, and therefore
\[
 Y(t)\uparrow2(m-4).
\]

Set $\eta(t):=Y(t)-2(m-4)$.  Then
\[
 \eta''+(m-6)\eta'+2(m-4)\eta+\eta^2=0.
\]
The linearized characteristic exponents are $-(\beta_- -2)$ and $-(\beta_+-2)$.  Applying the scalar variation-of-constants argument in \cref{lem:jacobi-slow-fast} (with the transverse remainder absent) gives either
\[
 \eta(t)=a_-e^{-(\beta_--2)t}+o\bigl(e^{-(\beta_--2)t}\bigr),
 \qquad a_-\ne0,
\]
or, if the slow coefficient vanishes,
\[
 \eta(t)=a_+e^{-(\beta_+-2)t}+o\bigl(e^{-(\beta_+-2)t}\bigr),
 \qquad a_+\ne0.
\]
If both asymptotic coefficients vanished, the same variation-of-constants
formula has no homogeneous term.  Starting from the preliminary decay
$|\eta|+|\eta_t|=O(e^{-\gamma t})$ with any
$0<\gamma<\beta_--2$, its right-hand side is
$O(e^{-2\gamma t})$; applying the zero-coefficient formula again doubles
the admissible decay exponent.  Iteration therefore gives
\[
 |\eta(t)|+|\eta_t(t)|=O(e^{-Mt})
 \qquad\text{for every }M>0.
\]
Write $X=(\eta,\eta_t)$.  On a sufficiently small neighborhood of the
origin the first-order system $X'=F(X)$ satisfies $|F(X)|\le C|X|$.
Backward Gronwall consequently yields, for $t\ge T$,
\[
 |X(T)|\le e^{C(t-T)}|X(t)|.
\]
Choosing $M>C$ in the preceding superexponential estimate and letting
$t\to\infty$ gives $X(T)=0$.  Uniqueness for the ODE then yields
$\eta\equiv0$, which is impossible.  

Since $Y=c_0r^2V$ and $c_0D_m=2(m-4)$, this yields exactly \eqref{eq:LE-tail} with $\kappa\in\{\beta_-,\beta_+\}$.  Finally $Z=2V+rV'$ is positive and $(2-\kappa)c_{LE}$ is the leading coefficient of $Z$, so $c_{LE}<0$.
\end{proof}

For $m\ge16$, condition \eqref{eq:JL-condition} holds.  We now
perform a weighted Lyapunov--Schmidt reduction.  The abstract
simple-eigenvalue mechanism is classical; compare
Crandall--Rabinowitz \cite{CR71}.  Here the domain is unbounded and the
parameter enters a singular transverse scaling, so the weighted
resolvent, normalization and differentiability estimates are proved
below rather than imported from the abstract theorem.  For small
$s>0$, put $Y=sX$ and seek
\begin{equation}\label{eq:LS-ansatz}
 Q_s(X,z)=s^2\bigl(v_s(Y)\ph(z)+s^2w_s(Y,z)\bigr),
 \qquad \int_{-1}^1w_s(Y,z)\ph(z)\dd z=0.
\end{equation}

Projection gives
\begin{align}
 -\Delta_Yv_s&=\La\int_{-1}^1(v_s\ph+s^2w_s)^2\ph\dd z,\label{eq:LS1}\\
 (-s^2\Delta_Y+H_z)w_s&=\La P^\perp(v_s\ph+s^2w_s)^2.\label{eq:LS2}
\end{align}
At $s=0$, $v_0=V$, we have
\[
 w_0=\La H_z^{-1}P^\perp(V^2\ph^2).
\]
Set $\rho=s^2$.  We solve \eqref{eq:LS1}--\eqref{eq:LS2} by a
parameter-dependent transverse resolvent, rather than by treating
$H_z^{-1}$ as if it generated derivatives in the $Y$ variables.

For $\gamma>0$ and an integer $k\ge0$, let
\[
 \|f\|_{\mathcal F_\gamma^k}
   :=\sum_{|\nu|\le k}
           \|\partial_Y^\nu f\|_{C^{0,\alpha}_{\gamma+|\nu|}}.
\]
For $k\ge2$ define the mixed weighted norm
\begin{equation}\label{eq:mixed-high-norms}
 \begin{split}
 \|w\|_{\mathcal W_\gamma^{(k)}}
 := {}&\|w\|_{\mathcal F_\gamma^k}
 +\sum_{|\nu|\le k-1}
       \|\partial_z\partial_Y^\nu w\|_{C^{0,\alpha}_{\gamma+|\nu|}}\\
 &+\sum_{|\nu|\le k-2}
       \|\partial_{zz}\partial_Y^\nu w\|_{C^{0,\alpha}_{\gamma+|\nu|}}.
 \end{split}
\end{equation}
Here and below the weight is in $Y$ and $z$ is unscaled.  For all
$\gamma>0$ and $k\ge2$ the solution space is
\[
 \mathcal W_{\gamma,\perp}^{(k)}
 =\{w\in\mathcal W_\gamma^{(k)}:
 w(Y,\pm1)=0,\ P_1w=0,\ w\text{ is radial in }Y
                       \text{ and even in }z\}.
\]
Radial axis compatibility is understood in Cartesian coordinates.
We abbreviate $\mathcal W_\sigma=\mathcal W_\sigma^{(2)}$ and
$\mathcal W_{\sigma,\perp}=\mathcal W_{\sigma,\perp}^{(2)}$.
The source spaces impose no boundary trace condition.

\begin{claim}\label{clm:transverse-rho-resolvent}
Fix $\gamma>0$, an integer $k\ge2$, and $\rho_0>0$.
For every radial-even source $f\in\mathcal F_\gamma^k$ satisfying
$P_1f=0$, and every $0\le\rho\le\rho_0$, the Dirichlet problem
\[
 (H_z-\rho\Delta_Y)w=f,\qquad w(Y,\pm1)=0,\qquad P_1w=0
\]
has a unique solution in $\mathcal W_{\gamma,\perp}^{(k)}$, and
\[
 \|w\|_{\mathcal W_\gamma^{(k)}}\le C_k\|f\|_{\mathcal F_\gamma^k}.
\]
In particular, when $k=2$ and $\gamma=\sigma$,
\begin{equation}\label{eq:transverse-rho-resolvent}
 \|w\|_{\mathcal W_\sigma}
 \le C\sum_{|\nu|\le2}
       \|\partial_Y^\nu f\|_{C^{0,\alpha}_{\sigma+|\nu|}}.
\end{equation}
All constants are independent of $\rho\in[0,\rho_0]$.
\end{claim}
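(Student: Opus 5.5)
We solve the problem mode by mode in $z$ and commute $Y$-derivatives through the operator. Expand $f=\sum_{k\ge1}f_k(Y)\phi_k(z)$ and $w=\sum_k w_k(Y)\phi_k(z)$ in the even Dirichlet eigenbasis of $H_z$ on $\ph^\perp$, where $\mu_k\ge8\La$. Each coefficient then solves the massive scalar problem
\[
 (\mu_k-\rho\Delta_Y)w_k=f_k .
\]
For $\rho>0$ we rescale $Y$ by $\sqrt\rho$, which turns this into $(\mu_k-\Delta)$ with the massive Green kernel of the proof of \cref{clm:massive-transverse-inverse}. At $\rho=0$ we simply have $w_k=f_k/\mu_k$.

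\emph{Weighted $L^2_z$ bound.} The scalar kernel of $(\mu_k-\rho\Delta_Y)^{-1}$ is a positive probability-type kernel $K_{\rho,\mu_k}(Y,Y')$ with total mass $1/\mu_k$. It is dominated by the kernel for $\mu_*=8\La$, which has exponential off-diagonal decay at scale $\sqrt{\rho/\mu_*}\le\sqrt{\rho_0/\mu_*}$. The weight $(1+|Y|)^{-\gamma}$ varies by at most a bounded factor over distances $O(1)$, and the exponential tail absorbs the polynomial ratio elsewhere. Hence
\[
 \int K(Y,Y')(1+|Y'|)^{-\gamma}\,dY'\le C(1+|Y|)^{-\gamma}
\]
uniformly in $\rho\in[0,\rho_0]$, with the case $\rho=0$ being the trivial one. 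Minkowski's inequality and Parseval, exactly as in the massive claim, then give
\[
 \sup_Y(1+|Y|)^\gamma\|w(Y,\cdot)\|_{L^2_z}\le C\sup_Y(1+|Y|)^\gamma\|f(Y,\cdot)\|_{L^2_z}.
\]
Uniqueness in the decaying class follows from the same positivity argument used for kernel triviality: for $\rho>0$ we use the radial ODE, and for $\rho=0$ it is immediate.

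\emph{$Y$-derivatives and H\"older norms.} The operator $H_z-\rho\Delta_Y$ has coefficients independent of $Y$. In Cartesian coordinates it therefore commutes with $\partial_Y^\nu$, and $\partial_Y^\nu w$ solves the same problem with source $\partial_Y^\nu f$, which still satisfies $P_1(\partial_Y^\nu f)=0$. Applying the $L^2_z$ bound with weight $\gamma+|\nu|$ for $|\nu|\le k$ controls all $\partial_Y^\nu w$.

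The $z$-regularity, and the H\"older norms, come from viewing $H_z w=f+\rho\Delta_Y w$ fibrewise. For each fixed $Y$ this is a one-dimensional Dirichlet problem on $(-1,1)$ with source $g_Y:=\partial_Y^\nu f+\rho\Delta_Y\partial_Y^\nu w$. The invertibility of $H_z$ on $\ph^\perp$ together with 1D Schauder theory gives $\|\partial_Y^\nu w(Y,\cdot)\|_{C^{2,\alpha}_z}\le C\|g_Y\|_{C^{0,\alpha}_z}$. Two kinds of term still need bounds.
\begin{enumerate}[label=(\roman*)]
\item The term $\rho\Delta_Y\partial^\nu_Y w$ involves $|\nu|+2$ derivatives. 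We bound it by the commuted estimate at order $|\nu|+2$ when $|\nu|+2\le k$.
\item Where $|\nu|+2>k$ we use the $\rho$-scaled interior Schauder estimate for $(H_z-\rho\Delta_Y)$. In the variables $Y/\sqrt\rho$ this estimate is uniformly elliptic, and it gives $\rho\|D^2_Y u\|\lesssim\|u\|+\|f\|$ on unit $z$-strips.
\end{enumerate}
Since $\mathcal W^{(k)}$ only asks for $\partial_{zz}\partial_Y^\nu$ with $|\nu|\le k-2$ and $\partial_z\partial_Y^\nu$ with $|\nu|\le k-1$, this count suffices. The joint H\"older seminorms in $(Y,z)$ are obtained by combining the $Y$-Lipschitz control from one extra $Y$-derivative with the fibrewise $C^{2,\alpha}_z$ bounds. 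At top order $|\nu|=k$, only $C^{0,\alpha}$ of $\partial_Y^\nu w$ is needed, and it comes from the scaled Schauder estimate.

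\emph{Main obstacle.} The hard step is uniformity as $\rho\downarrow0$. The operator degenerates to the purely transverse $H_z$, so no ellipticity in $Y$ survives, and H\"older control in $Y$ cannot come from smoothing. It must be inherited from the source, which is why the hypothesis is stated in $\mathcal F^k_\gamma$ rather than $C^{0,\alpha}$.

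The point to verify carefully is that the kernel $K_{\rho,\mu}$ is a positive averaging operator with mass at most $1/\mu_*$. Such an operator preserves $C^{0,\alpha}_Y$ seminorms, because it commutes with translations in $Y$ and averages translates. Combining this with the derivative commutation gives all $Y$-H\"older bounds with constants independent of $\rho$. I would check translation-averaging first on the whole space. The radial and weighted versions then follow because the weights are slowly varying relative to the kernel scale $\sqrt{\rho_0}$.
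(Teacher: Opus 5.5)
Your outline is sound, but it reaches the estimate by a different route from the paper. The paper, via \cref{prop:massive-product-resolvent}(ii), never passes through $L^2_z$ or local elliptic theory. It writes the inverse as the product heat-semigroup integral $\mathcal R_\rho f=\int_0^\infty e^{\rho t\Delta_Y}e^{-tH_z}P^\perp f\,dt$ and controls the two factors separately. The $Y$-factor is handled by Gaussian averaging of translates: polynomial weights lose only a polynomial factor in $t$, which the transverse decay $e^{-8\La t}$ absorbs. The $z$-factor is handled by explicit short-time Dirichlet heat-kernel bounds in $C^{0,\alpha}_z$ and $C^{1,\alpha}_z$, whose singularities $t^{-\alpha/2}$ and $t^{-(1+\alpha)/2}$ are integrable. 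It then commutes $\partial_Y^\nu$ through the integral and recovers $\partial_{zz}$ from the equation. One formula thus covers $\rho=0$ and $\rho>0$ together and yields H\"older bounds directly. You instead use the resolvent kernels mode by mode and get a weighted $\sup_Y L^2_z$ bound by Minkowski--Parseval, exactly as in \cref{clm:massive-transverse-inverse}. You then get transverse regularity from Schauder estimates in the rescaled variables $Y/\sqrt\rho$ together with fibrewise one-dimensional Schauder theory. Your route uses only standard elliptic estimates and machinery already in the paper, and it makes clear which bounds are uniform: all transverse quantities plus $\rho D_Y^2$. The cost is a separate $\rho=0$ case, an $L^2_z\to L^\infty$ upgrade, and a more delicate bootstrap order.

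Two points in your bookkeeping need tightening. First, the rescaled Schauder estimate controls $\partial_z u$, $\partial_{zz}u$, $\rho D_Y^2u$ and their $z$-H\"older seminorms uniformly. Its $Y$-H\"older bounds, however, carry a factor $\rho^{-\alpha/2}$. So the top-order bound $\partial_Y^\nu w\in C^{0,\alpha}$ with $|\nu|=k$ must come from translation invariance, as your last paragraph says, and not from Schauder as your sentence on top order claims. Translation invariance transfers H\"older bounds only in a norm where the full operator is uniformly bounded, and your Parseval step gives only $\sup_Y L^2_z$. To get pointwise bounds you can sum the modes using the exact masses $1/\mu_k$, which works because $|\phi_k|\le1$ and $\sum_k\mu_k^{-1}<\infty$; domination by the $\mu_*$ kernel is not summable. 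Alternatively, apply a local $L^2\to L^\infty$ estimate in the rescaled variables to the difference $w(\cdot+h,\cdot)-w$. Second, in step (i) the commuted estimate at order $|\nu|+2$ is only an $L^2_z$ bound, which gives $H^2_z$ rather than $C^{2,\alpha}_z$. Apply the rescaled estimate at every order $|\nu|\le k$, not just when $|\nu|+2>k$; it controls $[\rho\Delta_Y\partial_Y^\nu w]_{\alpha,z}$ directly. After that, the fibrewise Schauder step and the one-extra-derivative Lipschitz argument in $Y$ close as you describe.
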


\begin{proof}
Apply \cref{prop:massive-product-resolvent} with the parameter
$0\le\rho\le\rho_0$.  The orthogonality condition $P_1f=0$ is necessary
and ensures that the product resolvent acts on the full stated equation,
not merely on its projected part.  The proposition gives the uniform
mixed estimate \eqref{eq:transverse-rho-resolvent} and uniqueness.
\end{proof}

\begin{proposition}[Weighted Lyapunov--Schmidt construction at the threshold]
\label{prop:weighted-LS-construction}
Assume $m\ge16$ and choose $2<\sigma<\min\{4,\beta_-\}$.  There exists
$s_0>0$ such that for $0<s<s_0$, with $\rho=s^2$, the projected system
\eqref{eq:LS1}--\eqref{eq:LS2} has a unique radial-even solution in a
fixed weighted neighborhood of $(V,w_0)$ satisfying $v_s(0)=1$.  It obeys
\begin{equation}\label{eq:LS-proposition-expansion}
 v_s=V+O_{C^{2,\alpha}_\sigma}(s^2),\qquad
 w_s=w_0+O_{\mathcal W_\sigma}(s^2),
\end{equation}
is $C^1$ in the parameter $\rho=s^2$, and the corresponding cylinder
solution
\[
 Q_s(X,z)=s^2\bigl(v_s(sX)\ph(z)+s^2w_s(sX,z)\bigr)
\]
is positive and satisfies the coercive estimate
\begin{equation}\label{eq:small-coerc-prop}
 \mathfrak q_{Q_s}[\zeta]
 \ge c_0'\int_{\R^m}\frac{\zeta_1^2}{r^2}
      +c_1'\|\zeta^\perp\|_{H^1(\Cyl)}^2,
 \qquad 0<s<s_0,
\end{equation}
for the decomposition $\zeta=\zeta_1\ph+\zeta^\perp$.  Moreover
$A(s)=Q_s(0,0)$ satisfies $A'(s)=2s+O(s^3)>0$.
\end{proposition}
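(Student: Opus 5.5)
We treat \eqref{eq:LS1}--\eqref{eq:LS2} as a fixed-point problem in the unknowns $(v,w)$, with the transverse resolvent of \cref{clm:transverse-rho-resolvent} acting as a uniformly bounded solution operator. Write $v=V+\eta$ with $\eta\in C^{2,\alpha}_\sigma$ radial and $\eta(0)=0$. We also write $w=w_0+\omega$ with $\omega\in\mathcal W_{\sigma,\perp}$.

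\textbf{Transverse equation.} Setting $\rho=s^2$, \eqref{eq:LS2} reads
\[
 w=(H_z-\rho\Delta_Y)^{-1}\La P^\perp(v\ph+\rho w)^2 .
\]
The source decays like $r^{-4}$ with scaled $Y$-derivatives, so it lies in $\mathcal F^2_\sigma$ because $\sigma<4$. Hence \cref{clm:transverse-rho-resolvent} makes the right side a contraction in $w$ for small $\rho$, uniformly in $\rho$. This gives $w=\mathcal W(v,\rho)$, which is $C^1$ in $(v,\rho)$. Note that $\rho\,\partial_\rho$ acting on the resolvent produces $\rho\Delta_Y$ times the resolvent; this is bounded in the mixed norms thanks to the $\mathcal F^k$ structure with $k$ raised by two. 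Moreover $\mathcal W(V,0)=w_0$ and $\mathcal W(v,\rho)-\mathcal W(v,0)=O(\rho)$.

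\textbf{First mode.} Substituting into \eqref{eq:LS1} gives the scalar problem
\[
 -\Delta\eta-2c_0V\eta=c_0\eta^2+\rho\,G(\eta,\rho),
\]
where $G$ collects the $w$-terms and lies in $C^{0,\alpha}_{\sigma+2}$. The linear operator is the Lane--Emden Jacobi operator $-\Delta-2c_0V$. On radial functions with weight $\sigma<\beta_-$, augmented by the normalization $\eta(0)=0$, it is Fredholm of index zero; this follows exactly as in \cref{prop:fredholm}, since the tail of $2c_0V$ is $4(m-4)r^{-2}$ by \cref{lem:LE}. Its kernel is trivial. Indeed, the only regular radial Jacobi fields are multiples of the scaling field $Z=2V+rV'$. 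The normalization $Z(0)=2\ne0$ excludes $Z$ from the normalized kernel, and in any case \eqref{eq:LE-tail} shows that $Z$ decays like $r^{-\kappa}$ rather than the admissible rate. Careful bookkeeping is needed here: the weight $\sigma<\beta_-$ allows the slow mode, so surjectivity together with the normalization $\eta(0)=0$ is what pins down uniqueness, just as in the bordered operator $\mathcal N_Q$.

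The implicit function theorem at $(\eta,\rho)=(0,0)$ then yields a unique $\eta(\rho)$ in a neighbourhood, with $\eta=O(\rho)$ and $C^1$ dependence on $\rho$. This gives \eqref{eq:LS-proposition-expansion}. The value $A(s)=s^2(v_s(0)+s^2w_s(0,0))=s^2+O(s^4)$ then gives $A'(s)=2s+O(s^3)$.

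\textbf{Positivity and coercivity.} Positivity holds in the bulk because $V>0$ and $\ph>0$, with $\rho w$ a small perturbation in the $\ph$-quotient norm (\eqref{eq:massive-graph-boundary-factor}-type control). At infinity we use instead the tail $V\sim D_mr^{-2}$, which dominates $w=O(r^{-4})$. For coercivity, decompose $\zeta=\zeta_1\ph+\zeta^\perp$. The perpendicular part has spectral gap $8\La$, and the cross terms carry the factor $Q=O(s^2)$ pointwise, so they are absorbed. The first-mode form is
\[
 \int|\nabla\zeta_1|^2-2\La\int Q\ph^3\zeta_1^2\approx\int|\nabla\zeta_1|^2-2c_0s^2V(sX)\zeta_1^2 .
\]
Since $2c_0s^2V(sr)\le4(m-4)r^{-2}$ by \eqref{eq:LE-sep}, the sharp Hardy inequality leaves the margin $(m-2)^2/4-4(m-4)>0$, and this gives the $c_0'$ term. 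The perturbative error is $O(s^2)\cdot r^{-2}$ in relative size.

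\textbf{Main obstacle.} I expect the main obstacle to be the uniform-in-$\rho$ differentiability of the transverse resolvent map in the weighted mixed norms, that is, $C^1$ dependence on $\rho$ in the singular perturbation $\rho\Delta_Y$. I would handle it by deriving the resolvent identity
\[
 \partial_\rho(H_z-\rho\Delta)^{-1}=(H_z-\rho\Delta)^{-1}\Delta(H_z-\rho\Delta)^{-1}
\]
and closing it with the $k=4$ versus $k=2$ estimates of \cref{clm:transverse-rho-resolvent}, keeping the sources in $\mathcal F^4$.
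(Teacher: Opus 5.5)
Your architecture matches the paper's: solve the transverse equation with the uniform resolvent $(H_z-\rho\Delta_Y)^{-1}$, reduce to the normalized Lane--Emden inverse, then prove positivity and Hardy coercivity. The gap is the step ``the implicit function theorem at $(\eta,\rho)=(0,0)$ yields a unique $\eta(\rho)$ \dots\ with $C^1$ dependence on $\rho$''. That step cannot be justified in these spaces, and your proposed remedy does not close it.

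\textbf{Why the IFT step fails.} The reduced map is not $C^1$ in $\rho$ at $\rho=0$, nor even norm-continuous, as a map on $C^{2,\alpha}_\sigma$. There $H_z-\rho\Delta_Y$ loses ellipticity in $Y$, and $\partial_\rho(H_z-\rho\Delta_Y)^{-1}=(H_z-\rho\Delta_Y)^{-1}\Delta_Y(H_z-\rho\Delta_Y)^{-1}$ costs two $Y$-derivatives. For the same reason, $\mathcal W(v,\rho)-\mathcal W(v,0)=O(\rho)$ holds for $v=V$ but not for a general $v\in V+C^{2,\alpha}_\sigma$. Keeping the sources in $\mathcal F^4$ presupposes that the fixed point already has extra weighted derivatives with $O(\rho)$ bounds. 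Nothing in your construction provides these.

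\textbf{How the paper separates the issues.}
\emph{Existence and uniqueness} come from contractions that are uniform in $\rho$ and need no continuity in $\rho$. The first is in $\omega$, with Lipschitz constant $C\rho$. The second is in $\xi$ on a ball of radius $K\rho$, via $\mathcal S$. Uniqueness in a fixed neighbourhood then follows from the a priori absorption $\|\xi\|\le C(\rho+\|\xi\|^2)$.

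\emph{$C^1$ dependence} (\cref{prop:LS-parameter-regularity}) needs the idea you are missing. The same contractions close in $C^{k,\alpha}_\sigma\times\mathcal W^{(k)}_\sigma$ for $k=4,6$, because the highest $Y$-derivatives of $\omega$ enter only with a factor $\rho$ or $\rho^2$, and $\mathcal S$ gains two derivatives. With these six-derivative bounds, one inverts the exact difference-quotient system uniformly at orders two and four; its source contains $\Delta_Yw_{\rho+h}$. The order-four bound then gives convergence of the coefficients in the order-two topology as $h\to0$.

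The resulting bound on $\partial_\rho w_\rho(0,0)$ is also what actually gives
$A'(s)=2s+4s^3w_{s^2}(0,0)+2s^5\partial_\rho w_{s^2}(0,0)=2s+O(s^3)$. The expansion $A(s)=s^2+O(s^4)$ alone cannot be differentiated.

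\textbf{Smaller points.}
\emph{Cross term in the coercivity.} The bound $Q_s=O(s^2)$ does not let you absorb $-4\La\int Q_s\zeta_1\ph\zeta^\perp$. The first-mode form controls only $\int\zeta_1^2/r^2$, not $\|\zeta_1\|_{L^2}$. Use instead $|Q_s|\le Cs^2\min\{1,(sr)^{-2}\}\le Cs/r$, followed by Young's inequality, as the paper does.

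\emph{Transverse gap.} For general, not necessarily even, $\zeta^\perp$ the gap is $3\La$, not $8\La$. Any positive gap suffices.

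\emph{Positivity.} You must also control $v_s-V=O(s^2(1+|Y|)^{-\sigma})$ against $V\ge c(1+|Y|)^{-2}$, which uses $\sigma>2$.

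\emph{Kernel remark.} $Z=2V+rV'=O(r^{-\kappa})$ with $\kappa\ge\beta_->\sigma$, so $Z$ does lie in $C^{2,\alpha}_\sigma$. Below both indicial roots the unbordered operator has kernel $\spanop\{Z\}$ and index one. It is the normalization $\eta(0)=0$ that makes it invertible; this is the setting of \cref{prop:normalized-voc}, not \cref{prop:fredholm}. Your conclusion there is right, but your decay remark points the wrong way.
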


\begin{proof}
\emph{Step 1: the transverse fixed point.}
Write
\[
 v=V+\xi,\qquad w=w_0+\omega,
 \qquad \xi(0)=0,
\]
and let $\mathcal R_\rho$ denote the transverse resolvent from
\cref{clm:transverse-rho-resolvent}.  More precisely,
\[
 \mathcal R_\rho=(H_z-\rho\Delta_Y)^{-1}
\]
on the radial-even Dirichlet $P_1$-orthogonal realization specified there;
at $\rho=0$ this reduces to the pointwise inverse $H_z^{-1}$ on
$\varphi^\perp$.  For a fixed $\xi$, define
$\mathcal T_{\rho,\xi}(\omega)$ by
\begin{align}
 \mathcal T_{\rho,\xi}(\omega)
 :=\mathcal R_\rho\Big[{}&\rho\Delta_Yw_0
 +\La P^\perp\bigl(2V\xi\ph^2+\xi^2\ph^2\bigr)\notag\\
 &+2\La\rho P^\perp\bigl((V+\xi)\ph(w_0+\omega)\bigr)
 +\La\rho^2P^\perp(w_0+\omega)^2\Big].
 \label{eq:LS-omega-fixed-point}
\end{align}
Equation $\omega=\mathcal T_{\rho,\xi}(\omega)$ is precisely the fixed-point
form of the transverse equation \eqref{eq:LS2}.  Indeed, substituting
$v=V+\xi$ and $w=w_0+\omega$ into \eqref{eq:LS2}, using
\[
 H_zw_0=\La P^\perp(V^2\ph^2),
\]
and expanding the square gives
\begin{align*}
 (H_z-\rho\Delta_Y)\omega={}&\rho\Delta_Yw_0
 +\La P^\perp(2V\xi\ph^2+\xi^2\ph^2)\\
 &+2\La\rho P^\perp((V+\xi)\ph(w_0+\omega))
 +\La\rho^2P^\perp(w_0+\omega)^2.
\end{align*}
 Conversely, every fixed point of
\eqref{eq:LS-omega-fixed-point} satisfies \eqref{eq:LS2}; hence the two
formulations are equivalent for fixed $\xi$.

The product estimates in the weighted spaces and
\eqref{eq:transverse-rho-resolvent} imply, for
$\|\xi\|_{C^{2,\alpha}_\sigma}\le c_*$ and
$\|\omega\|_{\mathcal W_\sigma}\le C_*$,
\begin{align}
 \|\mathcal T_{\rho,\xi}(\omega)\|_{\mathcal W_\sigma}
 &\le C\bigl(\rho+\|\xi\|_{C^{2,\alpha}_\sigma}
       +\rho\|\omega\|_{\mathcal W_\sigma}\bigr),
 \label{eq:LS-omega-bound}\\
 \|\mathcal T_{\rho,\xi}(\omega_1)
   -\mathcal T_{\rho,\xi}(\omega_2)\|_{\mathcal W_\sigma}
 &\le C\rho\|\omega_1-\omega_2\|_{\mathcal W_\sigma}.
 \label{eq:LS-omega-contraction}
\end{align}
We now choose the fixed-point ball explicitly.  First fix $C_*>0$.
Choose $c_*>0$ so small that $Cc_*\le C_*/4$, and then decrease
$\rho_0>0$ so that
\[
 C\rho_0(1+C_*)\le C_*/4,
 \qquad C\rho_0\le\frac12.
\]
Then \eqref{eq:LS-omega-bound} shows that, whenever
$\|\xi\|_{C^{2,\alpha}_\sigma}\le c_*$,
\[
 \mathcal T_{\rho,\xi}:
 \overline B_{C_*}^{\mathcal W_\sigma}
 \longrightarrow
 \overline B_{C_*}^{\mathcal W_\sigma},
\]
while \eqref{eq:LS-omega-contraction} makes this map a contraction,
uniformly for $0\le\rho\le\rho_0$.  Hence the Banach fixed-point theorem
gives, for every admissible $\xi$, a unique solution $\omega$ in this
ball of
\[
 \omega=\mathcal T_{\rho,\xi}(\omega).
\]
We denote it by
\[
 \Omega_\rho(\xi):=\omega,
 \qquad
 \Omega_\rho(\xi)
 =\mathcal T_{\rho,\xi}\bigl(\Omega_\rho(\xi)\bigr).
\]
Thus $\Omega_\rho$ is the nonlinear solution map
$\xi\mapsto\omega$ obtained from the fixed-point equation.
The estimates above yield
\begin{equation}\label{eq:LS-omega-Lipschitz}
 \|\Omega_\rho(\xi)\|_{\mathcal W_\sigma}
 \le C(\rho+\|\xi\|_{C^{2,\alpha}_\sigma}),
 \qquad
 \|\Omega_\rho(\xi_1)-\Omega_\rho(\xi_2)\|_{\mathcal W_\sigma}
 \le C\|\xi_1-\xi_2\|_{C^{2,\alpha}_\sigma}.
\end{equation}

\medskip\noindent
\emph{Step 2: the normalized Lane--Emden equation.}
Let $\mathcal S$ be the normalized inverse of $L_{LE}$ supplied by
\cref{prop:normalized-voc}, with $2<\sigma<\beta_-$.  Subtracting the
Lane--Emden equation for $V$ from \eqref{eq:LS1} gives
\begin{align}
 L_{LE}\xi={}&c_0\xi^2
 +2\La\rho(V+\xi)
     \int_{-1}^1\ph^2(w_0+\Omega_\rho(\xi))\,dz\notag\\
 &+\La\rho^2\int_{-1}^1
      (w_0+\Omega_\rho(\xi))^2\ph\,dz.
 \label{eq:LS-xi-equation}
\end{align}
Define the right-hand side after applying $\mathcal S$ to be
$\mathcal K_\rho(\xi)$.  The weighted algebra estimate and
\eqref{eq:LS-omega-Lipschitz} give, on the ball
$\|\xi\|_{C^{2,\alpha}_\sigma}\le K\rho$,
\[
 \|\mathcal K_\rho(\xi)\|_{C^{2,\alpha}_\sigma}
 \le C(\rho+\|\xi\|_{C^{2,\alpha}_\sigma}^2)
 \le K\rho
\]
for $K$ fixed large and $\rho_0$ small, and
\[
 \|\mathcal K_\rho(\xi_1)-\mathcal K_\rho(\xi_2)\|_{C^{2,\alpha}_\sigma}
 \le C(\rho+K\rho)
 \|\xi_1-\xi_2\|_{C^{2,\alpha}_\sigma}.
\]
Hence $\mathcal K_\rho$ is a contraction.  Its unique fixed point
$\xi_\rho$ and the corresponding
$\omega_\rho=\Omega_\rho(\xi_\rho)$ solve
\eqref{eq:LS1}--\eqref{eq:LS2} with $v(0)=1$, and
\begin{equation}\label{eq:LS-expansion}
 v_s=V+O_{C^{2,\alpha}_\sigma}(s^2),
 \qquad
 w_s=w_0+O_{\mathcal W_\sigma}(s^2).
\end{equation}

It remains to justify the uniqueness in the fixed weighted
neighborhood asserted in the proposition.  Let $(\xi,\omega)$ be any
solution of \eqref{eq:LS1}--\eqref{eq:LS2} satisfying
\[
 \|\xi\|_{C^{2,\alpha}_\sigma}\le c_*,
 \qquad \|\omega\|_{\mathcal W_\sigma}\le C_*.
\]
Applying \eqref{eq:LS-omega-bound} to this solution and absorbing the term
$C\rho\|\omega\|_{\mathcal W_\sigma}$ gives
\[
 \|\omega\|_{\mathcal W_\sigma}
 \le C_1\bigl(\rho+\|\xi\|_{C^{2,\alpha}_\sigma}\bigr).
\]
The normalized scalar equation \eqref{eq:LS-xi-equation}, the boundedness
of $\mathcal S$, and the weighted algebra estimates then imply
\[
 \|\xi\|_{C^{2,\alpha}_\sigma}
 \le C_2\left(\rho+\|\xi\|_{C^{2,\alpha}_\sigma}^2\right).
\]
After decreasing the fixed radius $c_*$, if necessary, so that
$C_2c_*\le1/2$, the quadratic term is absorbed and
\[
 \|\xi\|_{C^{2,\alpha}_\sigma}\le2C_2\rho,
 \qquad
 \|\omega\|_{\mathcal W_\sigma}\le C_3\rho.
\]
Enlarging the constant $K$ in the contraction ball, if necessary, so that
$K>2C_2$, and then decreasing $\rho_0$ once more, every solution in the
fixed weighted neighborhood lies in the $K\rho$ contraction ball.  It
therefore coincides with $(\xi_\rho,\omega_\rho)$.  This proves the
uniqueness claimed in the statement.

\par\medskip
\noindent\emph{Step 3: parameter regularity.}
The only delicate regularity point is differentiation at $\rho=0$, where
$H_z-\rho\Delta_Y$ loses uniform ellipticity in the $Y$ variables.
\Cref{prop:LS-parameter-regularity} proves the required higher-order
bounds and difference-quotient estimates.  In particular,
\begin{equation}\label{eq:LS-six-derivative-bound}
 \|\xi_\rho\|_{C^{6,\alpha}_\sigma}
 +\|\omega_\rho\|_{\mathcal W_\sigma^{(6)}}\le C\rho,
 \qquad 0\le\rho\le\rho_0,
\end{equation}
and the map $\rho\mapsto(\xi_\rho,\omega_\rho)$ is $C^1$ into the
two-derivative weighted spaces with uniformly bounded derivative.
Consequently $A(s)=Q_s(0,0)$ satisfies
\begin{equation}\label{eq:LS-center-derivative-verified}
 \begin{split}
 A(s)&=s^2+s^4w_{s^2}(0,0),\\
 A'(s)&=2s+4s^3w_{s^2}(0,0)
             +2s^5\partial_\rho w_{s^2}(0,0)=2s+O(s^3).
 \end{split}
\end{equation}
The same appendix argument justifies differentiation of the weighted
affine tail.  Elliptic bootstrap gives smoothness in $(X,z)$ for $s>0$.

\medskip\noindent
\emph{Step 4: positivity and coercivity.}
The constructed solutions are positive.  The Lane--Emden tail and
positivity on compact sets give $V(Y)\ge c(1+|Y|)^{-2}$.
The estimates for $v_s-V$ and $\partial_z w_s$, together with the zero
Dirichlet trace, imply
\[
 v_s(Y)\ph(z)+s^2w_s(Y,z)
 \ge\ph(z)\bigl[V(Y)-Cs^2(1+|Y|)^{-\sigma}\bigr]>0
\]
for $s>0$ sufficiently small, because $\sigma>2$.

We now establish a strict stability for small $s$.  Indeed,
$2c_0V(r)<4(m-4)r^{-2}$, while the sharp Hardy inequality in $\R^m$ gives
$\int|\nabla\xi|^2\ge (m-2)^2/4\int r^{-2}\xi^2$.  Hence
\begin{equation}\label{eq:LE-coerc}
 \int_{\R^m}(|\nabla\xi|^2-2c_0V\xi^2)
 \ge\delta_{LE}\int_{\R^m}\frac{\xi^2}{r^2},
 \qquad
 \delta_{LE}:=\frac{(m-2)^2}{4}-4(m-4)>0.
\end{equation}
Write
\[
 \zeta(X,z)=\zeta_1(X)\ph(z)+\zeta^\perp(X,z),
 \qquad
 \int_{-1}^1\zeta^\perp(X,z)\ph(z)\dd z=0.
\]
Expanding $\mathfrak q_{Q_s}[\zeta]$ gives
\begin{align}
 \mathfrak q_{Q_s}[\zeta]
 =&
 \int_{\R^m}|\nabla_X\zeta_1|^2\,\dd X-2\La\int_{\Cyl}
 Q_s\,\zeta_1^2\ph^2\nonumber\\&+
 \int_{\Cyl}
 \Bigl(
   |\nabla_X\zeta^\perp|^2
   +|\partial_z\zeta^\perp|^2
   -\La|\zeta^\perp|^2
 \Bigr)
 \nonumber\\
 &
 -4\La\int_{\Cyl}
 Q_s\,\zeta_1\ph\,\zeta^\perp
 -2\La\int_{\Cyl}
 Q_s\,|\zeta^\perp|^2 .\label{eq:q-exp}
\end{align}
Using \eqref{eq:LS-ansatz} and \eqref{eq:LS-expansion},
the first line in the right hand side of \eqref{eq:q-exp} becomes
\begin{align*}
 &\int_{\R^m}
 \Bigl(
   |\nabla_X\zeta_1|^2
   -2c_0s^2V(sX)\zeta_1^2
 \Bigr)\dd X-2c_0s^2
 \int_{\R^m}
 \bigl(v_s(sX)-V(sX)\bigr)\zeta_1^2\,\dd X
 \\
 &-2\La s^4
 \int_{\R^m}\zeta_1(X)^2
 \left(
   \int_{-1}^1w_s(sX,z)\ph(z)^2\,\dd z
 \right)\dd X.
\end{align*}
Therefore, 
\begin{align}
 \mathfrak q_{Q_s}[\zeta]
 ={}&
 \int_{\R^m}
 \Bigl(
   |\nabla_X\zeta_1|^2
   -2c_0s^2V(sX)\zeta_1^2
 \Bigr)\dd X
 \nonumber\\
 &+
 \int_{\Cyl}
 \Bigl(
   |\nabla_X\zeta^\perp|^2
   +|\partial_z\zeta^\perp|^2
   -\La|\zeta^\perp|^2
 \Bigr)
 \nonumber\\
 &+I_{1}+I_{2}+I_{3},\label{eq:q-exp2}
\end{align}
where
\begin{align*}
 I_1
 :={}&
 -2c_0s^2
 \int_{\R^m}
 \bigl(v_s(sX)-V(sX)\bigr)\zeta_1^2\,\dd X
 \\
 &\quad
 -2\La s^4
 \int_{\R^m}\zeta_1(X)^2
 \left(
   \int_{-1}^1
   w_s(sX,z)\ph(z)^2\,\dd z
 \right)\dd X,
 \\
 I_2
 :={}&
 -4\La
 \int_{\Cyl}
 Q_s(X,z)\,
 \zeta_1(X)\ph(z)\zeta^\perp(X,z),
 \\
 I_3
 :={}&
 -2\La
 \int_{\Cyl}
 Q_s(X,z)|\zeta^\perp(X,z)|^2 .
\end{align*}

Since the second Dirichlet eigenvalue of $-\partial_{zz}$ on $\ph^\perp$ is $4\La$, equivalently $H_z=-\partial_{zz}-\La\ge3\La$ on $\ph^\perp$, we have
\[
    \int_{-1}^1\bigl(|\partial_z\zeta^\perp|^2-\La|\zeta^\perp|^2\bigr)\dd z
 \ge3\La\int_{-1}^1|\zeta^\perp|^2\dd z.
\]
Therefore, there exists a fixed constant $c_\perp>0$ such that
\begin{equation}
\int_\Cyl\Bigl(|\nabla_X\zeta^\perp|^2+|\partial_z\zeta^\perp|^2-\La|\zeta^\perp|^2\Bigr)
 \ge c_\perp\norm{\zeta^\perp}_{H^1(\Cyl)}^2.\label{eq:perp-spectral-coercivity}
\end{equation}

After the change of variables $Y=sX$, \eqref{eq:LE-coerc} therefore gives the scale-invariant estimate
\begin{equation}
    \int_{\R^m}\Bigl(|\nabla_X\zeta_1|^2-2c_0s^2V(sX)\zeta_1^2\Bigr)\dd X
 \ge\delta_{LE}\int_{\R^m}\frac{\zeta_1^2}{r^2}\dd X.\label{eq:LE-first-mode-coercivity}
\end{equation}

It remains to bound the terms $I_1, I_2, I_3$. Note that
\(
 0<V(Y)\le C\min\{1,|Y|^{-2}\},
\)
and since $H_z^{-1}$ acts only in the $z$ variable, the definition of $w_0$ gives $w_0=O((1+|Y|)^{-4})$.  Hence \eqref{eq:LS-expansion}, together with $\sigma<4$, implies uniformly for small $s$ that
\[
 |v_s(Y)-V(Y)|\le Cs^2(1+|Y|)^{-\sigma},
 \qquad
 |w_s(Y,z)|\le C(1+|Y|)^{-\sigma}.
\]
Since $\sigma>2$, with $t=sr$ we have
\[
 s^4(1+sr)^{-\sigma}
 \le Cs^2r^{-2},
 \qquad
 r|Q_s(X,z)|\le Cs,
 \qquad
 \norm{Q_s}_{L^\infty(\Cyl)}\le Cs^2.
\]
Therefore
\[
 |I_1|
 \le Cs^2\int_{\R^m}\frac{\zeta_1^2}{r^2}\dd X,\quad |I_3|
 \le Cs^2\norm{\zeta^\perp}_{L^2(\Cyl)}^2.
\]
Note that $I_2$ is estimated by
\[
 \begin{split}
 |I_2|
 &\le C\int_\Cyl |Q_s|\,|\zeta_1|\,|\zeta^\perp|\\
 &\le Cs
 \left(\int_{\R^m}\frac{\zeta_1^2}{r^2}\dd X\right)^{1/2}
 \norm{\zeta^\perp}_{L^2(\Cyl)}.
 \end{split}
\]
Young's inequality implies
\[
 |I_2|
 \le \frac{\delta_{LE}}4\int_{\R^m}\frac{\zeta_1^2}{r^2}\dd X
 +Cs^2\norm{\zeta^\perp}_{L^2(\Cyl)}^2.
\]

Combining these estimates with \eqref{eq:perp-spectral-coercivity}, \eqref{eq:LE-first-mode-coercivity}, \eqref{eq:q-exp2}, and then choosing $s_0>0$ small enough, gives positive constants $c_0',c_1'$ independent of $s$ such that
\begin{equation}\label{eq:small-coerc}
 \mathfrak q_{Q_s}[\zeta]
 \ge c_0'\int_{\R^m}\frac{\zeta_1^2}{r^2}+c_1'\norm{\zeta^\perp}_{H^1(\Cyl)}^2
 \qquad(0<s<s_0).
\end{equation}
\end{proof}

\begin{proposition}[Small ordered semistable branch]\label{prop:small-branch}
For $m\ge16$, there is $A_0>0$ and a $C^1$ family $A\mapsto Q_A\in\mathscr X_\sigma$ for $0<A<A_0$, such that
\[
 Q_A(0,0)=A,
 \qquad
 \partial_AQ_A>0,
 \qquad
 \mathfrak q_{Q_A}\ge0.
\]
Moreover the normalized operator $\mathcal N_{Q_A}$ is an isomorphism.
\end{proposition}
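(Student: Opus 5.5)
The plan is to reparametrize the Lyapunov--Schmidt family of \cref{prop:weighted-LS-construction} by its center height and to read off the four properties from what that proposition already provides. Since $A(s)=Q_s(0,0)$ satisfies $A'(s)=2s+O(s^3)>0$ on $(0,s_0)$, the map $s\mapsto A$ is a $C^1$ increasing diffeomorphism onto $(0,A_0)$ with $A_0:=\lim_{s\uparrow s_0}A(s)$, after shrinking $s_0$ if needed. We set $Q_A:=Q_{s(A)}$, so that $Q_A(0,0)=A$ holds by construction. Membership in $\mathscr X_\sigma$ follows from the expansion $v_s=V+O_{C^{2,\alpha}_\sigma}(s^2)$ and the Lane--Emden tail \eqref{eq:LE-tail}. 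Scaling gives $s^2v_s(sr)\ph = D_m r^{-2}\ph + O(r^{-\sigma})$, and in $\mathscr X_\sigma$ this is the fixed threshold tail, independent of $s$. The remainder lies in $C^{2,\alpha}_{\sigma,D}$. The $C^1$ dependence in the affine Banach coordinate is the statement of Step~3, through \cref{prop:LS-parameter-regularity}. Semistability, and in fact strict Hardy coercivity, is \eqref{eq:small-coerc-prop}.

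For the monotonicity $\partial_AQ_A>0$, we differentiate the equation in $A$. The tangent $H:=\partial_AQ_A$ then solves $L_{Q_A}H=0$ with $H(r,\pm1)=0$. Because the tail $D_mr^{-2}\ph$ is fixed, $H$ decays like $O(r^{-\sigma})$, and $H(0,0)=1$. So $H$ is a polynomially decaying radial-even Jacobi field. To get its sign, I would argue from the leading-order form rather than by a maximum principle. We have $\partial_s Q_s = s\,(2v_s+Y\cdot\nabla v_s)\ph + O(s^3)$ evaluated at $Y=sX$, and by \eqref{eq:LE-sep} this leading term is $sZ(sr)\ph$ with $Z>0$. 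To control the tail, where $Z\sim c\,r^{-\kappa}$, I would apply \cref{lem:jacobi-slow-fast}. It gives $H=c_\pm r^{-\beta_\pm}\ph+o(\cdot)$ uniformly after division by $\ph$, and the leading coefficient is the perturbation of $(2-\kappa)c_{LE}s^{\kappa-2}>0$. That leaves only a compact region, on which positivity follows from the uniform convergence $H/\ph\to$ a positive multiple of $Z$.

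A cleaner alternative, which I would try first, avoids asymptotics. Write $H=G_+-G_-$ with $G_-:=\max\{-H,0\}$. If $G_-$ has compact support, testing $L_QH=0$ against it and using \eqref{eq:small-coerc-prop} gives $\mathfrak q_Q[G_-]=0$, hence $G_-=0$. The strong maximum principle then upgrades $H\ge0$ to $H>0$, since $H(0,0)=1$. The needed positivity of $H$ near infinity again comes from the slow/fast expansion, so the asymptotic step cannot be avoided entirely.

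For the isomorphism, \cref{prop:fredholm} gives index zero, so it suffices to prove injectivity of $\mathcal N_{Q_A}$. Suppose $\mathcal N_{Q_A}K=0$. Then $K\in\mathscr D_\mu$ is a Jacobi field with $K(0,0)=0$, decaying at least like $r^{-\beta_-}$. Here I would use strict coercivity: the bound \eqref{eq:small-coerc-prop} controls $\int\zeta_1^2/r^2+\|\zeta^\perp\|_{H^1}^2$. Since $2\beta_->m-2$ is false for large $m$, we cannot test with $K$ directly. The slow component $r^{-\beta_-}\ph$ is not in $H^1$ when $\beta_-<(m-2)/2$. I therefore expect the kernel question to be the main obstacle. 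My plan is to show $c_-(K)=0$ by comparison with the tangent $H$. Set $K':=K-c\,H$ with $c$ chosen to cancel the slow coefficient, which is possible because $c_-(H)\neq0$ is expected as the generic Lane--Emden case. Then $K'$ is fast, decays like $r^{-\beta_+}$ with $\beta_+>(m-2)/2$, and so lies in $H^1_0$. Testing $L_QK'=0$ against $K'$ via cutoffs gives $\mathfrak q_Q[K']=0$, and coercivity forces $K'=0$. Thus $K=cH$, and $K(0,0)=0$ with $H(0,0)=1$ gives $c=0$.

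If instead $c_-(H)=0$, then $H$ itself is fast and in $H^1$, and coercivity would contradict $H\ne0$. So the slow case is in fact forced, and the argument closes in either case. The main technical work is justifying the cutoff energy identity for fast fields, which needs the decay $\beta_+>(m-2)/2$ and the transverse estimate \eqref{eq:jacobi-perp-annular}.
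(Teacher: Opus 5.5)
Your reparametrization by center height, the membership in $\mathscr X_\sigma$, and the injectivity argument for $\mathcal N_{Q_A}$ all match the paper. Semistability read directly off \eqref{eq:small-coerc-prop} is a legitimate shortcut compared with the paper's ground-state identity. The gap is in $\partial_AQ_A>0$, specifically the \emph{sign} of the slow coefficient $c_-(H)$, which you need for positivity of $H$ near infinity.

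Your justification, that $c_-(H)$ is a perturbation of $(2-\kappa)c_{LE}s^{\kappa-2}>0$, does not hold up, for two reasons.
\begin{enumerate}
\item The Lyapunov--Schmidt errors are controlled only in the weight $\sigma<\beta_-\le\kappa$. So for $|Y|\gg1$ the correction $O(s^2)(1+|Y|)^{-\sigma}$ in $\partial_sQ_s=s\bigl[Z(Y)+O(s^2)(1+|Y|)^{-\sigma}\bigr]\ph+\cdots$ dominates $Z(Y)\sim(2-\kappa)c_{LE}|Y|^{-\kappa}$. The coefficient $c_-$, which is read off at $r=\infty$, is not continuous in that topology.
\item \cref{lem:LE} allows $\kappa=\beta_+$. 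In that case $Z$ has zero slow coefficient, and the nonzero $c_-(H)$ (which your last paragraph shows must exist) comes entirely from the perturbation, with no sign information at all.
\end{enumerate}
The same non-uniformity undermines your compact-region step. The radius beyond which the slow/fast expansion takes over is not controlled uniformly in $s$ when measured in $Y=sX$.

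The missing idea is that the sign follows from the same truncation test you already use, with no asymptotic matching. The steps, in order:
\begin{enumerate}
\item Prove $c_-(H)\neq0$ exactly as in your last paragraph. A fast $H$ satisfies the cutoff identity with vanishing error, so \eqref{eq:small-coerc-prop} forces $H=0$, contradicting $H(0,0)=1$.
\item Suppose $c_-(H)<0$. The quotient form of the slow expansion in \cref{lem:jacobi-slow-fast} gives $H<0$ for $r\ge R_s$, uniformly in $|z|<1$, so $H_+$ has compact support. Testing $L_{Q}H=0$ against $H_+$ gives $\mathfrak q_{Q}[H_+]=0$, and coercivity forces $H_+=0$, again contradicting $H(0,0)=1$.
\item Hence $c_-(H)>0$, so $H_-$ is compactly supported. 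Your $G_-$ argument then gives $H\ge0$, and the strong maximum principle gives $H>0$.
\end{enumerate}
This is precisely the paper's route. With the nonvanishing and sign of $c_-(H)$ established before the positivity step, your proof closes.
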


\begin{proof}
Let $\Psi_s=\partial_sQ_s$.  Differentiating the cylinder equation and the
Dirichlet condition gives
\begin{equation}\label{eq:Psi-jacobi-rig}
 L_{Q_s}\Psi_s=0,\qquad \Psi_s(r,\pm1)=0.
\end{equation}
From \eqref{eq:LS-expansion},
\begin{equation}\label{eq:Psi-center-rig}
 \Psi_s(0,0)=2s+O(s^3)>0
\end{equation}
for $s>0$ small.  The coefficient $D_m$ of the threshold term
$D_mr^{-2}\ph$ is independent of $s$, hence differentiating the affine
tail removes this term and, by the Lyapunov--Schmidt construction,
\begin{equation}\label{eq:Psi-initial-weight-rig}
 \Psi_s=O_{C^{2,\alpha}_{\rm sc}}(r^{-\sigma}),
 \qquad 2<\sigma<\beta_-.
\end{equation}
Since \eqref{eq:Psi-initial-weight-rig} is a polynomial decay estimate,
the Jacobi indicial bootstrap proved above applies to the Jacobi field $\Psi_s$.
Consequently there is a unique slow coefficient $c_-(s)$ and exactly one
of the alternatives
\begin{equation}\label{eq:Psi-slow-rig}
 \Psi_s
 =c_-(s)r^{-\beta_-}\ph
  +o_{C^{2,\alpha}_{\rm sc}}(r^{-\beta_-})
 \qquad(c_-(s)\ne0),
\end{equation}
or
\begin{equation}\label{eq:Psi-fast-rig}
 \Psi_s
 =c_+(s)r^{-\beta_+}\ph
  +o_{C^{2,\alpha}_{\rm sc}}(r^{-\beta_+})
 \qquad(c_-(s)=0),
\end{equation}
with faster decay if $c_+(s)=0$.  By the last assertion of the Jacobi slow/fast analysis above, both expansions remain valid after division
by $\ph$, uniformly up to $z=\pm1$.

We claim $c_-(s)\ne0$.  Suppose $c_-(s)=0$ and choose a radial cutoff
$\eta_R$ equal to one for $r<R$, zero for $r>2R$, and
$|\eta_R'|\le C/R$.  Testing \eqref{eq:Psi-jacobi-rig} by
$\eta_R^2\Psi_s$ and expanding gives the exact identity
\begin{equation}\label{eq:Psi-cutoff-rig}
 \mathfrak q_{Q_s}[\eta_R\Psi_s]
 =\int_\Cyl \Psi_s^2|\nabla\eta_R|^2.
\end{equation}
Because $\beta_+>(m-2)/2$, the right side is
$O(R^{m-2-2\beta_+})\to0$.  Applying \eqref{eq:small-coerc} to the left
side and then monotone convergence yields
\[
 \int_{\R^m}\frac{\psi_1^2}{r^2}=0,
 \qquad \norm{\Psi_s^\perp}_{H^1(\Cyl)}=0,
\]
where $\Psi_s=\psi_1\ph+\Psi_s^\perp$.  Hence $\Psi_s\equiv0$, contrary
to \eqref{eq:Psi-center-rig}.  Therefore $c_-(s)\ne0$.

If $c_-(s)<0$, the quotient form of \eqref{eq:Psi-slow-rig} implies
$\Psi_s<0$ for all $r\ge R_s$, uniformly in $|z|<1$.  Hence
$(\Psi_s)_+$ has compact radial support and belongs to $H^1_0(\Cyl)$.
Testing \eqref{eq:Psi-jacobi-rig} with $(\Psi_s)_+$ gives
$\mathfrak q_{Q_s}[(\Psi_s)_+]=0$.  Estimate
\eqref{eq:small-coerc} forces $(\Psi_s)_+=0$, contradicting
\eqref{eq:Psi-center-rig}.  Thus $c_-(s)>0$.  The negative part
$(\Psi_s)_-$ is now compactly supported; the same test gives
$(\Psi_s)_-=0$.  Hence $\Psi_s\ge0$, and the strong maximum principle implies
\begin{equation}\label{eq:Psi-positive-rig}
 \Psi_s>0\qquad\hbox{in }\Cyl.
\end{equation}
Hopf's lemma gives $-\partial_\nu\Psi_s>0$ on $z=\pm1$.

At the center,
\[
 A(s):=Q_s(0,0)=s^2(1+s^2w_s(0,0))=s^2(1+O(s^2)),
\]
so $A'(s)=2s+O(s^3)>0$.  The one-dimensional inverse function theorem
gives a $C^1$ function $s=s(A)$ for $0<A<A_0:=A(s_0)$, and so $Q_A:=Q_{s(A)}$ is a $C^1$ family in $(0,A_0)$.  Its normalized tangent
\begin{equation}\label{eq:JA-rig}
 J_A:=\partial_AQ_A=\frac{\Psi_s}{A'(s)}
\end{equation}
satisfies
\[
 L_{Q_A}J_A=0,\qquad J_A(0,0)=1,\qquad J_A>0.
\]
For $\zeta\in C_c^1(\Cyl)$ whose support is initially separated from
$z=\pm1$, direct expansion gives the ground-state identity
\begin{equation}\label{eq:groundstate-rig}
 \mathfrak q_{Q_A}[\zeta]
 =\int_\Cyl J_A^2\left|\nabla\left(\frac\zeta{J_A}\right)\right|^2\ge0.
\end{equation}
For a general zero-trace compactly supported $\zeta$, take a boundary
cutoff $\theta_\delta$ and apply \eqref{eq:groundstate-rig} to
$\theta_\delta\zeta$.  Hopf's lemma gives $c(1-|z|)\le J_A\le
C(1-|z|)$ on the compact $X$-projection of the support, and the
one-dimensional Hardy inequality implies
$\theta_\delta\zeta\to\zeta$ in $H^1$.  Letting $\delta\downarrow0$
proves semistability for all admissible tests.

It remains to prove that $\mathcal N_{Q_A}$ is an isomorphism.  The slow
coefficient $c_-(J_A)$ is nonzero by the preceding argument.  If
$H\in\ker\mathcal N_{Q_A}$, set
\[
 \widehat H
 :=H-\frac{c_-(H)}{c_-(J_A)}J_A.
\]
Then $L_{Q_A}\widehat H=0$ and $c_-(\widehat H)=0$, so, by the Jacobi slow/fast alternative proved above,
$\widehat H$ has the fast asymptotic \eqref{eq:jacobi-fast-general}.  Repeating
\eqref{eq:Psi-cutoff-rig} with $\widehat H$ and using again
\eqref{eq:small-coerc} gives $\widehat H=0$.  Thus $H$ is a multiple of
$J_A$; since $H(0,0)=0$ and $J_A(0,0)=1$, this multiple is zero.
Therefore $\ker\mathcal N_{Q_A}=\{0\}$.  By
\cref{prop:fredholm}, $\operatorname{ind}\mathcal N_{Q_A}=0$, so its
cokernel is also zero and the bounded inverse theorem gives a bounded
inverse.  This proves all assertions.
\end{proof}

\subsection{Ordered continuation, large-\texorpdfstring{$m$}{m} compactness, and a uniform Hardy margin}

Continuation of positive elliptic branches from a locally regular
solution, together with the use of the first linearized eigenvalue to
detect loss of regularity, is classical in nonlinear eigenvalue
problems; see Crandall--Rabinowitz \cite{CR75} and Mignot--Puel
\cite{MP}.  The present cylinder problem is not a direct application
of those bounded-domain results: the continuation parameter is the
center height $A$ and the domain is unbounded.  We therefore prove the
required Fredholm, compactness and no-fast-kernel statements explicitly.

\medskip\noindent\textbf{Construction and uniqueness of the maximal ordered family.}
We make the continuation procedure precise.  Let $Q_0$ be any profile
reached from the small branch in Proposition \ref{prop:small-branch} for which the normalized operator
$\mathcal N_{Q_0}$ is an isomorphism, and put $A_0=Q_0(0,0)$.  In a
neighborhood of $(0,A_0)$ in $\mathscr D_\mu\times\R$ consider
\[
 \mathcal F_{Q_0}(H,A)
 :=\left(
 -\Delta(Q_0+H)-\La\bigl((Q_0+H)+(Q_0+H)^2\bigr),
 Q_0(0,0)+H(0,0)-A
 \right).
\]
This map takes values in $\mathscr Y_\mu\times\R$ and is smooth.
Indeed $\|H\|_{C^{0,\alpha}_{\beta_-}}\le C\|H\|_{\mathscr D_\mu}$
and $\mu+2<2\beta_-$, so multiplication satisfies
\[
 \|H_1H_2\|_{\mathscr Y_\mu}
 \le C\|H_1\|_{\mathscr D_\mu}\|H_2\|_{\mathscr D_\mu}.
\]
Its derivative with respect to $H$ at $(0,A_0)$ is exactly
\[
 D_H\mathcal F_{Q_0}(0,A_0)=\mathcal N_{Q_0}.
\]
Hence the Banach-space implicit-function theorem gives an interval
$I_0\ni A_0$ and a unique $C^1$ graph $A\mapsto Q_A$ on $I_0$ such that
$Q_{A_0}=Q_0$, $Q_A(0,0)=A$, and $Q_A$ solves the cylinder equation.
These nearby profiles remain positive.  On the tail the leading term
$D_mr^{-2}\ph$ dominates the perturbation, whose quotient by $\ph$
is $O(r^{-\beta_-})$ by \eqref{eq:massive-graph-boundary-factor}.
On a fixed compact cylinder, positivity and the Hopf boundary bound
for $Q_0$ persist under a sufficiently small local $C^1$ perturbation.
By differentiation,
\begin{equation}\label{eq:normalized-tangent-cont}
 L_{Q_A}J_A=0,
 \qquad J_A(0,0)=1,
 \qquad
 J_A:=\partial_AQ_A=\mathcal N_{Q_A}^{-1}(0,1).
\end{equation}
\begin{definition}[Positive normalized chart]\label{def:positive-normalized-chart}
A $C^1$ family $A\mapsto Q_A$ of threshold-cylinder profiles on an
interval $I$ is a \emph{positive normalized chart} if
\[
 Q_A(0,0)=A,\qquad \mathcal N_{Q_A}\text{ is an isomorphism},\qquad
 J_A:=\partial_AQ_A>0
\]
for every $A\in I$.  Thus the center height is the local parameter and
the branch tangent is the positive normalized Jacobi field satisfying
$J_A(0,0)=1$.
\end{definition}
After shrinking the small-amplitude chart supplied by
\cref{prop:small-branch}, it is a positive normalized chart.

Let $\mathscr I$ be the collection of numbers $b>0$ for which the
small-amplitude graph extends to $(0,b)$ by a finite chain of overlapping
positive normalized charts. Such an extension is unique by the implicit-function theorem.    

Define
\[
 A^*:=\sup\mathscr I\in(0,\infty].
\]
For every $A<A^*$ one may choose $b\in\mathscr I$ with $A<b$ and define
$Q_A$ by the unique extension on $(0,b)$.  This gives the maximal connected
ordered family issuing from the small branch,
\begin{equation}\label{eq:Bm}
 \cB_m=\{Q_A:0<A<A^*\},
 \qquad Q_A(0,0)=A,
 \qquad \partial_AQ_A=J_A>0.
\end{equation}
\begin{definition}[Maximal ordered branch and regular endpoint]\label{def:ordered-branch}
The family
\[
 \cB_m=\{Q_A:0<A<A^*\},\qquad
 Q_A(0,0)=A,\qquad \partial_AQ_A>0,
\]
obtained by chaining overlapping positive normalized charts is the
\emph{maximal ordered branch issuing from the small branch}.  Its finite
endpoint $A^*<\infty$ is \emph{regular} if there exists
$Q_*\in\mathscr X_\sigma$ such that
\[
 Q_A\longrightarrow Q_*\quad\text{in }\mathscr X_\sigma
 \quad\text{as }A\uparrow A^*.
\]
Otherwise a finite endpoint is called \emph{nonregular}.  When
$A^*=\infty$, the monotone limit is called the \emph{infinite-height
endpoint}; it is \emph{singular} if it is not locally bounded.
\end{definition}
In particular, arguing as in \eqref{eq:groundstate-rig}, every profile
$Q_A\in\cB_m$ is semistable.  The following continuation criterion is
the precise form needed later.

\begin{proposition}[Only a fast Jacobi field can stop a regular ordered endpoint]\label{prop:stopping}
Let $m\ge16$. If $A^*<\infty$ is a regular endpoint of \eqref{eq:Bm} and the ordered continuation fails at $A^*$, then $L_{Q_*}$ possesses a nonzero radial-even fast Dirichlet Jacobi field.
\end{proposition}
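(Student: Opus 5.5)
The plan is to argue by contraposition. I assume that $L_{Q_*}$ has no nonzero radial-even fast Dirichlet Jacobi field, and I show that the ordered branch then extends past $A^*$ by one more positive normalized chart. This contradicts the definition of $A^*$.

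\emph{Step 1: the endpoint profile.} Since $Q_A\to Q_*$ in $\mathscr X_\sigma$, the limit $Q_*$ is a smooth solution of \eqref{eq:cylinder}. Every $Q_A$ is semistable and the quadratic form passes to the limit, so $Q_*$ is semistable. It is positive, because it is the increasing limit of positive profiles with a fixed threshold tail. Hence \cref{lem:universal-tail}, \cref{prop:fredholm} and \cref{lem:jacobi-slow-fast} all apply to $Q_*$.

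\emph{Step 2: the kernel of $L_{Q_*}$ is at most one-dimensional.} Take any two homogeneous fields $H_1,H_2\in\mathscr D_\mu$. The combination $c_-(H_1)H_2-c_-(H_2)H_1$ has vanishing slow coefficient. By the slow/fast alternative it is therefore fast, and by our assumption it is zero. So $\dim\ker L_{Q_*}\le1$, and every nonzero kernel element is slow.

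Moreover, \eqref{eq:LQ-index-rig} gives $\operatorname{ind}L_{Q_*}=1$, so $\dim\ker L_{Q_*}\ge1$. Hence $\ker L_{Q_*}=\spanop\{K\}$ with $c_-(K)\ne0$. Since $\operatorname{ind}\mathcal N_{Q_*}=0$, the operator $\mathcal N_{Q_*}$ is an isomorphism exactly when $K(0,0)\ne0$. The task is thus to exclude a slow kernel element vanishing at the center.

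\emph{Step 3: passing the tangents $J_A$ to the limit (main obstacle).} Let $J_A=\mathcal N_{Q_A}^{-1}(0,1)$, so that $J_A>0$ and $J_A(0,0)=1$. I split into two cases according to $\|J_A\|_{\mathscr D_\mu}$ as $A\uparrow A^*$.

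\begin{itemize}
\item If $\|J_A\|_{\mathscr D_\mu}$ stays bounded, I extract a limit $J_*$ with $L_{Q_*}J_*=0$, $J_*\ge0$ and $J_*(0,0)=1$.
\item If $\|J_A\|_{\mathscr D_\mu}$ is unbounded along a sequence, I normalize $K_A=J_A/\|J_A\|$ and obtain a limit $K_*\ge0$ with $L_{Q_*}K_*=0$ and $K_*(0,0)=0$. The strong maximum principle for $L_{Q_*}K_*=0$ with $K_*\ge0$ vanishing at an interior point forces $K_*\equiv0$.
\end{itemize}

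In the second case I still need $K_*\ne0$. For this I use the decomposition $L_Q=\widetilde{\mathcal T}+\mathcal K_Q$ from \cref{prop:fredholm}. The operator $\widetilde{\mathcal T}$ is independent of $Q$, and $\mathcal K_{Q_A}\to\mathcal K_{Q_*}$ in operator norm because $Q_A\to Q_*$ in $\mathscr X_\sigma$. Write $\Pi$ for the projection of $\mathscr D_\mu$ onto the slow direction $\spanop\{\psi_-\}$. The identity $\widetilde{\mathcal T}K_A=-\mathcal K_{Q_A}K_A$, combined with compactness of $\mathcal K_{Q_*}$ and the semi-Fredholm estimate
\[
\|H\|_{\mathscr D_\mu}\le C\bigl(\|\widetilde{\mathcal T}H\|_{\mathscr Y_\mu}+|H(0,0)|+|c_-(H)|\bigr)
\]
for the Euler block, should give strong convergence $K_A\to K_*$ in $\mathscr D_\mu$, after passing to a subsequence along which $\Pi K_A$ converges. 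This is the step I expect to be hardest. If the operator-norm argument proves insufficient, I will fall back on the uniform tail estimates of \cref{lem:universal-tail} for the ordered segment, which show that the coefficients converge uniformly on the tail. Strong convergence then gives $\|K_*\|=1$, contradicting $K_*\equiv0$. So the tangents are bounded, and the first case holds.

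\emph{Step 4: conclusion.} In the bounded case, the strong maximum principle gives $J_*>0$. Since $J_*\ne0$, it spans $\ker L_{Q_*}$, and $J_*(0,0)=1\ne0$, so $\mathcal N_{Q_*}$ is an isomorphism. By Step 2, $J_*$ is slow. Because $J_*\ge0$, its slow coefficient satisfies $c_-(J_*)\ge0$, hence $c_-(J_*)>0$.

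The implicit function theorem at $Q_*$, using the map $\mathcal F_{Q_*}$, now produces a chart $A\mapsto Q_A$ around $A^*$. This chart agrees with $\cB_m$ for $A<A^*$ by local uniqueness. Its tangents stay close to $J_*$ in $\mathscr D_\mu$. Positivity of these tangents follows from two facts. Near the axis and on compact sets, $J_*>0$ together with the Hopf boundary bound gives positivity. On the tail, the slow term $c_-(J_*)r^{-\beta_-}\ph$ dominates; here I use the quotient form of the slow expansion and the boundary-factor bound \eqref{eq:massive-graph-boundary-factor}. Hence the new chart is a positive normalized chart, the ordered continuation does not fail at $A^*$, and this contradiction proves the proposition.
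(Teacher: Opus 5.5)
Your proof is correct, but the key compactness step takes a different route from the paper.

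\textbf{How the paper bounds the tangents.} The paper bounds the tangents $J_j$ directly. Harnack chains, using $J_j(0,0)=1$ and $J_j>0$, give local bounds. The tail is controlled by the normal splitting, the smallness of $\mathcal K_j$ on $\{r\ge R\}$, the augmented Euler estimate, and absorption. This yields uniform bounds only in the slightly weaker space $X^R_{\mu'}$ with $\mu'<\mu$, and the limit $J_*$ must then be upgraded to $\mathscr D_\mu$ by the indicial bootstrap.

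\textbf{How your argument differs.} You run an abstract blow-up argument instead. Multiplication by $Q_A-Q_*\in C^{2,\alpha}_\sigma$ is bounded $\mathscr D_\mu\to\mathscr Y_\mu$, by the same product estimate as \eqref{eq:endpoint-mixed-product}, which uses $\mu+2<\sigma+\beta_-$. Hence $L_{Q_A}\to L_{Q_*}$ in operator norm. Together with the compactness of $\mathcal K_{Q_*}$ and the invertibility of $(\widetilde{\mathcal T},H\mapsto H(0,0))$, this gives strong subsequential convergence in $\mathscr D_\mu$ itself. Positivity enters only at the end, through the strong maximum principle, to kill a normalized limit that vanishes at $(0,0)$.

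Two simplifications are available. The term $|c_-(H)|$ in your semi-Fredholm estimate is superfluous: $\ker\widetilde{\mathcal T}$ is spanned by a regular first-mode field with center value $1$, so $|H(0,0)|$ alone suffices. Likewise your ``fallback'' is unnecessary, because the operator-norm argument already closes. Your Step~2 observation, that $\operatorname{ind}L_{Q_*}=1$ forces $\ker L_{Q_*}$ to be exactly one-dimensional, is a correct addition that the paper does not need.

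\textbf{A point you must make explicit.} Your route gives a uniform bound on $\|J_A\|_{\mathscr D_\mu}$ over $[A_0,A^*)$ in the exact domain norm. Use this bound at the step you pass over quickly. Regular convergence only gives $Q_A\to Q_*$ in weight $\sigma<\beta_-$. So the claim that the new chart ``agrees with $\cB_m$ by local uniqueness'' requires $Q_A-Q_*\in\mathscr D_\mu$ with small norm, and as written you do not justify this.

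The paper obtains it from the averaged-coefficient membership \eqref{eq:averaged-coefficient-domain} and the endpoint absorption \eqref{eq:endpoint-graph-absorption}--\eqref{eq:endpoint-strong-graph-convergence}. In your framework it is immediate. The bound
\[
 \|Q_{A_2}-Q_{A_1}\|_{\mathscr D_\mu}\le\sup_{A}\|J_A\|_{\mathscr D_\mu}\,|A_2-A_1|
\]
makes $Q_A$ Cauchy in $Q_{A_0}+\mathscr D_\mu$. Its limit coincides pointwise with $Q_*$, so $\|Q_A-Q_*\|_{\mathscr D_\mu}\to0$, and local uniqueness then applies. With this line added, your argument replaces both the paper's Step~1 and the identification part of its Step~3.
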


\begin{proof}
Assume that $L_{Q_*}$ has no nonzero radial-even fast Dirichlet Jacobi
field.  We show that $Q_*$ is contained in a positive normalized chart,
which contradicts maximality of \eqref{eq:Bm}.

Choose $A_j\uparrow A^*$ and set $J_j=\partial_AQ_{A_j}$.  Then
\begin{equation}\label{eq:Jj-rig}
 L_{Q_{A_j}}J_j=0,\qquad J_j(0,0)=1,\qquad J_j>0.
\end{equation}
Regular convergence implies that $Q_*$ is a smooth solution in
$\mathscr X_\sigma$.  Moreover, for every compactly supported
zero-trace test function $\zeta$,
\[
 \mathfrak q_{Q_*}[\zeta]
 =\lim_{j\to\infty}\mathfrak q_{Q_{A_j}}[\zeta]\ge0,
\]
so $Q_*$ is semistable and \cref{prop:fredholm} applies to
$\mathcal N_{Q_*}$.

\smallskip\noindent
\emph{Step 1: compactness of the normalized tangents.}
Fix $K\Subset\overline\Cyl$.  Since the coefficients of $L_{Q_{A_j}}$ converge
locally in $C^{0,\alpha}$ and $J_j(0,0)=1$, Harnack chains give uniform
upper and lower bounds for $J_j$ on compact subsets away from
$z=\pm1$.  Boundary Harnack and boundary Schauder estimates give the
corresponding bounds up to $z=\pm1$.  Hence
\begin{equation}\label{eq:Jj-local-rig}
 \norm{J_j}_{C^{2,\alpha}(K)}\le C_K.
\end{equation}

It remains to control the tail uniformly.  Fix
\[
 \beta_-<\mu''<\mu'<\mu
\]
and choose $R$ in the normal-operator region.  On $\{r\ge R\}$ write
\[
 J_j=j_j(r)\ph+J_j^\perp
\]
and set
\[
 X_{\mu'}^R
 :=\spanop\{r^{-\beta_-}\ph\}\oplus
   C^{2,\alpha}_{\mu',D}(\{r\ge R\}),
 \qquad
 \|c r^{-\beta_-}\ph+H_f\|_{X_{\mu'}^R}
 :=|c|+\|H_f\|_{C^{2,\alpha}_{\mu'}}.
\]
From the normal splitting \eqref{eq:normal-splitting-rig},
\[
 L_{Q_{A_j}}=L_{\rm nor}+\mathcal K_j,
 \qquad
 L_{\rm nor}
 =\begin{pmatrix}E&B\\0&\mathscr L_\perp\end{pmatrix}
 \quad (r\ge R).
\]
Regular convergence in $\mathscr X_\sigma$ and the strict gains in the
weights used in the proof of \cref{prop:fredholm} imply, uniformly in
$j$,
\begin{equation}\label{eq:Kj-small-rig}
 \|P^\perp\mathcal K_jH\|_{C^{2,\alpha}_{\mu'}(r\ge R)}
 +\|P_1\mathcal K_jH\|_{C^{0,\alpha}_{\mu'+2}(r\ge R)}
 \le CR^{-\vartheta}\|H\|_{X_{\mu'}^R}
\end{equation}
for some $\vartheta>0$.

Projecting the Jacobi equation \eqref{eq:Jj-rig} onto $\ph^\perp$ gives
\[
 \mathscr L_\perp J_j^\perp=-P^\perp\mathcal K_jJ_j.
\]
The same-order estimate \eqref{eq:massive-same-order},
localized to the exterior half-cylinder by its exponentially decaying
Green kernel, yields
\begin{equation}\label{eq:Jperp-tail-direct-rig}
 \|J_j^\perp\|_{C^{2,\alpha}_{\mu'}(r\ge R)}
 \le C_R\|J_j\|_{C^{2,\alpha}(\{R<r<2R\})}
     +CR^{-\vartheta}\|J_j\|_{X_{\mu'}^R}.
\end{equation}

Define 
\[
 k_{1,j}(r):=
 \int_{-1}^1(\mathcal K_jJ_j)(r,z)\ph(z)\dd z.
\]
The first-mode equation is therefore
\[
 Ej_j=f_j,
 \qquad
 f_j:=-BJ_j^\perp-k_{1,j}.
\]
Since $B$ contains the factor $r^{-2}$, \eqref{eq:Kj-small-rig} and
\eqref{eq:Jperp-tail-direct-rig} give
\begin{equation}\label{eq:firstmode-source-tail-rig}
 \|f_j\|_{C^{0,\alpha}_{\mu'+2}(r\ge R)}
 \le C_R\|J_j\|_{C^{2,\alpha}(\{R<r<2R\})}
     +CR^{-\vartheta}\|J_j\|_{X_{\mu'}^R}.
\end{equation}
Because $\beta_-<\mu'<\beta_+$, the augmented Euler tail estimate
\cref{lem:euler-tail-augmented}, applied with the Cauchy data
$j_j(R),j_j'(R)$, gives a unique coefficient $c_-(J_j)$ such that
\begin{align*}
 |c_-(J_j)|
 +\|j_j-c_-(J_j)r^{-\beta_-}\|_{C^{2,\alpha}_{\mu'}(r\ge R)}
 \le{}& C_R\|J_j\|_{C^{2,\alpha}(\{R<r<2R\})}\\
 &+CR^{-\vartheta}\|J_j\|_{X_{\mu'}^R}.
\end{align*}
Combining this estimate with \eqref{eq:Jperp-tail-direct-rig} yields
\[
 \|J_j\|_{X_{\mu'}^R}
 \le C_R\|J_j\|_{C^{2,\alpha}(\{R<r<2R\})}
     +CR^{-\vartheta}\|J_j\|_{X_{\mu'}^R}.
\]
Choose $R$ so large that $CR^{-\vartheta}\le\frac12$ and absorb the
last term.  We obtain
\begin{equation}\label{eq:tangent-tail-uniform-rig}
 |c_-(J_j)|+
 \norm{J_j-c_-(J_j)r^{-\beta_-}\ph}_{C^{2,\alpha}_{\mu'}(r\ge R)}
 \le
 2C_R\norm{J_j}_{C^{2,\alpha}(\{R<r<2R\})}.
\end{equation}
The coefficient $c_-(J_j)$ is unique because $\beta_-<\mu'$, hence
$r^{-\beta_-}\ph\notin C^{2,\alpha}_{\mu'}$.  By
\eqref{eq:Jj-local-rig}, the right-hand side is uniformly bounded.

After passing to a subsequence,
$c_-(J_j)\to c_*$, and weighted Arzel\`a--Ascoli gives, for every
$\alpha'<\alpha$,
\begin{equation}\label{eq:Jstar-conv-rig}
 J_j\to J_*\quad\hbox{locally in }C^{2,\alpha'},
 \qquad
 J_j-c_-(J_j)r^{-\beta_-}\ph
 \to J_*-c_*r^{-\beta_-}\ph
 \quad\hbox{in }C^{2,\alpha'}_{\mu''}(r\ge R).
\end{equation}
Passing to the limit in \eqref{eq:Jj-rig} yields
\begin{equation}\label{eq:Jstar-rig}
 L_{Q_*}J_*=0,\qquad J_*(0,0)=1,\qquad J_*\ge0.
\end{equation}
Thus the strong maximum principle and Hopf's lemma give $J_*>0$ in the
open cylinder and $-\partial_\nu J_*>0$ on $z=\pm1$.  The Jacobi
indicial bootstrap proved above upgrades the tail in
\eqref{eq:Jstar-conv-rig} to the original domain $\mathscr D_\mu$ and
identifies $c_-(J_*)=c_*$.  If $c_*=0$, then $J_*$ is a nonzero fast
Jacobi field, contrary to the hypothesis.  Hence $c_*\ne0$; the quotient
tail expansion of the positive field $J_*$ gives in fact $c_*>0$.

\smallskip\noindent
\emph{Step 2: invertibility at the endpoint.}
Let $H\in\ker\mathcal N_{Q_*}$.  Since $J_*\in\mathscr D_\mu$, set
\[
 \widetilde H
 :=H-\frac{c_-(H)}{c_*}J_*.
\]
Then $L_{Q_*}\widetilde H=0$ and $c_-(\widetilde H)=0$.  Hence
$\widetilde H$ is fast and, by assumption, $\widetilde H=0$.
Therefore $H=cJ_*$.  Since $H(0,0)=0$ and $J_*(0,0)=1$, one has $c=0$.
Thus $\ker\mathcal N_{Q_*}=\{0\}$.  By semistability of $Q_*$ and
\cref{prop:fredholm},
$\operatorname{ind}\mathcal N_{Q_*}=0$; hence $\mathcal N_{Q_*}$ is an
isomorphism.

\smallskip\noindent
\emph{Step 3: continuation and preservation of positivity.}
Apply the Banach-space implicit function theorem to
\[
 \mathcal F(Q,A)
 =\bigl(-\Delta Q-\La(Q+Q^2),\,Q(0,0)-A\bigr)
\]
at $(Q_*,A^*)$.  It gives a unique $C^1$ graph
$A\mapsto\widehat Q_A$ for $|A-A^*|<\delta$.  Its normalized tangent
\[
 \widehat J_A:=\partial_A\widehat Q_A
 =\mathcal N_{\widehat Q_A}^{-1}(0,1)
\]
depends continuously on $A$ in $\mathscr D_\mu$, and
$\widehat J_{A^*}=J_*$.  In particular $c_-(\widehat J_A)\to c_*>0$.
Estimate \eqref{eq:massive-graph-boundary-factor}, the scalar part of
the graph norm, and $\mu>\beta_-$ give, uniformly for $A$ near $A^*$,
\[
 \frac{\widehat J_A(r,z)}{\ph(z)}
 =c_-(\widehat J_A)r^{-\beta_-}+O(r^{-\mu}),
 \qquad r\to\infty.
\]
Hence $\widehat J_A>0$ for all $r\ge R_1$, with $R_1$ independent of
$A$ near $A^*$.  On the compact cylinder $\{r\le R_1\}$, positivity of
$J_*$ together with Hopf's lemma gives
$J_*/\ph\ge c_{R_1}>0$; continuity of
$A\mapsto\widehat J_A$ in the local $C^1$ topology preserves this lower
bound after decreasing $\delta$.  Thus $\widehat J_A>0$ throughout the
cylinder for $|A-A^*|<\delta$.

It remains to show that the old branch enters this implicit-function
chart in the stated domain, rather than merely in a weaker weight.
Put $H_A=Q_A-Q_*$.  Regular convergence gives
\[
 \eta_A:=\|H_A\|_{C^{2,\alpha}_\sigma}\longrightarrow0.
\]
The exact difference equation is
\[
 L_{\overline Q_A}H_A=0,
 \qquad \overline Q_A=(Q_A+Q_*)/2.
\]
By \eqref{eq:averaged-coefficient-domain}, each $H_A$ belongs to
$\mathscr D_\mu$.  This is only a membership assertion; the following
estimate proves convergence in its norm.  The endpoint operator,
whose invertibility was proved in Step~2, satisfies
\[
 L_{Q_*}H_A=\La H_A^2,
 \qquad H_A(0,0)=A-A^*.
\]
Thus
\begin{equation}\label{eq:endpoint-graph-absorption}
 \|H_A\|_{\mathscr D_\mu}
 \le C\left(\|H_A^2\|_{\mathscr Y_\mu}+|A-A^*|\right).
\end{equation}
The definition of $\mathscr D_\mu$ gives its continuous embedding into
$C^{0,\alpha}_{\beta_-}$.  The weighted product inequality and
$\mu+2<\sigma+\beta_-$ give
\begin{equation}\label{eq:endpoint-mixed-product}
 \|H_A^2\|_{\mathscr Y_\mu}
 \le C\|H_A\|_{C^{2,\alpha}_\sigma}
           \|H_A\|_{\mathscr D_\mu}.
\end{equation}
Indeed on each annulus the product has weight $\sigma+\beta_-$;
projection onto $\ph$ requires only weight $\mu+2$, and projection
onto $\ph^\perp$ only weight $\mu$.  The compact part follows from
the local graph estimate.  Since $C\eta_A<1/2$ for $A$ close to
$A^*$, \eqref{eq:endpoint-graph-absorption} absorbs the product and yields
\begin{equation}\label{eq:endpoint-strong-graph-convergence}
 \|Q_A-Q_*\|_{\mathscr D_\mu}
 \le 2C|A-A^*|\longrightarrow0.
\end{equation}
Local uniqueness in the implicit-function theorem now identifies the
old and new graphs.  No upgrade of a weak convergence statement is
being inferred solely from the indicial membership argument.
The chart is positive and extends \eqref{eq:Bm} to $A>A^*$, a
contradiction.  Hence failure of ordered continuation at a regular
endpoint can occur only if $L_{Q_*}$ has a nonzero radial-even fast
Dirichlet Jacobi field.
\end{proof}

Define
\begin{equation}\label{eq:Sm}
 S_m:=\sup\{r^2Q(r,0):\ Q\in\cB_m,\ r>0\}\in(0,\infty].
\end{equation}
The next theorem is the central large-dimensional estimate.

\begin{theorem}[Large-dimensional compact-annulus estimate]\label{thm:Sm}
As $m\to\infty$,
\begin{equation}\label{eq:Smom2}
 \frac{S_m}{m^2}\longrightarrow0.
\end{equation}
In particular $S_m<\infty$ for all sufficiently large $m$.
\end{theorem}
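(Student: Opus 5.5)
The plan is to argue by contradiction, with a blow-up analysis organized by how the radius $r$ compares with the natural transverse and dimensional scales. Suppose there are $c>0$, dimensions $m_k\to\infty$, profiles $Q_k\in\cB_{m_k}$ and radii $r_k>0$ with
\[
 r_k^2Q_k(r_k,0)\ge c\,m_k^2 .
\]
(If $S_m=\infty$, the same choice is available with $c$ arbitrary.) The starting point is the two a priori bounds already proved for every semistable profile. The first is the scalar separation of \cref{lem:scalar-separation}, $p(r)<D_mr^{-2}=\tfrac{3(m-4)}{\pi}r^{-2}$, which is only $O(m)r^{-2}$. The second is the transverse monotonicity $Q\le Q(r,0)\ph$ of \cref{lem:transverse}. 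Together with radial monotonicity (\cref{cor:radial-mono}), the first bound gives $\int_{B_r}Q(\cdot,z)\ph\,dX\lesssim m\,r^{m-2}$. Hence a point with $r^2Q(r,0)\gtrsim m^2$ forces a large discrepancy between the center value and the first-mode average. In other words, $Q_k$ concentrates: either in $z$ near $z=0$, or radially on scales much smaller than $r_k$. I would normalize by setting $\lambda_k:=Q_k(r_k,0)^{1/2}$, so that $\lambda_k r_k\gtrsim m_k$. I would then choose a near-maximal point, via a doubling/point-selection lemma applied to $r^2Q$ on the annulus, so that the rescaled functions
\[
 \widetilde Q_k(\xi,\zeta)=\lambda_k^{-2}Q_k\bigl(r_k+\xi/\lambda_k,\ \zeta/\lambda_k\bigr)
\]
are locally bounded with $\widetilde Q_k(0,0)=1$. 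This is legitimate because $r\mapsto r^2Q(r,0)$ is continuous and, by \cref{lem:universal-tail}, tends to $D_m$ at infinity and to $0$ at the axis, so a supremum exceeding $cm^2\gg D_m$ is attained at an interior radius.

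The three cases are distinguished by the behaviour of $\lambda_k$ and of $\lambda_k r_k/m_k$. In the rescaled equation the linear term $\La\lambda_k^{-2}\widetilde Q$ disappears when $\lambda_k\to\infty$. The drift $\frac{m-1}{r}\partial_r$ becomes $\frac{m_k-1}{\lambda_k r_k+\xi}\partial_\xi$, which carries a coefficient $\kappa_k:=(m_k-1)/(\lambda_kr_k)\lesssim 1$. The tangential $O(m)$-directions contribute $m_k-1$ flat directions at scale $\lambda_kr_k$.
\begin{itemize}
 \item[(a)] Case $\lambda_k\to\infty$ and $\kappa_k\to0$. The limit is a stable positive solution of $-\Delta u=u^2$ on $\R^{n}$ (or on a half-space or slab after rescaling the transverse boundary). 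Here $n$ counts the radial and $z$ directions together with however many tangential directions survive; I would take a finite-dimensional slice, which suffices for the argument.
 \item[(b)] Case $\lambda_k\to\infty$ and $\kappa_k\to\kappa>0$. The limit solves $-u''-\kappa u'-u_{\zeta\zeta}=u^2$, a two-variable drift equation, which is the ``two-variable radial equation'' regime mentioned in the introduction. Stability passes to the limit via $\mathfrak q_Q\ge0$ tested against rescaled compactly supported functions.
 \item[(c)] Case $\lambda_k$ bounded. Then $Q_k(r_k,0)$ is bounded, so $r_k\gtrsim m_k$. Here I would work directly in the original variables and use the $m$-uniform Harnack constant of \cref{rem:tail-uniformity-correct} to get $Q(r,0)\le C_Mp(r)\le C_M D_m r^{-2}=O(m)r^{-2}$, contradicting $r_k^2Q_k(r_k,0)\ge cm_k^2$. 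One must check that the Harnack constant in two effective variables does not grow with $m$; the drift coefficient $(m-1)/r\lesssim 1$ is harmless when $r\gtrsim m$.
\end{itemize}

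In cases (a) and (b) the contradiction comes from Liouville theorems for stable solutions. For (a) I would use Farina's theorem \cite{Farina2007}: stable solutions of $-\Delta u=u^2$ are trivial in low dimension. Since I control the number of surviving effective variables, I can restrict stability to test functions depending on at most three variables, where nonexistence holds even for the whole space and the half-space (a Dirichlet face appears when $\lambda_k\to\infty$ near $z=\pm1$, which the transverse monotonicity excludes for the maximum point). For (b) I would prove a Liouville statement by hand: test stability with $u^{1+\gamma}\eta$ in the Farina style, weighted by $e^{\kappa\xi}$ so that the drift becomes self-adjoint, and derive $u\equiv0$. This contradicts $u(0,0)=1$.

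The main obstacle is the loss of compactness in the tangential directions as $m\to\infty$: the domain $\R^m$ does not have a locally finite-dimensional limit. I would therefore set up every limit for the symmetry-reduced two-variable $(r,z)$ equation with measure $r^{m-1}dr\,dz$, normalized at $r_k$. Semistability must be transferred into this reduced setting with test functions that respect both the $O(m)$ symmetry and the Jacobian weight. Getting $m$-uniform elliptic and Harnack estimates in this weighted setting, and making sure that the doubling selection keeps the blow-up point away from the axis (where $\kappa$ would blow up), is where I expect most of the work to lie.
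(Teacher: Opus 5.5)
Your overall architecture matches the paper's proof. You take a maximal or first-contact point of $r^2Q(r,0)$ at level $\sim c\,m^2$ and get local bounds from $Q(r,z)\le Q(r,0)\le (r_k/r)^2Q(r_k,0)$; no doubling lemma is needed, and $\lambda_kr_k\ge\sqrt c\,m_k\to\infty$ keeps you away from the axis. You then blow up in the two variables $(r,z)$ with the explicit drift $(m_k-1)/r$, and use scalar separation $p<D_mr^{-2}$ against the bounded-amplitude regime.

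Your case (c) works, provided you apply Harnack to the two-variable operator $-\partial_{rr}-\frac{m-1}{r}\partial_r-\partial_{zz}$, whose drift is bounded for $r\gtrsim m$. The Remark you cite works in $\R^{m+1}$, and its constant depends on $m$. This is a quantitative version of the paper's Cases~1--2, which use a strip limit to get $p_j(r_j)/M_j\to c_*>0$. Case (a) is also fine, since a bounded nonnegative superharmonic function on $\R^2$ is constant.

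The genuine gap is case (b): the stability-based Liouville argument cannot work. The limiting weighted stability inequality $\int e^{\kappa\xi}(|\nabla\eta|^2-2\La u\eta^2)\ge0$ becomes, after the substitution $\eta=e^{-\kappa\xi/2}\phi$ (the cross term integrates to zero),
\[
 \int_{\R^2}\Bigl(|\nabla\phi|^2+\bigl(\tfrac{\kappa^2}{4}-2\La u\bigr)\phi^2\Bigr)\ge0 .
\]
Because $0\le u\le1$, this holds automatically once $\kappa^2\ge8\La$. Large drift cannot be avoided. At a point with $r_k^2Q_k(r_k,0)\approx c\,m_k^2$ you get $\kappa=\lim(m_k-1)/(\lambda_kr_k)=c^{-1/2}$, and proving $S_m/m^2\to0$ requires arbitrarily small $c$. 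In that regime stability says nothing about $u$, so no Farina-type test can force $u\equiv0$. Such tests also break down against the exponential volume growth of $e^{\kappa\xi}\,d\xi\,d\zeta$.

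The missing ingredient is radial monotonicity, which you use only for the heuristic integral bound.
\begin{itemize}
\item $Q_r<0$ from \cref{cor:radial-mono} passes to the limit as $u_\xi\le0$.
\item Hence $u_\pm(\zeta)=\lim_{\xi\to\pm\infty}u(\xi,\zeta)$ exist. Compactness of the translates shows they are bounded nonnegative solutions of $-u_\pm''=\La u_\pm^2$ on $\R$.
\item Such a solution is concave and nonnegative on the whole line, hence constant, hence zero.
\item Monotonicity then squeezes $u\equiv0$, contradicting $u(0,0)=1$.
\end{itemize}
This is the paper's Case~3. It works for every drift $\kappa\ge0$, so it also covers your case (a), and it uses no stability of the limit.

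There is an alternative: $w(t,\xi,\zeta)=u(\xi-\kappa t,\zeta)$ is a bounded positive entire solution of $w_t=\Delta w+\La w^2$ in $\R^2$, so a parabolic Liouville theorem would also close case (b). That is much heavier machinery, and it is not the mechanism you proposed.
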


\begin{proof}
Assume that \eqref{eq:Smom2} fails.  Then there exist $\kappa>0$,
integers $m_j\to\infty$, and profiles $Q_j\in\cB_{m_j}$ such that
\begin{equation}\label{eq:Sm-contr-assump-rig}
 \sup_{r>0}r^2Q_j(r,0)>2\kappa m_j^2.
\end{equation}
For each fixed profile, \cref{lem:universal-tail} gives
$r^2Q_j(r,0)\to D_{m_j}=O(m_j)$ as $r\to\infty$, while radial regularity
gives $r^2Q_j(r,0)\to0$ as $r\downarrow0$.  Hence, for all large $j$,
there is a first radius $r_j>0$ satisfying
\begin{equation}\label{eq:firstcontactSm-rig}
 r_j^2Q_j(r_j,0)=\kappa m_j^2.
\end{equation}
Set
\[
 M_j:=Q_j(r_j,0),\qquad
 p_j(r):=\int_{-1}^1Q_j(r,z)\ph(z)\dd z.
\]
By the choice of $r_j$ and \cref{cor:radial-mono},
\begin{align}
 r^2Q_j(r,0)&<\kappa m_j^2 &&(0<r<r_j),\label{eq:firstleft-rig}\\
 Q_j(r,0)&\le M_j &&(r\ge r_j).\label{eq:firstright-rig}
\end{align}
Furthermore, \cref{lem:scalar-separation} gives
\begin{equation}\label{eq:pMjzero-rig}
 0<\frac{p_j(r_j)}{M_j}
 <\frac{D_{m_j}}{r_j^2M_j}
 =\frac{D_{m_j}}{\kappa m_j^2}
 \longrightarrow0.
\end{equation}
After passing to a subsequence, exactly one of the alternatives
$M_j\to0$, $M_j\to M_\infty\in(0,\infty)$, or $M_j\to\infty$ holds.

\medskip\noindent
\emph{Case 1: $M_j\to0$.}
Define, for $x>-r_j$,
\[
 U_j(x,z):=\frac{Q_j(r_j+x,z)}{M_j}.
\]
For $x\le0$, \eqref{eq:firstleft-rig} and
\cref{lem:transverse} imply
\[
 U_j(x,z)
 \le\frac{r_j^2}{(r_j+x)^2}\ph(z),
\]
whereas for $x\ge0$, \eqref{eq:firstright-rig} gives
$U_j(x,z)\le\ph(z)$.  Since
\begin{equation}\label{eq:case1-drift-rig}
 \frac{m_j}{r_j}
 =\frac{m_j\sqrt{M_j}}{\sqrt{\kappa}\,m_j}
 =\frac{\sqrt{M_j}}{\sqrt\kappa}\to0,
\end{equation}
we have $r_j\to\infty$, and on every compact subset of
$\R\times[-1,1]$,
\[
 0\le U_j\le1+o(1).
\]
The equation is
\begin{equation}\label{eq:case1-eq-rig}
 -(U_j)_{xx}-\frac{m_j-1}{r_j+x}(U_j)_x-(U_j)_{zz}
 =\La(U_j+M_jU_j^2).
\end{equation}
On a fixed compact set, the drift coefficient in
\eqref{eq:case1-eq-rig} tends uniformly to zero by
\eqref{eq:case1-drift-rig}; the right side is uniformly bounded.  Interior
and boundary $W^{2,p}$ estimates, followed by Schauder estimates, give
precompactness in $C^{2,\alpha'}_{\rm loc}$ for every $\alpha'<\alpha$.
Thus a subsequence converges to $U$ satisfying
\begin{equation}\label{eq:case1-limit-rig}
 -U_{xx}-U_{zz}=\La U\quad\hbox{in }\R\times(-1,1),
 \qquad U(x,\pm1)=0,
 \qquad 0\le U\le1,
 \qquad U(0,0)=1.
\end{equation}
Let $\{\ph_k\}_{k\ge1}$ be an orthonormal Dirichlet eigenbasis of
$-\partial_{zz}$, with $\ph_1=\ph$ and eigenvalues
$\lambda_1=\La<\lambda_2\le\cdots$.  Put
$c_k(x)=\int_{-1}^1U(x,z)\ph_k(z)\dd z$.  From
\eqref{eq:case1-limit-rig},
\[
 -c_1''=0,
 \qquad
 -c_k''+(\lambda_k-\La)c_k=0\quad(k\ge2).
\]
All $c_k$ are bounded on $\R$.  Hence $c_1$ is constant, and for $k\ge2$
the representation
$c_k=A_ke^{\sqrt{\lambda_k-\La}\,x}
+B_ke^{-\sqrt{\lambda_k-\La}\,x}$ forces $A_k=B_k=0$.
Thus $U=c\ph$.  Since $U(0,0)=1$ and $\ph(0)=1$, $c=1$.  Therefore
\[
 \frac{p_j(r_j)}{M_j}
 =\int_{-1}^1U_j(0,z)\ph(z)\dd z
 \longrightarrow\int_{-1}^1\ph^2\dd z=1,
\]
contradicting \eqref{eq:pMjzero-rig}.

\medskip\noindent
\emph{Case 2: $M_j\to M_\infty\in(0,\infty)$.}
Use the same $U_j$.  From $r_j^2M_j=\kappa m_j^2$,
\begin{equation}\label{eq:case2-drift-rig}
 \frac{m_j-1}{r_j}
 =\frac{m_j-1}{m_j}\sqrt{\frac{M_j}{\kappa}}
 \longrightarrow b:=\sqrt{\frac{M_\infty}{\kappa}}.
\end{equation}
In particular $r_j\to\infty$.  The preceding pointwise bounds again give
local uniform boundedness, and compactness applied to
\eqref{eq:case1-eq-rig} yields
\begin{equation}\label{eq:case2-limit-rig}
 -U_{xx}-U_{zz}-bU_x
 =\La(U+M_\infty U^2),
 \quad U(x,\pm1)=0,
 \quad U(0,0)=1,
 \quad U\ge0.
\end{equation}
The strong maximum principle gives $U>0$ in the open strip.  Hence
\[
 c_*:=\int_{-1}^1U(0,z)\ph(z)\dd z>0.
\]
Local $C^0$ convergence at $x=0$ implies
$p_j(r_j)/M_j\to c_*>0$, contradicting
\eqref{eq:pMjzero-rig}.

\medskip\noindent
\emph{Case 3: $M_j\to\infty$.}
Put
\begin{equation}\label{eq:case3scale-rig}
 U_j(x,y)
 :=M_j^{-1}Q_j\!
 \left(r_j+M_j^{-1/2}x,\,M_j^{-1/2}y\right).
\end{equation}
The rescaled domain is
\[
 x>-r_j\sqrt{M_j},\qquad |y|<\sqrt{M_j},
\]
and
\begin{equation}\label{eq:Rjcase3-rig}
 r_j\sqrt{M_j}=\sqrt\kappa\,m_j\to\infty.
\end{equation}
For $x\le0$, \eqref{eq:firstleft-rig} gives
\[
 U_j(x,y)
 \le\left(\frac{r_j\sqrt{M_j}}
 {r_j\sqrt{M_j}+x}\right)^2,
\]
and for $x\ge0$, \eqref{eq:firstright-rig} gives $U_j\le1$.
Also \cref{cor:radial-mono} gives $(U_j)_x\le0$.  Thus on each compact
subset of $\R^2$,
\begin{equation}\label{eq:case3-local-bound-rig}
 0\le U_j\le1+o(1),\qquad (U_j)_x\le0.
\end{equation}
The rescaled equation is
\begin{equation}\label{eq:case3-eq-rig}
 -(U_j)_{xx}-(U_j)_{yy}
 -\frac{m_j-1}{r_j\sqrt{M_j}+x}(U_j)_x
 =\La(M_j^{-1}U_j+U_j^2).
\end{equation}
By \eqref{eq:Rjcase3-rig},
\[
 \frac{m_j-1}{r_j\sqrt{M_j}+x}
 \longrightarrow \kappa^{-1/2}
\]
uniformly on compact sets.  Elliptic compactness therefore gives a limit
$U\in C^2(\R^2)$ satisfying
\begin{equation}\label{eq:case3-limit-rig}
 -U_{xx}-U_{yy}-\kappa^{-1/2}U_x=\La U^2,
 \qquad 0\le U\le1,
 \qquad U(0,0)=1,
 \qquad U_x\le0.
\end{equation}
For each $y$, the monotonicity in $x$ defines
\[
 U_-(y):=\lim_{x\to-\infty}U(x,y),\qquad
 U_+(y):=\lim_{x\to+\infty}U(x,y),
\]
with $0\le U_+\le U_-\le1$.  Let $x_n\to+\infty$.  The translates
$U_n(x,y)=U(x+x_n,y)$ satisfy the same equation
\eqref{eq:case3-limit-rig} and are uniformly bounded in
$C^{2,\alpha}$ on compact sets.  Any convergent subsequence has pointwise
limit $U_+(y)$ by monotonicity, hence the limit is independent of $x$.
Passing to the equation gives
\begin{equation}\label{eq:Uplus-rig}
 -U_+''=\La U_+^2\quad\hbox{on }\R.
\end{equation}
The same argument at $-\infty$ gives
$-U_-''=\La U_-^2$.  If $v\ge0$ is a bounded $C^2$ solution of
$-v''=\La v^2$ on $\R$, then $v''\le0$, so $v'$ is nonincreasing.  If
$v'(y_0)>0$, then $v(y)\to-\infty$ as $y\to-\infty$; if
$v'(y_0)<0$, then $v(y)\to-\infty$ as $y\to+\infty$.  Hence $v'\equiv0$,
and the equation gives $v\equiv0$.  Therefore
$U_-=U_+=0$.  Since $U_x\le0$,
\[
 0=U_-(y)\ge U(x,y)\ge U_+(y)=0,
\]
so $U\equiv0$, contradicting $U(0,0)=1$.

All three alternatives are impossible.  Therefore
$S_m/m^2\to0$.  In particular, if $S_m=\infty$ for infinitely many
$m\to\infty$, the same construction with any fixed $\kappa>0$ would give
the preceding contradiction; hence $S_m<\infty$ for all sufficiently
large $m$.
\end{proof}

Choose once and for all $m_0\ge17$ so large that for every $m\ge m_0$,
\begin{equation}\label{eq:gammam}
 \gamma_m:=\frac{(m-2)^2}{4}-2\La S_m>0.
\end{equation}
From now on $m\ge m_0$ is fixed.

\begin{proposition}[Uniform Hardy coercivity]\label{prop:hardy-coerc}
For every $Q\in\cB_m$ and every test function $\zeta\in H_0^1(\Cyl)$,
\begin{equation}\label{eq:uniform-coerc}
 \mathfrak q_Q[\zeta]
 \ge\gamma_m\int_\Cyl\frac{\zeta^2}{r^2}.
\end{equation}
\end{proposition}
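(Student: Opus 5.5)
Since $Q\le Q(r,0)\ph(z)\le Q(r,0)$ by \cref{lem:transverse}, and $r^2Q(r,0)\le S_m$ by the definition \eqref{eq:Sm} for every $Q\in\cB_m$, the plan is to use the pointwise potential bound
\[
 2\La Q(X,z)\le \frac{2\La S_m}{r^2}\qquad\text{in }\Cyl .
\]
The key point is that the linear part $-\La\zeta^2$ is cancelled by the transverse Poincar\'e inequality on each fibre:
\[
 \int_{-1}^1|\zeta_z|^2\,dz\ge\La\int_{-1}^1\zeta^2\,dz ,
\]
valid for every zero-trace function, because $\La$ is the first Dirichlet eigenvalue of $-\partial_{zz}$ on $(-1,1)$. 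For the $X$-gradient, we apply the sharp Hardy inequality in $\R^m$, namely $\int_{\R^m}|\nabla_X\zeta|^2\,dX\ge\frac{(m-2)^2}{4}\int_{\R^m}\zeta^2r^{-2}\,dX$, to each slice $\zeta(\cdot,z)$ and then integrate in $z$.

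Combining the three inequalities gives, for $\zeta\in C_c^1(\Cyl)$,
\[
 \mathfrak q_Q[\zeta]
 =\int_\Cyl\bigl(|\nabla_X\zeta|^2-2\La Q\zeta^2\bigr)
   +\int_\Cyl\bigl(|\zeta_z|^2-\La\zeta^2\bigr)
 \ge\Bigl(\frac{(m-2)^2}{4}-2\La S_m\Bigr)\int_\Cyl\frac{\zeta^2}{r^2},
\]
and the constant on the right is exactly $\gamma_m$ from \eqref{eq:gammam}. This holds uniformly over $\cB_m$ because $S_m<\infty$ for $m\ge m_0$ by \cref{thm:Sm}, and $\gamma_m>0$ by the choice of $m_0$.

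It remains to extend the inequality from $C_c^1(\Cyl)$ to $\zeta\in H^1_0(\Cyl)$. We approximate $\zeta$ in $H^1$ by $\zeta_n\in C_c^\infty(\Cyl)$ converging almost everywhere. The gradient terms and $\La\int\zeta_n^2$ converge. For the terms $\int Q\zeta_n^2$ and $\int\zeta_n^2/r^2$, the Hardy inequality makes $\zeta\mapsto\zeta/r$ continuous from $H^1$ into $L^2$, since $m\ge3$; together with $0\le Q\le S_m r^{-2}$, this shows both quadratic terms are continuous in $H^1$. Passing to the limit then gives \eqref{eq:uniform-coerc}.

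The only substantive input is the finiteness of $S_m$, which is already supplied by \cref{thm:Sm}. The one point to check carefully is that the bound $Q\le S_mr^{-2}$ holds for every $z$ and not only at $z=0$; this is exactly the $\ph$-majorization in \eqref{eq:phi-major}, together with $\ph\le1$.
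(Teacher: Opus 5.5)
Your proof is correct and follows the same route as the paper's: the pointwise bound $0<Q\le S_m r^{-2}$ from transverse monotonicity, the fibrewise Dirichlet Poincar\'e inequality cancelling the linear term $-\La\zeta^2$, and the sharp Hardy inequality on each slice. Your explicit density step from $C_c^1(\Cyl)$ to $H^1_0(\Cyl)$, which uses Hardy continuity of $\zeta\mapsto\zeta/r$, only spells out what the paper handles by applying the two slice inequalities directly for a.e.\ fixed $X$ and a.e.\ fixed $z$.
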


\begin{proof}
By \cref{lem:transverse} and \eqref{eq:Sm},
$0<Q(r,z)\le S_mr^{-2}$. For a.e. fixed $X$, the first Dirichlet eigenvalue in $z$ gives
\[
 \int_{-1}^1(|\zeta_z|^2-\La\zeta^2)\dd z\ge0.
\]
For a.e. fixed $z$, the sharp Hardy inequality in $\R^m$ gives
\[
 \int_{\R^m}|\nabla_X\zeta|^2\dd X
 \ge\frac{(m-2)^2}{4}\int_{\R^m}\frac{\zeta^2}{r^2}\dd X.
\]
Combining these inequalities with the potential bound proves \eqref{eq:uniform-coerc}.
\end{proof}

\begin{corollary}[No fast Jacobi fields]\label{cor:no-fast}
For $m\ge m_0$, $L_Q$ has no nonzero Dirichlet Jacobi field in the fast class. In particular it has no nonzero $H^1_0(\Cyl)$ Jacobi field. The radial-even Jacobi space on the regular ordered branch is exactly $\spanop\{\partial_AQ_A\}$.
\end{corollary}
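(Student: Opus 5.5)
The plan is to combine the uniform Hardy margin of \cref{prop:hardy-coerc} with the cutoff identity \eqref{eq:Psi-cutoff-rig} already used for the small branch. Let $H$ be a nonzero radial-even Dirichlet Jacobi field at $Q\in\cB_m$ in the fast class. By \cref{lem:jacobi-slow-fast}, $c_-(H)=0$, and hence $H=O_{C^{2,\alpha}_{\rm sc}}(r^{-\beta_+})$, including $\nabla H=O(r^{-\beta_+-1})$. Take the radial cutoff $\eta_R$ (equal to one on $r<R$, zero for $r>2R$, $|\eta_R'|\le C/R$), test $L_QH=0$ against $\eta_R^2H$, and integrate by parts. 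This gives the exact identity
\[
 \mathfrak q_Q[\eta_RH]=\int_\Cyl H^2|\nabla\eta_R|^2\le CR^{m-2-2\beta_+}\longrightarrow0,
\]
since $\beta_+>(m-2)/2$. Now apply \eqref{eq:uniform-coerc} to the left side, which is legitimate because $\eta_RH\in H^1_0(\Cyl)$. This yields $\gamma_m\int\eta_R^2H^2/r^2\to0$. Monotone convergence then gives $\int_\Cyl H^2/r^2=0$, so $H\equiv0$, a contradiction.

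For the $H^1_0$ statement, suppose $H\in H^1_0(\Cyl)$ solves $L_QH=0$ weakly. Testing directly with $H$ gives $\mathfrak q_Q[H]=0$, and \eqref{eq:uniform-coerc} again forces $H=0$. The only care needed is that $\int Q H^2$ is finite, which follows from $Q\le S_m r^{-2}$ and Hardy's inequality. One could also approximate $H$ by cutoffs, where the cutoff error $\int H^2|\nabla\eta_R|^2\le C\int_{R<r<2R}H^2/r^2\to0$ by Hardy.

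For the last claim, let $H$ be any radial-even Jacobi field, taken in the polynomially decaying class of \cref{def:jacobi-field}, at a profile $Q_A$ of the ordered branch. Its slow coefficient $c_-(H)$ exists by \cref{lem:jacobi-slow-fast}. The tangent $J_A=\partial_AQ_A$ is a Jacobi field with $J_A(0,0)=1$, and $c_-(J_A)\ne0$: otherwise $J_A$ would be a nonzero fast field, which the first part excludes. Set $\widehat H=H-\frac{c_-(H)}{c_-(J_A)}J_A$. This field is fast by linearity of $c_-$ in \eqref{eq:slow-coefficient-limit}, so it vanishes by the first part. Hence $H\in\spanop\{J_A\}$.

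The only genuinely delicate point I expect is the decay bookkeeping. The fast expansion must control $\nabla H$ as well as $H$, so that the cutoff integral decays and $\eta_RH$ is an admissible $H^1_0$ test function. This is supplied by the scaled $C^{2,\alpha}_{\rm sc}$ form of \eqref{eq:jacobi-fast-general}, including the case $c_+(H)=0$ with still faster decay. Everything else is immediate from \cref{prop:hardy-coerc}.
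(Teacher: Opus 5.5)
Your argument is correct and is essentially the paper's: the cutoff identity $\mathfrak q_Q[\eta_RH]=\int_\Cyl H^2|\nabla\eta_R|^2$ combined with the uniform Hardy margin \eqref{eq:uniform-coerc}, followed by subtracting the multiple $c_-(H)/c_-(\partial_AQ_A)$ of the branch tangent. The only difference is the order. The paper first excludes $H^1_0$ Jacobi fields and then places fast fields in $H^1_0$ using $\beta_+>m/2$. You instead kill fast fields directly from the cutoff decay $R^{m-2-2\beta_+}$, which needs only $\beta_+>(m-2)/2$, as in the paper's small-branch argument.
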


\begin{proof}
If $J\in H^1_0(\Cyl)$ satisfies $L_QJ=0$, let $\eta_R(r)$ equal one on $r<R$, vanish on $r>2R$, and satisfy $|\nabla\eta_R|\le C/R$. Testing by $\eta_R^2J$ yields the exact identity
\[
 \mathfrak q_Q[\eta_RJ]=\int_\Cyl J^2|\nabla\eta_R|^2.
\]
By \eqref{eq:uniform-coerc},
\[
 \gamma_m\int\frac{\eta_R^2J^2}{r^2}
 \le \frac{C}{R^2}\int_{R<r<2R}J^2\longrightarrow0.
\]
Monotone convergence gives $J=0$. A fast regular radial-even field belongs to $H^1_0$ because $m\ge17$ and $\beta_+>m/2$, so it is zero.

The branch tangent $\partial_A Q_A$ has nonzero slow coefficient; otherwise it would be fast by \cref{lem:jacobi-slow-fast}, and hence $\partial_A Q_A=0$, contradicting $(\partial_AQ_A)(0,0)=1$. For any other radial-even Jacobi field, subtract its slow coefficient times $\partial_A Q_A$. The difference is fast and therefore zero.
\end{proof}

\begin{corollary}[Global ordered branch]\label{cor:global-branch}
For $m\ge m_0$, the family \eqref{eq:Bm} extends to all $A>0$. Moreover
\begin{equation}\label{eq:global-properties}
 \partial_AQ_A>0,
 \qquad (Q_A)_r<0,
 \qquad \partial_z(Q_A/\ph)<0,
 \qquad \mathfrak q_{Q_A}\ge0.
\end{equation}
\end{corollary}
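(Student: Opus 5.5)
The plan is to argue by contradiction on the maximal ordered family $\cB_m$ of \eqref{eq:Bm}. Suppose $A^*<\infty$. Since $\partial_AQ_A>0$, the profiles increase monotonically as $A\uparrow A^*$. We will show that they converge regularly in $\mathscr X_\sigma$. Then \cref{prop:stopping} combined with \cref{cor:no-fast} shows that continuation cannot fail at $A^*$, which contradicts maximality. The properties in \eqref{eq:global-properties} then hold on all of $(0,\infty)$:
\begin{itemize}[leftmargin=2.2em]
\item $\partial_AQ_A>0$ holds by the definition of positive normalized charts;
\item semistability follows from the ground-state identity \eqref{eq:groundstate-rig};
\item $\partial_z(Q_A/\ph)<0$ follows from \cref{lem:transverse};
\item $(Q_A)_r<0$ follows from \cref{cor:radial-mono}, since each $Q_A$ is a positive semistable profile.
\end{itemize}

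\emph{Uniform height bound.} The key step is to show that $\sup_{A<A^*}\|Q_A\|_{L^\infty}<\infty$. The center height is $Q_A(0,0)=A<A^*$. By \cref{cor:radial-mono} and \cref{lem:transverse}, the maximum of $Q_A$ is attained at the center, so $0<Q_A\le A^*$ uniformly. The bound $Q_A\le S_mr^{-2}\ph$, which follows from $S_m<\infty$ (\cref{thm:Sm}) and \cref{lem:transverse}, gives uniform far-field control. Local Schauder estimates on translated unit cylinders then give uniform $C^{2,\alpha}$ bounds. The monotone limit $Q_*:=\lim_{A\uparrow A^*}Q_A$ is therefore a smooth positive solution with $Q_*(0,0)=A^*$. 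It is semistable as a limit of semistable profiles and tends to zero at infinity because $Q_*\le S_mr^{-2}$.

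\emph{Regular convergence.} This is the step I expect to be the main obstacle. Pointwise and local convergence is not enough; we need convergence in the weighted affine topology of $\mathscr X_\sigma$. For this I will use \cref{rem:tail-uniformity-correct} on the ordered segment $\{Q_A:A_0\le A<A^*\}$. It applies with lower profile $Q_0=Q_{A_0}$ and height bound $M=A^*$, and the remark explicitly allows this half-open segment. It gives uniform estimates $Q_A-D_mr^{-2}\ph=O_{C^{2,\alpha}_{\rm sc}}(r^{-\tau})$ for some $\tau\in(\sigma,\min\{4,\beta_-\})$, uniformly in $A$. Weighted Arzel\`a--Ascoli, applied to the remainders with the slightly stronger weight $\tau>\sigma$ and a slightly smaller H\"older exponent, then gives $Q_A\to Q_*$ in $\mathscr X_\sigma$. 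The monotone limit identifies the limit uniquely, so the full family converges, not just a subsequence. If necessary, I will recover the exact exponent $\alpha$ by fixing it from the outset slightly below the exponent used in the tail estimates. Alternatively, I will use the difference equation $L_{\overline Q_A}H_A=0$ together with \eqref{eq:averaged-coefficient-domain} to upgrade convergence. Once convergence is established, $A^*$ is a regular endpoint.

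\emph{Conclusion.} At the regular endpoint, \cref{cor:no-fast} shows that $L_{Q_*}$ has no nonzero radial-even fast Dirichlet Jacobi field. Here $Q_*$ is semistable, satisfies $Q_*\le S_mr^{-2}$, and so has the Hardy margin $\gamma_m$ from the argument of \cref{prop:hardy-coerc}, which passes to the limit. By \cref{prop:stopping}, the continuation does not fail at $A^*$, so the branch extends past $A^*$ by a positive normalized chart. This contradicts the definition of $A^*$, and hence $A^*=\infty$.
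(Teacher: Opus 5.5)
Your proposal is correct and follows essentially the paper's route. The height bound $0<Q_A\le A^*$ comes from the two monotonicities, then local compactness, then the ordered-segment tail bound of \cref{rem:tail-uniformity-correct} with some $\tau>\sigma$ to get convergence in $\mathscr X_\sigma$, and finally \cref{prop:stopping} played against \cref{cor:no-fast}. Two points of comparison:
\begin{itemize}
\item The paper avoids your H\"older-exponent workaround. On the tail, the uniform $\tau$-bound and the triangle inequality give $\|Q_A-Q_*\|_{C^{2,\alpha}_\sigma(\{r\ge R\})}\le CR^{\sigma-\tau}$ directly, with no Arzel\`a--Ascoli needed. On compact sets, a bootstrap to $C^{2,\alpha_1}$ with $\alpha_1>\alpha$ is enough.
\item Your observation that $Q_*\le S_mr^{-2}$ gives the Hardy margin at the endpoint is correct. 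It makes explicit why \cref{cor:no-fast} applies to $Q_*$, a step the paper leaves implicit.
\end{itemize}
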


\begin{proof}
Suppose $A^*<\infty$.  Along the maximal ordered branch,
\eqref{eq:Bm}, \cref{cor:radial-mono}, and \cref{lem:transverse} give
\[
 \partial_AQ_A>0,\qquad
 0<Q_A(r,z)\le Q_A(r,0)\le Q_A(0,0)=A\le A^*.
\]
The uniform $L^\infty$ bound and the equation first give, on every
compact cylinder subdomain (with the usual Dirichlet boundary charts when
it meets $z=\pm1$), a uniform $W^{2,p}$ bound for every finite $p$.
Choosing $p$ large and bootstrapping the equation gives a uniform
$C^{2,\alpha_1}$ bound for some $\alpha_1\in(\alpha,1)$.  Monotonicity in
$A$ defines a pointwise limit $Q_*$.  The compact embedding
$C^{2,\alpha_1}\Subset C^{2,\alpha}$, together with uniqueness of the
pointwise limit, therefore yields
\begin{equation}\label{eq:QA-local-endpoint-rig}
 Q_A\longrightarrow Q_*
 \qquad\hbox{in }C^{2,\alpha}_{\rm loc}(\Cyl)
 \quad\hbox{as }A\uparrow A^*.
\end{equation}

It remains only to upgrade this local convergence to the
space $\mathscr X_\sigma$.  Choose once and for all
\[
 \sigma<\tau<\min\{4,\beta_-\}.
\]
Since $0<Q_A\le A^*$, the family has a uniform
$L^\infty(\Cyl)$ bound.
Fix $A_0\in(0,A^*)$.  Order gives $Q_A\ge Q_{A_0}$ for
$A_0\le A<A^*$.  Apply the ordered lower-barrier argument
\eqref{eq:ordered-tail-sandwich} with $M=A^*$ and the fixed profile
$Q_{A_0}$, followed by the scaled ODE estimate in
\cref{rem:tail-uniformity-correct}.  These bounds require no prior
weighted compactness of the half-open branch and give
\begin{equation}\label{eq:QA-uniform-tau-rig}
 \sup_{A_0\le A<A^*}\sup_{R\ge R_0}
 R^\tau
 \bigl\|Q_A-D_mr^{-2}\ph\bigr\|_{C^{2,\alpha}_{\rm sc}(A_R)}
 \le C_{A^*}.
\end{equation}
Passing to the local limit in each fixed annulus shows that the same
bound holds for $Q_*$.  Consequently, for every $R\ge R_0$,
\begin{equation}\label{eq:QA-tail-sigma-convergence-rig}
 \sup_{A_0\le A<A^*}
 \bigl\|Q_A-Q_*\bigr\|_{C^{2,\alpha}_\sigma(\{r\ge R\})}
 \le C_{A^*}R^{\sigma-\tau}.
\end{equation}
Since $\tau>\sigma$, the right-hand side tends to zero as $R\to\infty$;
on the fixed region $\{r\le2R\}$ the convergence follows from
\eqref{eq:QA-local-endpoint-rig}.  Hence
\[
 Q_A\longrightarrow Q_*
 \qquad\hbox{in }\mathscr X_\sigma,
\]
so $A^*$ is a regular endpoint.  By \cref{prop:stopping}, failure to
continue at $Q_*$ would produce a nonzero radial-even fast Jacobi field,
contradicting \cref{cor:no-fast}.  Therefore $A^*=\infty$.
\end{proof}

\subsection{The singular cylinder endpoint}

The next estimate uses the standard semistability strategy of
testing the equation and the stability inequality with related
nonlinear cutoffs.  This general philosophy is central in the
regularity theory of semistable solutions; compare Cabr\'e--Capella
\cite{CC}, Cabr\'e \cite{Cabre2010,CabreSurvey}, Dupaigne
\cite{DupaigneBook}, and Cabr\'e--Figalli--Ros-Oton--Serra \cite{CFRS}.
The particular cubic cancellation below is tailored to the present
quadratic nonlinearity and is proved directly.

\begin{lemma}[Local stable-energy estimate]\label{lem:local-energy}
Let $Q$ be any positive semistable solution of \eqref{eq:cylinder}. For every compact $K\Subset\overline\Cyl$ there is $C_K$, independent of $Q(0,0)$, such that
\begin{equation}\label{eq:local-L3H1}
 \int_KQ^3+\int_K|\nabla Q|^2\le C_K.
\end{equation}
\end{lemma}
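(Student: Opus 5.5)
The plan is to combine the equation and the semistability inequality, tested against closely related functions, so that the quadratic terms cancel and only the cubic term survives with a favourable sign. Fix $K\Subset\overline\Cyl$ and choose $R$ with $K\subset\{|X|<R\}\times[-1,1]$. Let $\xi=\xi(X)\in C_c^\infty(\R^m)$ satisfy $0\le\xi\le1$, $\xi=1$ on $\{|X|\le R\}$, $\xi=0$ for $|X|\ge2R$, and $|\nabla\xi|\le C/R$. Put $\eta=\xi^3$. Since $\xi$ depends only on $X$ and $Q$ vanishes at $z=\pm1$, the function $Q\eta$ is a smooth compactly supported test function with zero trace. By \cref{def:threshold-profile} (or \cref{rem:semistability-density}) it is admissible in $\mathfrak q_Q$, and $Q\eta^2$ is admissible in the weak form of \eqref{eq:cylinder}.

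\emph{Step 1 (cubic cancellation).} Test \eqref{eq:cylinder} with $Q\eta^2$:
\[
\int\nabla Q\cdot\nabla(Q\eta^2)=\La\int(Q^2+Q^3)\eta^2.
\]
Use the exact identity
\[
\int|\nabla(Q\eta)|^2=\int\nabla Q\cdot\nabla(Q\eta^2)+\int Q^2|\nabla\eta|^2 .
\]
Inserting both into $\mathfrak q_Q[Q\eta]\ge0$, i.e.\ $\int|\nabla(Q\eta)|^2\ge\La\int(1+2Q)Q^2\eta^2$, the terms $\La\int Q^2\eta^2$ cancel. What remains is
\[
\La\int_\Cyl Q^3\eta^2\le\int_\Cyl Q^2|\nabla\eta|^2 .
\]
This is the cubic cancellation specific to the nonlinearity $Q+Q^2$, and it uses no information on $Q(0,0)$.

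\emph{Step 2 (closing the $L^3$ bound).} With $\eta=\xi^3$ we have $|\nabla\eta|^2=9\xi^4|\nabla\xi|^2$ and $Q^2\xi^4=(Q^3\xi^6)^{2/3}$. Apply H\"older with exponents $3/2$ and $3$ on the bounded support:
\[
\int Q^2|\nabla\eta|^2\le 9\Bigl(\int Q^3\xi^6\Bigr)^{2/3}\Bigl(\int_\Cyl|\nabla\xi|^6\Bigr)^{1/3}.
\]
Since $\int Q^3\xi^6<\infty$ for each fixed smooth $Q$, dividing gives
\[
\int Q^3\xi^6\le C\int|\nabla\xi|^6\le C_{R,m}.
\]
This bound is independent of $Q$, and it yields $\int_KQ^3\le C_K$.

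\emph{Step 3 (gradient bound).} Return to the identity of Step 1 with a second cutoff $\tilde\eta=\xi^3$ built at a slightly smaller radius, so that $\xi\equiv1$ on the support of $\tilde\eta$. Expanding $\nabla Q\cdot\nabla(Q\tilde\eta^2)$ and absorbing the cross term $2\int Q\tilde\eta\nabla Q\cdot\nabla\tilde\eta$ by Young's inequality gives
\[
\tfrac12\int|\nabla Q|^2\tilde\eta^2\le\La\int(Q^2+Q^3)\tilde\eta^2+C\int Q^2|\nabla\tilde\eta|^2 .
\]
Using $Q^2\le1+Q^3$ and the volume bound of the support, the right side is controlled by the $L^3$ bound on the larger cylinder from Step 2. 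This gives $\int_K|\nabla Q|^2\le C_K$.

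The argument is short once the test functions are justified. The main point to check is admissibility, namely that $Q\eta$ lies in the semistability test class with zero trace at $z=\pm1$ and compact $X$-support, and that the H\"older division in Step 2 is legitimate because $\int Q^3\xi^6<\infty$ for each fixed smooth $Q$. Both follow from the smoothness of the profile and \cref{rem:semistability-density}. No decay of $Q$ at infinity and no normalisation by $Q(0,0)$ enter, so $C_K$ depends only on $K$, $m$, and $\La$.
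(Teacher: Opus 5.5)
Your proposal is correct and follows essentially the paper's proof. You test semistability with $Q\xi^3$ and the equation with $Q\xi^6$ (the paper's $Q\eta^3$, $Q\eta^6$) to get the cubic cancellation $\La\int Q^3\xi^6\le 9\int Q^2\xi^4|\nabla\xi|^2$, you close it by H\"older where the paper uses Young's inequality and absorption (an equivalent step), and you then obtain the gradient bound by testing the equation with $Q$ times a nested cutoff squared.
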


\begin{proof}
Choose compactly supported smooth cutoffs $\chi,\eta$ on
$\overline\Cyl$, with $0\le\chi,\eta\le1$, $\chi=1$ on a neighbourhood
of $K$, and $\eta=1$ on a neighbourhood of $\operatorname{supp}\chi$.
The cutoffs need not vanish at $z=\pm1$, because $Q$ has zero trace.
Test semistability with $Q\eta^3$ and the equation with $Q\eta^6$.
Cancellation of the mixed gradient terms gives, equivalently,
\begin{equation}\label{eq:stable-energy-id}
 \La\int_\Cyl Q^3\eta^6
 \le 9\int_\Cyl Q^2\eta^4|\nabla\eta|^2.
\end{equation}
Young's inequality, with its coefficient chosen after the factor $9$,
gives
\[
 9Q^2\eta^4|\nabla\eta|^2
 \le\frac{\La}{2}Q^3\eta^6+C_{\La}|\nabla\eta|^6.
\]
Absorption yields
\[
 \int_{\operatorname{supp}\chi}Q^3
 \le\int_\Cyl Q^3\eta^6
 \le C_{\La}\int_\Cyl|\nabla\eta|^6.
\]
Next test the equation with $Q\chi^2$.  Using
$2Q\chi|\nabla Q||\nabla\chi|
\le\tfrac12\chi^2|\nabla Q|^2+2Q^2|\nabla\chi|^2$, one obtains
\[
 \frac12\int_\Cyl\chi^2|\nabla Q|^2
 \le\La\int_\Cyl\chi^2(Q^2+Q^3)
       +2\int_\Cyl Q^2|\nabla\chi|^2.
\]
The established $L^3$ bound on $\operatorname{supp}\chi$, H\"older's
inequality, and the finite measure of this fixed support bound the
right-hand side independently of $Q(0,0)$.  This proves both terms in
\eqref{eq:local-L3H1}, also when $K$ meets the transverse boundary.
\end{proof}

\begin{theorem}[Singular semistable cylinder endpoint]\label{thm:cylinder-singular}
As $A\to\infty$, $Q_A$ increases pointwise to an unbounded weak solution
\[
 Q_\infty\in H^1_{\rm loc}(\Cyl)\cap L^3_{\rm loc}(\Cyl)
\]
of \eqref{eq:cylinder}. It is semistable and satisfies
\begin{equation}\label{eq:Qinf-bound}
 0<Q_\infty(r,z)\le Q_\infty(r,0)\ph(z)\le\frac{S_m}{r^2}\ph(z)
 \qquad(r>0).
\end{equation}
\end{theorem}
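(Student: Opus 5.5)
The plan is to pass to the monotone limit along the global branch of \cref{cor:global-branch}, use \cref{lem:local-energy} for compactness, and then prove unboundedness by contradiction through the regular-endpoint machinery. By \cref{cor:global-branch}, $A\mapsto Q_A(X,z)$ is increasing for every $(X,z)$. By \cref{lem:transverse}, \cref{cor:radial-mono} and the definition \eqref{eq:Sm} we have, uniformly in $A$,
\[
 0<Q_A(r,z)\le Q_A(r,0)\ph(z)\le S_mr^{-2}\ph(z),
\]
and $S_m<\infty$ since $m\ge m_0$. Hence the pointwise limit $Q_\infty$ exists and is finite for $r>0$. Taking limits in the display gives \eqref{eq:Qinf-bound}, and strict positivity follows from $Q_\infty\ge Q_{A_0}>0$.

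\textbf{Limit equation and semistability.} \Cref{lem:local-energy} gives $\int_K(Q_A^3+|\nabla Q_A|^2)\le C_K$ uniformly in $A$. Monotone convergence yields $Q_A\to Q_\infty$ in $L^3_{\rm loc}$, hence $Q_A^2\to Q_\infty^2$ in $L^1_{\rm loc}$ (indeed in $L^{3/2}_{\rm loc}$). A weak $H^1_{\rm loc}$ limit, identified with $Q_\infty$, gives $Q_\infty\in H^1_{\rm loc}$ with zero trace on $z=\pm1$. I then pass to the limit in the weak formulation against $\zeta\in C^1_c$ with zero trace. For semistability, Fatou (or $L^1_{\rm loc}$ convergence of $Q_A$) in $\int\La(1+2Q_A)\zeta^2\le\int|\nabla\zeta|^2$ gives $\mathfrak q_{Q_\infty}\ge0$. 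The radial symmetry, evenness, and the decay $Q_\infty\to0$ as $r\to\infty$ (from \eqref{eq:Qinf-bound}) are inherited.

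\textbf{Unboundedness.} This is the main step. Suppose $Q_\infty\in L^\infty_{\rm loc}$. Since $Q_\infty$ is radially decreasing with $Q_\infty\le S_mr^{-2}$, boundedness near the axis gives $Q_\infty\in L^\infty(\Cyl)$. Then the uniform bound $Q_A\le\|Q_\infty\|_\infty=:M$ holds along the whole branch. But $Q_A(0,0)=A\le Q_\infty(0,0)\le M$, which is impossible for $A>M$. This step is in fact immediate from the normalization, so the real content is making sure that $Q_\infty(0,0)=\lim A=\infty$ is meaningful, that is, that unboundedness is located at the axis. The point is that $Q_\infty$ is finite off the axis by \eqref{eq:Qinf-bound} while $Q_\infty(0,0)=\infty$. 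I would state unboundedness as $\operatorname{ess\,sup}_{B_\rho\times(-\rho,\rho)}Q_\infty=\infty$ for every $\rho$.

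To prove this, I argue by contradiction. If $Q_\infty\le M$ on a neighbourhood of $(0,0)$, interior elliptic estimates for $Q_A$ on that neighbourhood would need to bound $Q_A$ uniformly. Since $Q_A\le Q_\infty\le M$ pointwise there by monotonicity, we would get $A=Q_A(0,0)\le M$ for all $A$, a contradiction. So no local $L^\infty$ bound holds near the axis. This is the only place where care is needed: pointwise monotone limits and a.e. equivalence classes. I would handle it by noting that each $Q_A\le Q_\infty$ everywhere, so any a.e. bound on a set of positive measure near $0$ transfers, via the Harnack inequality for the positive solutions $Q_A$ (whose potential is bounded by $\La(1+M)$ there), to a pointwise bound at $(0,0)$.
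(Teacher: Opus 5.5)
Your proposal is correct and follows the paper's proof essentially step for step: the monotone limit inherits \eqref{eq:Qinf-bound} from \cref{lem:transverse} and \eqref{eq:Sm}, the local $H^1\cap L^3$ bounds of \cref{lem:local-energy} pass the weak equation and semistability to the limit, and unboundedness comes from the normalization $Q_A(0,0)=A$ together with $Q_A\le Q_\infty$. The differences are cosmetic. Your monotone-convergence argument gives strong $L^3_{\rm loc}$ convergence rather than the paper's $L^p_{\rm loc}$, $p<3$. The Harnack step is unnecessary, and the announced regular-endpoint machinery is never actually used. Continuity of each $Q_A$ already turns an a.e.\ bound $Q_\infty\le M$ on a neighbourhood of $(0,0)$ into $A=Q_A(0,0)\le M$, which is the paper's argument in contrapositive form.
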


\begin{proof}
Monotonicity in $A$ defines the pointwise limit. By \cref{lem:local-energy}, $Q_A$ is uniformly bounded in $H^1$ and $L^3$ on compact sets. Uniform integrability and monotone convergence give strong $L^p_{\rm loc}$ convergence for every $p<3$; in particular $Q_A^2\to Q_\infty^2$ in $L^1_{\rm loc}$. Hence both the weak equation and the semistability inequality pass to the limit.

The bound \eqref{eq:Qinf-bound} is the monotone limit of the corresponding bounds for $Q_A$. 

The limit $Q_\infty$ is unbounded. Given $M>0$, choose $A>2M$. Since $Q_A(0,0)=A$, continuity gives a set of positive measure on which $Q_A>M$. Since $Q_\infty\ge Q_A$, the essential supremum of $Q_\infty$ exceeds $M$. As $M$ is arbitrary, $Q_\infty\notin L^\infty_{\rm loc}$.
\end{proof}

\section{Constant-boundary branches and a finite-domain annulus estimate}
\label{sec:finite-branch}

We first extract the finite-domain comparison consequence of the
cylinder construction.  We then establish an estimate for all bounded
solutions with the symmetries and monotonicities of the minimal branch.
The dimension is allowed to tend to infinity only in this second step.

\subsection{The boundary parameter and the minimal branch}

Let $S_m$ be as in \eqref{eq:Sm}, and put
\begin{equation}\label{eq:finite-Bm}
 C_\ph=\max_{0\le t\le1}
 \frac{\ph(\sqrt{1-t^2})}{t^2},
 \qquad \overline B_m=C_\ph S_m.
\end{equation}
At $t=0$ the quotient is defined by its limit $\pi/4$.  Thus
$C_\ph<\infty$ and $\overline B_m=o(m^2)$ by \cref{thm:Sm}.  At a point of
$\Gamma_\eps$ with $r>0$, \eqref{eq:intro-cylinder-input} gives
\begin{equation}\label{eq:finite-cylinder-trace}
 0\le Q_A(r,z)
 \le\frac{S_m}{r^2}\ph\bigl(\sqrt{1-\eps^2r^2}\bigr)
 \le \overline B_m\eps^2.
\end{equation}
The same bound holds at the two tips by continuity.  Only this pointwise
trace bound will be used.

\begin{lemma}[Spectral comparison with the cylinder]\label{lem:cb-comparison}
For all sufficiently large $m$, set
\[
 \gamma_m^{\mathrm c}
 =\frac{(m-2)^2}{4}-2\La S_m>0.
\]
For every $A>0$, $\eps>0$ and $\zeta\in H^1_0(D_\eps)$,
\begin{equation}\label{eq:cb-cylinder-gap}
 \mathfrak q_{Q_A,D_\eps}[\zeta]
 \ge2\eps\sqrt{\La\gamma_m^{\mathrm c}}
       \int_{D_\eps}\zeta^2.
\end{equation}
If a nonnegative classical solution $q$ of
$-\Delta q=\La(q+q^2)$ in $D_\eps$ satisfies
$q|_{\Gamma_\eps}\ge Q_A|_{\Gamma_\eps}$, then $q\ge Q_A$.
Consequently the constant-boundary problem has no nonnegative bounded
classical solution when $\delta\ge \overline B_m\eps^2$.
\end{lemma}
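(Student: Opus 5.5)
The lemma has three assertions, and I would prove them in order: the spectral gap, then the comparison principle, then nonexistence. Throughout I regard $D_\eps\subset\Cyl$ and extend every $\zeta\in H^1_0(D_\eps)$ by zero, so that all bounds on $Q_A$ from \cref{cor:global-branch} and \eqref{eq:Sm} apply on $D_\eps$.

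\emph{Step 1: the gap \eqref{eq:cb-cylinder-gap}.}
By \cref{lem:transverse} and the definition of $S_m$, we have $0<Q_A\le S_mr^{-2}$. Hence
\[
 \mathfrak q_{Q_A,D_\eps}[\zeta]\ge\int_{D_\eps}\Bigl(|\nabla_X\zeta|^2-\frac{2\La S_m}{r^2}\zeta^2\Bigr)+\int_{D_\eps}\bigl(|\zeta_z|^2-\La\zeta^2\bigr).
\]
I would estimate the two integrals slice by slice.

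For fixed $z$, the $X$-slice of $D_\eps$ is a ball, and $\zeta$ vanishes on its boundary. The sharp Hardy inequality in $\R^m$ then shows that the first integral is at least $\gamma_m^{\mathrm c}\int_{D_\eps}\zeta^2/r^2$.

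For fixed $X$ with $r=|X|<\eps^{-1}$, the $z$-slice is the interval $|z|<\ell(r):=\sqrt{1-\eps^2r^2}$. Its first Dirichlet eigenvalue is $\La/\ell^2$. Using $1/(1-t)\ge1+t$ with $t=\eps^2r^2$, this eigenvalue is at least $\La(1+\eps^2r^2)$. So the second integral is at least $\La\eps^2\int_{D_\eps}r^2\zeta^2$.

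Adding the two bounds and applying the pointwise AM--GM inequality
\[
 \gamma_m^{\mathrm c}r^{-2}+\La\eps^2r^2\ge2\eps\sqrt{\La\gamma_m^{\mathrm c}}
\]
gives \eqref{eq:cb-cylinder-gap}. Positivity of $\gamma_m^{\mathrm c}$ for large $m$ is exactly \eqref{eq:gammam}, which holds by \cref{thm:Sm}.

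\emph{Step 2: comparison.}
Set $w=Q_A-q$. Then $-\Delta w=\La(1+Q_A+q)w$ in $D_\eps$ and $w\le0$ on $\Gamma_\eps$. Both functions are bounded and classical up to the boundary ($Q_A$ is smooth on $\overline\Cyl$), so $w_+\in H^1_0(D_\eps)$. Testing the equation with $w_+$ gives
\[
 \int|\nabla w_+|^2=\La\int(1+Q_A+q)w_+^2 .
\]
On $\{w>0\}$ we have $q<Q_A$, so the right side is at most $\La\int(1+2Q_A)w_+^2$. This means $\mathfrak q_{Q_A,D_\eps}[w_+]\le0$. Step~1 then forces $\int w_+^2=0$, so $q\ge Q_A$.

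\emph{Step 3: nonexistence.}
Suppose $q$ is a nonnegative bounded classical solution with boundary value $\delta\ge\overline B_m\eps^2$. The trace bound \eqref{finite-cylinder-trace}, which also holds at the tips by continuity, gives $Q_A|_{\Gamma_\eps}\le\overline B_m\eps^2\le\delta=q|_{\Gamma_\eps}$ for every $A>0$. Step~2 then yields $q(0,0)\ge Q_A(0,0)=A$ for all $A$, since the global branch of \cref{cor:global-branch} exists for every $A>0$. This contradicts boundedness of $q$.

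\emph{Main obstacle.}
The only delicate point is Step~1, specifically extracting the $\eps$-dependent gain from the shrinking transverse slices and balancing it against the Hardy margin through AM--GM. I would also check that slicing is legitimate for $H^1_0(D_\eps)$ functions; this follows by density of $C_c^\infty(D_\eps)$ together with Fatou's lemma, as in \cref{rem:semistability-density}.
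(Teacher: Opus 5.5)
Your proposal is correct and follows the paper's proof essentially step for step. The argument runs as follows: slicewise Hardy on the $X$-balls plus the shrinking-section Dirichlet gap $\La/(1-\eps^2r^2)\ge\La(1+\eps^2r^2)$, combined by AM--GM; then testing the difference equation with $(Q_A-q)_+$; then the trace bound $Q_A|_{\Gamma_\eps}\le\overline B_m\eps^2$ for every $A>0$. The only cosmetic difference is that you use the pointwise bound $1+Q_A+q\le1+2Q_A$ on $\{Q_A>q\}$, whereas the paper writes the exact identity $\mathfrak q_{Q_A,D_\eps}[v]=-\La\int_{D_\eps}v^3\le0$.
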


\begin{proof}
It suffices to prove the form estimate for smooth compactly supported
functions and then use density.  At fixed $X$, the section is
$(-h(r),h(r))$, with $h(r)=\sqrt{1-\eps^2r^2}$.  The one-dimensional
Dirichlet inequality gives
\begin{align}
 \int_{D_\eps}(|\zeta_z|^2-\La\zeta^2)
 &\ge\La\int_{D_\eps}
        \frac{\eps^2r^2}{1-\eps^2r^2}\zeta^2
 \ge\La\eps^2\int_{D_\eps}r^2\zeta^2.
 \label{eq:cb-sectional-gap}
\end{align}
At fixed $z$, extend $\zeta$ by zero in the $X$ variables and apply the
$m$-dimensional Hardy inequality.  Since $Q_A\le S_m r^{-2}$,
\[
 \mathfrak q_{Q_A,D_\eps}[\zeta]
 \ge\int_{D_\eps}
       \left(\frac{\gamma_m^{\mathrm c}}{r^2}
                       +\La\eps^2r^2\right)\zeta^2.
\]
The arithmetic--geometric mean inequality proves
\eqref{eq:cb-cylinder-gap}.

Now let $v=(Q_A-q)_+$.  The boundary inequality gives
$v\in H^1_0(D_\eps)$.  Testing the difference of the two classical
equations by $v$ yields the exact identity
\[
 \mathfrak q_{Q_A,D_\eps}[v]= -\La\int_{D_\eps}v^3\le0.
\]
Estimate \eqref{eq:cb-cylinder-gap} implies $v=0$.  If
$q|_{\Gamma_\eps}=\delta\ge \overline B_m\eps^2$, then
\eqref{eq:finite-cylinder-trace} permits this comparison for every
$A>0$.  Therefore $q(0,0)\ge Q_A(0,0)=A$ for all $A$, which is
impossible for a bounded classical solution.
\end{proof}

For the rest of this subsection fix such an $m$ and a number $\eps>0$.
\begin{definition}[Constant-boundary admissible parameter, minimal solution, and endpoint]\label{def:cb-minimal-branch}
A number $\delta>0$ is \emph{admissible} if the constant-boundary problem
\[
 -\Delta q=\La(q+q^2)\quad\text{in }D_\eps,
 \qquad q=\delta\quad\text{on }\Gamma_\eps
\]
admits a nonnegative bounded classical solution.  Its endpoint parameter is
\begin{equation}\label{eq:cb-delta-star}
 \delta_\eps^*
 :=\sup\{\delta>0:\ \delta\text{ is admissible}\}.
\end{equation}
For an admissible $\delta$, a \emph{minimal solution} $q_\delta$ is the
pointwise smallest nonnegative classical solution at that boundary value.
The family of such $q_\delta$ on the admissible interval is the
\emph{constant-boundary minimal branch}.  Its monotone pointwise limit as
$\delta\uparrow\delta_\eps^*$ is the \emph{constant-boundary endpoint}.
\end{definition}
For a solution $q$, the maximum principle gives $q\ge\delta$.  Writing
$u=q-\delta$ reduces the problem to
\begin{equation}\label{eq:cb-zero-equation}
 \begin{cases}
 -\Delta u=\La\bigl[u^2+(1+2\delta)u+\delta(1+\delta)\bigr]
       &\text{in }D_\eps,\\
 u=0&\text{on }\Gamma_\eps.
 \end{cases}
\end{equation}

The subsolution--supersolution iteration used below is classical;
see Sattinger \cite{Sattinger}.  The organization into a monotone
minimal branch and the relation between minimality and nonnegativity
of the first linearized eigenvalue are standard features of the
positive increasing convex theory; compare Mignot--Puel \cite{MP} and
Cabr\'e--Capella \cite{CC}.  Since our parameter is a nonzero constant
boundary value rather than the multiplier in the equation, we give the
short argument in full.

\begin{proposition}[Minimal branch and monotonicity]
\label{prop:cb-minimal}
The endpoint in \eqref{eq:cb-delta-star} satisfies
\begin{equation}\label{eq:cb-delta-range}
 0<\delta_\eps^*\le \overline B_m\eps^2.
\end{equation}
For every $0<\delta<\delta_\eps^*$ there is a smallest nonnegative
classical solution $q_\delta$.  Both $q_\delta$ and
$u_\delta=q_\delta-\delta$ are strictly increasing with $\delta$ in the
interior.  The first Dirichlet eigenvalue of
\[
 L_{q_\delta}=-\Delta-\La(1+2q_\delta)
\]
is positive.  Moreover $u_\delta$ is $O(m)$-invariant in $X$, even in
$z$, and satisfies
\begin{equation}\label{eq:cb-monotonicities}
 (u_\delta)_r\le0,
 \qquad (u_\delta)_z\le0\quad\text{for }z>0.
\end{equation}
\end{proposition}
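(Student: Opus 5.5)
The plan is to work with the zero-boundary form \eqref{eq:cb-zero-equation}. The right-hand side $g_\delta(u)=\La[u^2+(1+2\delta)u+\delta(1+\delta)]$ is positive, increasing and convex in $u\ge0$, and increasing in $\delta$. The argument has four steps: admissibility of small $\delta$ together with the upper bound in \eqref{eq:cb-delta-range}; monotone iteration to obtain minimal solutions, with their ordering in $\delta$; positivity of the first eigenvalue; and the symmetry and monotonicity properties \eqref{eq:cb-monotonicities}.

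\emph{Step 1: the range of $\delta$.} For small $\delta>0$ we construct a supersolution. Since $D_\eps\subset\Cyl$ and $\La$ is the first eigenvalue of $-\partial_{zz}$ on $(-1,1)$, while $D_\eps$ has finite $X$-extent $\eps^{-1}$, the first Dirichlet eigenvalue $\lambda_1(D_\eps)$ is strictly larger than $\La$. Let $e$ solve $-\Delta e-\La e=1$ in $D_\eps$ with $e=0$ on $\Gamma_\eps$; then $e>0$ and $e$ is bounded. For small $\delta$ and a suitable constant $t$, $\bar u=t e$ satisfies $-\Delta\bar u\ge g_\delta(\bar u)$, because the quadratic and $\delta$-terms are perturbative. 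Next, $u=0$ is a strict subsolution since $g_\delta(0)>0$. Starting from $0$, the iteration $-\Delta u_{k+1}-\La(1+2\delta)u_{k+1}-\La u_k^2=\La\delta(1+\delta)$, or equivalently an iteration after adding $Ku$ to both sides, increases monotonically below $\bar u$. This gives a smallest solution, and every nonnegative solution dominates every iterate, so the limit is below it; this is the minimality argument. The bound $\delta_\eps^*\le\overline B_m\eps^2$ is exactly the last statement of \cref{lem:cb-comparison}.

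\emph{Step 2: existence for all $\delta<\delta_\eps^*$ and monotonicity.} If $\delta'$ is admissible and $\delta<\delta'$, then $u_{\delta'}$ is a supersolution for the $\delta$-problem, because $g_\delta\le g_{\delta'}$. The same iteration from $0$ therefore produces the minimal $u_\delta\le u_{\delta'}$. Strict monotonicity in the interior follows from the strong maximum principle applied to the difference, since the source difference is strictly positive. Because $q_\delta=u_\delta+\delta$, strict monotonicity of $q_\delta$ follows as well.

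\emph{Step 3: positivity of the first eigenvalue.} This is the main obstacle. Monotonicity first gives $\lambda_1(L_{q_\delta})\ge0$ by the standard argument: take $\delta<\delta'<\delta_\eps^*$, so that $w=u_{\delta'}-u_\delta>0$. Convexity gives
\[
L_{q_\delta}w\ge\La\bigl[(\delta'-\delta)(1+2u_\delta)+\delta'(1+\delta')-\delta(1+\delta)\bigr]>0,
\]
so $w$ is a positive strict supersolution of $L_{q_\delta}$, which forces $\lambda_1>0$. To verify this, test against the first eigenfunction $\psi>0$: we get $\lambda_1\int w\psi=\int\psi L_{q_\delta}w>0$. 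This uses only the existence of some admissible $\delta'>\delta$, which holds by the definition of the supremum.

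\emph{Step 4: symmetry and monotonicity.} The domain $D_\eps$ is invariant under $O(m)$ acting on $X$ and under $z\mapsto-z$, and the iterates inherit these symmetries, so the minimal solution does too. For \eqref{eq:cb-monotonicities}, each iterate $u_{k+1}$ solves a linear problem with source $g_\delta(u_k)$ that is nonincreasing in $r$ and in $|z|$. On the ellipsoid the functions $-\partial_r u$ and $-\partial_z u$ are nonnegative on the boundary, since $u$ vanishes there and is positive inside. The plan is to show by the moving-plane method (Gidas--Ni--Nirenberg), applied in each direction $X_i$ and in $z$, that $u_\delta$ is monotone in each of these directions. Because $D_\eps$ is convex and symmetric in every such direction and the nonlinearity is Lipschitz, the method applies directly and yields $(u_\delta)_{X_i}\le0$ for $X_i>0$, hence $(u_\delta)_r\le0$, and $(u_\delta)_z\le0$ for $z>0$.
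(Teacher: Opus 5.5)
Your proposal is correct. Steps 1--3 follow the paper closely. Your $te$ is the paper's supersolution $2\La\delta\mathcal R_\eps$, and you use the same monotone iteration, the same comparison with solutions at larger boundary values, and the same Green's-identity test against the positive first eigenfunction.

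In Step 3 the difference is only bookkeeping. The paper takes $w=q_{\delta'}-q_\delta$, whose trace $\delta'-\delta$ produces the boundary term $(\delta'-\delta)\int_{\Gamma_\eps}(-\partial_\nu\phi)$. Your $w=u_{\delta'}-u_\delta$ has zero trace, so the gain sits in the source instead. The paper also first derives $\lambda_1\ge0$ from minimality via $q_\delta-t\phi$; your argument shows this is unnecessary.

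Your displayed lower bound is slightly wrong, though harmlessly. The exact identity is $L_{q_\delta}w=\La\bigl[(q_{\delta'}-q_\delta)^2+(1+2q_\delta)(\delta'-\delta)\bigr]$. On $\Gamma_\eps$ this equals $\La(\delta'-\delta)(1+\delta+\delta')$, which is below your bound by $\La(\delta'-\delta)$. It is still strictly positive, and positivity is all you use.

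The genuinely different part is Step 4. The paper derives \eqref{eq:cb-monotonicities} from the stability it has just proved. The function $v=\partial_{X_i}u_\delta$ solves $L_{q_\delta}v=0$ in $D_\eps\cap\{X_i>0\}$, vanishes on the symmetry plane, and is $\le0$ on the curved boundary. Hence the zero extension of $v_+$ lies in $H^1_0(D_\eps)$ with $\mathfrak q_{q_\delta,D_\eps}[v_+]=0$, and $\lambda_1>0$ forces $v_+=0$. Symmetry there comes from uniqueness of the minimal solution.

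You instead invoke Gidas--Ni--Nirenberg. This applies here: $D_\eps$ is smooth, bounded, convex and symmetric in each $X_i$ and in $z$; $g_\delta$ is a polynomial; and $u_\delta>0$ because $g_\delta>0$. It also yields more, namely symmetry and strict monotonicity for every positive solution, stable or not. The paper's route is self-contained and costs only a few lines once $\lambda_1>0$ is known, which is why it proves the eigenvalue bound before the monotonicity.

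Two small corrections. First, $-\partial_z u\ge0$ on $\Gamma_\eps$ holds only where $z\ge0$. Second, the first iteration you write has an order-preserving inverse only if $\lambda_1(D_\eps)>\La(1+2\delta)$, which you have not checked for all $\delta<\delta_\eps^*$. The plain iteration $-\Delta u_{k+1}=g_\delta(u_k)$ avoids this, since it is monotone because $g_\delta$ is increasing on $[0,\infty)$.
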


\begin{proof}
The first Dirichlet eigenvalue of $-\Delta$ on $D_\eps$ is strictly
larger than $\La$.  Indeed, the sectional Dirichlet inequality gives
$\int|\zeta_z|^2\ge\La\int\zeta^2$, and equality in the full Rayleigh
quotient would force the first eigenfunction to be independent of $X$.
Its zero boundary trace then forces it to vanish.  Hence the classical
Dirichlet problem
\[
 (-\Delta-\La)\mathcal R_\eps=1\quad\text{in }D_\eps,
 \qquad \mathcal R_\eps|_{\Gamma_\eps}=0
\]
has a bounded positive solution.  For sufficiently small $\delta>0$,
$\overline u=2\La\delta\mathcal R_\eps$ is a supersolution of
\eqref{eq:cb-zero-equation}, since its residual is
\[
 2\La\delta-
 \La\bigl[\overline u^2+2\delta\overline u+\delta(1+\delta)\bigr]
 =\La\delta+O_{m,\eps}(\delta^2)>0.
\]
The zero function is a subsolution.  Monotone Poisson iteration
between these two functions gives a classical solution; this is the
standard monotone method of \cite{Sattinger}, specialized here to the
shifted zero-boundary equation \eqref{eq:cb-zero-equation}.

Whenever a solution exists at a larger boundary value, it is a
supersolution for every smaller positive boundary value.  Iteration
starting from the constant $\delta$ is bounded by any such
supersolution and converges to the minimal solution $q_\delta$.  It
also proves comparison between minimal solutions at different
parameters.  In particular the set of admissible positive parameters
is an interval and $\delta_\eps^*>0$.  The upper bound follows from
\cref{lem:cb-comparison}.

We next prove semistability.  Suppose that the first eigenvalue $\lambda_1(L_{q_\delta})$
of $L_{q_\delta}$ were negative, and let $\phi>0$ be its Dirichlet
eigenfunction.  The strong maximum principle and Hopf's lemma imply
$u_\delta>0$, $-\partial_\nu u_\delta>0$, and
$u_\delta\ge c\phi$ for some $c>0$.  For small $t>0$,
$q_\delta-t\phi\ge\delta$ and
\[
 -\Delta(q_\delta-t\phi)
   -\La\bigl[(q_\delta-t\phi)+(q_\delta-t\phi)^2\bigr]
 =-t\lambda_1(L_{q_\delta})\phi-\La t^2\phi^2\ge0.
\]
Iteration below this smaller supersolution contradicts minimality.
Thus $\lambda_1(L_{q_\delta})\ge0$.

Choose $\delta<\delta'<\delta_\eps^*$ and put
$w=q_{\delta'}-q_\delta\ge0$.  Subtraction gives
\[
 L_{q_\delta}w=\La w^2,
 \qquad w|_{\Gamma_\eps}=\delta'-\delta.
\]
For the positive first eigenfunction $\phi$ of $L_{q_\delta}$, Green's
identity gives
\begin{equation}\label{eq:cb-strict-stability}
 \lambda_1(L_{q_\delta})\int_{D_\eps}w\phi
 =\La\int_{D_\eps}w^2\phi
     +(\delta'-\delta)\int_{\Gamma_\eps}(-\partial_\nu\phi)>0.
\end{equation}
It follows that $\lambda_1(L_{q_\delta})>0$.  Also $-\Delta w\ge0$ and its boundary value
is positive, so $w\ge\delta'-\delta>0$.  Since $q_{\delta'}>q_\delta$
and $t+t^2$ is strictly increasing for $t\ge0$,
$-\Delta(w-\delta'+\delta)>0$; hence $u_{\delta'}>u_\delta$ in the
interior.

Every orthogonal transformation in $X$, and reflection in $z$, maps a
minimal solution to a minimal solution.  Uniqueness of the smallest
solution therefore gives the asserted symmetries.  In the half-domain
$D_\eps\cap\{X_i>0\}$, the derivative
$v=\partial_{X_i}u_\delta$ satisfies $L_{q_\delta}v=0$.  It vanishes
on the symmetry plane, while on the physical boundary
\[
 v=(\partial_\nu u_\delta)\nu_i\le0,
\]
because the outward normal component $\nu_i$ has the sign of $X_i$.
The zero extension of $v_+$ belongs to $H^1_0(D_\eps)$.  Testing the
derivative equation by $v_+$ and using $\lambda_1(L_{q_\delta})>0$ gives $v_+=0$.
Radial symmetry now implies $(u_\delta)_r\le0$.  The same argument in
$D_\eps\cap\{z>0\}$ gives the second inequality in
\eqref{eq:cb-monotonicities}.
\end{proof}

\subsection{The large-dimensional annulus estimate}

In this subsection $\overline B_m\ge0$ is any sequence satisfying $\overline B_m/m^2\to0$;
it need not initially be the sequence in \eqref{eq:finite-Bm}.
\begin{definition}[Finite-domain annulus class and annulus height]\label{def:annulus-class}
The \emph{finite-domain annulus class} $\mathcal E_m(\overline B_m)$ consists of
triples $(\eps,\delta,u)$ such that
\[
 0<\eps\le m^{-2},\qquad0\le\delta\le \overline B_m\eps^2,
\]
$u\in C^2(\overline{D_\eps})$ is nonnegative, $O(m)$-invariant in $X$,
even in $z$, solves \eqref{eq:cb-zero-equation}, and satisfies
\[
 u_r\le0,\qquad u_z\le0\quad(z>0).
\]
The zero solution at $\delta=0$ is allowed.  The associated
\emph{annulus height} is the extended nonnegative number
\begin{equation}\label{eq:finite-Tm}
 T_m=T_m(\overline B_m)
 :=\sup\{r^2u(r,0):
       (\eps,\delta,u)\in\mathcal E_m(\overline B_m),\ 0<r<\eps^{-1}\}.
\end{equation}
No stability condition is included in this definition.
\end{definition}

\begin{theorem}[Finite-domain annulus estimate]\label{thm:finite-annulus}
For every nonnegative sequence $\overline B_m=o(m^2)$,
\begin{equation}\label{eq:finite-Tm-small}
 \frac{T_m(\overline B_m)}{m^2}\longrightarrow0
 \qquad(m\to\infty).
\end{equation}
In particular $T_m(\overline B_m)<\infty$ for every sufficiently large $m$.
\end{theorem}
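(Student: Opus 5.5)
The plan is to argue by contradiction, following the three-case first-contact blow-up used for \cref{thm:Sm}. The one structural change is that the finite-domain class carries no semistability, so scalar separation (\cref{lem:scalar-separation}) is unavailable. Each rescaled limit must therefore be excluded using only monotonicity, the equation and the geometry of $D_\eps$.

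\emph{Setup.} Suppose \eqref{eq:finite-Tm-small} fails. Then there are $\kappa>0$, $m_j\to\infty$ and $(\eps_j,\delta_j,u_j)\in\mathcal E_{m_j}(\overline B_{m_j})$ with $\sup_r r^2u_j(r,0)>2\kappa m_j^2$. Since $r^2u_j(r,0)\to0$ both at $r=0$ and at $r=\eps_j^{-1}$, there is a first radius $r_j$ with $r_j^2u_j(r_j,0)=\kappa m_j^2$. Put $M_j=u_j(r_j,0)$. Using $u_r\le0$ and $u_z\le0$, we get $u_j\le\kappa m_j^2r^{-2}$ for $r<r_j$ and $u_j\le M_j$ for $r\ge r_j$. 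Two small parameters enter the equation:
\begin{itemize}
\item the sectional half-width $h_j(r)=\sqrt{1-\eps_j^2r^2}\le1$;
\item the source $\La\delta_j(1+\delta_j)\le C\overline B_{m_j}\eps_j^2\le C\overline B_{m_j}m_j^{-4}\to0$.
\end{itemize}
Because $\eps_j\le m_j^{-2}$ and $r_j<\eps_j^{-1}$, we also have $M_j\ge\kappa m_j^2\eps_j^2$. We then split according to $M_j\to0$, $M_j\to M_\infty\in(0,\infty)$, or $M_j\to\infty$.

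\emph{Case $M_j\to\infty$.} This case should go through verbatim. With the rescaling \eqref{eq:case3scale-rig}, the drift tends to $\kappa^{-1/2}$ because $r_j\sqrt{M_j}=\sqrt\kappa\,m_j$. The zeroth-order and source terms vanish after division by $M_j^2$. The rescaled sections have half-width $h_j\sqrt{M_j}$, and I expect this to tend to infinity; if it stayed bounded, the same one-dimensional argument below applies on a bounded interval. The limit $U$ satisfies \eqref{eq:case3-limit-rig} with $U_x\le0$. The $x$-limits at $\pm\infty$ are bounded nonnegative solutions of $-v''=\La v^2$, hence zero. This forces $U\equiv0$, contradicting $U(0,0)=1$. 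No stability is used here.

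\emph{Cases $M_j\to0$ and $M_j\to M_\infty$.} With $U_j=u_j(r_j+x,z)/M_j$, the limit lives on a strip of half-width $h_\infty=\lim h_j(r_j)\in[0,1]$, provided $\eps_jr_j$ converges. The limit equation is $-\Delta U-bU_x=\La(U+M_\infty U^2)+c_\infty$, with $c_\infty=\lim\La\delta_j/M_j\ge0$, together with $U_x\le0$, $U\le1$ and $U(0,0)=1$. The $x$-limits $U_\pm$ solve $-U''=\La(U+M_\infty U^2)+c_\infty$ on $(-h_\infty,h_\infty)$ with zero boundary values. Testing against the first eigenfunction, whose eigenvalue is $\La h_\infty^{-2}\ge\La$, gives
\[
\La(h_\infty^{-2}-1)\int U_\pm\ph_{h_\infty}=\La\int(M_\infty U_\pm^2+c_\infty)\ph_{h_\infty}.
\]
When $h_\infty=1$ this forces $U_\pm=0$ if $M_\infty>0$ or $c_\infty>0$. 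Then $U_x\le0$ squeezes $U\equiv0$, a contradiction.

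\emph{Main obstacle.} The hard residual regime is $M_j\to0$ with $h_\infty$ near $1$ and $c_\infty=0$. There the limit is the neutral mode $U=\ph$ (compare Case~1 of \cref{thm:Sm}), and in \cref{thm:Sm} the contradiction came from scalar separation, which is not available here. My first attempt would exploit the projection $p_j(r)=\int u_j\ph_{h_j(r)}\,dz$ over the whole radial range. It satisfies
\[
-p_j''-\frac{m_j-1}{r}p_j'+\La\frac{\eps_j^2r^2}{1-\eps_j^2r^2}p_j\ \ge\ \La B_2p_j^2+\La\delta_j+(\text{commutator terms}),
\]
where the commutator terms come from the $r$-dependence of $\ph_{h_j}$ and are $O(\eps_j^2)$ relative to $p_j$. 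Integrating $r^{m-1}$ times this inequality from $0$, using $p_j\ge p_j(r_j)\approx M_j$ on $(0,r_j)$ and the left bound $p_j\lesssim\kappa m_j^2r^{-2}$, should produce a lower bound on $-r_j p_j'(r_j)$. This must be played against two facts: the strip limit has $U_x\to0$, so $r_jp_j'(r_j)=o(M_j)\cdot r_j$; and $p_j$ must still reach $0$ at $r=\eps_j^{-1}$, where the sectional gap $\La\eps_j^2r^2$ is the only absorbing mechanism. If this balance does not close, I would enlarge the blow-up window to length $r_j$ and treat the resulting radial inequality in the variable $s=r/r_j$. In that variable the drift is $(m_j-1)/s$, which is large, and I would run a dichotomy on $\eps_jr_j$. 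This step is where I expect the real difficulty.
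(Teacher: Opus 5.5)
Your treatment of the regimes $M_j\to\infty$ and $M_j\to M_\infty\in(0,\infty)$ matches the paper's Steps~2 and~1, and it correctly uses no stability. Two small corrections:
\begin{itemize}
\item Your own bound $M_j\ge\kappa m_j^2\eps_j^2$ gives $\delta_j/M_j\le\overline B_{m_j}/(\kappa m_j^2)\to0$. So $c_\infty=0$ in every case.
\item Since $\eps_j^2r_j^2=\kappa\eps_j^2m_j^2/M_j\le\kappa/(m_j^2M_j)$, you get $h_\infty=1$ automatically unless $M_j\to0$. In particular $h_j\sqrt{M_j}\to\infty$ when $M_j\to\infty$. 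This matters, because your fallback on a bounded interval would fail: $-v''=\La v^2$ has positive Dirichlet solutions on suitable bounded intervals.
\end{itemize}

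The genuine gap is the regime $M_j\to0$, which is the core of the theorem and which you leave unresolved.

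\emph{The degenerate subcase.} The case $h_\infty=0$ is not covered by any strip limit, since the limit domain is empty. The paper excludes it with an explicit barrier on the thin window. Your eigenfunction identity does exclude $h_\infty\in(0,1)$. Together these give $t_j:=\eps_jr_j\to0$.

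\emph{Your first attempt at $h_\infty=1$ cannot close.} Integrating the projected inequality against $r^{m-1}$ gives at best $-r_jp_j'(r_j)\gtrsim\kappa m_jM_j$. The unit-scale strip limit only gives $-r_jp_j'(r_j)=o(r_jM_j)=o(\sqrt\kappa\,m_j\sqrt{M_j})$. These are compatible because $M_j\to0$. Moreover, the constant $B_2$ needs $u/\ph$ to be monotone in $|z|$, and this class only has $u_z\le0$.

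\emph{The missing idea.} It is the $r_j$-scale window you only gesture at, but with the opposite reading of the scales.
\begin{itemize}
\item For $s=r/r_j\in I=(1/2,1)$, the profile $v_j(s)=u_j(r_js,0)/M_j$ is nonincreasing with $1\le v_j\le4$. Helly's theorem gives an a.e.\ limit $v$.
\item At each continuity point, the unit-scale strip limit at radius $r_js$ (this is where $t_j\to0$ is needed) shows that the section profile, rescaled to $(-1,1)$, converges to $v(s)\ph$.
\item Project onto $\ph$ and divide by $\La M_j$. The whole radial operator then carries the factor $(r_j^2M_j)^{-1}=(\kappa m_j^2)^{-1}$. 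After moving derivatives onto a test function, even the drift $(m_j-1)/s$ contributes only $O(1/m_j)$ in $\mathcal D'(I)$.
\item What survives is the algebraic balance $a_js^2p_j/(1-t_j^2s^2)=\mathcal F_j+o(1+a_j)$, with $a_j=t_j^2/M_j$. Here $p_j\to v$ and $\mathcal F_j\to B_2v^2$ are the linear and quadratic $\ph$-moments of the flattened profile.
\item Dividing by $a_j$ rules out $a_j\to\infty$. The bound $v\ge1$ rules out $a_\infty=0$. If $a_\infty>0$, then $v=(a_\infty/B_2)s^2$, which is strictly increasing and contradicts radial monotonicity.
\end{itemize}

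Without this large-$m$ collapse of the radial operator at scale $r_j$, or some other substitute for scalar separation, your proof does not reach the conclusion.
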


\begin{proof}
Suppose \eqref{eq:finite-Tm-small} fails.  There are $\kappa>0$,
integers $m_j\to\infty$ and triples
$(\eps_j,\delta_j,u_j)\in\mathcal E_{m_j}(\overline B_{m_j})$ for which
\[
 \sup_{0<r<\eps_j^{-1}}r^2u_j(r,0)>2\kappa m_j^2.
\]
For each fixed solution, $r^2u_j(r,0)$ is continuous and vanishes both
at $r=0$ and at $r=\eps_j^{-1}$.  Let $r_j$ be its first contact with
$\kappa m_j^2$, and write
\begin{equation}\label{eq:finite-first-contact}
 M_j=u_j(r_j,0)>0,
 \qquad r_j^2M_j=\kappa m_j^2.
\end{equation}
First contact and the two monotonicities imply the global bound, within
the physical domain,
\begin{equation}\label{eq:finite-first-bounds}
 0\le\frac{u_j(r,z)}{M_j}\le
 \begin{cases}
 (r_j/r)^2,&0<r\le r_j,\\
 1,&r\ge r_j.
 \end{cases}
\end{equation}
The parameter assumptions and $r_j<\eps_j^{-1}$ also give
\begin{equation}\label{eq:finite-small-parameters}
 \delta_j\to0,
 \qquad\frac{\delta_j}{M_j}
 \le\frac{\overline B_{m_j}}{\kappa m_j^2}\to0,
 \qquad\eps_jm_j\le m_j^{-1}\to0.
\end{equation}
Passing to a subsequence, $M_j$ tends to a finite positive number, to
infinity, or to zero.  All local estimates below are for the
uniformly elliptic equations in the two variables $(r,z)$, away from
the radial axis.  Their drift coefficients are explicitly controlled;
no estimate in a space of increasing dimension is invoked.

\medskip\noindent
\emph{Step 1: a finite positive limiting amplitude is impossible.}
Suppose $M_j\to M_\infty\in(0,\infty)$.  In the translated physical
variables define
\[
 U_j(x,z)=M_j^{-1}u_j(r_j+x,z)
 \quad\hbox{on}\quad
 \Omega_j^{(1)}:=\bigl\{(x,z):x>-r_j,\ \eps_j^2(r_j+x)^2+z^2<1\bigr\}.
\]
Thus $\Omega_j^{(1)}$ is exactly the image of the radial half-plane part of
$D_{\eps_j}$ under the translation $r=r_j+x$.  Then $r_j\to\infty$,
$\eps_jr_j\to0$, and
\[
 \frac{m_j-1}{r_j+x}\longrightarrow
 b:=\sqrt{M_\infty/\kappa}
\]
on every fixed $x$ interval.  The domains converge locally, including
their boundaries, to $\R\times(-1,1)$.  The equation reads
\begin{equation}\label{eq:finite-physical-window}
 -(U_j)_{xx}-(U_j)_{zz}
 -\frac{m_j-1}{r_j+x}(U_j)_x
 =\La\left[M_jU_j^2+(1+2\delta_j)U_j
                +\frac{\delta_j(1+\delta_j)}{M_j}\right].
\end{equation}
For every fixed $R>0$, \eqref{eq:finite-first-bounds} gives
\[
 \sup_{\substack{|x|\le R\\(x,z)\in\Omega_j^{(1)}}}U_j(x,z)
 \le \left(\frac{r_j}{r_j-R}\right)^2=1+o_R(1).
\]
Together with \eqref{eq:finite-small-parameters}, local and Dirichlet
boundary $W^{2,p}$ and Schauder estimates therefore give a subsequential
limit $U$ with
\[
 \begin{gathered}
 -U_{xx}-U_{zz}-bU_x=\La(U+M_\infty U^2),\\
 U(x,\pm1)=0,\qquad0\le U\le1,\qquad
 U(0,0)=1,\qquad U_x\le0.
 \end{gathered}
\]
The pointwise limit $U_-(z)=\lim_{x\to-\infty}U(x,z)$ exists.
Since $U_x\le0$, $U_-(0)\ge U(0,0)=1$, while $U\le1$; hence
$U_-(0)=1$.  To identify its equation, translate $U$ by any sequence tending to
$-\infty$.  The same compactness estimates apply; monotone convergence
shows that every local limit is $U_-(z)$, independent of $x$.
Consequently
\[
 -U_-''=\La(U_-+M_\infty U_-^2),\qquad
 U_-(\pm1)=0,\qquad U_-(0)=1.
\]
Multiplication by $\ph$ and two integrations by parts cancel the
linear term and yield $M_\infty\int_{-1}^1U_-^2\ph=0$, a contradiction.

\medskip\noindent
\emph{Step 2: an infinite limiting amplitude is impossible.}
Suppose $M_j\to\infty$ and set
\[
 U_j(x,y)=M_j^{-1}u_j
       (r_j+M_j^{-1/2}x,M_j^{-1/2}y),
 \qquad (x,y)\in\Omega_j^{(2)},
\]
where
\[
 \Omega_j^{(2)}:=\left\{(x,y):
 x>-r_j\sqrt{M_j},\quad
 \eps_j^2\bigl(r_j+M_j^{-1/2}x\bigr)^2+M_j^{-1}y^2<1\right\}.
\]
This is the image of $D_{\eps_j}$ under the coefficient-scale recentering
$r=r_j+M_j^{-1/2}x$, $z=M_j^{-1/2}y$.  The identities
\[
 r_j\sqrt{M_j}=\sqrt\kappa\,m_j\to\infty,
 \qquad
 (\eps_jr_j)^2=\frac{\kappa\eps_j^2m_j^2}{M_j}\to0
\]
show that the rescaled domains exhaust $\R^2$.  On every fixed
compact set, \eqref{eq:finite-first-bounds} also yields
\[
 U_j(x,y)\le
 \left(1-\frac{R}{r_j\sqrt{M_j}}\right)^{-2}=1+o_R(1)
 \qquad (|x|+|y|\le R),
\]
for all sufficiently large $j$.  The drift coefficient converges to
$\kappa^{-1/2}$; the linear and constant terms tend to zero.  Compactness
gives
\[
 -U_{xx}-U_{yy}-\kappa^{-1/2}U_x=\La U^2,
 \qquad0\le U\le1,\quad U(0,0)=1,\quad U_x\le0.
\]
As in Step~1, the negative-end limit $U_-(y)$ is a bounded nonnegative
solution of $-U_-''=\La U_-^2$ on $\R$.  Since $U_x\le0$,
$U(0,0)=1$, and $U\le1$, one has $U_-(0)=1$.
A nonnegative concave function on the whole line is constant; its
equation then makes it zero.  This is a contradiction.

\medskip\noindent
\emph{Step 3: at low amplitude the cross-section must approach the full strip.}
It remains to consider $M_j\to0$.  We retain the translated physical
variables and the domain $\Omega_j^{(1)}$ from Step~1, namely
\[
 U_j(x,z)=M_j^{-1}u_j(r_j+x,z),\qquad (x,z)\in\Omega_j^{(1)},
\]
so that \eqref{eq:finite-physical-window} remains the exact equation in
these variables.  Define
\[
 t_j=\eps_jr_j\in(0,1),\qquad
 h_j^0=\sqrt{1-t_j^2}.
\]
Since $m_j/r_j=\sqrt{M_j/\kappa}\to0$, we have $r_j\to\infty$ and
the drift tends to zero on fixed windows.  Pass to a subsequence with
$h_j^0\to h\in[0,1]$.

If $h=0$, the part of the domain with $|x|<1$ lies in
$|z|<\bar h_j$, where
\[
 \bar h_j=\sqrt{1-\eps_j^2(r_j-1)^2}\longrightarrow0.
\]
For large $j$, \eqref{eq:finite-first-bounds} bounds $U_j$ there by
$2$, and the right-hand side of \eqref{eq:finite-physical-window} is
bounded above by a fixed $C$.  Let $k_j=\pi/(6\bar h_j)$, choose $A\ge C/2$,
and set
\[
 W_j(x,z)=A(\bar h_j^2-z^2)
   +4\frac{\cosh(k_jx)}{\cosh k_j}\cos(2k_jz).
\]
The cosine is at least $1/2$ for $|z|\le \bar h_j$.  With
$b_j(x)=(m_j-1)/(r_j+x)$, the operator
$-\partial_{xx}-\partial_{zz}-b_j\partial_x$ applied to the second
term is that term multiplied by
$3k_j^2-b_j(x)k_j\tanh(k_jx)$, which is nonnegative for large $j$.
Applied to the first term it equals $2A$.  Thus this operator applied
to $W_j$ is at least $C$.  On the physical boundary $U_j=0\le W_j$,
and on any artificial side $x=\pm1$ one has $U_j\le2\le W_j$.
The maximum principle on the intersection of the window with the
physical domain yields
\[
 1=U_j(0,0)\le A\bar h_j^2+\frac{4}{\cosh k_j}\longrightarrow0,
\]
a contradiction.

Suppose now $h>0$.  On fixed windows the boundary graphs converge
smoothly to $z=\pm h$.  Compactness gives a bounded nonnegative
solution
\[
 -U_{xx}-U_{zz}=\La U
   \quad\hbox{in }\R\times(-h,h),\qquad
 U(x,\pm h)=0,\qquad U(0,0)=1.
\]
Its projection $p$ on the positive first Dirichlet eigenfunction of
$(-h,h)$ is bounded and satisfies
\[
 -p''+\left(\frac{\La}{h^2}-\La\right)p=0.
\]
For $h<1$ every bounded solution of this equation on $\R$ is zero.
Nonnegativity of $U$ then forces $U=0$, a contradiction.  Hence $h=1$.
Expanding in the full Dirichlet eigenbasis of $(-1,1)$, the first
coefficient solves $-p''=0$ and is constant, while each higher
coefficient solves an equation with positive mass and is zero.
Thus the only such limit is $U(x,z)=\ph(z)$.  In particular,
\begin{equation}\label{eq:finite-t-small}
 t_j\longrightarrow0.
\end{equation}

\medskip\noindent
\emph{Step 4: Helly compactness and pointwise first-mode reduction.}
Let $I=(1/2,1)$ and set
\[
 \mathcal R:=I\times(-1,1).
\]
For $s\in I$ define
\[
 h_j(s)=\sqrt{1-t_j^2s^2},\qquad
 \Phi_j(s,y)=(r_js,h_j(s)y),
\]
and
\begin{equation}\label{eq:finite-flattening}
 \begin{gathered}
 v_j(s)=\frac{u_j(r_js,0)}{M_j},\qquad s\in I,\\
 V_j(s,y)=\frac{u_j(\Phi_j(s,y))}{M_j}
 =\frac{u_j(r_js,h_j(s)y)}{M_j},
 \qquad (s,y)\in\mathcal R.
 \end{gathered}
\end{equation}
The map $\Phi_j$ is well defined because
$\eps_j^2(r_js)^2+h_j(s)^2y^2<1$ for $|y|<1$.  The first-contact
bound and radial monotonicity imply
\begin{equation}\label{eq:finite-v-order}
 1\le v_j(s)\le s^{-2}\le4,
 \qquad v_j\text{ is nonincreasing on }I.
\end{equation}
By Helly's selection theorem for uniformly bounded monotone functions
(see, for example, \cite{RoydenFitzpatrick}), after passing to a
subsequence there is a nonincreasing function $v:I\to[1,4]$ such that
$v_j(s)\to v(s)$ at every continuity point of $v$.

\begin{claim*}[Pointwise first-mode reduction]
For every continuity point $s\in I$ of $v$,
\begin{equation}\label{eq:finite-first-mode}
 V_j(s,\cdot)\longrightarrow v(s)\ph
 \quad\text{in }C^0([-1,1]).
\end{equation}
\end{claim*}
\begin{proof}
Fix such an $s$ and put $\bar r_j=r_js$.  Consider the translated
profiles
\[
 \widetilde U_{j,s}(x,z)=M_j^{-1}u_j(\bar r_j+x,z)
\]
on
\[
 \widetilde\Omega_{j,s}
 =\bigl\{(x,z):x>-\bar r_j,\ 
   \eps_j^2(\bar r_j+x)^2+z^2<1\bigr\}.
\]
Fix $R>0$.  Since $s<1$ and $r_j\to\infty$, one has
$\bar r_j+x<r_j$ for $|x|\le R$ and all sufficiently large $j$.
Hence the first-contact estimate gives
\[
 0\le \widetilde U_{j,s}(x,z)
 \le \left(\frac{r_j}{\bar r_j+x}\right)^2
 \le \left(s-\frac{R}{r_j}\right)^{-2}
\]
on every fixed translated window.  Moreover
\[
 \frac{m_j-1}{\bar r_j+x}
 =\frac{(m_j-1)/r_j}{s+x/r_j}\longrightarrow0
\]
uniformly there, since $m_j/r_j=\sqrt{M_j/\kappa}\to0$, and
\[
 \eps_j^2(\bar r_j+x)^2
 =t_j^2\left(s+\frac{x}{r_j}\right)^2\longrightarrow0
\]
by \eqref{eq:finite-t-small}.  Thus the domains
$\widetilde\Omega_{j,s}$ converge locally, including the Dirichlet
boundary, to the full strip $\R\times(-1,1)$.  Since
$M_j\to0$, $\delta_j/M_j\to0$, and $\delta_j\to0$, standard interior
and boundary compactness yields, from any subsequence, a further
subsequence converging locally up to the boundary to a bounded
nonnegative solution
\[
 -\widetilde U_{xx}-\widetilde U_{zz}=\La\widetilde U
 \quad\text{in }\R\times(-1,1),\qquad
 \widetilde U(x,\pm1)=0.
\]
At $x=0$,
\[
 \widetilde U_{j,s}(0,0)=v_j(s)\longrightarrow v(s),
\]
so $\widetilde U(0,0)=v(s)$.  Expanding in the Dirichlet
eigenfunctions of $(-1,1)$, the first coefficient solves $-a_1''=0$
and is constant by boundedness on $\R$, while every higher coefficient
satisfies
\[
 -a_k''+(\lambda_k-\La)a_k=0,
 \qquad \lambda_k>\La,
\]
and hence vanishes.  Since $\ph(0)=1$, necessarily
\[
 \widetilde U(x,z)=v(s)\ph(z).
\]
The limit is unique, and therefore the whole sequence converges to this
profile locally up to the strip boundary.  Finally
$h_j(s)=\sqrt{1-t_j^2s^2}\to1$.  Uniform boundary Schauder estimates
(or, equivalently, a one-dimensional flattening of the interval
$(-h_j(s),h_j(s))$ at $x=0$) then give
\[
 \widetilde U_{j,s}(0,h_j(s)\,\cdot)
 \longrightarrow v(s)\ph
 \quad\text{in }C^0([-1,1]).
\]
The left-hand side is exactly $V_j(s,\cdot)$, proving
\eqref{eq:finite-first-mode}.
\end{proof}

Since a monotone function has at most countably many discontinuities,
the Helly convergence holds almost everywhere.  The bound $|v_j|\le4$
and dominated convergence on compact subintervals therefore yield
\begin{equation}\label{eq:finite-v-limit}
 \begin{gathered}
 v_j\to v\quad\text{almost everywhere and in }L^p_{\rm loc}(I)
       \quad(1\le p<\infty),\\
 1\le v\le4,
 \qquad v\text{ is nonincreasing}.
 \end{gathered}
\end{equation}
No estimate of an $s$-derivative is used in this step.

\medskip\noindent
\emph{Step 5: exact projection on the varying cross-section.}
Define
\[
 \begin{gathered}
 \beta_j=\frac{h_j'}{h_j},\qquad
 p_j(s)=\int_{-1}^1V_j(s,y)\ph(y)\,dy,
 \qquad \mathcal F_j(s)=\int_{-1}^1V_j(s,y)^2\ph(y)\,dy,\\
 G_j(s)=\int_{-1}^1y(V_j)_y\ph\,dy,
 \qquad \mathcal H_j(s)=\int_{-1}^1y^2(V_j)_{yy}\ph\,dy.
 \end{gathered}
\]
The moments are defined on $I$.  By evenness and monotonicity in $z$,
\[
 0\le V_j(s,y)\le V_j(s,0)=v_j(s)\le4.
\]
Hence \eqref{eq:finite-first-mode}, which holds at every continuity
point of $v$, together with dominated convergence gives, on every
compact subinterval of $I$,
\begin{equation}\label{eq:finite-projection-limits}
 p_j\longrightarrow v,
 \qquad \mathcal F_j\longrightarrow B_2v^2
 \quad\hbox{in }L^1_{\rm loc}(I),
 \qquad B_2=\int_{-1}^1\ph^3\,dy=\frac8{3\pi}.
\end{equation}
Since $V_j(s,\pm1)=\ph(\pm1)=0$, integration by parts
gives
\begin{equation}\label{eq:finite-GH}
 G_j=-\int_{-1}^1V_j(\ph+y\ph')\,dy,
 \qquad
 \mathcal H_j=\int_{-1}^1V_j(y^2\ph)''\,dy.
\end{equation}
In particular $p_j,\mathcal F_j,G_j,\mathcal H_j$ are uniformly bounded on compact
subintervals of $I$.

We next write explicitly the equation satisfied by $V_j$ in the
flattened variables.  At fixed physical $z$, one has
\[
 y=\frac{z}{h_j(s)},\qquad
 \frac{dy}{ds}\Big|_z=-\beta_j(s)y,
\]
and hence, with
\[
 D_j:=\partial_s-\beta_jy\partial_y,
\]
\[
 \partial_r=r_j^{-1}D_j.
\]
At fixed $r$ (equivalently, fixed $s$), one has instead
\[
 \partial_z=h_j^{-1}\partial_y.
\]
Moreover
\[
 D_j^2
 =\partial_{ss}-2\beta_jy\partial_{sy}
       +\beta_j^2y^2\partial_{yy}
       +(\beta_j^2-\beta_j')y\partial_y.
\]
Since
\[
 u_j(r_js,h_j(s)y)=M_jV_j(s,y),
\]
substitution into \eqref{eq:cb-zero-equation}, followed by division by
$M_j$, gives the exact equation on $\mathcal R=I\times(-1,1)$:
\[
\begin{split}
 &-\frac1{r_j^2}\Bigg[
 (V_j)_{ss}+\frac{m_j-1}{s}(V_j)_s
 -2\beta_jy(V_j)_{sy}+\beta_j^2y^2(V_j)_{yy}\\
 &\hspace{28mm}
 +\left(\beta_j^2-\beta_j'
       -\frac{m_j-1}{s}\beta_j\right)y(V_j)_y
 \Bigg]
 -\frac1{h_j^2}(V_j)_{yy}\\
 &\qquad=\La\left[
 M_jV_j^2+(1+2\delta_j)V_j
 +\frac{\delta_j(1+\delta_j)}{M_j}\right],
 \qquad V_j(s,\pm1)=0.
\end{split}
\]
We now project this equation onto the first transverse mode $\ph$.
The pure $s$-derivatives give
$p_j''+(m_j-1)s^{-1}p_j'$.  Furthermore,
\[
 \int_{-1}^1y(V_j)_{sy}\ph\,dy=G_j',\qquad
 \int_{-1}^1y^2(V_j)_{yy}\ph\,dy=\mathcal H_j,
\]
and the remaining $y(V_j)_y$ term gives the corresponding multiple of
$G_j$.  Finally, using $V_j(s,\pm1)=\ph(\pm1)=0$ and
$-\ph''=\La\ph$, two integrations by parts yield
\[
 -\int_{-1}^1(V_j)_{yy}\ph\,dy
 =\La\int_{-1}^1V_j\ph\,dy=\La p_j.
\]
Thus the projection gives the following exact classical identity on
$I$:
\begin{equation}\label{eq:finite-exact-projection}
 \begin{split}
 -r_j^{-2}\left[p_j''+\frac{m_j-1}{s}p_j'+E_j\right]
       +\frac{\La}{h_j^2}p_j
 =\La\left[(1+2\delta_j)p_j+M_j\mathcal F_j
        +\frac{\delta_j(1+\delta_j)}{M_j}\mathsf B_1\right],
 \end{split}
\end{equation}
where
\begin{equation}\label{eq:finite-Ej}
 \begin{gathered}
 E_j=-2\beta_jG_j'+\beta_j^2\mathcal H_j+
 \left(\beta_j^2-\beta_j'-\frac{m_j-1}{s}\beta_j\right)G_j,
 \qquad \mathsf B_1=\int_{-1}^1\ph\,dy=\frac4\pi.
 \end{gathered}
\end{equation}
For a fixed $I'\Subset I$, direct differentiation of $h_j$ gives
\[
 \|\beta_j\|_{L^\infty(I')}
       +\|\beta_j'\|_{L^\infty(I')}\le C_{I'}t_j^2.
\]
For every $\psi\in C_c^\infty(I)$, move the $s$ derivatives in
\eqref{eq:finite-exact-projection} and \eqref{eq:finite-Ej} onto
$\psi$.  The uniform bounds following \eqref{eq:finite-GH} then give
\begin{equation}\label{eq:finite-distribution-errors}
 \begin{split}
 \left|\left\langle p_j''+\frac{m_j-1}{s}p_j',\psi\right\rangle\right|
       &\le C_\psi m_j,\\
 |\langle E_j,\psi\rangle|&\le C_\psi m_jt_j^2.
 \end{split}
\end{equation}
For example, the only derivative of $G_j$ appears as
$\langle-2\beta_jG_j',\psi\rangle
=2\int G_j(\beta_j\psi)'$, so it is bounded by $C_\psi t_j^2$.
The bounds in \eqref{eq:finite-distribution-errors} therefore do not
presuppose any control of $G_j'$ or $p_j'$.

Using $r_j^2M_j=\kappa m_j^2$, divide
\eqref{eq:finite-exact-projection} by $\La M_j$ and set
\begin{equation}\label{eq:finite-alpha-d}
 a_j=\frac{t_j^2}{M_j}>0,
 \qquad d_j=\frac{\delta_j(1+\delta_j)}{M_j^2}\ge0.
\end{equation}
The result is
\begin{equation}\label{eq:finite-reduced-projection}
 a_j\frac{s^2}{1-t_j^2s^2}p_j
   =\mathcal F_j+2\frac{\delta_j}{M_j}p_j+\mathsf B_1d_j+R_j,
 \qquad R_j\longrightarrow0\quad\hbox{in }\mathcal D'(I).
\end{equation}
Indeed \eqref{eq:finite-distribution-errors} bounds each pairing of
$R_j$ by $C_\psi(1+t_j^2)/m_j$.  The constant forcing has the
additional exact ratio
\begin{equation}\label{eq:finite-forcing-ratio}
 \frac{d_j}{a_j}
 =\frac{\delta_j(1+\delta_j)}{\kappa\eps_j^2m_j^2}
 \le\frac{\overline B_{m_j}(1+\delta_j)}{\kappa m_j^2}
 \longrightarrow0.
\end{equation}
This estimate retains the possible interaction between the geometric
cross-sectional term and the small boundary forcing.

\medskip\noindent
\emph{Step 6: contradiction with radial monotonicity.}
If $a_j\to\infty$, divide
\eqref{eq:finite-reduced-projection} by $a_j$.
Equations \eqref{eq:finite-small-parameters},
\eqref{eq:finite-t-small}, \eqref{eq:finite-forcing-ratio} and
\eqref{eq:finite-projection-limits} then give $s^2v=0$ in
$\mathcal D'(I)$, contrary to $v\ge1$.

Otherwise a subsequence has $a_j\to a_\infty\in[0,\infty)$.
Equation \eqref{eq:finite-forcing-ratio} gives $d_j\to0$, so
\eqref{eq:finite-reduced-projection} becomes
\[
 a_\infty s^2v=B_2v^2\quad\text{almost everywhere in }I.
\]
If $a_\infty=0$, this contradicts $v\ge1$.  If $a_\infty>0$, division by
$v$ gives
\[
 v(s)=\frac{a_\infty}{B_2}s^2\quad\text{almost everywhere in }I.
\]
The right-hand side is strictly increasing and cannot have a
nonincreasing representative.  For instance choose two Lebesgue
points $s_1<s_2$ from the full-measure equality set and use
\eqref{eq:finite-v-limit}.  This is the final contradiction.

The argument applies as well if $T_m=\infty$ for infinitely many
unbounded values of $m$, since one may still choose a solution crossing
$2\kappa m^2$.  Both assertions of the theorem follow.
\end{proof}

\subsection{A spectral gap along the entire bounded branch}

Return to $\overline B_m=C_\ph S_m$ from \eqref{eq:finite-Bm}, and use the
corresponding number $T_m(\overline B_m)$.  Choose $m_1$ sufficiently large that,
for every $m\ge m_1$, the cylinder branch exists globally and
\begin{equation}\label{eq:finite-gamma-choice}
 \gamma_m^{\mathrm c}>0,
 \qquad T_m<\infty,
 \qquad
 \gamma_m^{\mathrm e}:=\frac{(m-2)^2}{4}-2\La T_m>0.
\end{equation}
This is possible by \cref{thm:Sm,thm:finite-annulus}.  For such an $m$
fix
\begin{equation}\label{eq:finite-epsilon-choice}
 \eps_0(m)=\min\left\{m^{-2},
       \frac{\sqrt{\gamma_m^{\mathrm e}}}{2\sqrt\La \overline B_m}\right\}>0.
\end{equation}
The constant $\overline B_m$ is positive by the positivity of the cylinder branch.

\begin{proposition}[Uniform finite-domain spectral gap]
\label{prop:cb-uniform-gap}
Let $m\ge m_1$, $0<\eps\le\eps_0(m)$, and
$(\eps,\delta,u)\in\mathcal E_m(\overline B_m)$.  Set $q=u+\delta$.  Then
\begin{equation}\label{eq:cb-uniform-gap}
 \mathfrak q_{q,D_\eps}[\zeta]
 \ge\eps\sqrt{\La\gamma_m^{\mathrm e}}
             \|\zeta\|_{L^2(D_\eps)}^2
 \qquad(\zeta\in H^1_0(D_\eps)).
\end{equation}
In particular this estimate holds for every bounded minimal solution
$q_\delta$, $0<\delta<\delta_\eps^*$, with the same constant.
\end{proposition}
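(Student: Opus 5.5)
The argument will repeat the proof of the spectral comparison in \cref{lem:cb-comparison}, with the cylinder bound $Q_A\le S_m r^{-2}$ replaced by a pointwise bound on $q=u+\delta$ that comes from $T_m$. By density it suffices to take $\zeta\in C_c^\infty(D_\eps)$.

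\emph{Pointwise potential bound.} For $(\eps,\delta,u)\in\mathcal E_m(\overline B_m)$, the monotonicity $u_z\le0$ for $z>0$ and evenness give $u(r,z)\le u(r,0)$. The definition \eqref{eq:finite-Tm} then gives $u(r,z)\le T_m r^{-2}$ for $0<r<\eps^{-1}$. Hence
\[
 \La(1+2q)\le \La+\frac{2\La T_m}{r^2}+2\La\delta,
 \qquad \delta\le \overline B_m\eps^2 .
\]

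\emph{Splitting the form.} On each section $(-h(r),h(r))$ with $h(r)=\sqrt{1-\eps^2r^2}$, the sectional inequality \eqref{eq:cb-sectional-gap} gives
\[
 \int(|\zeta_z|^2-\La\zeta^2)\ge\La\eps^2\int r^2\zeta^2 .
\]
On each slice of fixed $z$, extending $\zeta$ by zero and applying the sharp Hardy inequality in $\R^m$ controls $\int|\nabla_X\zeta|^2$ by $\frac{(m-2)^2}{4}\int r^{-2}\zeta^2$. Combining these two inequalities with the potential bound yields
\[
 \mathfrak q_{q,D_\eps}[\zeta]\ge\int_{D_\eps}\Bigl(\frac{\gamma_m^{\mathrm e}}{r^2}+\La\eps^2r^2-2\La\delta\Bigr)\zeta^2 .
\]
The arithmetic--geometric mean inequality gives $\gamma_m^{\mathrm e}r^{-2}+\La\eps^2r^2\ge2\eps\sqrt{\La\gamma_m^{\mathrm e}}$.

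\emph{Absorbing the boundary term.} It remains to show $2\La\delta\le\eps\sqrt{\La\gamma_m^{\mathrm e}}$. Since $\delta\le\overline B_m\eps^2$ and $\eps\le\eps_0(m)\le\sqrt{\gamma_m^{\mathrm e}}/(2\sqrt\La\,\overline B_m)$ by \eqref{eq:finite-epsilon-choice},
\[
 2\La\overline B_m\eps^2\le\eps\sqrt{\La\gamma_m^{\mathrm e}} .
\]
Subtracting this from $2\eps\sqrt{\La\gamma_m^{\mathrm e}}$ leaves $\eps\sqrt{\La\gamma_m^{\mathrm e}}$, which is \eqref{eq:cb-uniform-gap}.

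\emph{Application to the minimal branch.} By \cref{prop:cb-minimal}, each $u_\delta$ with $0<\delta<\delta^*_\eps\le\overline B_m\eps^2$ is nonnegative, has the required symmetries and monotonicities, and solves \eqref{eq:cb-zero-equation}. It is $C^2$ up to the boundary because it is a bounded classical solution on a smooth domain. Together with $\eps\le m^{-2}$, this shows $(\eps,\delta,u_\delta)\in\mathcal E_m(\overline B_m)$, so the estimate applies with the same constant.

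The only delicate point is the bookkeeping of the constant term $2\La\delta$. The choice of $\eps_0(m)$ is designed exactly so that this term absorbs half of the AM--GM gap, so I do not expect a real obstacle here.
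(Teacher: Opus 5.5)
Your proof is correct and follows the paper's argument essentially line by line: the bound $u(r,z)\le u(r,0)\le T_m r^{-2}$, Hardy's inequality combined with the sectional gap \eqref{eq:cb-sectional-gap}, the AM--GM lower bound $2\eps\sqrt{\La\gamma_m^{\mathrm e}}$, and absorption of $2\La\delta\le 2\La\overline B_m\eps^2$ through the choice of $\eps_0(m)$. The final step, placing the minimal branch in $\mathcal E_m(\overline B_m)$ via \cref{prop:cb-minimal} and \eqref{eq:cb-delta-range}, is also exactly how the paper concludes.
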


\begin{proof}
By definition of $T_m$ and axial monotonicity,
$u(r,z)\le u(r,0)\le T_mr^{-2}$.  Hardy's inequality and
\eqref{eq:cb-sectional-gap} therefore imply
\begin{align*}
 \mathfrak q_{q,D_\eps}[\zeta]
 &\ge\int_{D_\eps}
     \left(\frac{\gamma_m^{\mathrm e}}{r^2}
                    +\La\eps^2r^2-2\La\delta\right)\zeta^2\\
 &\ge\left(2\eps\sqrt{\La\gamma_m^{\mathrm e}}
                        -2\La \overline B_m\eps^2\right)
                  \int_{D_\eps}\zeta^2.
\end{align*}
The choice \eqref{eq:finite-epsilon-choice} proves
\eqref{eq:cb-uniform-gap}.  Proposition~\ref{prop:cb-minimal} and
\eqref{eq:cb-delta-range} place every bounded minimal solution in
$\mathcal E_m(\overline B_m)$, which gives the last assertion.
\end{proof}

\section{Singular endpoints and extremal identification}
\label{sec:finite-limit}

Fix $m\ge m_1$ and $0<\eps\le\eps_0(m)$ as in
\eqref{eq:finite-gamma-choice}--\eqref{eq:finite-epsilon-choice}.
All constants in the compactness argument below are allowed to depend
on these fixed values.  We retain the minimal branch $q_\delta$ and
write $u_\delta=q_\delta-\delta$.

\begin{lemma}[Energy compactness at the boundary-parameter endpoint]
\label{lem:cb-energy}
The increasing limit
\[
 q_\eps=\lim_{\delta\uparrow\delta_\eps^*}q_\delta
\]
is finite almost everywhere and belongs to
$H^1(D_\eps)\cap L^3(D_\eps)$.  It satisfies
$q_\eps-\delta_\eps^*\in H^1_0(D_\eps)$ and
\begin{equation}\label{eq:cb-global-weak}
 \int_{D_\eps}\nabla q_\eps\cdot\nabla\zeta
 =\La\int_{D_\eps}(q_\eps+q_\eps^2)\zeta
 \quad
 \left(\zeta\in C^2(\overline{D_\eps}),\
              \zeta|_{\Gamma_\eps}=0\right).
\end{equation}
Furthermore
\begin{equation}\label{eq:cb-limit-stability}
 \int_{D_\eps}
   \bigl(|\nabla\zeta|^2-\La(1+2q_\eps)\zeta^2\bigr)
 \ge\eps\sqrt{\La\gamma_m^{\mathrm e}}
                 \int_{D_\eps}\zeta^2
 \qquad(\zeta\in C_c^1(D_\eps)).
\end{equation}
\end{lemma}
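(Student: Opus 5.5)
The plan is to derive uniform bounds along the minimal branch from the equation and from the uniform spectral gap of \cref{prop:cb-uniform-gap}, and then pass to the monotone limit. Write $u_\delta=q_\delta-\delta\in H^1_0(D_\eps)\cap C^2(\overline{D_\eps})$; these are nonnegative, increasing in $\delta$, and satisfy $0<\delta<\delta_\eps^*\le\overline B_m\eps^2$.

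\emph{First step: a uniform $H^1\cap L^3$ bound.} Test \eqref{eq:cb-zero-equation} with $u_\delta$:
\[
 \int|\nabla u_\delta|^2=\La\int\bigl(u_\delta^3+(1+2\delta)u_\delta^2+\delta(1+\delta)u_\delta\bigr).
\]
Next apply the semistability form $\mathfrak q_{q_\delta,D_\eps}[u_\delta]\ge0$, which is admissible since $u_\delta\in H^1_0$:
\[
 \int|\nabla u_\delta|^2\ge\La\int(1+2\delta+2u_\delta)u_\delta^2 .
\]
Subtracting the two relations gives
\[
 \La\int u_\delta^3\le\La\delta(1+\delta)\int u_\delta .
\]
Hölder's inequality on the bounded domain, $\int u_\delta\le|D_\eps|^{2/3}\|u_\delta\|_{L^3}$, then yields $\|u_\delta\|_{L^3}\le C$ uniformly in $\delta$. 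Substituting this into the energy identity gives a uniform bound on $\|\nabla u_\delta\|_{L^2}$. This is the same cubic cancellation used in \cref{lem:local-energy}, but here it is global, since $u_\delta$ vanishes on $\Gamma_\eps$.

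\emph{Second step: passage to the limit.} By monotonicity, $u_\delta\uparrow u_\eps$ pointwise. The $L^3$ bound and Fatou's lemma make $u_\eps$ finite almost everywhere and put it in $L^3$. Weak $H^1_0$ compactness identifies the weak limit with $u_\eps$, so $u_\eps\in H^1_0(D_\eps)$. Hence $q_\eps=u_\eps+\delta_\eps^*$ lies in $H^1\cap L^3$ and $q_\eps-\delta_\eps^*\in H^1_0(D_\eps)$.

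For the weak equation, fix a test function $\zeta\in C^2(\overline{D_\eps})$ with zero trace. The gradient term converges by weak convergence. For the nonlinear term, $q_\delta^2\le q_\eps^2\in L^{3/2}\subset L^1$, so monotone (or dominated) convergence gives $q_\delta^2\to q_\eps^2$ in $L^1$. This yields \eqref{eq:cb-global-weak}.

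\emph{Third step: the gap at the endpoint.} Fix $\zeta\in C_c^1(D_\eps)$. By \cref{prop:cb-uniform-gap},
\[
 \int|\nabla\zeta|^2-\La\int(1+2q_\delta)\zeta^2\ge\eps\sqrt{\La\gamma_m^{\mathrm e}}\int\zeta^2
\]
with a constant independent of $\delta$. Since $\zeta$ is bounded and $q_\delta\uparrow q_\eps$ in $L^1$, the potential term converges and the inequality passes to the limit, giving \eqref{eq:cb-limit-stability}.

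The main point needing care is the first step, namely that semistability may be tested with $u_\delta$ itself. This holds by \cref{rem:semistability-density}, or simply because the first eigenvalue of $L_{q_\delta}$ is positive by \cref{prop:cb-minimal}. The only other point to confirm is that the constant $\delta(1+\delta)$ is bounded, which follows from $\delta\le\overline B_m\eps^2$.
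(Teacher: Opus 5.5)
Your proposal is correct and follows the paper's proof essentially step for step. It uses the same global cubic cancellation between the energy identity and the semistability form tested with $u_\delta$, then H\"older's inequality for the uniform $L^3$ and $H^1_0$ bounds, then Fatou's lemma and weak closedness of $H^1_0$ for the limit, and finally passage to the limit in the uniform gap of \cref{prop:cb-uniform-gap} for each fixed $\zeta$. The only difference is cosmetic: you get $q_\delta^2\to q_\eps^2$ in $L^1$ by monotone convergence, using $q_\eps\in L^3$, whereas the paper uses Rellich compactness plus a product estimate; both are valid.
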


\begin{proof}
For each $0<\delta<\delta_\eps^*$ the minimal solution $q_\delta$ is
bounded and classical.  Hence the potential $\La(1+2q_\delta)$ is bounded
on $D_\eps$, and the quadratic form
$\mathfrak q_{q_\delta,D_\eps}$ is continuous on $H^1_0(D_\eps)$.
Since $C_c^\infty(D_\eps)$ is dense in $H^1_0(D_\eps)$, the
semistability inequality, initially stated for compactly supported smooth
test functions, extends to every $H^1_0(D_\eps)$ test function.  In
particular $u_\delta=q_\delta-\delta\in H^1_0(D_\eps)$ is admissible.

Test \eqref{eq:cb-zero-equation} by $u_\delta$ and test the extended
semistability inequality by the same function.  Their difference is
\begin{equation}\label{eq:cb-L3-cancellation}
 \int_{D_\eps}u_\delta^3
 \le\delta(1+\delta)\int_{D_\eps}u_\delta.
\end{equation}
Since $q_\delta\ge\delta$ and $\delta>0$, the right-hand side of
\eqref{eq:cb-zero-equation} is strictly positive; hence the strong maximum
principle gives $u_\delta>0$ in $D_\eps$.  H\"older's inequality applied to
\eqref{eq:cb-L3-cancellation} therefore gives
\begin{equation}\label{eq:cb-L3-bound}
 \int_{D_\eps}u_\delta^3
 \le[\delta(1+\delta)]^{3/2}|D_\eps|.
\end{equation}
The energy identity
\[
 \int_{D_\eps}|\nabla u_\delta|^2
 =\La\int_{D_\eps}
 \bigl[u_\delta^3+(1+2\delta)u_\delta^2
                  +\delta(1+\delta)u_\delta\bigr]
\]
and \eqref{eq:cb-delta-range} now give a uniform $H^1_0$ bound.
Thus $q_\delta$ is uniformly bounded in $H^1\cap L^3$ as
$\delta\uparrow\delta_\eps^*$.

Pointwise monotonicity defines $q_\eps$.  Fatou's lemma and the $L^3$
bound show that it is finite almost everywhere and belongs to $L^3$.
The weak $H^1$ limit of any subsequence is the same function, and
Rellich compactness yields
\begin{equation}\label{eq:cb-convergences}
 \begin{gathered}
 q_\delta\rightharpoonup q_\eps\quad\hbox{in }H^1(D_\eps),
 \qquad
 q_\delta\to q_\eps\quad\hbox{in }L^2(D_\eps),\\
 q_\delta^2\to q_\eps^2\quad\hbox{in }L^1(D_\eps).
 \end{gathered}
\end{equation}
The last assertion also follows directly from the product estimate
\[
 \|q_\delta^2-q_\eps^2\|_{L^1}
 \le\|q_\delta-q_\eps\|_{L^2}
       (\|q_\delta\|_{L^2}+\|q_\eps\|_{L^2}).
\]
Weak closedness of $H^1_0$ gives
$q_\eps-\delta_\eps^*\in H^1_0(D_\eps)$.  For every test function in
\eqref{eq:cb-global-weak}, its boundedness and the convergences above
allow passage in the classical identity for $q_\delta$.  This proves
the global weak formulation, not merely its compactly supported
version.  Finally pass to the limit in \eqref{eq:cb-uniform-gap} for a
fixed compactly supported $\zeta$ to obtain
\eqref{eq:cb-limit-stability}.
\end{proof}

\begin{theorem}[Singular constant-boundary endpoint]
\label{thm:finite-singular}
For every fixed $m\ge m_1$ and $0<\eps\le\eps_0(m)$, set
$\delta_\eps=\delta_\eps^*$.  Then
\begin{equation}\label{eq:cb-singular-endpoint}
 \begin{gathered}
 0<\delta_\eps\le \overline B_m\eps^2,\qquad
 q_\eps\in H^1(D_\eps)\cap L^3(D_\eps)
                      \setminus L^\infty(D_\eps),\\
 -\Delta q_\eps=\La(q_\eps+q_\eps^2)\quad\hbox{in }D_\eps,
 \qquad q_\eps-\delta_\eps\in H^1_0(D_\eps),\\
 q_\eps>\delta_\eps\quad\hbox{almost everywhere in }D_\eps.
 \end{gathered}
\end{equation}
The solution is semistable, and the function
\begin{equation}\label{eq:cb-scaled-solution}
 U_\eps(y,x_N)
  =\frac{q_\eps(y/\eps,x_N/\eps)}{\delta_\eps}-1
\end{equation}
is precisely $u^*(\Omega_\eps,f_{\delta_\eps})$.  Its extremal
parameter is
\begin{equation}\label{eq:cb-extremal-parameter}
 \lambda^*(\Omega_\eps,f_{\delta_\eps})
   =\frac{\La(1+\delta_\eps)}{\eps^2}.
\end{equation}
\end{theorem}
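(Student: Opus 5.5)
The proof has three parts: first the regularity properties of $q_\eps$, then its unboundedness by an implicit-function argument, and finally the extremal identification through Br\'ezis--V\'azquez.

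\emph{Regularity and semistability.} The bounds $0<\delta_\eps\le\overline B_m\eps^2$ are \eqref{eq:cb-delta-range}. \Cref{lem:cb-energy} gives $q_\eps\in H^1\cap L^3$, the weak equation \eqref{eq:cb-global-weak}, the trace condition $q_\eps-\delta_\eps\in H^1_0(D_\eps)$, and semistability, indeed with the positive gap \eqref{eq:cb-limit-stability}. For the strict inequality $q_\eps>\delta_\eps$ a.e., I use monotonicity: $q_\eps\ge q_\delta>\delta$ for every fixed $\delta<\delta_\eps$. Hence $q_\eps-\delta_\eps\ge u_{\delta}-(\delta_\eps-\delta)$, and letting $\delta\uparrow\delta_\eps$ together with $u_\delta\uparrow$ and $u_\delta>0$ gives $q_\eps\ge\delta_\eps$. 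Strictness then follows from the weak superharmonicity $-\Delta(q_\eps-\delta_\eps)\ge\La\delta_\eps(1+\delta_\eps)>0$ and the weak strong maximum principle.

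\emph{Unboundedness (main step).} I argue by contradiction and suppose $q_\eps\in L^\infty$. Then $u_\eps=q_\eps-\delta_\eps$ solves \eqref{eq:cb-zero-equation} with a bounded right side, so elliptic regularity up to the smooth boundary $\Gamma_\eps$ makes it classical, $C^{2,\alpha}(\overline{D_\eps})$. Since $q_\eps$ is now bounded, the form $\mathfrak q_{q_\eps,D_\eps}$ is continuous on $H^1_0$, and \eqref{eq:cb-limit-stability} extends by density to all of $H^1_0$. Thus $\lambda_1(L_{q_\eps})\ge\eps\sqrt{\La\gamma_m^{\mathrm e}}>0$, so $L_{q_\eps}:C^{2,\alpha}_0\to C^{0,\alpha}$ is an isomorphism. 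The implicit function theorem applied to $(u,\delta)\mapsto-\Delta u-\La[u^2+(1+2\delta)u+\delta(1+\delta)]$ then produces classical solutions for $\delta$ slightly larger than $\delta_\eps^*$. These are nonnegative by $C^0$-closeness and the maximum principle (their right sides stay positive), so such $\delta$ are admissible, contradicting the definition \eqref{eq:cb-delta-star}. Hence $q_\eps\notin L^\infty$.

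The delicate point is justifying the uniform gap at the endpoint. This is exactly why \cref{prop:cb-uniform-gap} was proved uniformly on $\mathcal E_m$: the gap passes to the limit through \cref{lem:cb-energy}, so no eigenvalue degeneracy can occur there.

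\emph{Extremal identification.} I rescale via \eqref{eq:inversechange} with $\vartheta=\La$. Then $U_\eps$ lies in $H^1_0(\Omega_\eps)\setminus L^\infty$ and is positive, and it solves $-\Delta U=\lambda f_{\delta_\eps}(U)$ weakly with $\lambda=\La(1+\delta_\eps)/\eps^2$. Its linearized form equals $\eps^{m-1}$ times $\mathfrak q_{q_\eps}$ under scaling, since $\lambda f'_{\delta_\eps}(U)=\eps^{-2}\La(1+2q_\eps)$; hence $U_\eps$ is semistable. The global weak formulation \eqref{eq:cb-global-weak} gives \eqref{eq:brezis-extremal-weak}. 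Br\'ezis--V\'azquez \cite[Theorem~3.1]{BV} asserts that an unbounded semistable $H^1_0$ weak solution at parameter $\lambda$ forces $\lambda=\lambda^*$ and $U=u^*$. Applying it yields both $U_\eps=u^*(\Omega_\eps,f_{\delta_\eps})$ and \eqref{eq:cb-extremal-parameter}. Its hypotheses hold because $f_{\delta_\eps}$ satisfies \eqref{eq:nonlinearity-properties} and $f(U)\in L^1$, the latter since $U\in L^3\supset L^2$.
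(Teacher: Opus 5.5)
Your proposal is correct and follows the paper's proof in all three steps: \cref{lem:cb-energy} for the weak properties, an implicit-function contradiction at a hypothetically bounded endpoint, and the Br\'ezis--V\'azquez characterization after rescaling. Your two shortcuts are both valid: you extend \eqref{eq:cb-limit-stability} by density instead of re-verifying membership in $\mathcal E_m(\overline B_m)$, and you obtain $q=v+\delta>0$ directly from $C^0$-closeness instead of the paper's Hopf-collar argument. One slip: on the bounded domain one has $L^3\subset L^2$ (not $\supset$), and this inclusion is what gives $f_{\delta_\eps}(U_\eps)\in L^1$.
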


\begin{proof}
\emph{Step 1: the endpoint is unbounded.}
The parameter range, weak equation, trace, energy membership and
semistability follow from \cref{prop:cb-minimal,lem:cb-energy}.
Suppose, for a contradiction, that $q_\eps$ is bounded.  Since
$0\le q_\delta\le q_\eps$, the classical equations give uniform global
$W^{2,p}$ bounds on $u_\delta$, for every finite $p$, on the fixed smooth
domain $D_\eps$.  Sobolev embedding and elliptic bootstrap give
uniform $C^{k,\alpha}$ bounds for every fixed $k$ and $0<\alpha<1$.
The endpoint is consequently a classical solution and inherits all
symmetries and monotonicities from \cref{prop:cb-minimal}.
It follows that
\[
 (\eps,\delta_\eps^*,q_\eps-\delta_\eps^*)
       \in\mathcal E_m(\overline B_m).
\]
By \cref{prop:cb-uniform-gap}, the Dirichlet linearization at $q_\eps$
has a strictly positive first eigenvalue.

Let
\[
 C^{2,\alpha}_D(\overline{D_\eps})
 =\{v\in C^{2,\alpha}(\overline{D_\eps}):
                                  v|_{\Gamma_\eps}=0\}.
\]
Consider the smooth map
\[
 \begin{split}
 \mathcal F:C^{2,\alpha}_D(\overline{D_\eps})\times\R
        &\longrightarrow C^{0,\alpha}(\overline{D_\eps}),\\
 \mathcal F(v,\delta)
        &=-\Delta v-
          \La\bigl[v^2+(1+2\delta)v+\delta(1+\delta)\bigr].
 \end{split}
\]
At $(q_\eps-\delta_\eps^*,\delta_\eps^*)$ the derivative in $v$ is
$L_{q_\eps}$.  Its positive first eigenvalue, the Dirichlet Fredholm
alternative and the Schauder estimate show that it is an isomorphism
between the indicated H\"older spaces.  The implicit-function theorem
therefore produces classical solutions for $\delta$ on both sides of
$\delta_\eps^*$.  This is the same local continuation principle that
underlies the classical positive-solution branch theory of
\cite{CR75,MP}; the preceding annulus estimate is what supplies the
uniform positive eigenvalue needed at the present endpoint.

These nearby solutions remain nonnegative.  Indeed the endpoint
$v_*=q_\eps-\delta_\eps^*$ is positive in the interior and, by the
Hopf boundary lemma, satisfies $\partial_\nu v_*<0$ at every boundary
point.  Since $\Gamma_\eps$ is compact, there is $c_0>0$ such that
$-\partial_\nu v_*\ge 2c_0$ on $\Gamma_\eps$.  By continuity of
$\nabla v_*$ and the tubular-neighborhood theorem, after decreasing
$\rho>0$ if necessary we have
\[
 -\nabla v_*(y-t\nu(y))\cdot\nu(y)\ge \frac32c_0
 \qquad
 (y\in\Gamma_\eps,\ 0\le t\le\rho),
\]
where $\nu(y)$ is the outward unit normal.  On the compact interior set
$K_\rho:=\{x\in\overline{D_\eps}:\dist(x,\Gamma_\eps)\ge\rho\}$,
we have $m_\rho:=\min_{K_\rho}v_*>0$.  Hence every zero-trace function
$v$ sufficiently close to $v_*$ in $C^1(\overline{D_\eps})$ satisfies
\[
 -\nabla v(y-t\nu(y))\cdot\nu(y)\ge c_0
 \quad(0\le t\le\rho),
 \qquad
 v\ge \frac12m_\rho\quad\text{on }K_\rho.
\]
Since $v(y)=0$ on $\Gamma_\eps$, integration along the inward normal
segment gives
\[
 v(y-t\nu(y))
 =\int_0^t-\nabla v(y-s\nu(y))\cdot\nu(y)\,ds
 \ge c_0t\ge0.
\]
Thus $v\ge0$ also in the boundary collar.  Consequently the continuation
gives a nonnegative bounded classical solution for some
$\delta>\delta_\eps^*$, contradicting \eqref{eq:cb-delta-star}.
This proves $q_\eps\notin L^\infty(D_\eps)$.

\emph{Step 2: positivity above the boundary constant.}
Since $q_\delta\ge\delta$, its limit satisfies
$q_\eps\ge\delta_\eps>0$.  Let $\tau_\eps$ solve
\[
 -\Delta\tau_\eps=1\quad\hbox{in }D_\eps,
 \qquad \tau_\eps|_{\Gamma_\eps}=0.
\]
This is a positive classical function.  In distributions,
\[
 -\Delta(q_\eps-\delta_\eps)
 =\La(q_\eps+q_\eps^2)
 \ge\La\delta_\eps(1+\delta_\eps).
\]
The weak maximum principle in $H^1_0$ gives
\[
 q_\eps-\delta_\eps
 \ge\La\delta_\eps(1+\delta_\eps)\tau_\eps>0
 \quad\text{almost everywhere}.
\]
For completeness, subtract the right-hand side, test its distributional
superharmonicity by smooth nonnegative approximations of its negative
part, and pass in $H^1_0$.  Its negative part has zero Dirichlet energy.

\emph{Step 3: scaling and the Br\'ezis--V\'azquez characterization.}
Define $U_\eps$ by \eqref{eq:cb-scaled-solution} and set
\[
 \lambda_{\eps,0}=\frac{\La(1+\delta_\eps)}{\eps^2}.
\]
The exact change of variables in \eqref{eq:inversechange} gives
\[
 U_\eps\ge0,\qquad
 U_\eps\in H^1_0(\Omega_\eps)\cap L^3(\Omega_\eps)
                        \setminus L^\infty(\Omega_\eps),
 \qquad
 -\Delta U_\eps=\lambda_{\eps,0}f_{\delta_\eps}(U_\eps).
\]
This is a weak solution in the sense required by
\cite[Equation~(1.5) and Theorem~3.1]{BV}.  To see this explicitly,
$f_{\delta_\eps}(U_\eps)\in L^{3/2}(\Omega_\eps)\subset L^1(\Omega_\eps)$,
and scaling \eqref{eq:cb-global-weak} and integrating by parts using
$U_\eps\in H^1_0$ gives
\begin{equation}\label{eq:cb-BV-weak}
 \int_{\Omega_\eps}U_\eps(-\Delta\eta)
   =\lambda_{\eps,0}\int_{\Omega_\eps}
                    f_{\delta_\eps}(U_\eps)\eta
 \quad
 \left(\eta\in C^2(\overline{\Omega_\eps}),\
                        \eta|_{\partial\Omega_\eps}=0\right).
\end{equation}
In particular the additional weighted integrability
$f_{\delta_\eps}(U_\eps)\operatorname{dist}(\cdot,\partial\Omega_\eps)
\in L^1$ holds.

Semistability is preserved with its exact scaling factor.  In fact
\[
 \lambda_{\eps,0}f_{\delta_\eps}'(U_\eps(x))
 =\eps^{-2}\La\bigl[1+2q_\eps(x/\eps)\bigr].
\]
For $\psi\in C_c^1(\Omega_\eps)$ put
$\zeta(\xi)=\psi(\eps\xi)$.  With $N=m+1$,
\begin{equation}\label{eq:cb-scaled-stability}
 \begin{split}
 &\int_{\Omega_\eps}
     \bigl[|\nabla\psi|^2
       -\lambda_{\eps,0}f_{\delta_\eps}'(U_\eps)\psi^2\bigr]\\
 &\qquad=\eps^{N-2}\int_{D_\eps}
           \bigl[|\nabla\zeta|^2-\La(1+2q_\eps)\zeta^2\bigr]
 \ge0.
 \end{split}
\end{equation}
For the fixed positive number $\delta_\eps$, all hypotheses on the
nonlinearity are verified in \eqref{eq:nonlinearity-properties}.
We have therefore obtained a nonnegative, unbounded, semistable
$H^1_0$ weak solution.  Br\'ezis and V\'azquez
\cite[Theorem~3.1]{BV} imply simultaneously
\[
 \lambda_{\eps,0}=\lambda^*(\Omega_\eps,f_{\delta_\eps}),
 \qquad U_\eps=u^*(\Omega_\eps,f_{\delta_\eps}).
\]
The theorem follows.  The characterization is applied separately for
each fixed $\eps$; no uniform superlinearity as $\delta_\eps\to0$ is
required.
\end{proof}

\begin{proof}[Proof of \cref{thm:main}]
Take the dimension threshold and thickness in
\eqref{eq:finite-gamma-choice}--\eqref{eq:finite-epsilon-choice} and set
$C_m=\overline B_m$.  Theorem~\ref{thm:finite-singular} gives every assertion for
each fixed $\eps$ in this range.  For any $\eps_j\downarrow0$, choose
$\delta_j=\delta_{\eps_j}$.  Then
$0<\delta_j\le \overline B_m\eps_j^2\to0$, and the stated singular extremal
sequence follows.
\end{proof}

\begin{proof}[Proof of \cref{cor:brezis-op61}]
Fix $N=m+1\ge N_0$ and choose $\eps>0$ below both the thin-domain
threshold in Dancer's theorem stated in the Introduction and the number $\eps_0(m)$ in
\cref{thm:main}.  Dancer's theorem gives
\[
 u^*(\Omega_\eps,e^{(\cdot)})\in L^\infty(\Omega_\eps),
\]
whereas \cref{thm:main}, proved above through
\cref{thm:finite-singular}, supplies $\delta_\eps>0$ such that
\[
 u^*(\Omega_\eps,f_{\delta_\eps})\notin L^\infty(\Omega_\eps).
\]
The properties in \eqref{eq:nonlinearity-properties} show that
$f_{\delta_\eps}$ satisfies the positivity, monotonicity, convexity and
superlinearity hypotheses in Br\'ezis' formulation.  Thus the same smooth
strictly convex ellipsoid gives a negative answer to the Gelfand question
and an affirmative answer to the replacement-nonlinearity question in
Open Problem~6.1.
\end{proof}

\section*{Declaration of generative AI and AI-assisted technologies in the manuscript preparation process}

\noindent\textbf{Statement}: During the preparation of this manuscript, the authors used ChatGPT (OpenAI) to assist in verifying proof ideas, converting the proof arguments into preliminary drafts, and checking the manuscript for potential logical errors. The authors subsequently reviewed and revised all AI-assisted content and take full responsibility for the content of the manuscript.

\section*{Acknowledgments}

We thank Professor Xi-Nan Ma for valuable guidance and discussions. This work was supported by the National Key R\&D Program of China (Grant No. 2025YFA1017601).

\appendix
\section{Radial variation of constants and the normalized nonindicial inverse}\label{app:radial-voc}

This appendix records the radial ODE facts used in the Lyapunov--Schmidt reduction.  The weighted H\"older spaces below are the radial, $z$--independent versions of \eqref{eq:weighted-holder}.  Thus, for example,
\[
 \|u\|_{C^{k,\alpha}_\tau(\R^m)}
 :=\|u\|_{C^{k,\alpha}(\{r<2\})}
   +\sup_{R\ge1}R^\tau\|u(R\,\cdot)\|_{C^{k,\alpha}(\{1<|Y|<2\})}.
\]

\begin{lemma}[Radial variation of constants and axis regularity]\label{lem:radial-voc}
Let $m\ge3$ and
\[
 \mathcal Lu:=-u''-\frac{m-1}{r}u'-P(r)u,
\]
where $P$ is bounded and $C^{0,\alpha}$ near $r=0$.  Suppose that $Z$ is a nonvanishing regular radial solution of $\mathcal LZ=0$, normalized only by $Z(0)\ne0$.  Put, for any fixed $r_*>0$,
\[
 Z_2(r):=Z(r)\int_{r_*}^r\frac{\rho^{1-m}}{Z(\rho)^2}\dd\rho.
\]
Then $Z,Z_2$ are a fundamental pair on $(0,\infty)$ and
\[
 W(Z,Z_2)(r):=Z(r)Z_2'(r)-Z'(r)Z_2(r)=r^{1-m}.
\]
Moreover
\[
 Z(r)=Z(0)+O(r^2),
 \qquad
 Z_2(r)=-\frac{1}{(m-2)Z(0)}r^{2-m}\bigl(1+o(1)\bigr)
 \quad(r\downarrow0),
\]
so $Z_2$ is singular at the axis.  If $f$ is locally $C^{0,\alpha}$, then
\[
 u_0(r)
 :=Z(r)\int_0^r Z_2(s)f(s)s^{m-1}\dd s
   -Z_2(r)\int_0^r Z(s)f(s)s^{m-1}\dd s
\]
satisfies $\mathcal Lu_0=f$, extends as a regular radial $C^{2,\alpha}$ function through $r=0$, and obeys
\[
 u_0(0)=u_0'(0)=0.
\]
Consequently, for every $a\in\R$, the unique regular solution with prescribed center value $u(0)=a$ is
\[
 u(r)=u_0(r)+\frac{a}{Z(0)}Z(r).
\]
\end{lemma}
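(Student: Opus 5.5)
We argue with the reduction-of-order construction and then the standard variation of constants. Since $Z$ is regular and radial with $Z(0)\ne0$, the integral identity $Z'(r)=-r^{1-m}\int_0^r s^{m-1}P(s)Z(s)\dd s$ gives $Z'(r)=O(r)$, hence $Z(r)=Z(0)+O(r^2)$. In particular $Z\ne0$ on some interval $(0,r_0]$. The hypothesis that $Z$ is nonvanishing is used on all of $(0,\infty)$, so that the integrand $\rho^{1-m}/Z(\rho)^2$ is continuous there. Differentiating $Z_2=Z\,I$ with $I(r)=\int_{r_*}^r\rho^{1-m}Z^{-2}$ gives $Z_2'=Z'I+r^{1-m}/Z$. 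From this the Wronskian is immediately $ZZ_2'-Z'Z_2=r^{1-m}$. One checks $\mathcal LZ_2=0$ directly: $(r^{m-1}Z_2')'=(r^{m-1}Z')'I+Z'/Z\cdot$ terms cancel, or equivalently apply Abel's identity $(r^{m-1}W)'=0$. Nonvanishing Wronskian gives linear independence.

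For the axis asymptotics, $Z^{-2}=Z(0)^{-2}(1+O(\rho^2))$ near $0$. So $I(r)=-\frac{r^{2-m}}{(m-2)Z(0)^2}(1+o(1))$ as $r\downarrow0$, using $m\ge3$ so that $\rho^{1-m}$ is nonintegrable at $0$ and dominates the constant from $r_*$. Multiplying by $Z(r)\to Z(0)$ gives the stated expansion of $Z_2$.

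For $u_0$, differentiate under the integral. The boundary terms cancel, leaving $u_0'=Z'\int_0^rZ_2fs^{m-1}-Z_2'\int_0^rZfs^{m-1}$. A second differentiation, combined with the Wronskian identity, yields $\mathcal Lu_0=f$; the sign is fixed by $W=r^{1-m}$. Integrability at $0$ holds because $Z_2(s)s^{m-1}=O(s)$. The main point is regularity at the axis. We estimate $\int_0^rZ_2fs^{m-1}=O(r^2)$, so the first term is $O(r^2)$. In the second term, $Z_2(r)\int_0^rZfs^{m-1}=O(r^{2-m}\cdot r^m)=O(r^2)$. Similarly $u_0'=O(r)$. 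Hence $u_0(0)=u_0'(0)=0$ and $u_0$ is bounded near $0$.

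To upgrade to $C^{2,\alpha}$ as a function of $Y\in\R^m$, observe that $u_0$ is a bounded distributional solution of $-\Delta u_0-Pu_0=f$ in a punctured ball. Since $m\ge3$ the origin is a removable set for bounded solutions, because a point has zero capacity. So $u_0$ solves the equation in the full ball, and interior Schauder estimates with $P,f\in C^{0,\alpha}$ give $C^{2,\alpha}$. This removable-singularity step is where care is needed; the rest is formal.

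For uniqueness, a regular solution $u$ with $u(0)=a$ gives a difference $u-u_0-\frac{a}{Z(0)}Z$ that is a regular homogeneous solution vanishing at $0$. It is therefore a combination $c_1Z+c_2Z_2$. Boundedness at the axis forces $c_2=0$, since $Z_2$ blows up like $r^{2-m}$, and the zero value at $0$ then forces $c_1=0$.
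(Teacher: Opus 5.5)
Your proposal is correct and follows the paper's proof essentially step for step: reduction of order with $W(Z,Z_2)=r^{1-m}$, the axis expansion of $Z_2$, the $O(r^2)$ and $O(r)$ bounds for $u_0$ and $u_0'$, and eliminating the $Z_2$ component by boundedness to get uniqueness. The one difference is the $C^{2,\alpha}$ upgrade at the axis. The paper simply substitutes those bounds back into the radial ODE, whereas you treat $u_0$ as a bounded distributional solution in $\R^m$, remove the point singularity, and apply Schauder estimates. Both routes work, and yours states the justification more explicitly.
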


\begin{proof}
Differentiating the definition of $Z_2$ gives
\[
 Z_2'=Z'\int_{r_*}^r\frac{\rho^{1-m}}{Z(\rho)^2}\dd\rho
       +\frac{r^{1-m}}{Z},
\]
and hence $W(Z,Z_2)=r^{1-m}$.  Abel's identity then shows that $Z_2$ solves the same homogeneous equation; alternatively this follows by direct differentiation.  Since a regular radial solution satisfies $Z'(0)=0$ and $P$ is bounded, the equation gives $Z(r)=Z(0)+O(r^2)$.  Therefore
\[
 \int_{r_*}^r\frac{\rho^{1-m}}{Z(\rho)^2}\dd\rho
 =-\frac{1}{(m-2)Z(0)^2}r^{2-m}\bigl(1+o(1)\bigr),
\]
which yields the stated expansion of $Z_2$.

For the inhomogeneous equation, write it in standard form
\[
 u''+\frac{m-1}{r}u'+P(r)u=-f.
\]
The usual variation-of-constants formula and the identity
$W(Z,Z_2)=r^{1-m}$ give exactly the displayed expression for $u_0$.  Near the axis,
\[
 Z_2(s)f(s)s^{m-1}=O(s),
 \qquad
 Z(s)f(s)s^{m-1}=O(s^{m-1}),
\]
so the two integrals are respectively $O(r^2)$ and $O(r^m)$.  Since
$Z_2(r)=O(r^{2-m})$, both terms in $u_0$ are $O(r^2)$ and their derivatives are $O(r)$.  Substitution in the equation then gives the regular radial $C^{2,\alpha}$ extension and $u_0(0)=u_0'(0)=0$.

Every homogeneous solution is $c_1Z+c_2Z_2$.  Axis regularity forces $c_2=0$, while the condition $u(0)=a$ fixes $c_1=a/Z(0)$.  This proves both existence and uniqueness.
\end{proof}

\begin{lemma}[Euler tail variation estimate]\label{lem:euler-tail-voc}
Let
\[
 E_\lambda u:=-u''-\frac{m-1}{r}u'-\frac{\lambda}{r^2}u,
\]
and assume that the indicial equation
\[
 \beta(m-2-\beta)=\lambda
\]
has two distinct positive roots $0<\beta_-<\beta_+$.  Fix $0<\sigma<\beta_-$ and $R\ge1$.  For $g=O_{C^{0,\alpha}_{\rm sc}}(r^{-\sigma-2})$ on $\{r\ge R\}$, the unique solution of
\[
 E_\lambda u=g,
 \qquad u(R)=u'(R)=0,
\]
is given by
\[
 \begin{split}
 (\mathcal T_Rg)(r)
 =\frac{1}{\beta_+-\beta_-}\Bigg[
 &-r^{-\beta_-}\int_R^r s^{m-1-\beta_+}g(s)\dd s\\
 &+r^{-\beta_+}\int_R^r s^{m-1-\beta_-}g(s)\dd s
 \Bigg],
 \end{split}
\]
and satisfies
\[
 \|\mathcal T_Rg\|_{C^{2,\alpha}_\sigma(\{r\ge R\})}
 \le C\|g\|_{C^{0,\alpha}_{\sigma+2}(\{r\ge R\})},
\]
with $C$ independent of $R\ge1$ after the natural scaled identification of the tail annuli.
\end{lemma}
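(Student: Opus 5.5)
The plan is to verify the formula by direct variation of constants and then bound each integral on dyadic annuli. First I will check that $r^{-\beta_\pm}$ solve $E_\lambda u=0$; this is exactly the indicial equation $\beta(m-2-\beta)=\lambda$. Their Wronskian is
\[
W(r^{-\beta_-},r^{-\beta_+})=-(\beta_+-\beta_-)r^{-\beta_--\beta_+-1}=-(\beta_+-\beta_-)r^{1-m},
\]
since $\beta_-+\beta_+=m-2$. Writing the equation as $u''+\frac{m-1}{r}u'+\frac{\lambda}{r^2}u=-g$ and applying the standard formula with base point $R$ produces $\mathcal T_Rg$: the factor $s^{m-1}\cdot s^{-\beta_\mp}$ becomes $s^{m-1-\beta_\mp}$, and the signs come from the negative Wronskian. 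Both integrals vanish at $r=R$, and so does the derivative of the bracket there, because the boundary terms cancel. This gives $u(R)=u'(R)=0$, and uniqueness follows from the ODE uniqueness theorem.

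For the weighted bound, set $\|g\|:=\|g\|_{C^{0,\alpha}_{\sigma+2}}$, so that $|g(s)|\le\|g\|s^{-\sigma-2}$. Using $m-1-\beta_+=\beta_--1$ and $m-1-\beta_-=\beta_+-1$, the integrands are bounded by $\|g\|s^{\beta_--\sigma-3}$ and $\|g\|s^{\beta_+-\sigma-3}$. I need to control
\[
r^{-\beta_-}\int_R^r s^{\beta_--\sigma-3}\,ds\quad\text{and}\quad r^{-\beta_+}\int_R^r s^{\beta_+-\sigma-3}\,ds,
\]
and require each to be $O(r^{-\sigma})$. This is where $\sigma<\beta_-$ enters, and I expect it to be the main point to get right. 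The exponents $\beta_\pm-\sigma-3$ exceed $-1$ only when $\beta_\pm>\sigma+2$. In that case each integral is at most $Cr^{\beta_\pm-\sigma-2}$, so each term is $O(r^{-\sigma-2})$, which is even better than needed.

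If instead $\beta_-\le\sigma+2$, the integrals are bounded by $CR^{\beta_--\sigma-2}$, possibly with a logarithm. Since $r\ge R$ and $\beta_->\sigma$, the resulting term $r^{-\beta_-}R^{\beta_--\sigma-2}$ is at most $r^{-\sigma}R^{-2}\le r^{-\sigma}$. All the constants depend only on $\beta_\pm$ and $\sigma$ and not on $R$. The logarithm in the borderline case $\beta_-=\sigma+2$ is absorbed through $r^{-\beta_-}\log(r/R)\le C r^{-\sigma}$, which uses $\beta_->\sigma$ once more. This establishes $|u|\le C\|g\|r^{-\sigma}$.

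For the derivatives, differentiating the formula gives $r\,u'$ as the same combination with the factors $-\beta_\pm$ in front, because the contributions from differentiating the upper limits cancel. So $r|u'|$ satisfies the same bound. For the second derivative I will use the equation itself: $r^2u''=-(m-1)ru'-\lambda u-r^2g$, and each term on the right is $O(r^{-\sigma})$. To get the scaled Hölder seminorm on a dyadic annulus $R'<r<2R'$, I will rescale to $s=r/R'$. In that variable the rescaled function satisfies a uniformly elliptic ODE with smooth coefficients, and the interior or one-sided Schauder estimate on $[1,2]$ bounds the $C^{2,\alpha}$ norm by the sup bounds already obtained plus $R'^{\sigma+2}\|g\|_{C^{0,\alpha}}$ of the rescaled source. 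Near the endpoint $r=R$ I will rely on the zero Cauchy data, or equivalently on the explicit formula, instead of the interior estimate. Since the rescaled ODE coefficients do not depend on $R'$, the constant is independent of $R$.
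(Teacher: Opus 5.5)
Your overall route is the paper's: variation of constants with the pair $r^{-\beta_\pm}$ and Wronskian $-(\beta_+-\beta_-)r^{1-m}$, pointwise bounds on the two integrals, $ru'$ from the formula (the boundary terms do cancel), $u''$ from the equation, and rescaled Schauder estimates on dyadic annuli. The problem is the central pointwise estimate, which rests on an arithmetic slip.

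You used $\beta_-+\beta_+=m-2$ correctly for the Wronskian, but then wrote $m-1-\beta_+=\beta_--1$. In fact $m-1-\beta_+=\beta_-+1$ and $m-1-\beta_-=\beta_++1$. So the integrands are bounded by $\|g\|s^{\beta_\pm-\sigma-1}$, a factor $s^2$ larger than what you estimated, and your bounds do not control the actual terms of $\mathcal T_Rg$.

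Your intermediate conclusion that each term is $O(r^{-\sigma-2})$ when $\beta_\pm>\sigma+2$ is false. $E_\lambda$ is homogeneous of degree $-2$ under dilations and gains no extra decay. For example, take $g=r^{-\sigma-2}$ and set $c=\lambda-\sigma(m-2-\sigma)$, which is positive because $\sigma<\beta_-$. The solution with zero Cauchy data is $-c^{-1}r^{-\sigma}+Ar^{-\beta_-}+Br^{-\beta_+}$, which is exactly of size $r^{-\sigma}$. The case split on $\beta_-$ versus $\sigma+2$ and the logarithm are artifacts of the slip. Your account of where $\sigma<\beta_-$ enters is therefore also misplaced.

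The correct computation, which is the paper's, needs no cases. Since $\sigma<\beta_-<\beta_+$, both exponents $\beta_\pm-\sigma-1$ exceed $-1$, so
$\int_R^r s^{\beta_\pm-\sigma-1}\,ds\le r^{\beta_\pm-\sigma}/(\beta_\pm-\sigma)$.
Hence each term of $\mathcal T_Rg$ is bounded by $C\|g\|r^{-\sigma}$, with $C$ depending only on $\beta_\pm$ and $\sigma$, and so independent of $R$.

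This is exactly where $\sigma<\beta_-$ is needed. If $\sigma\ge\beta_-$, the first integral would stay bounded or grow only like $\log r$. That would leave a tail of size $r^{-\beta_-}$ or $r^{-\beta_-}\log r$, which is not $O(r^{-\sigma})$.

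With this correction, the rest of your plan goes through and coincides with the paper's argument: $ru'$ is the same pair of integrals with factors $-\beta_\pm$, $r^2u''$ is bounded from the equation, and the dyadic rescaling gives the H\"older part.
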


\begin{proof}
The homogeneous pair is $r^{-\beta_-},r^{-\beta_+}$ and its Wronskian equals
\[
 -\bigl(\beta_+-\beta_-\bigr)r^{1-m}.
\]
The displayed formula is therefore the variation-of-constants formula with both integration constants chosen so that the value and radial derivative vanish at $r=R$.  If
$|g(s)|\le M s^{-\sigma-2}$, then, using
$m-2=\beta_-+\beta_+$,
\[
 \int_R^r s^{m-1-\beta_+}|g(s)|\dd s
 \le CM\int_R^r s^{\beta_--\sigma-1}\dd s
 \le CM r^{\beta_--\sigma},
\]
and similarly
\[
 \int_R^r s^{m-1-\beta_-}|g(s)|\dd s
 \le CM r^{\beta_+-\sigma}.
\]
Because $\sigma<\beta_-$, both terms are $O(Mr^{-\sigma})$.  Differentiating the formula once gives the corresponding $O(Mr^{-\sigma-1})$ estimate; the equation then gives $u''=O(Mr^{-\sigma-2})$.  Finally, rescaling each dyadic annulus to $1<|Y|<2$ and applying the ordinary interior radial Schauder estimate upgrades these pointwise derivative bounds to the stated $C^{2,\alpha}_\sigma$ estimate.  The strict inequality $\sigma<\beta_-$ is exactly what prevents an indicial logarithm.
\end{proof}

\begin{lemma}[Augmented Euler tail estimate]\label{lem:euler-tail-augmented}
Under the assumptions of \cref{lem:euler-tail-voc}, fix
\[
 \beta_-<\tau<\beta_+.
\]
Let $g\in C^{0,\alpha}_{\tau+2}(\{r\ge R\})$ and prescribe Cauchy data
$u(R)=b_0$, $u'(R)=b_1$.  Then the unique solution of
\[
 E_\lambda u=g\qquad (r\ge R)
\]
belongs to
\[
 \spanop\{r^{-\beta_-}\}\oplus C^{2,\alpha}_\tau(\{r\ge R\}).
\]
More precisely, there is a unique coefficient $c_-(u)$ such that
\begin{equation}\label{eq:euler-augmented-tail-est}
 |c_-(u)|+
 \|u-c_-(u)r^{-\beta_-}\|_{C^{2,\alpha}_\tau(\{r\ge R\})}
 \le C_R\bigl(|b_0|+R|b_1|\bigr)
      +C\|g\|_{C^{0,\alpha}_{\tau+2}(\{r\ge R\})},
\end{equation}
where the forcing constant $C$ is independent of $R\ge1$ under the
natural scaled identification of the tail annuli.
\end{lemma}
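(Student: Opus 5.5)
We reduce the statement to \cref{lem:euler-tail-voc} by splitting off the homogeneous part. The homogeneous solutions of $E_\lambda u=0$ on $\{r\ge R\}$ are $r^{-\beta_-}$ and $r^{-\beta_+}$, with Wronskian $-(\beta_+-\beta_-)r^{1-m}\ne0$. Hence the solution with data $(b_0,b_1)$ is uniquely
\[
 u=A r^{-\beta_-}+B r^{-\beta_+}+\widetilde u,
\]
where $\widetilde u$ is a particular solution chosen in a weighted class and $A,B$ are fixed by matching Cauchy data at $R$. Uniqueness of the ODE solution is immediate; the work is in choosing $\widetilde u$ so that it has the weight $\tau$ rather than only $\sigma<\beta_-$.

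\textbf{Between-roots particular solution.} Since $\beta_-<\tau<\beta_+$, the formula \eqref{eq:euler-VOC-rig} with $R_0$ replaced by $R$ (tail integral for the slow root, integral from $R$ for the fast root) is the right choice:
\[
 \mathcal G_R g(r)=\frac1{\beta_+-\beta_-}\Bigl[r^{-\beta_-}\int_r^\infty s^{\beta_-+1}g\,ds+r^{-\beta_+}\int_R^r s^{\beta_++1}g\,ds\Bigr].
\]
If $|g|\le M s^{-\tau-2}$, the first integral converges because $\beta_--\tau<0$ and is $O(Mr^{\beta_--\tau})$. The second is $O(Mr^{\beta_+-\tau})$ because $\beta_+-\tau>0$. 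So $\mathcal G_Rg=O(Mr^{-\tau})$, with constants depending only on $\tau,\beta_\pm$ and not on $R$. Differentiating once gives $O(Mr^{-\tau-1})$, and the equation gives $u''=O(Mr^{-\tau-2})$. Rescaling each dyadic annulus to $1<|Y|<2$ and applying the one-dimensional interior Schauder estimate, as in the proof of \cref{lem:euler-tail-voc}, upgrades this to
\[
 \|\mathcal G_Rg\|_{C^{2,\alpha}_\tau(r\ge R)}\le C\|g\|_{C^{0,\alpha}_{\tau+2}}
\]
with $C$ independent of $R$. The only point needing care is the annuli touching $r=R$, where one uses the one-sided estimate on $[R,2R]$; the ODE structure makes this harmless. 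Note also that $\mathcal G_Rg(R)$ and $R(\mathcal G_Rg)'(R)$ are both bounded by $CR^{-\tau}\|g\|$.

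\textbf{Matching and the coefficient.} Next solve for $A,B$ from $A R^{-\beta_-}+BR^{-\beta_+}=b_0-\mathcal G_Rg(R)$ and the derivative equation. The matrix has determinant $-(\beta_+-\beta_-)R^{-(m-1)}$, so Cramer's rule gives
\[
 |A|R^{-\beta_-}+|B|R^{-\beta_+}\le C\bigl(|b_0|+R|b_1|+R^{-\tau}\|g\|\bigr).
\]
Set $c_-(u):=A$. The remainder $u-Ar^{-\beta_-}=Br^{-\beta_+}+\mathcal G_Rg$ lies in $C^{2,\alpha}_\tau$, since $\beta_+>\tau$, with norm at most $C_R(|b_0|+R|b_1|)+C\|g\|$. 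The factor $R^{\beta_-}$ in $|A|$ and the factor in $\|r^{-\beta_+}\|_{C^{2,\alpha}_\tau}$ are absorbed into $C_R$, and the $g$ contribution through $A,B$ is $C R^{\beta_\pm-\tau}\|r^{-\beta_\pm}\|\le C\|g\|$ uniformly in $R$. Uniqueness of $c_-$ follows because $r^{-\beta_-}\notin C^{2,\alpha}_\tau$ when $\tau>\beta_-$, so the decomposition of $\spanop\{r^{-\beta_-}\}\oplus C^{2,\alpha}_\tau$ is direct.

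The main obstacle is only bookkeeping: the forcing constant must be independent of $R$. For this, the scaled weights and the contributions of $\mathcal G_Rg$ to $A$ and $B$ must be tracked carefully so that no factor such as $R^{\beta_+-\beta_-}$ is created.
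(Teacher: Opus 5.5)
Your proposal is correct and is essentially the paper's argument. The paper writes $u=H_b+\mathcal T_Rg$ with the zero-Cauchy operator of \cref{lem:euler-tail-voc} and then splits $\mathcal T_Rg=c_gr^{-\beta_-}+\mathcal G_Rg$, where $\mathcal G_Rg$ is exactly your between-roots particular solution; hence your matched coefficient $A$ coincides with the paper's $c_-(u)=A_-+c_g$. In fact your $B$ receives no contribution from $g$ at all, because the Cauchy data of $\mathcal G_Rg$ at $R$ are those of $-c_gr^{-\beta_-}$, so the $R$-uniformity bookkeeping you single out as the main obstacle is simpler than you anticipated.
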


\begin{proof}
Let $H_b=A_-r^{-\beta_-}+A_+r^{-\beta_+}$ be the unique homogeneous
Euler solution with $H_b(R)=b_0$ and $H_b'(R)=b_1$.  The Wronskian
\[
 W(r^{-\beta_-},r^{-\beta_+})(R)
 =-(\beta_+-\beta_-)R^{1-m}\ne0
\]
shows that $A_\pm$ are uniquely determined and
\[
 |A_-|+\|A_+r^{-\beta_+}\|_{C^{2,\alpha}_\tau(r\ge R)}
 \le C_R\bigl(|b_0|+R|b_1|\bigr).
\]
Write the solution as
\[
 u=H_b+\mathcal T_Rg,
\]
where $\mathcal T_R$ is the zero-Cauchy variation-of-constants operator
from \cref{lem:euler-tail-voc}.  Since $\tau>\beta_-$, the first
integral in that formula converges at infinity.  Set
\[
 c_g:=-\frac1{\beta_+-\beta_-}
 \int_R^\infty s^{m-1-\beta_+}g(s)\dd s.
\]
Using $m-2=\beta_-+\beta_+$, one obtains the exact decomposition
\[
 \mathcal T_Rg-c_gr^{-\beta_-}
 =\frac1{\beta_+-\beta_-}\left[
 r^{-\beta_-}\int_r^\infty s^{m-1-\beta_+}g(s)\dd s
 +r^{-\beta_+}\int_R^r s^{m-1-\beta_-}g(s)\dd s
 \right].
\]
If $|g(s)|\le Ms^{-\tau-2}$, then
\[
 |c_g|\le CM R^{\beta_--\tau}\le CM,
\]
and the two terms on the right are respectively bounded by
$CMr^{-\tau}$.  Differentiating once and using the equation for the
second derivative gives the corresponding weighted derivative bounds;
rescaled Schauder estimates on dyadic annuli give
\[
 |c_g|+
 \|\mathcal T_Rg-c_gr^{-\beta_-}\|_{C^{2,\alpha}_\tau(r\ge R)}
 \le C\|g\|_{C^{0,\alpha}_{\tau+2}(r\ge R)}.
\]
Taking $c_-(u)=A_-+c_g$ proves \eqref{eq:euler-augmented-tail-est}.
Uniqueness follows from $r^{-\beta_-}\notin C^{2,\alpha}_\tau$ and
$r^{-\beta_+}\in C^{2,\alpha}_\tau$.
\end{proof}

\begin{proposition}[Normalized inverse for a two-root Euler tail]\label{prop:normalized-voc}
Let $m\ge3$ and
\[
 \mathcal Lu=-u''-\frac{m-1}{r}u'-P(r)u
\]
be a radial operator whose coefficient is regular at the axis.  Assume:
\begin{enumerate}[label=\textup{(\roman*)},leftmargin=2.3em]
\item the regular homogeneous kernel is one-dimensional and is spanned by a nonvanishing solution $Z$ with $Z(0)\ne0$;
\item for some $\lambda>0$ and $\delta>0$,
\[
 P(r)=\frac{\lambda}{r^2}+q(r),
 \qquad
 q=O_{C^{0,\alpha}_{\rm sc}}(r^{-2-\delta});
\]
\item the indicial equation $\beta(m-2-\beta)=\lambda$ has distinct positive roots $0<\beta_-<\beta_+$.
\end{enumerate}
Then, for every $0<\sigma<\beta_-$,
\[
 u\longmapsto(\mathcal Lu,u(0)):
 C^{2,\alpha}_{\sigma,\mathrm{rad}}(\R^m)
 \longrightarrow
 C^{0,\alpha}_{\sigma+2,\mathrm{rad}}(\R^m)\times\R
\]
is an isomorphism.  In particular,
\[
 \|u\|_{C^{2,\alpha}_\sigma}
 \le C\bigl(\|\mathcal Lu\|_{C^{0,\alpha}_{\sigma+2}}+|u(0)|\bigr).
\]
\end{proposition}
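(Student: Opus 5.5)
The plan is to build the inverse explicitly from the variation-of-constants machinery of \cref{lem:radial-voc} on a core ball and the augmented Euler tail estimate, and then obtain the weighted bound from a kernel/onto argument together with a direct estimate on the core. Injectivity comes first. If $u\in C^{2,\alpha}_{\sigma,\mathrm{rad}}$ satisfies $\mathcal Lu=0$ and $u(0)=0$, then $u$ is a regular radial homogeneous solution. Hypothesis (i) gives $u=cZ$, and $u(0)=0$ with $Z(0)\ne0$ forces $c=0$. Hence the map $u\mapsto(\mathcal Lu,u(0))$ is injective.

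For surjectivity, fix $(f,a)\in C^{0,\alpha}_{\sigma+2}\times\R$ and let $u$ be the unique regular solution with $u(0)=a$ given by \cref{lem:radial-voc}, namely $u=u_0+\frac{a}{Z(0)}Z$. It remains to show that $u$ decays like $r^{-\sigma}$, and here the choice of $\sigma<\beta_-$ matters. On the exterior $r\ge R$, with $R$ large, rewrite the equation as
\[
 E_\lambda u=f+q u .
\]
Because $q=O(r^{-2-\delta})$, the term $qu$ is a small perturbation. Every solution of the homogeneous Euler equation is a combination of $r^{-\beta_-}$ and $r^{-\beta_+}$, and both lie in weight $\sigma$ since $\sigma<\beta_-<\beta_+$. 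So one expects every solution of $\mathcal Lu=f$ to decay at rate at least $r^{-\sigma}$. The concrete steps are these. First, write $u=H_b+\mathcal T_R(f+qu)$ on $[R,\infty)$, using \cref{lem:euler-tail-voc} with the Cauchy data $b=(u(R),u'(R))$. Next, set up a fixed-point/absorption argument in $C^{2,\alpha}_\sigma(r\ge R)$: the operator $v\mapsto \mathcal T_R(qv)$ has norm at most $CR^{-\delta}$, because $\|qv\|_{C^{0,\alpha}_{\sigma+2}}\le CR^{-\delta}\|v\|_{C^{2,\alpha}_\sigma}$ on $r\ge R$, and the constant in \cref{lem:euler-tail-voc} is uniform in $R$. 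For $R$ large, the Neumann series produces a solution in weight $\sigma$ with the prescribed Cauchy data. ODE uniqueness then identifies this solution with $u$, which gives the bound
\[
 \|u\|_{C^{2,\alpha}_\sigma(r\ge R)}\le C_R\bigl(|u(R)|+R|u'(R)|\bigr)+C\|f\|_{C^{0,\alpha}_{\sigma+2}} .
\]

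On the core $r\le 2R$, bound $u$ by the explicit formula of \cref{lem:radial-voc}: $|u|+|u'|\le C_R(\|f\|_{L^\infty(r<2R)}+|a|)$, and local radial Schauder estimates (in the Cartesian variables near the axis) upgrade this to $C^{2,\alpha}$. Combining the core and exterior estimates gives
\[
 \|u\|_{C^{2,\alpha}_\sigma}\le C\bigl(\|f\|_{C^{0,\alpha}_{\sigma+2}}+|a|\bigr),
\]
so the map is a bounded bijection with bounded inverse. This also yields the displayed estimate, since $a=u(0)$ and $f=\mathcal Lu$.

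The main obstacle is the exterior step, and specifically checking that the constant in \cref{lem:euler-tail-voc} is uniform in $R$ and that the scaled Hölder norm of $qv$ really gains $R^{-\delta}$, so that the perturbation is absorbed without a circular dependence on $R$. If uniformity in $R$ turns out to be delicate, I would instead fix a single large $R$ once and run the same absorption only on $[R',\infty)$ for a larger $R'$. The finite interval $[R,R']$ would then be absorbed into the core constant using continuous dependence of the ODE on its data.
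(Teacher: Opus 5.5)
Your proposal is correct and follows essentially the same route as the paper: injectivity from hypothesis (i), the core handled by \cref{lem:radial-voc}, and the tail handled by the contraction $u=H_b+\mathcal T_R(f+qu)$. That contraction closes because $q$ gains $R^{-\delta}$ and the constant of $\mathcal T_R$ is uniform in $R$, so your fallback with a second radius $R'$ is not needed. The only difference is cosmetic: you take the global regular solution first and identify its tail with the fixed point by ODE uniqueness, whereas the paper matches the core solution to the tail fixed point through the Cauchy data at $r=R$.
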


\begin{proof}
Fix a large radius $R$ to be chosen below.  On the fixed core $0\le r\le R$, \cref{lem:radial-voc} gives, for every $f$ and $a$, a unique regular solution of
\[
 \mathcal Lu=f,\qquad u(0)=a,
\]
and ordinary ODE/Schauder estimates give
\[
 |u(R)|+R|u'(R)|
 \le C_R\bigl(\|f\|_{C^{0,\alpha}(\{r\le R\})}+|a|\bigr).
\]
Set $b_0=u(R)$ and $b_1=u'(R)$.  Let $H_b=A_-r^{-\beta_-}+A_+r^{-\beta_+}$ be the unique Euler homogeneous solution with
$H_b(R)=b_0$ and $H_b'(R)=b_1$.  Since $\sigma<\beta_-$,
\[
 \|H_b\|_{C^{2,\alpha}_\sigma(\{r\ge R\})}
 \le C_R(|b_0|+|b_1|).
\]

On the tail the equation is
\[
 E_\lambda u=f+qu.
\]
Because $\mathcal T_R$ in \cref{lem:euler-tail-voc} has zero Cauchy data at $r=R$, a solution with the prescribed data $(b_0,b_1)$ is a fixed point of
\[
 u=H_b+\mathcal T_R(f+qu).
\]
Multiplication by $q$ satisfies
\[
 \|qu\|_{C^{0,\alpha}_{\sigma+2}(\{r\ge R\})}
 \le CR^{-\delta}\|u\|_{C^{2,\alpha}_\sigma(\{r\ge R\})}.
\]
Hence, after choosing $R$ so large that the product of this constant with the norm of $\mathcal T_R$ is at most $1/2$, the right-hand side is a contraction on the tail weighted space.  Therefore there is a unique tail solution and
\[
 \|u\|_{C^{2,\alpha}_\sigma(\{r\ge R\})}
 \le C_R(|b_0|+|b_1|)
   +C\|f\|_{C^{0,\alpha}_{\sigma+2}(\{r\ge R\})}.
\]
Combining this with the fixed-core estimate proves the global bound and surjectivity.

For injectivity, suppose $\mathcal Lu=0$ and $u(0)=0$.  By assumption (i), every regular homogeneous solution is $cZ$; since $Z(0)\ne0$, the center condition gives $c=0$.  Thus the kernel is trivial, completing the proof.
\end{proof}

\begin{remark}\label{rem:LE-voc-application}
For the Lane--Emden linearization used in the Lyapunov--Schmidt reduction above,
\[
 P(r)=2c_0V(r)=4(m-4)r^{-2}+O(r^{-\kappa}),
 \qquad \kappa\ge\beta_->2,
\]
so assumption (ii) holds with $\lambda=4(m-4)$ and $\delta=\kappa-2>0$.  The regular homogeneous kernel is generated by the positive scaling field $Z=2V+rV'$, with $Z(0)=2$.  Hence \cref{prop:normalized-voc} applies for every $2<\sigma<\beta_-$, exactly as used in the Lyapunov--Schmidt argument.
\end{remark}

\section{Uniform massive product resolvents on the cylinder}
\label{app:massive-product-resolvent}

The estimates in this appendix are a concrete product-cylinder version of
standard weighted elliptic theory on noncompact manifolds; compare
Lockhart--McOwen~\cite{LM}.  We include the proof because the exact
polynomial H\"older weights, the fibrewise projection $P^\perp$, and the
uniform limit $\rho\downarrow0$ are specific to the present argument.

We shall repeatedly use the elementary equivalence between the scaled
annular norm in \cref{def:weighted-spaces} and derivative-by-derivative
weighted norms.  For every fixed integer $k\ge0$,
\begin{equation}\label{eq:app-weighted-norm-equivalence}
 \begin{split}
 \|u\|_{C^{k,\alpha}_\gamma(\Cyl)}\asymp{}&
 \|u\|_{C^{k,\alpha}(\{r<2\}\times(-1,1))}\\
 &+\sum_{|\nu|+j\le k}
   \sup_{R\ge1}R^{\gamma+|\nu|}
   \|\partial_X^\nu\partial_z^j u
     \|_{C^{0,\alpha}_{\rm sc}(A_R)} .
 \end{split}
\end{equation}
The constants depend only on $k,\alpha$ and the fixed reference annulus.
Indeed, after the change of variables $X=RY$, this is just the standard
finite-order equivalence of H\"older norms on the fixed annulus
$\mathcal A$.  Thus commuting $Y$-derivatives through the product
resolvent below gives exactly the scaled annular estimates used in the
main text; no additional unscaled derivative gain is being assumed.

\begin{proposition}[Uniform massive product resolvent]
\label{prop:massive-product-resolvent}
Let $H_z=-\partial_{zz}-\La$ on the even Dirichlet subspace of
$(-1,1)$ orthogonal to $\ph$.  Fix $\gamma>0$, $\rho_0>0$, and an integer
$k\ge2$.

\begin{enumerate}[label=(\roman*)]
\item For $F\in\mathcal Y_\gamma^\perp$, the problem
\[
 (-\Delta_Y+H_z)w=F,\qquad w(Y,\pm1)=0,\qquad P_1w=0,
\]
has a unique solution in the graph domain $\mathcal G_\gamma^\perp$ and
\[
 \|w\|_{\mathcal G_\gamma^\perp}\le C\|F\|_{C^{0,\alpha}_\gamma}.
\]
Its graph norm controls the ordinary $C^{2,\alpha}$ norm on smaller fixed
cylinders and
\[
 \sup_{r\ge0}(1+r)^\gamma
 \left(\|w(r,\cdot)\|_{C^1([-1,1])}
       +\sup_{|z|<1}\frac{|w(r,z)|}{\ph(z)}\right)
 \le C\|w\|_{\mathcal G_\gamma^\perp}.
\]
If $F\in P^\perp C^{2,\alpha}_\gamma$, then
\[
 \|w\|_{C^{2,\alpha}_\gamma}\le C\|F\|_{C^{2,\alpha}_\gamma}.
\]

\item For every $0\le\rho\le\rho_0$ and every radial-even
$f\in\mathcal F_\gamma^k$ with $P_1f=0$, the problem
\[
 (H_z-\rho\Delta_Y)w=f,\qquad w(Y,\pm1)=0,\qquad P_1w=0,
\]
has a unique solution in $\mathcal W_{\gamma,\perp}^{(k)}$ and
\[
 \|w\|_{\mathcal W_\gamma^{(k)}}
 \le C_k\|f\|_{\mathcal F_\gamma^k},
\]
with $C_k$ independent of $\rho\in[0,\rho_0]$.
\end{enumerate}
\end{proposition}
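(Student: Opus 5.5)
The plan is to diagonalize in the transverse variable and reduce both parts to scalar radial ODEs with a uniform positive mass. Expand $F=\sum_{k\ge1}f_k(r)\phi_k(z)$ in the even Dirichlet eigenbasis of $H_z$ on $\ph^\perp$, with $\mu_k\ge8\La$. For part (i), each mode solves $-u_k''-\frac{m-1}{r}u_k'+\mu_ku_k=f_k$. We use the regular--decaying Green kernel \eqref{eq:massive-green-rig}, built from $r^{-\nu_0}I_{\nu_0}(\sqrt{\mu_k}r)$ and $r^{-\nu_0}K_{\nu_0}(\sqrt{\mu_k}r)$. The maximum principle gives domination of every kernel by $G_*=G_{8\La,0}$, and the weighted moment bound \eqref{eq:massive-weight-rig} holds for every $\gamma>0$. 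Minkowski and Parseval then give $\sup_r(1+r)^\gamma\|w(r,\cdot)\|_{L^2_z}\le C\|F\|_{C^0_\gamma}$, exactly as in \cref{clm:massive-transverse-inverse}. Uniqueness follows from the kernel triviality already proved for the regular--decaying realization.

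Next we upgrade the $L^2_z$ bound to H\"older control. On each unit strip $\mathcal U_r$, with $r$ large, the equation is uniformly elliptic in $(r,z)$. We apply interior and flat-boundary $W^{2,2}$ estimates, then Schauder estimates (again as in Step 3 of \cref{lem:universal-tail}). This gives unit-scale $C^{2,\alpha}$ bounds with weight $(1+r)^{-\gamma}$. These bounds imply the $C^{0,\alpha}_\gamma$ bound (unit H\"older control dominates the scaled seminorm on $A_R$, since $d_R\le$ unit distance), the $C^1_z$ bound, and the quotient bound by $\ph$, the last via $w(r,\pm1)=0$ and $\ph\asymp1-|z|$. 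Near the axis we use Cartesian Schauder estimates in $(X,z)$. This proves the graph-domain isomorphism. For the same-order estimate, $\partial_Y^\nu$ commutes with $-\Delta_Y+H_z$ (the equation is translation invariant in $Y$). So applying the graph estimate to $\partial_Y^\nu w$, with source $\partial_Y^\nu F\in C^{0,\alpha}_{\gamma+|\nu|}$, gives the weighted $X$-derivatives. The $z$-derivatives come from $\partial_{zz}w=-\Delta_Yw-\La w-F$, and the norm equivalence \eqref{eq:app-weighted-norm-equivalence} assembles these into $C^{2,\alpha}_\gamma$. To keep the derivatives radial-compatible we work with Cartesian derivatives directly; no radial commutator terms appear.

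Part (ii) is the main obstacle: the constants must be uniform as $\rho\downarrow0$, where $Y$-ellipticity degenerates. First, for $\rho>0$ we rescale $Y=\sqrt\rho\,X$. Then $(H_z-\rho\Delta_Y)w=f$ becomes part (i) in the variable $X$. The weighted $L^2_z$ bound survives the rescaling uniformly because the kernel bound is dominated by a heat-type kernel of width $\sqrt\rho\le\sqrt{\rho_0}$. Concretely, we write the mode solution as $w_k=(\mu_k-\rho\Delta_Y)^{-1}f_k=\int_0^\infty e^{-t\mu_k}e^{t\rho\Delta_Y}f_k\,dt$. The heat semigroup $e^{t\rho\Delta_Y}$ is a positive contraction that preserves polynomial weights with constants uniform for $t\rho$ bounded, while for large $t$ the factor $e^{-8\La t}$ absorbs the polynomial loss. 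This gives $\|w_k\|_{C^0_\gamma}\le C\mu_k^{-1}\|f_k\|_{C^0_\gamma}$ uniformly in $\rho$. Since $e^{t\rho\Delta_Y}$ commutes with $\partial_Y^\nu$ and is bounded on $C^{0,\alpha}$, the same holds for all $\partial_Y^\nu f$ with weight $\gamma+|\nu|$, and summing the modes via Parseval gives weighted bounds on $\partial_Y^\nu w$, $|\nu|\le k$, without any use of $Y$-ellipticity. The $z$-derivatives come from the fibre equation: at each fixed $Y$, $H_z w=f+\rho\Delta_Yw$, and the right-hand side is already controlled in $C^{0,\alpha}_\gamma$ by the $|\nu|\le k$ bounds. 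One-dimensional Dirichlet Schauder estimates for $H_z$, which has a spectral gap on $\ph^\perp$, then bound $\partial_z\partial_Y^\nu w$ and $\partial_{zz}\partial_Y^\nu w$ with the stated losses of one and two $Y$-orders. The delicate point is H\"older regularity in $z$ jointly with $Y$; we handle it by treating $z$-H\"older bounds fibrewise and $Y$-H\"older bounds through the commuting semigroup. Uniqueness follows from injectivity of each mode operator on decaying functions, using the mass $\mu_k>0$.
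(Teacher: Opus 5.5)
There is a genuine gap, and it sits in a step you treat as routine. In part (i) you claim that the unit-strip $C^{2,\alpha}$ bounds with weight $(1+r)^{-\gamma}$ give $\|w\|_{C^{0,\alpha}_\gamma}$ because ``$d_R\le$ unit distance''. That inequality points the other way. Since $d_R(P,P')\le|P-P'|$, one has $|w(P)-w(P')|/d_R(P,P')^\alpha\ge|w(P)-w(P')|/|P-P'|^\alpha$, so the scaled seminorm on $A_R$ is the \emph{stronger} one. A counterexample is $r^{-\gamma}\sin(r)\,\psi(z)$, with $\psi\perp\ph$ a fixed smooth Dirichlet profile. All its unit-strip norms are $O(r^{-\gamma})$, yet its scaled seminorm on $A_R$ is of order $R^{\alpha-\gamma}$.

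Consequently, what you prove gives neither the first term of the graph norm nor the scaled H\"older seminorms that \eqref{eq:app-weighted-norm-equivalence} requires in the same-order estimate. You must use that $F$ varies slowly at scale $R$ in $X$. The natural route is translation invariance in $Y$: estimate $w(\cdot+h,\cdot)-w$, whose source is $F(\cdot+h,\cdot)-F$. However, the inverse you built is the radial Bessel kernel of \cref{clm:massive-transverse-inverse}, and its uniqueness argument, and both act only on radial functions. So they cannot be applied to $Y$-translates, nor to the Cartesian derivatives $\partial_Y^\nu w$ as you propose; you would need the translation-invariant kernel of $(\mu-\Delta_Y)^{-1}$ on all of $\R^m$.

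The paper avoids this by using, already in part (i) with $\rho=1$, the product formula $\mathcal R_\rho f=\int_0^\infty e^{\rho t\Delta_Y}e^{-tH_z}P^\perp f\,dt$. Gaussian convolution commutes with $Y$-translations and preserves the scaled H\"older modulus up to the polynomial loss $(1+|a|)^{\gamma+\alpha}$. The transverse factor $e^{-8\La t}$ absorbs this loss.

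In part (ii) your handling of the $Y$-variables is essentially the paper's, but the $z$-regularity does not close. Parseval returns only fibrewise $L^2_z$ information. Each use of your fibre equation $H_z\partial_Y^\nu w=\partial_Y^\nu f+\rho\Delta_Y\partial_Y^\nu w$ spends two $Y$-derivatives, so it cannot reach the top-order quantities in $\mathcal W^{(k)}_\gamma$:
\begin{itemize}
\item $\partial_z\partial_Y^\nu w\in C^{0,\alpha}_{\gamma+|\nu|}$ for $|\nu|=k-1$;
\item the $z$-H\"older part of $\partial_Y^\nu w\in C^{0,\alpha}_{\gamma+|\nu|}$ for $|\nu|\in\{k-1,k\}$.
\end{itemize}
The second item is itself needed to place $\rho\Delta_Y\partial_Y^\nu w$ in $C^{0,\alpha}$ when you read off $\partial_{zz}\partial_Y^\nu w$ for $|\nu|=k-2$. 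Obtaining either through the fibre equation would require $k+1$ or $k+2$ derivatives of $w$, which are not available uniformly as $\rho\downarrow0$.

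The missing ingredient is the short-time smoothing of the projected transverse Dirichlet semigroup on the H\"older scale:
\[
\|e^{-tH_z}P^\perp g\|_{C^{0,\alpha}_z}\lesssim(1+t^{-\alpha/2})\|g\|_{L^\infty_z},
\qquad
\|\partial_z e^{-tH_z}P^\perp g\|_{C^{0,\alpha}_z}\lesssim t^{-(1+\alpha)/2}\|g\|_{L^\infty_z}.
\]
Both are integrable at $t=0$ and decay exponentially for $t\ge1$. If you keep $e^{-tH_z}$ as an operator on $C^{0,\alpha}_z$ inside the time integral, rather than expanding in modes, you get first-order $z$-control at every $Y$-order with no loss. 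Only $\partial_{zz}\partial_Y^\nu w$ is then read off from the equation, which is exactly where the two-order loss built into $\mathcal W^{(k)}_\gamma$ comes from.
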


\begin{proof}
On $\ph^\perp$ the even transverse spectrum of $H_z$ is bounded below by
$8\La>0$.  For $\rho\ge0$ define
\begin{equation}\label{eq:app-product-resolvent}
 \mathcal R_\rho f
 :=\int_0^\infty e^{\rho t\Delta_Y}e^{-tH_z}P^\perp f\,dt.
\end{equation}
The projected one-dimensional Dirichlet heat kernel decays exponentially
for $t\ge1$.  For $0<t\le1$ its $z$-H\"older norm is
$O(1+t^{-\alpha/2})$, and one $z$ derivative is
$O(t^{-(1+\alpha)/2})$.  These bounds follow directly from the sine/cosine
expansion of the Dirichlet kernel.

Polynomial weights are stable under translation with polynomial loss:
for every $a\in\R^m$,
\[
 \|F(\,\cdot-a,\cdot)\|_{C^{0,\alpha}_\gamma}
 \le C(1+|a|)^{\gamma+\alpha}\|F\|_{C^{0,\alpha}_\gamma}.
\]
After averaging against the Gaussian kernel with variance $\rho t$,
$0\le\rho\le\rho_0$, this yields at most polynomial growth in $t$.
The exponential transverse factor therefore makes
\eqref{eq:app-product-resolvent} integrable uniformly in $\rho$.  It
follows that
\[
 \|\mathcal R_\rho f\|_{C^{0,\alpha}_\gamma}
 \le C\|f\|_{C^{0,\alpha}_\gamma}.
\]
The representation has zero Dirichlet trace and, when $P_1f=0$, solves
the unprojected equation.

For part (i), take $\rho=1$.  Local interior and flat-boundary Schauder
estimates give the compact-cylinder $C^{2,\alpha}$ control and closedness
of the graph.  On unit strips centered at large $r$, the same boundary
estimate gives
\[
 \|w\|_{C^{2,\alpha}(\mathcal U_{r,1/2})}
 \le C(1+r)^{-\gamma}\|w\|_{\mathcal G_\gamma^\perp}.
\]
Since $w(r,\pm1)=0$ and $\ph(z)\asymp1-|z|$, this implies the displayed
boundary-factor estimate.  If $F$ has two weighted $Y$ derivatives,
commute them through \eqref{eq:app-product-resolvent}; one $z$ derivative
uses the integrable short-time bound above, while
$w_{zz}=-\La w-\Delta_Yw-F$ supplies the remaining second derivative.

For part (ii), commute $\partial_Y^\nu$, $|\nu|\le k$, through
\eqref{eq:app-product-resolvent}.  The shifted weight
$\gamma+|\nu|$ absorbs the polynomial Gaussian moments.  The first
$z$-derivative terms are integrable as above, and for $|\nu|\le k-2$ use
\[
 \partial_{zz}\partial_Y^\nu w
 =-\La\partial_Y^\nu w
  -\rho\Delta_Y\partial_Y^\nu w-\partial_Y^\nu f.
\]
This proves the uniform mixed estimate.

For uniqueness, decompose into transverse eigenfunctions.  If $\rho>0$,
each homogeneous coefficient solves $(-\rho\Delta_Y+\mu_j)u_j=0$ with
$\mu_j>0$ and polynomial decay; the maximum principle on expanding balls
gives $u_j=0$.  For $\rho=0$, uniqueness is the fibrewise invertibility
of $H_z$ on $\ph^\perp$.
\end{proof}

\section{Parameter regularity in the weighted Lyapunov--Schmidt reduction}
\label{app:LS-parameter}

\begin{proposition}[Parameter regularity of the small fixed point]
\label{prop:LS-parameter-regularity}
Let $(\xi_\rho,\omega_\rho)$ be the small fixed point determined, for
$0\le\rho\le\rho_0$, by \eqref{eq:LS-omega-fixed-point} and
\eqref{eq:LS-xi-equation}.  After reducing $\rho_0$ if necessary,
\[
 \|\xi_\rho\|_{C^{6,\alpha}_\sigma}
 +\|\omega_\rho\|_{\mathcal W_\sigma^{(6)}}\le C\rho,
\]
and $\rho\mapsto(\xi_\rho,\omega_\rho)$ is $C^1$ into
$C^{2,\alpha}_\sigma\times\mathcal W_\sigma$, with uniformly bounded
derivative.  The weighted affine tail is differentiable in the same
sense.
\end{proposition}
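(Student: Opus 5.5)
The plan has two parts: first the weighted bound of order $\rho$ in six derivatives, then differentiability in $\rho$ via difference quotients. The only delicate point is $\rho=0$, where $H_z-\rho\Delta_Y$ loses ellipticity in $Y$. Throughout, derivatives in $Y$ are generated by commuting them through the product resolvent of \cref{prop:massive-product-resolvent}(ii). They are never generated by inverting $\rho\Delta_Y$.

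\emph{Step 1: higher-order bounds.} I would repeat the two contractions of \cref{prop:weighted-LS-construction} with $C^{2,\alpha}_\sigma$ and $\mathcal W_\sigma$ replaced by $C^{6,\alpha}_\sigma$ and $\mathcal W_\sigma^{(6)}$.

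For the transverse map \eqref{eq:LS-omega-fixed-point} there is a derivative mismatch. The source $\rho\Delta_Y(w_0+\omega)$ costs two $Y$-derivatives, but \cref{clm:transverse-rho-resolvent} with $k=6$ returns only six. To close this, I would carry a ladder of norms $\mathcal W^{(k)}_\sigma$ for $k\le 8$ (or more) and take $w_0\in\mathcal W^{(k)}_\sigma$ for every $k$. This holds because $w_0=\La H_z^{-1}P^\perp(V^2\ph^2)$ is built from the smooth Lane--Emden profile, whose $\partial_Y^\nu V$ decays like $r^{-2-|\nu|}$ by \cref{lem:LE}. The quadratic and mixed terms are handled by weighted algebra estimates. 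Since $\omega$ enters only through $\rho\cdot(\text{product})$, the gain comes from the factor $\rho$ and not from derivatives.

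A cleaner alternative: write $\omega_\rho=\rho\,\mathcal R_\rho\Delta_Y w_0+\widetilde\omega$ and note that $\rho\Delta_Y w_0\in\mathcal F^k_\sigma$ has size $\rho$ in every $k$. This gives $\|\omega_\rho\|_{\mathcal W^{(6)}_\sigma}\le C(\rho+\|\xi\|_{C^{6,\alpha}_\sigma})$. On the scalar side, \cref{prop:normalized-voc} combined with a standard radial Schauder bootstrap on dyadic annuli upgrades the normalized inverse $\mathcal S$ to $C^{k,\alpha}_{\sigma+2}\to C^{k+2,\alpha}_\sigma$. The scalar right-hand side of \eqref{eq:LS-xi-equation} lies in $C^{6,\alpha}_{\sigma+2}$ with norm $O(\rho+\|\xi\|^2)$, so $\|\xi_\rho\|_{C^{6,\alpha}_\sigma}\le C\rho$ follows by absorption.

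\emph{Step 2: differentiability.} For $\rho_1\neq\rho_2$, subtract the two fixed-point equations. The only new term is $(\rho_1-\rho_2)\Delta_Y(w_0+\omega_{\rho_2})$, together with the resolvent difference $\mathcal R_{\rho_1}-\mathcal R_{\rho_2}=(\rho_1-\rho_2)\mathcal R_{\rho_1}\Delta_Y\mathcal R_{\rho_2}$. Each costs two $Y$-derivatives, which Step~1 supplies. This yields the Lipschitz bound
\[
\|(\xi,\omega)_{\rho_1}-(\xi,\omega)_{\rho_2}\|_{C^{4,\alpha}_\sigma\times\mathcal W^{(4)}_\sigma}\le C|\rho_1-\rho_2|.
\]
Next define the candidate derivative $(\dot\xi_\rho,\dot\omega_\rho)$ as the solution of the formally differentiated linear system. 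This system is again a contraction, since its linear part is the same as in Step~1, and it is solvable in $C^{4,\alpha}_\sigma\times\mathcal W^{(4)}_\sigma$ with uniform bounds. Applying the Lipschitz estimate once more to the remainder
\[
\text{(difference quotient)}-(\dot\xi,\dot\omega)
\]
shows that this remainder is $O(|\rho_1-\rho_2|)$ in the two-derivative spaces. So the map is differentiable with a uniformly bounded derivative. Continuity of the derivative follows from the same estimate applied to the linear system, using the Lipschitz dependence of its coefficients on $\rho$. The tail statement is immediate because $D_m$ does not depend on $\rho$.

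\emph{Main obstacle.} The main obstacle is the derivative loss at $\rho=0$: each $\rho$-derivative consumes two $Y$-derivatives, and only the commutation estimates of the product resolvent recover them. I expect the most care to be needed in checking that the polynomial Gaussian moments in \eqref{eq:app-product-resolvent} are absorbed at each shifted weight $\sigma+|\nu|$, uniformly for $\rho\in[0,\rho_0]$.
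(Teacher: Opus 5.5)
Your proposal is correct and follows essentially the paper's argument: you get higher-order bounds at orders $4$ and $6$ by commuting $Y$-derivatives through the product resolvent and closing the same two contractions, and then use a difference-quotient system whose only derivative loss is the term $h\Delta_Y w_{\rho+h}$, inverted by the same two-stage Neumann scheme at orders two and four (your direct $O(|h|)$ remainder estimate replaces the paper's passage to the limit in the coefficients, with the same effect). One small correction: the source in \eqref{eq:LS-omega-fixed-point} contains $\rho\Delta_Y w_0$ but no $\rho\Delta_Y\omega$, so Step~1 has no derivative mismatch and needs no ladder; your \emph{cleaner alternative} is the right reading, and you should add that uniqueness in the two-derivative ball identifies the higher-order fixed points with $(\xi_\rho,\omega_\rho)$.
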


\begin{proof}
For each fixed integer $k\ge2$, the normalized scalar inverse satisfies
\[
 \|\mathcal S f\|_{C^{k,\alpha}_\sigma}
 \le C_k\|f\|_{\mathcal F_{\sigma+2}^{k-2}}.
\]
Indeed, the $C^{2,\alpha}_\sigma$ estimate is
\cref{prop:normalized-voc}; on each enlarged annulus, scale $Y=R\xi$ and
apply the ordinary interior Schauder estimate.  The Lane--Emden ODE and
its tail give bounded weighted derivatives of $V$ of all fixed orders,
and the same is true for
$w_0=\La H_z^{-1}P^\perp(V^2\ph^2)$.
Leibniz's rule and \cref{prop:massive-product-resolvent} then yield, for
$k=4,6$,
\[
 \|\Omega_\rho(\xi)\|_{\mathcal W_\sigma^{(k)}}
 \le C_k(\rho+\|\xi\|_{C^{k,\alpha}_\sigma}),
\]
and the analogous Lipschitz estimate.  The highest $Y$ derivatives of
$\omega$ occur with factors $\rho$ or $\rho^2$ in
\eqref{eq:LS-omega-fixed-point}; the scalar equation depends on $\omega$
with the factor $\rho$.  The same two contractions therefore close in
orders $k=4,6$, and uniqueness in the original two-derivative space
identifies the fixed points.  This proves the six-derivative estimate.

For the parameter derivative, write
\[
 v_\rho=V+\xi_\rho,\qquad w_\rho=w_0+\omega_\rho,
 \qquad B_\rho=v_\rho\ph+\rho w_\rho.
\]
Differentiating the exact projected equations formally gives
\begin{equation}\label{eq:app-LS-parameter-system}
 \begin{cases}
 L_{LE}U=a_\rho U+\rho\mathcal B_\rho W+f_\rho,&U(0)=0,\\
 (H_z-\rho\Delta_Y)W
  =\mathcal C_\rho U+\rho\mathcal D_\rho W+g_\rho,
  &P_1W=0,\ W|_{z=\pm1}=0,
 \end{cases}
\end{equation}
where
\[
 \begin{split}
 a_\rho&=2\La\left(B_2\xi_\rho
           +\rho\int_{-1}^1w_\rho\ph^2\,dz\right),\\
 \mathcal B_\rho k&=2\La\int_{-1}^1B_\rho\ph k\,dz,
 \qquad
 \mathcal C_\rho h=2\La P^\perp(B_\rho\ph h),\\
 \mathcal D_\rho k&=2\La P^\perp(B_\rho k),\\
 f_\rho&=2\La\int_{-1}^1B_\rho w_\rho\ph\,dz,
 \qquad
 g_\rho=\Delta_Yw_\rho+2\La P^\perp(B_\rho w_\rho).
 \end{split}
\]
The six-derivative estimate implies
\[
 \|a_\rho U\|_{C^{0,\alpha}_{\sigma+2}}
 \le C\rho\|U\|_{C^{2,\alpha}_\sigma},
 \qquad
 \|f_\rho\|_{C^{0,\alpha}_{\sigma+2}}
 +\|g_\rho\|_{\mathcal F_\sigma^2}\le C,
\]
with the analogous bounded estimates for the three off-diagonal
operators.  First invert
$I-\rho\mathcal R_\rho\mathcal D_\rho$ in the second row using
\cref{prop:massive-product-resolvent}; substitute into the first row and
apply $\mathcal S$.  The remaining operator has norm $O(\rho)$, so a
second Neumann series gives a uniform inverse of
\eqref{eq:app-LS-parameter-system}.

To justify formal differentiation, fix $h$ with
$\rho,\rho+h\in[0,\rho_0]$ and set
\[
 U_h:=\frac{v_{\rho+h}-v_\rho}{h},\qquad
 W_h:=\frac{w_{\rho+h}-w_\rho}{h},\qquad
 B_+:=B_{\rho+h},\quad B:=B_\rho.
\]
Since
\[
 \frac{B_+-B}{h}=U_h\ph+\rho W_h+w_{\rho+h},
\]
subtracting the two exact projected systems and dividing by $h$ gives the
following \emph{exact} difference-quotient system:
\begin{equation}\label{eq:app-LS-dq-system}
 \begin{cases}
 L_{LE}U_h=a_{\rho,h}U_h
       +\rho\mathcal B_{\rho,h}W_h+f_{\rho,h},&U_h(0)=0,\\[1mm]
 (H_z-\rho\Delta_Y)W_h
       =\mathcal C_{\rho,h}U_h
        +\rho\mathcal D_{\rho,h}W_h+g_{\rho,h},
       &P_1W_h=0,\ W_h|_{z=\pm1}=0,
 \end{cases}
\end{equation}
where
\begin{align*}
 a_{\rho,h}
 &=\La\int_{-1}^1(B_++B-2V\ph)\ph^2\,dz,\\
 \mathcal B_{\rho,h}k
 &=\La\int_{-1}^1(B_++B)\ph k\,dz,\\
 \mathcal C_{\rho,h}u
 &=\La P^\perp\bigl((B_++B)\ph u\bigr),\qquad
 \mathcal D_{\rho,h}k
 =\La P^\perp\bigl((B_++B)k\bigr),\\
 f_{\rho,h}
 &=\La\int_{-1}^1(B_++B)w_{\rho+h}\ph\,dz,\\
 g_{\rho,h}
 &=\Delta_Yw_{\rho+h}
   +\La P^\perp\bigl((B_++B)w_{\rho+h}\bigr).
\end{align*}
These formulas are obtained simply from
$B_+^2-B^2=(B_++B)(B_+-B)$ and from
\[
 (H_z-(\rho+h)\Delta_Y)w_{\rho+h}
 -(H_z-\rho\Delta_Y)w_\rho
 =(H_z-\rho\Delta_Y)(w_{\rho+h}-w_\rho)
   -h\Delta_Yw_{\rho+h}.
\]
The already proved estimate
$B_\rho=V\ph+O(\rho)$ in the weighted algebra gives, uniformly for
$|h|$ small,
\begin{equation}\label{eq:app-LS-dq-coefficients}
 \|a_{\rho,h}\|\le C(\rho+|h|),\qquad
 \|\mathcal B_{\rho,h}\|+
 \|\mathcal C_{\rho,h}\|+
 \|\mathcal D_{\rho,h}\|\le C,
\end{equation}
and the six-derivative bound yields
\begin{equation}\label{eq:app-LS-dq-sources}
 \|f_{\rho,h}\|_{\mathcal F_{\sigma+2}^{k-2}}
 +\|g_{\rho,h}\|_{\mathcal F_\sigma^k}
 \le C_k,\qquad k=2,4,
\end{equation}
with the evident interpretation of the first norm when $k=2$.
The only term in \eqref{eq:app-LS-dq-sources} containing two additional
$Y$-derivatives is $\Delta_Yw_{\rho+h}$; it is precisely here that the
four- and six-derivative estimates obtained above are used.

The same two-stage Neumann inversion used for
\eqref{eq:app-LS-parameter-system} therefore applies uniformly to
\eqref{eq:app-LS-dq-system} at derivative orders two and four and gives
\begin{equation}\label{eq:app-LS-dq-bound}
 \|U_h\|_{C^{k,\alpha}_\sigma}
 +\|W_h\|_{\mathcal W_\sigma^{(k)}}
 \le C_k,\qquad k=2,4.
\end{equation}
Equivalently,
\[
 \|v_{\rho+h}-v_\rho\|_{C^{k,\alpha}_\sigma}
 +\|w_{\rho+h}-w_\rho\|_{\mathcal W_\sigma^{(k)}}
 \le C_k|h|,\qquad k=2,4.
\]
The $k=4$ estimate implies that the coefficients and sources in
\eqref{eq:app-LS-dq-system} converge, as $h\to0$, in the
source topologies required by the $k=2$ inverse.  Their limits are
exactly the coefficients and sources displayed in
\eqref{eq:app-LS-parameter-system}.  Hence $(U_h,W_h)$ converges to the
unique solution of that system.  This proves $C^1$ dependence, including
the one-sided derivative at $\rho=0$.  Differentiability of the affine
tail follows from the same weighted derivative bounds and the fact that
its leading coefficient $D_mr^{-2}\ph$ is parameter-independent.
\end{proof}

\end{document}